
\documentclass[12pt,reqno,twoside,a4paper]{amsart}

\makeatletter
\@namedef{subjclassname@2020}{\textup{2020} Mathematics Subject Classification}
\makeatother

\usepackage{amsmath, amsthm, amssymb, amscd, amsfonts, tikz, float, enumitem, graphicx}
\usepackage{mathrsfs}

\usepackage{color}
\usepackage[all]{xy}             
%
 \usepackage{hyperref}          

\setlength{\topmargin}{-11mm}
\setlength{\textheight}{245mm}
\setlength{\textwidth}{150mm}
\setlength{\oddsidemargin}{0.43cm}
\setlength{\evensidemargin}{0.43cm}
\setcounter{tocdepth}{3}
%

\font \smallrm=cmr10 at 10truept
\font \ssmallrm=cmr10 at 9truept

\font \smallsl=cmsl10 at 9pt

\numberwithin{equation}{section}

\newcommand{\subu}[2]{{#1}_{\raise-2pt\hbox{$ \scriptstyle #2 $}}}
\newcommand{\subd}[3]{{#1}_{\raise-2pt\hbox{$ \scriptstyle #2 #3 $}}}



\newtheorem{lema}{Lemma}[subsection]
\newtheorem{theorem}[lema]{Theorem}
\newtheorem{cor}[lema]{Corollary}

\newtheorem{prop}[lema]{Proposition}

\theoremstyle{definition}
\newtheorem{definition}[lema]{Definition}

\newtheorem{exas}[lema]{Examples}

\newtheorem{rmk}[lema]{Remark}
\newtheorem{rmks}[lema]{Remarks}
\newtheorem{free text}[lema]{}
\newtheorem{obs}[lema]{Observation}
\newtheorem{obs's}[lema]{Observations}

\theoremstyle{remark}




\newcommand \id{\operatorname{id}}
\newcommand \ad{\operatorname{ad}}

\newcommand \rk{\operatorname{rk}}

\newcommand \End{\operatorname{End}}
\newcommand \Hom{\operatorname{Hom}}

\newcommand \Alt{\operatorname{Alt}}
\newcommand \Sym{\operatorname{Sym}}

\newcommand \Ker{\operatorname{Ker}}

\newcommand \Img{\operatorname{Im}}
\newcommand \op{\operatorname{op}}

\newcommand \smallast {{\operatorname{\raise1,5pt\hbox{$ \scriptscriptstyle \ast $}}}}
\newcommand \smallp {{\operatorname{\raise1pt\hbox{$ \scriptscriptstyle + $}}}}
\newcommand \smallm {{\operatorname{\raise1pt\hbox{$ \scriptscriptstyle - $}}}}
\newcommand \smallpm {{\operatorname{\raise1pt\hbox{$ \scriptscriptstyle \pm $}}}}
\newcommand \smallmp {{\operatorname{\raise1pt\hbox{$ \scriptscriptstyle \mp $}}}}

\newcommand{\eps}{\epsilon}
\newcommand{\ot}{\otimes}
\newcommand{\com}{\Delta}

\newcommand{\zero}{{\bar{0}}}
\newcommand{\one}{{\bar{1}}}

\newcommand{\R}{{\mathcal R}}

\newcommand{\F}{{\mathcal F}}

\newcommand{\G}{{\mathcal G}}
\newcommand{\Z}{{\mathcal Z}}
 \newcommand{\Lc}{{\mathcal L}}




\def \bq {\mathbf{q}}

\def \bx {\mathbf{x}}

\def \FF{\mathbb{F}}
\def \NN{\mathbb{N}}

\def \ZZ {\mathbb{Z}}
\def \k {\Bbbk}

\def \kh{{\Bbbk[[\hbar]]}}
\def \khp{{\Bbbk(\hskip-1,7pt(\hbar)\hskip-1,7pt)}}

\def \Zqqm {{\mathbb{Z}\big[\hskip1pt q,q^{-1}\big]}}

\def \Zabqpm {{\ZZ_A\big[\bq^{\pm 1}\big]}}

\def \dotRbq {{\dot{R}_{\,\bq}}}
\def \Rbqsq {{R_{\,\bq}^{\,\scriptscriptstyle {\sqrt{\phantom{I}}}}}}
\def \dotRbqsq {{\dot{R}_{\,\bq}^{\,\scriptscriptstyle {\sqrt{\phantom{I}}}}}}

\def\FF{\mathbb{F}}

\def \G {\mathcal{G}}

\def \K {\mathcal{K}}
\def \L {\mathcal{L}}

\def \SS {\mathcal{S}}

\def \JJ {\mathfrak{J}}

\def \Apicc {{\scriptscriptstyle A}}
\def \Dpicc {{\scriptscriptstyle D}}
\def \Ppicc {{\scriptscriptstyle P}}
\def \Rpicc {{\scriptscriptstyle \R}}
\def \Thetapicc {{\scriptscriptstyle \Theta}}
\def \Psipicc {{\scriptscriptstyle \Psi}}

\def \lieg{\mathfrak{g}}

\def \liegd{\mathfrak{g}^{\raise-1pt\hbox{$ \Dpicc $}}}

\def \liegdP{\mathfrak{g}^{\raise-1pt\hbox{$ \Dpicc $}}_\Ppicc}
\def \liegRpP{\mathfrak{g}^{\raise1pt\hbox{$ \scriptscriptstyle \mathcal{R},p $}}_\Ppicc}
\def \liegRpPa{\mathfrak{g}^{\raise1pt\hbox{$ \scriptscriptstyle \mathcal{R},p $}}_{\Ppicc_{\raise-3pt\hbox{$ \scriptscriptstyle \!\! A $}}}}
\def \liegPsi{\mathfrak{g}^{\raise-1pt\hbox{$ \scriptscriptstyle \Psi $}}}
\def \liegdPsi{\mathfrak{g}^{\raise-1pt\hbox{$ {\scriptscriptstyle D,\Psi} $}}}

\def \liegdotdq{\dot{\lieg}_{\raise-2pt\hbox{$ \Dpicc , \hskip-0,9pt{\scriptstyle \bq} $}}}
\def \liegtildq{\tilde{\lieg}_{\raise-2pt\hbox{$ \Dpicc , \hskip-0,9pt{\scriptstyle \bq} $}}}
\def \liegdq{\lieg_{\raise-2pt\hbox{$ \Dpicc , \hskip-0,9pt{\scriptstyle \bq} $}}}
\def \liegdqcheck{\lieg_{\raise-2pt\hbox{$ \Dpicc , \hskip-0,9pt{\scriptstyle \check{\bq}} $}}}
\def \Gtildeqstar {\widetilde{G}^{\,\raise2pt\hbox{$ \scriptstyle * $}}_{\!\Dpicc , \hskip+0,9pt{\scriptstyle \bq}}}

\def \lieb{\mathfrak{b}}

\def \liebd{\mathfrak{b}^{\raise-1pt\hbox{$ \Dpicc $}}}
\def \lieh{\mathfrak{h}}
\def \liehd{\mathfrak{h}^{\raise-1pt\hbox{$ \Dpicc $}}}
\def \liek{\mathfrak{k}}

\def \lies{\mathfrak{s}}
\def \lien{\mathfrak{n}}

\def \lieso{\mathfrak{so}}

\def \Phipicc {{\scriptscriptstyle \Phi}}

\def \uhg{U_\hbar(\hskip0,5pt\lieg)}

\def \uRpPhg{U^{\,\R\,,\,p}_{\!P,\hskip0,7pt\hbar}(\hskip0,5pt\lieg)}

\def \uRpPhh{U^{\,\R\,,\,p}_{\!P,\hskip0,7pt\hbar}(\hskip0,5pt\lieh)}

 



\def \uRpPhbp{U^{\,\R,p}_{\!P,\hskip0,7pt\hbar}(\hskip0,5pt\lieb_+)}

\def \uRpPhbm{U^{\,\R,p}_{\!P,\hskip0,7pt\hbar}(\hskip0,5pt\lieb_-)}

\def \uRpPhbpm{U^{\,\R,p}_{\!P,\hskip0,7pt\hbar}(\hskip0,5pt\lieb_\pm)}

\def \uRpPhnp{U^{\,\R,p}_{\!P,\hskip0,7pt\hbar}(\hskip0,5pt\lien_+)}
\def \uRpPhnm{U^{\,\R,p}_{\!P,\hskip0,7pt\hbar}(\hskip0,5pt\lien_-)}
\def \uRpPhnpm{U^{\,\R,p}_{\!P,\hskip0,7pt\hbar}(\hskip0,5pt\lien_\pm)}

\def \Ubqg {U_{\bq}(\hskip0,5pt\lieg)}
\def \Ubqh {U_{\bq}(\hskip0,5pt\lieh)}
\def \Ubqnm {U_{\bq}(\hskip0,5pt\lien_-)}
\def \Ubqnp {U_{\bq}(\hskip0,5pt\lien_+)}
\def \Ubqbm {U_{\bq}(\hskip0,5pt\lieb_-)}
\def \Ubqbp {U_{\bq}(\hskip0,5pt\lieb_+)}

\def \uqg{U_q(\hskip0,8pt\lieg)}

\newcommand{\sigmachi}{{\raise-1pt\hbox{$ \, \scriptstyle \dot\sigma_\chi $}}}

\def \pf{\begin{proof}}
\def \epf{\end{proof}}

\newcommand\fT{\mathsf{T}}
\newcommand\fH{\mathsf{H}}


\newcommand{\binomq}[2]{\genfrac{[}{]}{0pt}{}{#1}{#2}}

\theoremstyle{plain}



\begin{document}

\title[MULTIPARAMETER QUANTUM SUPERGROUPS, DEFORMATIONS, SPECIALIZATIONS]
 {Multiparameter Quantum Supergroups,  \\
  Deformations and Specializations}

\author[Gast{\'o}n Andr{\'e}s GARC{\'I}A \ , \ \  Fabio GAVARINI \ , \ \  Margherita PAOLINI]
{Gast{\'o}n Andr{\'e}s Garc{\'\i}a${}^\flat\,$,  \ \ Fabio Gavarini$\,{}^\sharp\,$,  \ \ Margherita Paolini${}^\natural$}

\address{
   \newline
 ${}^\flat$  Departamento de Matem\'atica, Facultad de Ciencias Exactas
   \newline
 Universidad Nacional de La Plata   ---   CMaLP-CIC-CONICET
   \newline
 1900 La Plata, ARGENTINA   ---   {\tt ggarcia@mate.unlp.edu.ar}
   \newline
 \phantom{A}  and
   \newline
 ${}^\flat$  Guangdong-Technion Israel Institute of Technology,
   \newline
 No.\ 241, Daxue Road, Shantou, Guangdong Province, China
 \vspace*{0.5cm}
   \newline
 ${}^\sharp$  Dipartimento di Matematica,
   \newline
 Universit\`a degli Studi di Roma ``Tor Vergata''   ---   INdAM\,/\,GNSAGA
   \newline
 Via della ricerca scientifica 1,  \; I\,-00133 Roma, ITALY   ---   {\tt gavarini@mat.uniroma2.it}
 \vspace*{0.5cm}
   \newline
  ${}^\natural$  Independent researcher   ---   {\tt margherita.paolini.mp@gmail.com} }

\thanks{\noindent 2020 \emph{MSC:}\,  17B37, 17B62   ---
\emph{Keywords:} Quantum Groups, Quantum Enveloping Algebras.}


\begin{abstract}
 \medskip
   In this paper we introduce a multiparameter version of the quantum universal enveloping superalgebras introduced by Yamane in  \cite{Ya1}.  For these objects we consider:
                                                                       \par
   \hskip-5pt   --- \textsl{(1)}\;  their deformations by twist and by 2--cocycle (both of ``toral type''); in particular, we prove that this family is stable under both types of deformations;
                                                                       \par
   \hskip-5pt   --- \textsl{(2)}\;  their semiclassical limits, which are multiparameter Lie superbialgebras;
                                                                       \par
   \hskip-5pt   --- \textsl{(3)}\;  the deformations by twist and by 2--cocycle (of ``toral type'') of these multiparameter Lie superbialgebras: in particular, we prove that this family is stable under these deformations, and that ``quantization commutes with deformation''.
%
 \vskip9pt
   \hfill   \textit{To the memory of Pierre Cartier,}   \qquad  {\ }
                                          \par
   \hfill   \textit{with our deepest admiration.}   \quad \qquad  {\ }
%
\end{abstract}

{\ } \vskip-65pt

   \centerline{ \smallrm  {\smallsl Communications in Contemporary Mathematics\/}  (to appear)}
%
                                                 \par
   \centerline{ \smallrm {\ } }
%
%
 \vskip1pt
   \centerline{\smallrm {\smallsl \ }}
%
 {\ }
\vskip1pt

\maketitle

\tableofcontents



\section{Introduction}  \label{sec: intro}

\vskip7pt

   In the last forty years, quantum groups
 --- mainly in the form of quantized universal enveloping algebras, hereafter shortened into QUEA's ---
 have been thoroughly studied, both in their  \textsl{formal\/}  version (\`a la Drinfeld, say),
 and in their  \textsl{polynomial\/}  formulation (following Jimbo and Lusztig).
 The semiclassical limit underlying them is given by Lie bialgebras (infinitesimally)
 and Poisson-Lie groups (globally): conversely, the ``quantization problem'' for
 a Lie bialgebra consists in the quest for some QUEA whose semiclassical
 limit yields back the given Lie bialgebra.
 \vskip5pt
   Quite soon,  \textit{multiparameter\/}  quantum groups have been introduced too, including those coming from the FRT-construction, as well as multiparameter QUEA's   --- or MpQUEA's, in short.
   A standard method for that uses the technique of  \textit{deformation\/}  of Hopf algebras,
   which comes in two versions: deformation by twist   --- where only the comultiplication
   and the antipode are modified ---   and deformation by 2--cocycle
   --- where one changes only the multiplication and the antipode.
   The twist deformation was applied by Reshetikhin to Drinfeld's formal QUEA's
   $ \uhg \, $,  thus finding some multiparameter QUEA's in which the new,
   ``additional'' parameters show up to describe the new coalgebra structure.
   The second method instead was applied by several people, all working with Jimbo \& Lusztig's
   polynomial QUEA  $ \uqg $   --- in a suitable ``quantum double version'' ---
   thus providing some other multiparameter QUEA's whose parameters enter
   in the description of the (new) algebra structure.  A common,  \textsl{unified\/}
   point of view on this matter was presented in  \cite{GG3}  (see also  \cite{GG1}):
   the key message there is that, although the two families of MpQUEA's arising
   from deformations by twist or by 2--cocycle seem to be entirely disconnected,
   they form instead one and the same family.  In addition, a second key result from  \cite{GG3}  is that
   deformations of either kind of any MpQUEA still provide some new MpQUEA's,
                                               \hbox{in this same family.}
                                                                          \par
   Looking at MpQUEA's, it was quickly realized that their many parameters could always be
   re-arranged so that only one of them kept the role of ``quantization parameter'',
   while all the other took instead a ``geometrical'' meaning.
   Namely, when one specializes the single ``quantum parameter''
   and thus gets the semiclassical limit, the remaining parameters enter in the description
   of the Poisson structure (in the form of a Lie cobracket) that is induced
 onto the underlying Lie algebra.  In this way, specialization
 of MpQUEA's yields  \textsl{multiparameter\/}
 structures onto the semiclassical limit, so one ends up with multiparameter
 Lie bialgebras (or MpLbA's, in short).  On the other hand, one can also introduce
 this family of MpLbA's via an independent construction, and prove that this family
 is stable under deformations (in the sense of Lie bialgebra theory) by twist or by
 2--cocycle; see  \cite{GG3}  for details.
 \vskip5pt
  The present paper extends the previous world-building process to the setup
  of quantum  \textsl{super\/}groups:  namely, we introduce  \textit{formal multiparameter quantized enveloping  \textsl{super}algebras\/}  (in short  \textit{formal MpQUESA}'s,  or just  \textit{FoMpQUESA}'s) and we study their deformations, by twist and by 2--cocycle.
                                                                            \par
   Both Lie superalgebras and their quantizations, the quantized universal enveloping superalgebras (QUESA),
can be regarded as Lie algebras and quantum universal enveloping algebras in the category of super vector spaces.
%
%
 As noted by Majid \cite{Mj}, both classical and super objects are related via the
process of the so-called \emph{bosonization} which yields a functor from one category to the
other. Despite this strong connection, Lie superalgebras and QUESA's are interesting on their own,
mainly because of their wide range of applications.
Simple Lie superalgebras over an algebraically closed field of characteristic zero
were classified by Kac in \cite{Ka2}:  like for simple Lie algebras, this classification is given in terms of Cartan matrices and (generalized) Dynkin diagrams.  Nevertheless, a single Lie superbialgebra may have non-isomorphic Dynkin diagrams.
 As the study of MpQUEA's (and their semiclassical limit) was mostly bounded
 to those whose semiclassical limits were Lie algebras of contragredient
type with symmetrizable Cartan matrices, we focus onto MpQUESA's whose semiclassical limit are simple contragredient Lie superalgebras of type  $ A $--$ G $
(w.r.t.\ the classification in  \cite{Ka2}).  These QUESA's were first introduced by
Yamane, in  \cite{Ya1},  who defined formal QUESA's of type  $ A $--$ G $:  for them, relations are not only commutation relations and super quantum Serre relations, as one also needs to add higher order relations.
Likewise, the MpLSbA's that we construct are, as Lie superalgebras, the contragredient ones of type  $ A $--$ G $,  again.
 \vskip5pt
   Specifically, we introduce multiparameter versions of Yamane's formal QUESA's  (in short, FoMpQUESA's), see  Definition \ref{def: FoMpQUESA},  by mimicking the strategy we developed in  \cite{GG3}  for the non-super case.  The very definition is based upon the connection between
the multiparameters and the action of a fixed commutative subsuperalgebra:
this is encoded in the notion of  \textit{realization\/}  of a multiparameter matrix  $ P $,
much like for Kac-Moody algebras. This allows us to relate the quantum objects
with their semiclassical limit, and also multiparameter objects with Yamane's standard ones:
the latter step goes through deformation(s).
%
%
 To do this, we consider deformations by toral twists of Yamane's uniparameter QUESA's,
just like Reshetikhin did with Drinfeld's uniparameter QUEA's, thus finding suitable new, deformed QUESA's that are now ``multiparameter'' ones, as they depend on the many parameters coming from the twist:
 these parameters, though, only show up in the description of the coalgebra structure,
 while the algebra structure is independent of them.
 As a consequence, one may endow these deformed QUESA's with a Hopf superalgebra
structure.
The second, key idea is to perform on these MpQUESA's a careful ``change of presentation'',
modifying the original generators (from Yamane's presentation) into new ones: the outcome is a
new presentation where the parameters appear in the defining relations, whereas the description
of the coalgebra structure now is entirely parameter-free.  Finally, we take this last presentation as
``the'' pattern for our definition of MpQUESA, given again by generators and relations,
mimicking the last presentation of the deformed MpQUEA's that we found after the ``change-the-generators'' step.
The arguments used in this analysis apply again to study, more in general, the deformations by twist of  \textsl{any\/}
one of our MpQUESA's: thus we find that any such deformation is yet another MpQUESA   --- this is proved in  Theorem
\ref{thm: uRpPhg=twist-uhg}.  Moreover, we prove in  Proposition \ref{prop: deform-Yamane=FoMpQUESA}  that any FoMpQUESA
can be obtained from a Yamane's uniparameter QUESA by a proper choice of the twist.  Then we draw our attention to an extension of the dual notion of twist,  \emph{polar--$2$--cocycles},  and show in  Theorem \ref{thm: double FoMpQUESAs_P-P'_mutual-deform.s}  that actually the family of FoMpQUESA is again stable by deformations of the superalgebra structure by a  polar--$2$--cocycle.  In conclusion, any FoMpQUESA can be obtained from Yamane's (uniparameter) QUESA either by a twist deformation
%
%
 or by a  polar--$2$--cocycle  deformation   --- the former fact is fairly natural, but the latter instead is quite surprising indeed.  So, for every FoMpQUESA one can decide
 to focus the dependence on the discrete multiparameters either on the supercoalgebra structure or on the superalgebra structure: in fact, we chose the second option.  As in the non-super setting  \cite{GG3},  everything is encoded in the notion of ``realization'' of multiparameters.
 \vskip5pt
   Quite naturally, the same strategy used in the quantum setup can be adopted as well in the semiclassical framework, even with some simplifications.  Therefore, we develop our construction of MpLSbA's, and the study of their deformations,  \textsl{independently\/}  of the quantum setup   --- though everything about that might be deduced by specialization from the quantum level (cf.\  Theorem \ref{thm: semicl-limit FoMpQUESA}).  We obtain then a self-contained ``chapter'' of Lie superbialgebra theory, that introduces a new family of multiparameter Lie superbialgebras
%
%
 and their deformations, the key fact being that this family is stable under deformations (by twist and by 2--cocycle), see  Theorem \ref{thm: twist-liegRP=new-liegR'P'}  and  Theorem \ref{thm: 2-cocycle-def-MpLSbA}.  These MpLSbA's come equipped with a presentation ``\`a la Serre'', in which the parameters rule the Lie superalgebra structure (cf. \S \ref{subsec: MpLSbA's}).  In addition, every such MpLSbA admits an alternative presentation, in which the Lie algebra structure stands fixed, while the Lie coalgebra structure varies according to the multiparameter matrix  $ P $.  Like in the quantum setup, the isomorphism between the two presentations is quite meaningful, as it boils down to a well-chosen change of generators.  In short, we might say that our multiparameters are encoded in realizations, FoMpQUESA's are quantizations of MpLSbA's, and multiparameter objects are given by deformations of either the (super)algebra or the (super)coalgebra structure of uniparameter objects   --- see a diagrammatic summary in  \S \ref{subsec: global synopsis}.
 In all this,  \textsl{the underlying Lie superalgebras are the (well-known) simple, contragredient ones of type  $ A $--$ G \, $  (in Kac' classification)\/}:  we are thus providing a new contribution to a classical topic of utmost importance.  Even more, indeed,  \textsl{all of our construction also extends,  \textit{verbatim},  to the setup of  \textit{affine}  Lie superalgebras and their quantum counterparts},  as introduced in  \cite{Ya2}  and  \cite{Ya3}.
%
 \vskip5pt
   Finally, we briefly introduce  \textit{polynomial\/}  MpQUESA's: mimicking the recipe of Lusztig \& Jimbo for polynomial QUEA's, these are certain subalgebras (up to modifying the ground ring) of our FoMpQUESA's.  We also study both types of deformations for these objects.  Now the  polar--$ 2 $--cocycles  become actually Hopf  $ 2 $--cocycles,  and the deformation of the algebraic structure is well-understood.  On the other hand, the twist deformations need to satisfy an arithmetic condition in order to be meaningful; nevertheless, these types of deformations cannot be seen,  \textit{strictu sensu},  as genuine ``twist deformations'' of our polynomial MpQUESA's.
 In fact, ``formal'' quantum groups are better suited to deal with deformations (up to resorting to the broader notion of ``polar--$ 2 $--cocycles''),  so we focused mainly onto them.
                                                                  \par
   One can also consider multiparameter versions of quantum formal series superalgebras on supergroups: in  \cite{GP}  this is done by constructing such an object as dual to the FoMpQUESA's of the present paper.  The same goal was pursued in  \cite{Ma}  via an FRT-construction: the link between  \cite{Ma}  and  \cite{GP}  is discussed in  \cite{GP}  as well.
%
 \vskip5pt
   A last word about the organization of the paper, and its detailed content.
                                                                  \par
   In  Section \ref{sec: Cartan-data_realiz's}  we introduce the ``combinatorial data'' underlying our constructions of MpLSbA's and FoMpQUESA's alike: the notion of multiparameters, their realizations and deformations.  Then in  Section \ref{sec: MpLSbAs}  we present our MpLSbA's, given by an explicit presentation (involving parameters) and bearing a Lie bialgebra structure which arise as deformation of one arising from Yamane's QUESA's.  We also study their deformations, both by (toral) twists and by (toral) 2--cocycles:
in particular, this proves that the family of these Lie superbialgebras is  \textsl{stable w.r.t.\ deformations}. In addition, before presenting our new results about multiparameter objects and their deformations, we quickly resume the general theory of Lie superbialgebras and their deformations, as the literature on the subject seems to be quite poor.
                                                                  \par
   Section \ref{sec: FoMpQUESA's}  is dedicated to our newly minted notion of FoMpQUESA's.  We begin by introducing them as superalgebras with a given presentation (by generators and relations), which closely mimics that of Yamane's QUESA's in  \cite{Ya1}  but for involving more general parameters.  Then we prove that they do bear a structure of Hopf superalgebra, which is obtained from that of a usual Yamane's QUESA through deformation by a suitable (toral) twist followed by a suitable ``change of variables''.  Finally, we prove the functoriality (w.r.t.\ the underlying, Cartan-like combinatorial data) of our construction of FoMpQUEA's, and their triangular decomposition.
                                                                  \par
   We spend Section  \ref{sec: deform-FoMpQUESA's}  to discuss deformations of FoMpQUEA's by (toral) twists and by (toral) polar--2--cocycles.  In short, we prove that these deformations turn FoMpQUEA's into new FoMpQUEA's again, hence the family of all FoMpQUESA's is  \textsl{stable by such deformations}.  Moreover, under mild assumptions things go the other way around too, so that any FoMpQUESA's can be realized as a suitable deformation --- both by (toral) twist and by (toral) polar--2--cocycle ---   of one of Yamane's uniparameter QUESA's.
                                                                  \par
   In Section  \ref{sec: FoMpQUESA's-vs-MpLSbA's}  we compare the classical and quantum setups.  Namely, we perform specializations of FoMpQUESA's, showing that their semiclassical limit is always a MpLSbA, with the same underlying ``Cartan datum''.  Conversely, any possible MpLSbA arises as such a limit: in other words, for any MpLSbA there is a suitable FoMpQUESA which is quantization of it.  As a second step, we compare deformations before and after specialization/quantization: the outcome is, in a nutshell, that ``specialization/quantization and deformation commute with each other''.
                                                                  \par
   Finally, in Section \ref{sec: polynomial MpQUESA's & deform.'s}  we introduce the notion of  \textit{polynomial MpQUESA's},  and we study their behavior under the processes of both types of deformations.

\vskip11pt

   \centerline{\ssmallrm ACKNOWLEDGEMENTS}
 \vskip3pt
   {\smallrm The authors thankfully acknowledge support by:
   \;\textit{(1)}\,  the  {\smallsl Research-in-Pairs\/}  program of  {\smallsl CIMPA-ICTP\/};
   \;\textit{(2)}\,  CONICET, ANPCyT, Secyt (Argentina),
   \;\textit{(3)}\,  the MIUR  {\smallsl Excellence Department Project
   MatMod@TOV (CUP E83C23000330006)\/}  awarded to the Department of Mathematics,
   University of Rome ``Tor Vergata'' (Italy),
   \;\textit{(4)}\,  the research branch  {\smallsl GNSAGA\/}  of INdAM (Italy),
   \;\textit{(5)}\,  the European research network  {\smallsl COST Action CaLISTA CA21109}.}

\bigskip
 \medskip

\section{Multiparameters and their realizations}  \label{sec: Cartan-data_realiz's}
%
 \vskip7pt
   In this section we fix the basic combinatorial data that we need.  The definition of our multiparameter Lie superbialgebras and formal multiparameter quantum super groups requires a full lot of related material that we now present.  In particular,  $ \, \NN = \{0, 1,\ldots\} \, $  and  $ \, \NN_+ := \NN \setminus \{0\} \, $,
   \,while  $ \k $  will be a field of characteristic zero.  This material is actually taken from  \cite{GG3}, \S 2,  up to minimal changes: we shall then skip many details (and proofs), as the reader can find them in  \cite{GG3}.

\medskip

\subsection{Cartan data, multiparameter matrices, and realizations}
  \label{MpMatrices, Cartan & realiz.'s}  {\ }
 \vskip7pt
   We introduce hereafter the ``multiparameters'', which we will use to construct (semi)classical and quantum objects as well.  The theory can be developed in wide generality, yet we restrict ourselves to the setup more closely related with (contragredient) Lie superalgebras of simple type   --- the so-called  \textit{basic\/}  ones.

\vskip9pt

\begin{free text}  \label{Cartan-data}
 {\bf Cartan data.}
 Let  $ \lieg $  be a complex, finite-dimensional simple Lie superalgebra.
 By the classification results due to Kac   --- cf.\ \cite{Ka2}  ---
 we know that the isomorphism class of  $ \lieg $  falls within a family in a finite list,
 namely  $ A_N $,  $ B_N $,  $ C_N $,  $ D_N $,  $ D(2\,,1;\alpha) $,  $ F_4 $  and
 $ G_3 \, $:  we discard hereafter the case  $ D(2,1;\alpha) $,  as we build upon
 \cite{Ya1},  which also drops it.  In short, we shall refer to these as
 ``\textit{basic\/}  Lie superalgebras''   --- of type  $ A \, $,  $ B \, $,  etc.\ up to
 $ G \, $,  in short  $ A-G \, $.
                                                                    \par
   Every basic Lie superalgebra  $ \lieg $  can be realized as a
   \textit{contragredient  \textsl{Lie superalgebra},  or a
   \textsl{suitable quotient}  of it},  following Kac' recipe in  \cite[\S 2.5.1]{Ka2}:
   this involves fixing a set of (positive) simple roots, say
   $ \, \alpha_1 , \dots , \alpha_n \, $,  with their parities
   --- given by a map  $ \, p : I \relbar\joinrel\relbar\joinrel\longrightarrow \ZZ_2 \, $  ---   and an associated
   ``Cartan matrix''  $ \, A = {\big(\, a_{ij} \big)}_{i, j \in I}
   \in \textsl{M}_{n}(\ZZ) \, $   --- where  $ \, n \in \NN_+ \, $
   is the  \textsl{rank\/}  of  $ \lieg $  and  $ \, I := \{1,\dots,n\} \, $  ---
   as the starting point of the construction.  Note that the matrix  $ A \, $,
   besides having integral coefficients, is also symmetrizable: indeed, we can (and will) also fix ``minimal'' elements  $ \, d_i \in \frac{\,1\,}{\,2\,} \ZZ \setminus \{0\} \, (\,i = 1, \dots, n \,) \, $  such that  $ \, DA \, $  is symmetric with integral entries, where  $ D $  is the diagonal matrix  $ \, D := {\big(\, d_i \, \delta_{ij} \big)}_{i, j \in I} \, $.
                                                                   \par
   We stress the fact that for the same basic Lie superalgebra
   $ \lieg $  one has different pairs  $ (A,p) $  that yield a realization of  $ \lieg $
   as the contragredient Lie superalgebra associated with that pair.
   In turn, every such pair corresponds (bijectively) to a suitable
   \textsl{Dynkin diagram},  after the recipe explained in  \cite[\S 2.5 (Tables IV--V)]{Ka2}:
   in particular, by default we ``incorporate'' the parity in the Dynkin diagram itself, by
   labelling each vertex as explained in  \cite{Ka2},
   so that the label itself yields the parity of the corresponding (vertex, hence) simple root.
   Namely, the label for a vertex in a Dynkin diagram is a ``colour'', which is ``white'',
   ``black'' or ``grey'' respectively for the vertex
   ``$ \,\circ \, $'',  ``$ \, \bullet \, $''  or  ``$ \, \otimes \, $'':
   then the parity is  $ \zero $  for white vertices and it is  $ \one $
   for black and for grey vertices. Therefore, we can construct our basic Lie superalgebras
   directly from these (labelled) Dynkin diagrams.  Still following  \cite{Ya1},
   we assume that in cases  $ F(4) $  and  $ G(3) $  the Dynkin is
   \textit{distinguished},  i.e.\ it has exactly one vertex which is odd.
     \par
   In the end, the list of all Dynkin diagrams we will deal with (arranged by type
   $ A $,  $ B $,  etc.) is the following (with the symbol  ``$ \, \times \, $''  standing
   for a white or grey vertex):
%
\begin{figure}[H]
    \centering
\begin{tikzpicture}[scale=0.8]   \quad  $ A_n $
\tikzset{cross/.style={cross out, draw,
         minimum size=2*(#1-\pgflinewidth),
         inner sep=0pt, outer sep=0pt}}
\draw[black, very thin] (2.2, 0) -- (3.2, 0);
\draw[black, very thin] (3.6, 0) -- (4.8, 0);
\draw[black, very thin] (5.2, 0) -- (6.2, 0);
\draw[black,dashed] (6.4, 0) -- (7.8, 0);
\draw[black, very thin] (8.2, 0) -- (9.2, 0);

\draw[black, very thin] (2-0.1, -0.1) -- (2+0.1, 0.1);
\draw[black, very thin] (2-0.1, 0.1) -- (2+0.1, -0.1);

\draw[black, very thin] (3.4-0.1, -0.1) -- (3.4+0.1, 0.1);
\draw[black, very thin] (3.4-0.1, 0.1) -- (3.4+0.1, -0.1);

\draw[black, very thin] (5-0.1, -0.1) -- (5+0.1, 0.1);
\draw[black, very thin] (5-0.1, 0.1) -- (5+0.1, -0.1);

\draw[black, very thin] (9.4-0.1, -0.1) -- (9.4+0.1, 0.1);
\draw[black, very thin] (9.4-0.1, 0.1) -- (9.4+0.1, -0.1);

\draw[black, very thin] (2,0)
node[above=0.2cm, scale=0.7] {1};

\draw[black, very thin] (3.4,0)
node[above=0.2cm, scale=0.7] {2};

\draw[black, very thin] (5,0)
node[above=0.2cm, scale=0.7] {3};

\draw[black, very thin] (9.4,0)
node[above=0.2cm, scale=0.7] {$n$};
\end{tikzpicture}
\end{figure}

\begin{figure}[H]
    \centering
\begin{tikzpicture}[scale=0.8]   $ B_n \phantom{i} (I) $   \qquad
\draw[black, very thin] (2.2, 0) -- (3.2, 0);
\draw[black, very thin] (3.6, 0) -- (4.8, 0);
\draw[black, very thin] (5.2, 0) -- (6.2, 0);
\draw[black,dashed] (6.4, 0) -- (7.8, 0);
\draw[black, very thin] (8.2, 0.07) -- (9.2, 0.07);
\draw[black, very thin] (8.2, -0.07) -- (9.2, -0.07);
\draw[black, very thin] (8.7-0.1, -0.1-0.1) -- (8.7+0.1, 0.1-0.1);
\draw[black, very thin] (8.7-0.1, 0.1+0.1) -- (8.7+0.1, -0.1+0.1);

\draw[black, very thin] (2-0.1, -0.1) -- (2+0.1, 0.1);
\draw[black, very thin] (2-0.1, 0.1) -- (2+0.1, -0.1);

\draw[black, very thin] (3.4-0.1, -0.1) -- (3.4+0.1, 0.1);
\draw[black, very thin] (3.4-0.1, 0.1) -- (3.4+0.1, -0.1);

\draw[black, very thin] (5-0.1, -0.1) -- (5+0.1, 0.1);
\draw[black, very thin] (5-0.1, 0.1) -- (5+0.1, -0.1);

\draw [black, very thin](9.4, 0) circle (0.1 cm);


\draw[black, very thin] (2,0)
node[above=0.2cm, scale=0.7] {1};

\draw[black, very thin] (3.4,0)
node[above=0.2cm, scale=0.7] {2};

\draw[black, very thin] (5,0)
node[above=0.2cm, scale=0.7] {3};

\draw[black, very thin] (9.4,0)
node[above=0.2cm, scale=0.7] {$n$};
\end{tikzpicture}
\end{figure}

\begin{figure}[H]
    \centering
\begin{tikzpicture}[scale=0.8]   $ B_n \phantom{i} (I\!I) $   \qquad
\draw[black, very thin] (2.2, 0) -- (3.2, 0);
\draw[black, very thin] (3.6, 0) -- (4.8, 0);
\draw[black, very thin] (5.2, 0) -- (6.2, 0);
\draw[black,dashed] (6.4, 0) -- (7.8, 0);
\draw[black, very thin] (8.2, 0.07) -- (9.2, 0.07);
\draw[black, very thin] (8.2, -0.07) -- (9.2, -0.07);
\draw[black, very thin] (8.7-0.1, -0.1-0.1) -- (8.7+0.1, 0.1-0.1);
\draw[black, very thin] (8.7-0.1, 0.1+0.1) -- (8.7+0.1, -0.1+0.1);

\draw[black, very thin] (2-0.1, -0.1) -- (2+0.1, 0.1);
\draw[black, very thin] (2-0.1, 0.1) -- (2+0.1, -0.1);

\draw[black, very thin] (3.4-0.1, -0.1) -- (3.4+0.1, 0.1);
\draw[black, very thin] (3.4-0.1, 0.1) -- (3.4+0.1, -0.1);

\draw[black, very thin] (5-0.1, -0.1) -- (5+0.1, 0.1);
\draw[black, very thin] (5-0.1, 0.1) -- (5+0.1, -0.1);

\draw [black, very thin, fill](9.4, 0) circle (0.1 cm);


\draw[black, very thin] (2,0)
node[above=0.2cm, scale=0.7] {1};

\draw[black, very thin] (3.4,0)
node[above=0.2cm, scale=0.7] {2};

\draw[black, very thin] (5,0)
node[above=0.2cm, scale=0.7] {3};

\draw[black, very thin] (9.4,0)
node[above=0.2cm, scale=0.7] {$n$};
\end{tikzpicture}
\end{figure}

\begin{figure}[H]
    \centering
\begin{tikzpicture}[scale=0.8]   $ C_n $   \qquad
\draw[black, very thin] (2.2, 0) -- (3.2, 0);
\draw[black, very thin] (3.6, 0) -- (4.8, 0);
\draw[black, very thin] (5.2, 0) -- (6.2, 0);
\draw[black,dashed] (6.4, 0) -- (7.8, 0);
\draw[black, very thin] (8.2, 0.07) -- (9.2, 0.07);
\draw[black, very thin] (8.2, -0.07) -- (9.2, -0.07);
\draw[black, very thin] (8.7+0.1, +0.1+0.1) -- (8.7-0.1, 0.1-0.1);
\draw[black, very thin] (8.7+0.1, -0.1-0.1) -- (8.7-0.1, -0.1+0.1);

\draw[black, very thin] (2-0.1, -0.1) -- (2+0.1, 0.1);
\draw[black, very thin] (2-0.1, 0.1) -- (2+0.1, -0.1);

\draw[black, very thin] (3.4-0.1, -0.1) -- (3.4+0.1, 0.1);
\draw[black, very thin] (3.4-0.1, 0.1) -- (3.4+0.1, -0.1);

\draw[black, very thin] (5-0.1, -0.1) -- (5+0.1, 0.1);
\draw[black, very thin] (5-0.1, 0.1) -- (5+0.1, -0.1);

\draw [black, very thin](9.4, 0) circle (0.1 cm);


\draw[black, very thin] (2,0)
node[above=0.2cm, scale=0.7] {1};

\draw[black, very thin] (3.4,0)
node[above=0.2cm, scale=0.7] {2};

\draw[black, very thin] (5,0)
node[above=0.2cm, scale=0.7] {3};

\draw[black, very thin] (9.4,0)
node[above=0.2cm, scale=0.7] {$n$};
\end{tikzpicture}
\end{figure}

\begin{figure}[H]
    \centering
\begin{tikzpicture}[scale=0.8]   $ D_n \phantom{i} (I) $   \quad
\draw[black, very thin] (2.2, 0) -- (3.2, 0);
\draw[black, very thin] (3.6, 0) -- (4.8, 0);
\draw[black, very thin] (5.2, 0) -- (6.2, 0);
\draw[black,dashed] (6.4, 0) -- (7.8, 0);
\draw[black, very thin] (8.3, 0.1) -- (9.2, 0.6);
\draw[black, very thin] (8.3, -0.1) -- (9.2, -0.6);

\draw[black, very thin] (2-0.1, -0.1) -- (2+0.1, 0.1);
\draw[black, very thin] (2-0.1, 0.1) -- (2+0.1, -0.1);

\draw[black, very thin] (3.4-0.1, -0.1) -- (3.4+0.1, 0.1);
\draw[black, very thin] (3.4-0.1, 0.1) -- (3.4+0.1, -0.1);

\draw[black, very thin] (5-0.1, -0.1) -- (5+0.1, 0.1);
\draw[black, very thin] (5-0.1, 0.1) -- (5+0.1, -0.1);

\draw [black, very thin](9.4, 0.6) circle (0.1 cm);
\draw [black, very thin](9.4, -0.6) circle (0.1 cm);


\draw[black, very thin] (2,0)
node[above=0.2cm, scale=0.7] {1};

\draw[black, very thin] (3.4,0)
node[above=0.2cm, scale=0.7] {2};

\draw[black, very thin] (5,0)
node[above=0.2cm, scale=0.7] {3};

\draw[black, very thin] (9.4,0.6)
node[above=0.05cm, scale=0.7] {$n$};

\draw[black, very thin] (9.4,-0.6)
node[below=0.05cm, scale=0.7] {$n-1$};
\end{tikzpicture}\\ 
\end{figure}

\begin{figure}[H]
    \centering
\begin{tikzpicture}[scale=0.8]   $ D_n \phantom{i} (I\!I) $   \quad
\draw[black, very thin] (2.2, 0) -- (3.2, 0);
\draw[black, very thin] (3.6, 0) -- (4.8, 0);
\draw[black, very thin] (5.2, 0) -- (6.2, 0);
\draw[black,dashed] (6.4, 0) -- (7.8, 0);
\draw[black, very thin] (8.3, 0.1) -- (9.2, 0.6);
\draw[black, very thin] (8.3, -0.1) -- (9.2, -0.6);

\draw[black, very thin] (9.35, 0.3) -- (9.35, -0.3);
\draw[black, very thin] (9.42, 0.3) -- (9.42, -0.3);

\draw[black, very thin] (2-0.1, -0.1) -- (2+0.1, 0.1);
\draw[black, very thin] (2-0.1, 0.1) -- (2+0.1, -0.1);

\draw[black, very thin] (3.4-0.1, -0.1) -- (3.4+0.1, 0.1);
\draw[black, very thin] (3.4-0.1, 0.1) -- (3.4+0.1, -0.1);

\draw[black, very thin] (5-0.1, -0.1) -- (5+0.1, 0.1);
\draw[black, very thin] (5-0.1, 0.1) -- (5+0.1, -0.1);

\draw [black, very thin](9.4, 0.6) circle (0.13 cm);
\draw [black, very thin](9.4, -0.6) circle (0.13 cm);

\draw[black, very thin] (9.4-0.1, 0.1+0.6) -- (9.4+0.1, -0.1+0.6);
\draw[black, very thin] (9.4-0.1, -0.1+0.6) -- (9.4+0.1,  0.1+0.6);

\draw[black, very thin] (9.4-0.1, -0.1-0.6) -- (9.4+0.1, 0.1-0.6);
\draw[black, very thin] (9.4-0.1, 0.1-0.6) -- (9.4+0.1, -0.1-0.6);

\draw[black, very thin] (2,0)
node[above=0.2cm, scale=0.7] {1};

\draw[black, very thin] (3.4,0)
node[above=0.2cm, scale=0.7] {2};

\draw[black, very thin] (5,0)
node[above=0.2cm, scale=0.7] {3};

\draw[black, very thin] (9.4,0.6)
node[above=0.07cm, scale=0.7] {$n$};

\draw[black, very thin] (9.4,-0.6)
node[below=0.04cm, scale=0.7] {$n-1$};
\end{tikzpicture}  \\
\end{figure}

\begin{figure}[H]
    \centering
\begin{tikzpicture}[scale=0.8]   $ F_4 $   \quad
\draw[black, very thin] (2.2, 0) -- (3.2, 0);

\draw[black, very thin] (3.6, 0.07) -- (4.8, 0.07);
\draw[black, very thin] (3.6, -0.07) -- (4.8, -0.07);
\draw[black, very thin] (4.2-0.1, -0.1-0.1) -- (4.2+0.1, 0.1-0.1);
\draw[black, very thin] (4.2-0.1, 0.1+0.1) -- (4.2+0.1, -0.1+0.1);

\draw[black, very thin] (5.2, 0) -- (6.2, 0);

\draw[black, very thin] (2,0)
node[above=0.2cm, scale=0.7] {1};

\draw[black, very thin] (3.4,0)
node[above=0.2cm, scale=0.7] {4};

\draw[black, very thin] (5,0)
node[above=0.2cm, scale=0.7] {3};

\draw[black, very thin] (6.4,0)
node[above=0.2cm, scale=0.7] {2};

\draw [black, very thin](2, 0) circle (0.13 cm);
\draw [black, very thin](3.4, 0) circle (0.13 cm);
\draw [black, very thin](5, 0) circle (0.13 cm);
\draw [black, very thin](6.4, 0) circle (0.13 cm);

\draw[black, very thin] (6.4-0.1, -0.1) -- (6.4+0.1, 0.1);
\draw[black, very thin] (6.4-0.1, 0.1) -- (6.4+0.1, -0.1);

\end{tikzpicture}
\end{figure}

\begin{figure}[H]
    \centering
\begin{tikzpicture}[scale=0.8]   $ G_3 $   \quad
\draw[black, very thin] (2.2, 0) -- (3.2, 0);
\draw[black, very thin] (3.6, 0.07) -- (4.8, 0.07);
\draw[black, very thin] (3.6, 0) -- (4.8, 0);
\draw[black, very thin] (3.6, -0.07) -- (4.8, -0.07);
\draw[black, very thin] (4.2+0.1, -0.1-0.1) -- (4.2-0.1, 0.1-0.1);
\draw[black, very thin] (4.2+0.1, 0.1+0.1) -- (4.2-0.1, -0.1+0.1);

\draw[black, very thin] (2,0)
node[above=0.2cm, scale=0.7] {1};

\draw[black, very thin] (3.4,0)
node[above=0.2cm, scale=0.7] {3};

\draw[black, very thin] (5,0)
node[above=0.2cm, scale=0.7] {2};

\draw [black, very thin](2, 0) circle (0.13 cm);
\draw [black, very thin](3.4, 0) circle (0.13 cm);
\draw [black, very thin](5, 0) circle (0.13 cm);

\draw[black, very thin] (2-0.1, -0.1) -- (2+0.1, 0.1);
\draw[black, very thin] (2-0.1, 0.1) -- (2+0.1, -0.1);
\end{tikzpicture}
\end{figure}
%
%
 \vskip5pt
   \textsl{Note\/}  that every Dynkin diagram as above defines a parity function  $ \, p : I \longrightarrow \ZZ_2 \, $,  given by  $ \, p(i) := \zero \, $  if the vertex  $ i $  ($ \in I \, $)  is white and  $ \, p(i) := \one \, $  if  $ i $  is grey or black.
\end{free text}

\vskip5pt

\begin{definition}  \label{def: Cartan-super-datum}
 We will call  \textit{Cartan super-datum\/}  any pair  $ (A,p) $  where  $ A $  is a Cartan matrix associated with one of the Dynkin diagrams listed in  \S \ref{Cartan-data}  above, and  $ \, p : I \longrightarrow \ZZ_2 \, $  is the parity function defined by that Dynkin diagram.   \hfill  $ \diamondsuit $
\end{definition}

\vskip7pt

  A basic tool in the subsequent construction is the following, taken from  \cite{GG3}, \S 2  (see also  \cite{Ka1}, Ch.\ 1):

\vskip11pt

\begin{definition}  \label{def: realization of P}
 Let  $ \hbar $  be a formal variable, and  $ \kh $  the ring of formal power series in  $ \hbar $
 with coefficients in  $ \k \, $.  Let  $ \lieh $  be a free  $ \kh $--module  of finite rank, and pick subsets
 $ \; \Pi^\vee := {\big\{ T^+_i , T^-_i \big\}}_{i \in I} \! \subseteq \lieh \; $,
 and
 $ \; \Pi := {\big\{ \alpha_i \big\}}_{i \in I} \subseteq \lieh^* := \Hom_\kh\!\big(\, \lieh \, , \kh \big) \; $.
 For later use, we also introduce the elements  $ \, S_i := 2^{-1} \big(\, T^+_i + T^-_i \big) \, $  and
 $ \,  \varLambda_i := 2^{-1} \big(\, T^+_i - T^-_i \big) \, $  (for  $ \, i \in I \, $)  and the sets
 $ \; \Sigma := {\big\{ S_i \big\}}_{i \in I} \! \subseteq \lieh \; $  and
 $ \; \Lambda := {\big\{ \varLambda_i \big\}}_{i \in I} \! \subseteq \lieh \; $.
 \vskip3pt
   Let  $ \, P \in M_n\big(\kh\big) \, $  be any  $ (n \times n) $--matrix  with entries in  $ \kh \, $.
 \vskip5pt
   \textit{(a)}\;  We call the triple  $ \; \R \, := \, \big(\, \lieh \, , \Pi \, , \Pi^\vee \,\big) \; $
   a  \textsl{realization\/}  of  $ P \, $ over $\kh$,
   with  \textsl{rank\/}  defined as  $ \, \rk(\R) := \rk_\kh(\lieh) \, $,  \,if:
 \vskip3pt
   \quad \textit{(a.1)} \;  $ \; \alpha_j\big(\,T^+_i\big) \, = \, p_{\,ij} \; $,
$ \;\;\; \alpha_j\big(\,T^-_i\big) \, = \, p_{j\,i} \; $,  \quad  for all  $ \, i, j \in I \, $;
 \vskip3pt
   \quad \textit{(a.2)} \;  the set  $ \, \overline{\Sigma} := {\big\{\, \overline{S}_i := S_i \; (\text{\,mod\ } \hbar\,\lieh \,) \big\}}_{i \in I} \, $
   is  $ \Bbbk $--linearly  independent in  $ \, \overline{\lieh} := \lieh \Big/ \hbar\,\lieh \, $   ---
   \textsl{N.B.:}\, this is equivalent to saying that  $ \Sigma $  itself can be completed to a  $ \kh $--basis  of  $ \lieh \, $,
   hence in particular  $ \Sigma $  is  $ \kh $--linearly  independent in  $ \, \lieh \, $.
 \vskip5pt
   \textit{(b)}\;  We call a realization  $ \; \R \, := \, \big(\, \lieh \, , \Pi \, , \Pi^\vee \,\big) \; $
   of the matrix  $ P $,  respectively,
 \vskip3pt
   \quad \textit{(b.1)} \;  \textsl{straight\/}  if the set  $ \, \overline{\Pi} := {\big\{\, \overline{\alpha}_i := \alpha_i \; (\text{\,mod\ } \hbar\,\lieh^* ) \big\}}_{i \in I}  \, $
   is  $ \Bbbk $--linearly  independent in  $ \, \overline{\lieh^*} := \lieh^* \Big/ \hbar\,\lieh^* \, $
   ---  \textsl{N.B.:}\, this is equivalent to saying that  $ \Pi $ can be completed to a  $ \kh $--basis  of  $ \lieh^* \, $,
   thus in particular  $ \Pi $  is  $ \kh $--linearly  independent in  $ \, \lieh^* \, $;
 \vskip3pt
   \quad \textit{(b.2)} \;  \textsl{small\/}  if
   $ \,\; \textsl{Span}_\kh\big( {\{ S_i \}}_{i \in I} \big) \; = \; \textsl{Span}_\kh\big( {\big\{ T_i^+ , T_i^- \big\}}_{i \in I} \big) \;\, $;
 \vskip3pt
   \quad \textit{(b.3)} \;  \textsl{split\/}  if the set
   $ \, \overline{\Pi^\vee} := {\big\{\, \overline{T^\pm}_i := T^\pm_i \; (\text{\,mod\ } \hbar\,\lieh \,) \big\}}_{i \in I} \, $
   is  $ \Bbbk $--linearly  independent in  $ \, \overline{\lieh} := \lieh \Big/ \hbar\,\lieh \, $   ---   \textsl{N.B.:}\,
   this is equivalent to saying that  $ \Pi^\vee $  can be completed to a  $ \kh $--basis  of  $ \lieh \, $,
   hence in particular it is  $ \kh $--linearly  independent in  $ \, \lieh \, $;
 \vskip3pt
   \quad \textit{(b.4)} \;  \textsl{minimal\/}  if
   $ \,\; \textsl{Span}_\kh\big( {\big\{ T_i^+ , T_i^- \big\}}_{i \in I} \big) \, = \; \lieh \;\, $
   ---   \textsl{N.B.:}\,  in particular,  $ \R $  is  \textsl{split\/  {\rm and}  minimal\/}  if and only if
   $ \, {\big\{\, T_i^+ , T_i^- \big\}}_{i \in I} \, $  is a  $ \kh $--basis  of  $ \lieh \, $.
 \vskip5pt
   \textit{(c)}\;  For any pair of realizations  $ \; \R \, := \, \big(\, \lieh \, , \Pi \, , \Pi^\vee \,\big) \; $  and  $ \; \dot{\R} \, := \, \big(\, \dot{\lieh} \, , \dot{\Pi} \, , {\dot{\Pi}}^\vee \,\big) \; $  of the same matrix  $ P $,
   a \textsl{(homo)morphism\/}  $ \, \underline{\phi} : \R \longrightarrow \dot{\R} \, $  is the datum of any  $ \kh $--module morphism  $ \, \phi : \lieh \longrightarrow \dot{\lieh} \, $  such that  $ \, \phi\big(T_i^\pm\big) = {\dot{T}}_{\sigma(i)}^\pm \, $  and  $ \, \phi^*\big(\dot{\alpha}_{\sigma(i)}\big) = \alpha_i \, $  (for all  $ \, i \in I \, $)  for some permutation  $ \, \sigma \in \mathbb{S}_{I} \, $   --- the symmetric group over  $ I $   ---   hence, in particular,  $ \, \phi\big(\Pi^\vee\big) = {\dot{\Pi}}^\vee \, $,  and also that  $ \, \phi^*\big(\dot{\Pi}\big) = \Pi \, $
   ---  \textsl{N.B.:}  realizations along with their morphisms form a category,
   in which the iso--/epi--/mono--morphisms are those morphisms  $ \phi $  as above that actually are  $ \kh $--module  iso--/epi--/mono--morphisms.
 \vskip5pt
   \textit{(d)}\;  Let  $ A $  be any Cartan matrix associated with a Dynkin diagram from the list given in  \S \ref{Cartan-data}, and let  $ \, D := {\big(\hskip0,7pt d_i \, \delta_{ij} \big)}_{i, j \in I} \, $  be the diagonal matrix associated with  $ A \, $,  as in  \S \ref{Cartan-data}.  We say that a square matrix  $ \, P \in M_n(\kh) \, $  is  \textsl{of Cartan type},  with associated Cartan matrix  $ A \, $,  if  $ \; P_s := 2^{-1} \big( P + P^{\,\scriptscriptstyle T} \big) = DA \; $.
 \vskip5pt
   \textit{N.B.:}\;  condition  \textit{(b.3)\/}  is equivalent  to requiring that
   $ \, \overline{\Sigma} \cup \overline{\Lambda} \, $  be  $ \Bbbk $--linearly  independent in
   $ \, \overline{\lieh} := \lieh \Big/ \hbar\,\lieh \, $;  \,in turn, this is equivalent to saying that
   $ \, \Sigma \cup \Lambda \, $  itself can be completed to a  $ \kh $--basis  of
   $ \lieh \, $,  hence in particular it is  $ \kh $--linearly  independent.  Similarly, the condition
   $ \, \phi\big(T_i^\pm\big) = {\dot{T}}_{\sigma(i)}^\pm \, $   ---  $ \, i \in I \, $,
   for some permutation  $ \, \sigma \in \mathbb{S}(I) \, $  ---   in  \textit{(c)\/}  can be replaced by
 $ \, \phi(S_i) = {\dot{S}}_{\sigma(i)} \, $  and  $ \, \phi(\varLambda_i) = {\dot{\varLambda}}_{\sigma(i)} \, $.
 \vskip5pt
   \textit{(e)}\;  In an entirely similar way, one may define realizations of a matrix
   $ \, P := {\big(\, p_{i,j} \big)}_{i, j \in I} \in M_n(\Bbbk) \, $  over a ground field  $ \Bbbk \, $.
   Such a realization  $ \, \R := \big(\, \lieh \, , \Pi \, , \Pi^\vee \,\big) \, $ consists of a  $ \Bbbk $--vector  space
   $ \lieh $  and distinguished subsets  $ \; \Pi^\vee := {\big\{ T^+_i , T^-_i \big\}}_{i \in I} \! \subseteq \lieh \; $
   and  $ \; \Pi := {\big\{ \alpha_i \big\}}_{i \in I} \subseteq \lieh^* := \Hom_\Bbbk\!\big(\, \lieh \, , \Bbbk \big) \; $:
   \,then condition  \textit{(a.1)\/}  reads the same, while  \textit{(a.2)\/}  instead says that the set
   $ \, {\big\{\, S_i = 2^{-1}\big(T^+_i + T^-_i\big) \,\big\}}_{i\in I} \, $  is linearly independent, and the
   \textsl{rank\/}  of the realization is  $ \, \rk(\R) := \dim_\Bbbk(\lieh) \, $.  Also,  $ \R $  is  \textsl{straight},
   resp.\  \textsl{split},  if  $ \Pi \, $,  resp.\  $ \Pi^\vee $,  is linearly independent.   \hfill  $ \diamondsuit $
%
%
\end{definition}

\vskip7pt

   The following consequence of the definitions yields a close link with Kac' notion:

\vskip11pt

\begin{lema}  \label{lem: realiz_P => realiz_A}  \textsl{(cf.\ \cite{GG3}, Lemma 2.1.4))}
 Let  $ \, P \in M_n(\kh) \, $  be a matrix as above.  If  $ \; \R := \big(\, \lieh \, , \Pi \, , \Pi^\vee \,\big) \, $
 is a straight realization of  $ \, P $,  then the triple  $ \, \big(\, \lieh \, , \Pi \, , \Pi^\vee_S \big) \, $
 --- with  $ \, \Pi^\vee_S \! := {\{S_i\}}_{i \in I} \, $  ---   is a realization of  $ \, P_s := 2^{-1} \big( P + P^{\,\scriptscriptstyle T} \big) \, $
 --- over the ring  $ \kh $  ---   in the sense of  \cite{Ka1}, Ch.\ 1, \S 1.1, but for condition (1.1.3).   \hfill  $ \square $
\end{lema}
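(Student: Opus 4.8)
The plan is to unwind the definitions on both sides and check that conditions (a.1) and (a.2) of Definition~\ref{def: realization of P} hold for the triple $\big(\,\lieh\,,\Pi\,,\Pi^\vee_S\big)$ with respect to the symmetrized matrix $P_s=2^{-1}\big(P+P^{\,\scriptscriptstyle T}\big)$, and then to compare these with conditions (1.1.1)--(1.1.2) of \cite{Ka1}, Ch.~1, \S1.1, while noting explicitly that (1.1.3) (the minimality/dimension count $\dim\lieh = 2n-\rk A$) is \emph{not} claimed.

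\textbf{Step 1: the pairing condition (a.1) for $P_s$.} Write $p_{ij}$ for the entries of $P$, so that by hypothesis $\alpha_j\big(T^+_i\big)=p_{ij}$ and $\alpha_j\big(T^-_i\big)=p_{ji}$. The element $S_i=2^{-1}\big(T^+_i+T^-_i\big)$ then satisfies, by $\kh$-linearity of $\alpha_j$,
\[
\alpha_j\big(S_i\big)\;=\;2^{-1}\big(\alpha_j\big(T^+_i\big)+\alpha_j\big(T^-_i\big)\big)\;=\;2^{-1}\big(p_{ij}+p_{ji}\big)\;=\;\big(P_s\big)_{ji}\;=\;\big(P_s\big)_{ij},
\]
the last equality because $P_s$ is symmetric. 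This is exactly the pairing required of a realization of $P_s$ (in Kac's convention $\langle\alpha_j,S_i\rangle = a_{ji}$ for the relevant symmetric matrix, matching condition (1.1.2) of \cite{Ka1}); so (a.1) holds for $\big(\,\lieh\,,\Pi\,,\Pi^\vee_S\big)$ relative to $P_s$.

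\textbf{Step 2: the linear independence condition (a.2).} For a realization of $P_s$ we must check that $\overline{\Sigma}:=\{\,\overline{S}_i:=S_i\ (\text{mod }\hbar\,\lieh)\,\}_{i\in I}$ is $\k$-linearly independent in $\overline{\lieh}:=\lieh/\hbar\,\lieh$; but this is precisely condition (a.2) of the assumed realization $\R$ of $P$, which holds because $\R$ is a realization (straightness is not even needed for this part). Hence $\big(\,\lieh\,,\Pi\,,\Pi^\vee_S\big)$ is a realization of $P_s$ over $\kh$ in the sense of Definition~\ref{def: realization of P}(a), and by Lemma~\ref{lem: realiz_P => realiz_A}'s own phrasing this coincides with Kac's notion \cite{Ka1}, Ch.~1, \S1.1 except that condition (1.1.3) need not hold. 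Finally one remarks where \emph{straightness} entered: it guarantees that $\Pi=\{\alpha_i\}_{i\in I}$ is itself $\kh$-linearly independent (equivalently, $\overline{\Pi}$ is $\k$-linearly independent in $\overline{\lieh^*}$), which is the counterpart of Kac's requirement that the simple roots be linearly independent --- the only genuinely new ingredient beyond what a bare realization of $P$ supplies.

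\textbf{What to watch for.} There is essentially no obstacle here: the statement is a bookkeeping consequence of the definitions. The one point requiring care is the normalization/index conventions --- matching ``$\alpha_j(T^\pm_i)=p_{ij},p_{ji}$'' with Kac's convention for the pairing between $\lieh$ and $\lieh^*$, and making sure the symmetry of $P_s$ is used to swap the two indices so that the result really is the entry of the symmetric Cartan-type matrix. One should also state clearly, as the lemma does, that (1.1.3) is excluded, since in our setting $\lieh$ may have strictly larger rank than Kac's minimal choice (the realizations we use need be neither split nor minimal).
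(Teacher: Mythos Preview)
Your proof is correct: it unwinds the definitions exactly as required, verifying the pairing $\alpha_j(S_i)=(P_s)_{ij}$, the linear independence of $\{\overline{S}_i\}$ via (a.2), and the linear independence of $\Pi$ via straightness. The paper itself gives no proof here --- the lemma ends with $\square$ and simply cites \cite{GG3}, Lemma 2.1.4 --- so there is nothing to compare against; your argument is the natural and essentially only way to do this bookkeeping.
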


\vskip7pt

   Note that condition (1.1.3) in  \cite[Ch.\ 1, \S 1.1]{Ka1},  is fulfilled whenever  $ \, \rk(\lieh) = 2\,n - \rk(P_s) \, $;
   \,in particular, we can always achieve that condition up to suitably enlarging or restricting  $ \lieh \, $.
   In any case, from now on with any straight realization of a matrix  $ P $  of Cartan type, for some Cartan matrix  $ A \, $,
   we shall always associate the realization of  $ \, P_s = DA \, $  given by  Lemma \ref{lem: realiz_P => realiz_A},
   hence also the corresponding realization of  $ A $  and then all the related data and machinery mentioned in  \S \ref{MpMatrices, Cartan & realiz.'s}.

\vskip9pt

   We collect now a few technical results:

\vskip9pt

\begin{prop}  \label{prop: exist-realiz's}  \textsl{(cf.\ \cite{GG3}, Proposition 2.1.5)}
 \vskip3pt
   (a)\;  For every  $ \, P \in M_n\big(\kh\big) \, $  and every  $ \, \ell \geq 3\,n-\rk\big(P+P^{\,\scriptscriptstyle T}\big) \, $,
   \,there exists a straight split realization of  $ P $  with  $ \, \rk(\lieh) = \ell \, $,
 which
 is unique up to isomorphisms.
 \vskip3pt
   (b)\;  Claim (a) still holds true if we drop the word ``straight'' and pick  $ \, \ell \geq 2\,n \, $.  $ \square $
\end{prop}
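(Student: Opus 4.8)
The plan is to produce the realization by hand, exploiting that straightness can be built in for free, and to reduce splitness to a dimension count. First I would set $\lieh := \kh^{\,\ell}$ with standard basis $e_1,\dots,e_\ell$ and dual basis $e_1^*,\dots,e_\ell^*$, and take $\alpha_j := e_j^*$ for $j\in I$; then $\overline{\alpha_1},\dots,\overline{\alpha_n}$ automatically form part of a basis of $\overline{\lieh^*}$, so the realization is straight whatever the $T_i^\pm$ are, and only (a.1) and splitness remain. Condition (a.1) pins the first $n$ coordinates of $T_i^+$ (resp.\ $T_i^-$) to the $i$-th row (resp.\ column) of $P$ and leaves the remaining $\ell-n$ ``tail'' coordinates, collected into matrices $X^\pm\in M_{(\ell-n)\times n}(\kh)$, free. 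Writing $\overline{P}$ for the reduction of $P$ modulo $\hbar$ and expanding a would-be relation $\sum_i c_i\,\overline{T_i^+}+\sum_i d_i\,\overline{T_i^-}=0$ in coordinates, I would see that splitness holds exactly when the $\k$-linear map $\k^{2n}\to\k^{\ell-n}$ with matrix $[\,\overline{X^+}\mid\overline{X^-}\,]$ is injective on $K := \Ker[\,\overline{P}^{\scriptscriptstyle T}\mid\overline{P}\,]\subseteq\k^{2n}$. Since the columns of $\overline{P}+\overline{P}^{\scriptscriptstyle T}$ lie in the column span of $[\,\overline{P}^{\scriptscriptstyle T}\mid\overline{P}\,]$, one gets $\dim_\k K = 2n-\rk_\k[\,\overline{P}^{\scriptscriptstyle T}\mid\overline{P}\,] \leq 2n-\rk(P+P^{\scriptscriptstyle T}) \leq \ell-n$ by the hypothesis on $\ell$; hence a $\k$-linear map injective on $K$ exists, and taking $X^\pm$ (over $\kh$) to lift it gives admissible $T_i^\pm$. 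The resulting triple is a straight split realization of rank $\ell$ — splitness forcing the larger set $\overline{\Sigma}\cup\overline{\Lambda}$ to be independent, whence (a.2), by the N.B.\ in Definition \ref{def: realization of P}.

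\textbf{Existence in (b).} Here I would drop $\alpha_j=e_j^*$ and instead, for $\ell\geq 2n$, put $T_i^+:=e_i$, $T_i^-:=e_{n+i}$ ($i\in I$) and $\alpha_j:=\sum_{i\in I}p_{ij}\,e_i^*+\sum_{i\in I}p_{ji}\,e_{n+i}^*$ in $\lieh:=\kh^{\,\ell}$. Then (a.1) is immediate, the $2n$ reductions $\overline{T_i^\pm}$ are visibly $\k$-independent, so the realization is split (hence (a.2) holds) and has rank $\ell$.

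\textbf{Uniqueness, and the expected obstacle.} For uniqueness in (a) I would take two straight split realizations $\R=(\lieh,\Pi,\Pi^\vee)$ and $\dot\R=(\dot\lieh,\dot\Pi,\dot\Pi^\vee)$ of $P$ with $\rk(\R)=\rk(\dot\R)=\ell$ and build an isomorphism in three moves. First, I would construct a $\k$-linear isomorphism $\psi\colon\overline\lieh\to\overline{\dot\lieh}$ carrying $\overline{T_i^\pm}$ to $\overline{\dot T_i^\pm}$ and intertwining $\overline\alpha:=(\overline{\alpha_j})_j$ with $\overline{\dot\alpha}$: this rests on the rigidity of the mod-$\hbar$ data, namely that $\overline\alpha\colon\overline\lieh\to\k^n$ is surjective (straightness), that $\{\overline{T_i^\pm}\}$ is a marked basis of a $2n$-dimensional subspace $V$ with $\overline\alpha(\overline{T_i^\pm})$ the prescribed rows/columns of $\overline{P}$, and that the quantities $\dim(V\cap\Ker\overline\alpha)=2n-\rk[\,\overline{P}^{\scriptscriptstyle T}\mid\overline{P}\,]$, $\dim(V+\Ker\overline\alpha)$ and $\overline\alpha(V)\subseteq\k^n$ depend only on $P$ and $\ell$ — so that $\{\overline{T_i^\pm}\}$ can be completed to a basis of $\overline\lieh$ by vectors of prescribed $\overline\alpha$-image by one and the same recipe on both sides. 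Second, using that $\Pi^\vee$ (resp.\ $\dot\Pi^\vee$) completes to a $\kh$-basis of $\lieh$ (resp.\ $\dot\lieh$) by splitness, I would transport these bases compatibly with $\psi$ to get a $\kh$-module isomorphism $\phi_0\colon\lieh\to\dot\lieh$ with $\phi_0(T_i^\pm)=\dot T_i^\pm$ and $\overline{\phi_0}=\psi$. Third, as $\dot\alpha_j\circ\phi_0$ and $\alpha_j$ already agree on the $T_i^\pm$ and agree modulo $\hbar$ everywhere, I would correct $\phi_0$ by a $\kh$-map vanishing on $\Pi^\vee$ with values in $\hbar\,\dot\lieh$ — possible because $\alpha\colon\lieh\to\kh^{\,n}$ is surjective (straightness again) — to obtain $\phi$ with $\phi^*(\dot\alpha_j)=\alpha_j$ exactly, still an isomorphism since $\overline\phi=\psi$. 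For (b) a lighter version of the same argument applies. I expect the only real difficulty to be this uniqueness step — specifically, making the completion of bases ``canonical enough'' to be performed identically on $\R$ and $\dot\R$ so that $\psi$ genuinely exists, and then checking that the $\hbar$-adic correction of $\phi_0$ preserves invertibility.
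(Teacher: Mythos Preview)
The paper gives no proof here --- it simply cites \cite{GG3}, Proposition 2.1.5, and closes with $\square$. So there is no in-paper argument to compare against, and I can only evaluate your proposal on its own.

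Your existence constructions for both (a) and (b) are correct and clean. The uniqueness sketch for (a) is also reasonable: straightness forces $\overline{\alpha}\colon\overline{\lieh}\to\k^n$ to be surjective, which is precisely what makes the mod-$\hbar$ linear-algebra data rigid in your step~1 and what lets you solve for the $\hbar$-adic correction $\eta$ in step~3.

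The genuine gap is in (b). Your ``a lighter version of the same argument applies'' is too quick --- straightness is not an incidental hypothesis here but is essential to the uniqueness mechanism. Without it, already step~1 fails: the maps $\overline{\alpha}$ and $\overline{\dot\alpha}$ need not be surjective and can have images of different dimensions, so no $\k$-isomorphism $\psi$ intertwining them exists. Concretely, take $n=1$, $P=(0)\in M_1(\kh)$, $\ell=3\geq 2n$, and in $\lieh=\dot\lieh=\kh^3$ set $T_1^+=\dot T_1^+=e_1$, $T_1^-=\dot T_1^-=e_2$, but $\alpha_1=0$ in $\R$ and $\dot\alpha_1=e_3^*$ in $\dot\R$. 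Both are split realizations of $P$ of rank $3$ satisfying (a.1)--(a.2), yet any isomorphism $\phi$ would force $e_3^*\circ\phi=\alpha_1=0$, hence $\phi$ non-surjective. So either the uniqueness clause in (b) carries an extra hypothesis in \cite{GG3} that has been elided here, or a genuinely different argument is needed; in any case, ``lighter version'' does not do the job.
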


\vskip7pt

\begin{rmk}
 It follows from definitions that a  \textsl{necessary\/}  condition for a small realization of any  $ \, P \in M_n\big(\kh\big) \, $
 to exist is  $ \, \rk\!\big( P_s \,\big| P_a \big) = \rk(P_s) \, $.
 Conversely, with much the same arguments used in the proof of
 Proposition \ref{prop: exist-realiz's},  we can prove that such a condition is also  \textsl{sufficient},  as the following holds true, indeed:
\end{rmk}

\eject
   \vskip7pt

\begin{prop}  \label{prop: exist-realiz's_small}  \textsl{(cf.\ \cite{GG3}, Proposition 2.1.7)}
 \vskip3pt
 If  $ \, P \in M_n\big(\kh\big) \, $  is such that  $ \, \rk\!\big( P_s \,\big| P_a \big) = \rk(P_s) \, $,
 then, for all  $ \, \ell \geq 2\,n-\rk(P_s) \, $,
 there exists a straight small realization of  $ \, P $  with  $ \, \rk(\lieh) = \ell \, $,
\,and such a realization is unique up to isomorphisms.   \hfill  $ \square $
\end{prop}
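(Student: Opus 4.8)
The plan is to reduce the assertion to the analogous fact for the \emph{symmetric part} $P_s$ of $P$, recovering the ``antisymmetric'' data from the hypothesis by a single linear-algebra trick. Throughout, write $P_a:=2^{-1}\big(P-P^{\scriptscriptstyle T}\big)$, so that $P=P_s+P_a$, $P^{\scriptscriptstyle T}=P_s-P_a$, with $P_s^{\scriptscriptstyle T}=P_s$ and $P_a^{\scriptscriptstyle T}=-P_a$. The first point to establish is that the hypothesis $\rk\!\big(P_s\,\big|\,P_a\big)=\rk(P_s)$ is equivalent to the existence of a matrix $C\in M_n\big(\kh\big)$ with $P_a=C\,P_s$. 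When $P_s$ is invertible over $\kh$ --- which is the case, in particular, whenever $P$ is of Cartan type, since then $P_s=DA$ has determinant a nonzero scalar --- this is immediate, with $C:=P_a\,P_s^{-1}$; in general it follows from the rank condition via the elementary-divisor theory of matrices over the principal ideal domain $\kh$, exactly as in \cite{GG3}.

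For existence, fix $\ell\ge 2\,n-\rk(P_s)$. I would first produce a \emph{straight} realization $\big(\lieh,\Pi,\Sigma\big)$ of the symmetric matrix $P_s$, with $\rk_\kh(\lieh)=\ell$, where $\Sigma=\{S_i\}_{i\in I}$ now plays the role of $\Pi^\vee$: concretely, $\lieh$ free of rank $\ell$, the $\alpha_j$'s completable to a $\kh$-basis of $\lieh^*$, the $S_i$'s completable to a $\kh$-basis of $\lieh$, and $\alpha_j(S_i)=(P_s)_{ij}$. This is the Kac-type minimal-realization construction (cf.\ \cite[Ch.\ 1]{Ka1}) carried out over $\kh$; note that Proposition \ref{prop: exist-realiz's} cannot be quoted directly, since its ``split'' clause forces the larger bound $3\,n-\rk(P_s)$, whereas here I only need ``straight'' and may descend to $2\,n-\rk(P_s)$. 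Given this data, set $\varLambda_i:=\sum_{k\in I}c_{ik}\,S_k\in\mathrm{Span}_\kh(\Sigma)$, put $T^\pm_i:=S_i\pm\varLambda_i$ and $\Pi^\vee:=\{T^+_i,T^-_i\}_{i\in I}$. Then $\R:=\big(\lieh,\Pi,\Pi^\vee\big)$ is a straight small realization of $P$: axiom (a.1) follows from $\alpha_j\big(T^\pm_i\big)=(P_s)_{ij}\pm(C\,P_s)_{ij}=(P_s\pm P_a)_{ij}$, which equals $p_{ij}$ for the sign ``$+$'' and $p_{ji}$ for ``$-$''; axiom (a.2) and straightness are inherited from the realization of $P_s$; and smallness is immediate, because $\varLambda_i\in\mathrm{Span}_\kh(\Sigma)$ gives $\mathrm{Span}_\kh\big(\Pi^\vee\big)=\mathrm{Span}_\kh(\Sigma)$.

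For uniqueness, I would run the same argument backwards. If $\R=\big(\lieh,\Pi,\Pi^\vee\big)$ is \emph{any} straight small realization of $P$ with $\rk_\kh(\lieh)=\ell$, then smallness forces $\varLambda_i\in\mathrm{Span}_\kh(\Sigma)$, so that $\big(\lieh,\Pi,\Sigma\big)$ is a straight realization of $P_s$ of rank $\ell$, and moreover $\alpha_j(\varLambda_i)=(P_a)_{ij}$ for all $i,j$. By the uniqueness up to isomorphism of straight realizations of $P_s$ of a given rank, one obtains an isomorphism onto the model built above matching the $S_i$'s and the $\alpha_i$'s; since (when $P_s$ is invertible) an element of $\mathrm{Span}_\kh(\Sigma)$ is determined by its pairings with all the $\alpha_j$, this isomorphism automatically carries each $\varLambda_i$, hence each $T^\pm_i$, to the corresponding element of the model, so it is an isomorphism of realizations of $P$. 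When $P_s$ is singular --- so that the $\varLambda_i$ are pinned down only modulo $\mathrm{Span}_\kh(\Sigma)\cap\bigcap_j\Ker\alpha_j$ --- one concludes exactly as in \cite[Prop.\ 2.1.7]{GG3}.

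The step I expect to be the main obstacle is the first one: extracting an \emph{honest} matrix $C\in M_n(\kh)$ with $P_a=C\,P_s$ from an equality of ranks over the non-field ring $\kh$ needs genuine care, as a plain fraction-field argument does not suffice, and this is where one leans on the invertible-$P_s$ case (which already covers all the intended applications) together with the elementary-divisor bookkeeping of \cite{GG3}. The only other point requiring attention is that, as noted above, Proposition \ref{prop: exist-realiz's} is not directly applicable in the existence step because of the ``split'' mismatch, so the bound $2\,n-\rk(P_s)$ must be re-derived by a (routine) variant of that proof with the splitness requirement on $\Pi^\vee$ suppressed.
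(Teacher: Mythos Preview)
The paper does not give its own proof here: the proposition is stated with a citation to \cite{GG3}, Proposition 2.1.7, and closed with a $\square$, so there is nothing to compare against beyond the reference. Your sketch is a faithful reconstruction of what that argument must look like: reduce to a Kac-style straight realization of the symmetric part $P_s$ at the optimal rank $2n-\rk(P_s)$, then manufacture the $T_i^\pm$ from the $S_i$ via $\varLambda_i=\sum_k c_{ik}S_k$ using $P_a=C\,P_s$, and reverse the process for uniqueness. The computations you give for (a.1), (a.2), straightness and smallness are correct.

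Two remarks on the points you already flag. First, the extraction of $C\in M_n\big(\kh\big)$ from the rank hypothesis is genuinely the crux: equality of ranks over the fraction field $\khp$ does \emph{not} by itself force $C$ to have entries in $\kh$ (e.g.\ $P_s=\hbar\cdot I_2$ and $P_a=\begin{pmatrix}0&1\\-1&0\end{pmatrix}$ satisfy the rank condition but admit no such $C$ over $\kh$, and indeed no small realization). What actually characterizes existence of a small realization is the \emph{module-theoretic} condition $P_a=C\,P_s$ with $C$ over $\kh$, and this coincides with the rank condition precisely in the situations of interest --- in particular when $P_s$ has a unit determinant, which is how the result is used in this paper (Cartan-type $P$ with $P_s=DA$ having nonzero integer determinant, hence invertible in $\kh$ since $\operatorname{char}\Bbbk=0$); in the singular case one has to read the rank condition in the stronger Smith-normal-form sense. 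You identify this correctly. Second, your observation that Proposition~\ref{prop: exist-realiz's} cannot be invoked directly for the existence step (because ``split'' forces the larger bound $3n-\rk(P_s)$) is right; one really does need the Kac-type construction of a straight realization of $P_s$ with $\{S_i\}$ completable to a basis and $\rk(\lieh)=\ell\ge 2n-\rk(P_s)$, carried out over $\kh$, which is routine over a PID.
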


\vskip7pt

   After this existence results concerning realizations of special type, we can achieve a more general result with two additional steps.  The first one tells in short that every realization can be ``lifted'' to a \textsl{split\/}  one:

\vskip9pt

\begin{lema}  \label{lemma: split/straight-lifting}  \textsl{(cf.\ \cite{GG3}, Lemma 2.1.8)}
 \vskip3pt
 Let  $ \; \R := \big(\, \lieh \, , \Pi \, , \Pi^\vee \,\big) \; $  be a realization of  $ \, P \in M_n\big(\kh\big) \, $.  Then:
 \vskip3pt
   \textit{(a)}\,  There exists a  \textsl{split}  realization  $ \; \dot{\R} := \big(\, \dot{\lieh} \, , \dot{\Pi} \, , {\dot{\Pi}}^\vee \,\big) \, $   of the same matrix  $ P $  and an epimorphism of realizations  $ \; \underline{\pi} : \dot{\R} \relbar\joinrel\twoheadrightarrow \R \; $  such that, if  $ \, \lieh_{{}_T} := \textsl{Span}\Big( {\big\{ T_i^\pm \big\}}_{i \in I} \Big) \, $  and  $ \, \dot{\lieh}_{{}_T} := \textsl{Span}\Big( {\big\{ \dot{T}_i^\pm \big\}}_{i \in I} \Big) \, $,  then  $ \, \underline{\pi} \, $  induces an isomorphism  $ \; \dot{\pi} : \dot{\lieh} \Big/ \dot{\lieh}_{{}_T} \cong \lieh \Big/ \lieh_{{}_T} \; $.
                                                               \par
   If in addition  $ \, \R $  is  \textsl{straight},  resp.\ \textsl{minimal},  then a  \textsl{split}  realization  $ \, \dot{\R} $
   as above can be found that is  \textsl{straight},  resp.\ \textsl{minimal},  as well.
 \vskip3pt
   \textit{(b)}\,  There exists a  \textsl{straight}  realization  $ \; \hat{\R} := \big(\, \hat{\lieh} \, , \hat{\Pi} \, , {\hat{\Pi}}^\vee \,\big) \, $   of the same matrix  $ P $  and an epimorphism of realizations  $ \; \underline{\pi} : \hat{\R} \relbar\joinrel\twoheadrightarrow \R \; $  such that, if  $ \, \lieh_{{}_T} := \textsl{Span}\Big( {\big\{ T_i^\pm \big\}}_{i \in I} \Big) \, $  and  $ \, \hat{\lieh}_{{}_T} := \textsl{Span}\Big( {\big\{ \hat{T}_i^\pm \big\}}_{i \in I} \Big) \, $,  then  $ \, \underline{\pi} \, $  induces an isomorphism  $ \; \hat{\pi} : \hat{\lieh}_{{}_T} \cong \lieh_{{}_T} \; $.
                                                               \par
   If in addition  $ \, \R $  is  \textsl{split},  resp.\ \textsl{minimal},  then a  \textsl{straight}  realization  $ \, \hat{\R} $
   as above can be found that is  \textsl{split},  resp.\ \textsl{minimal},  as well.
\end{lema}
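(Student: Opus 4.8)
The plan is to produce both lifts by ``resolving'' the linear dependences that obstruct splitness, resp.\ straightness, through the adjunction of finitely many free $\kh$--generators, using repeatedly that $\kh$ is a principal ideal domain (indeed a discrete valuation ring), so that submodules of free modules are free and $\operatorname{Ext}^2_\kh$ vanishes. Since $\lieh$ carries no odd part, this is purely a statement about $\kh$--modules, and I would follow the template of \cite[Lemma 2.1.8]{GG3}.

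For \textit{(a)}, I would first set $\lieh_{{}_T} := \textsl{Span}\big( {\{ T_i^\pm \}}_{i\in I} \big)$, take a free $\kh$--module $\dot\lieh_{{}_T}$ of rank $2n$ with a distinguished basis $\big\{ \dot T_i^+ , \dot T_i^- \big\}_{i\in I}$, and let $\phi \colon \dot\lieh_{{}_T} \twoheadrightarrow \lieh_{{}_T}$ be the epimorphism $\dot T_i^\pm \mapsto T_i^\pm$; its kernel $K$ is free. Next I would ``glue'' $\dot\lieh_{{}_T}$ onto $\lieh$ along $\lieh_{{}_T}$: exploiting $\operatorname{Ext}^2_\kh = 0$, lift the short exact sequence $0 \to \lieh_{{}_T} \to \lieh \to \lieh\big/\lieh_{{}_T} \to 0$ through $\phi$ to a short exact sequence $0 \to \dot\lieh_{{}_T} \to \dot\lieh \to \lieh\big/\lieh_{{}_T} \to 0$ carrying a $\kh$--module map $\pi \colon \dot\lieh \to \lieh$ that extends $\phi$ and induces the identity on $\lieh\big/\lieh_{{}_T}$ (when $\lieh\big/\lieh_{{}_T}$ is free this is just $\dot\lieh := \dot\lieh_{{}_T} \oplus C$ for a free complement $C$ of $\lieh_{{}_T}$; the general case is handled by first bringing the inclusion $\lieh_{{}_T} \hookrightarrow \lieh$ into Smith normal form over $\kh$). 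Finally I would put $\dot\Pi^\vee := \big\{ \dot T_i^\pm \big\}$, $\dot\alpha_i := \alpha_i \circ \pi$, $\dot\Pi := \{ \dot\alpha_i \}$, and verify the routine points: conditions \textit{(a.1)--(a.2)} of Definition \ref{def: realization of P} (immediate from $\dot\alpha_j\big(\dot T_i^\pm\big) = \alpha_j\big(T_i^\pm\big)$); that $\big\{ \dot S_i , \dot\varLambda_i \big\}_{i\in I}$ is a $\kh$--basis of the free direct summand $\dot\lieh_{{}_T}$ of $\dot\lieh$, whence $\dot\R$ is split; that $\underline{\pi}$ is an epimorphism of realizations; that $\Ker\pi \cong \Ker\phi = K \subseteq \dot\lieh_{{}_T}$ by the Snake Lemma, which yields the induced isomorphism $\dot\pi \colon \dot\lieh\big/\dot\lieh_{{}_T} \cong \lieh\big/\lieh_{{}_T}$; and that straightness, resp.\ minimality, of $\R$ passes to $\dot\R$ (for minimality, $\lieh = \lieh_{{}_T}$ forces $\dot\lieh = \dot\lieh_{{}_T}$; for straightness, $\pi$ is a split epimorphism, so its transpose $\pi^*$ stays injective modulo $\hbar$).

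For \textit{(b)} I would run the dual construction: one keeps $\lieh_{{}_T}$ and the $T_i^\pm$ fixed and enlarges $\lieh$ in directions ``transverse'' to $\lieh_{{}_T}$, extending the functionals $\alpha_i$ onto the new generators so as to make $\big\{ \overline{\alpha_i} \big\}_{i\in I}$ linearly independent in $\overline{\lieh^*}$; concretely one applies the recipe of part \textit{(a)} to the restricted system $\big\{ \alpha_i\big|_{\lieh_{{}_T}} \big\}_{i\in I} \subseteq \lieh_{{}_T}^*$ and dualizes, producing $\hat\R$ straight together with an epimorphism $\underline{\pi} \colon \hat\R \twoheadrightarrow \R$ whose restriction to the $T$--parts is the isomorphism $\hat\pi \colon \hat\lieh_{{}_T} \cong \lieh_{{}_T}$; splitness, resp.\ minimality, is inherited by the same bookkeeping.

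The step I expect to be the genuine obstacle is the freeness issue inside part \textit{(a)}: over $\kh$ the quotient $\lieh\big/\lieh_{{}_T}$ may carry $\hbar$--torsion (the submodule $\lieh_{{}_T}$ need not be a direct summand of $\lieh$), so one cannot simply set $\dot\lieh := \dot\lieh_{{}_T} \oplus \big(\lieh\big/\lieh_{{}_T}\big)$; one must lift the extension along $\phi$ and then check that, among the possible lifts, one can be chosen which is again a \emph{free} $\kh$--module while mapping onto $\lieh$ with kernel contained in $\dot\lieh_{{}_T}$. Everything else reduces to a diagram chase.
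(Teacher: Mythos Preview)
Your plan for part \textit{(a)} is sound and is essentially what is needed; the paper itself merely cites \cite[Lemma~2.1.8]{GG3} for this half, so your sketch is in fact more detailed than the paper's own treatment. The freeness concern you flag is genuine but not hard to resolve over the discrete valuation ring $\kh$: once the inclusion $\lieh_{{}_T}\hookrightarrow\lieh$ is put into Smith normal form --- so that there is a basis $H_1,\dots,H_t$ of $\lieh$ and scalars $d_1,\dots,d_r\in\kh$ with $\{d_kH_k\}_{k\le r}$ a basis of $\lieh_{{}_T}$ --- one can write down $\dot\lieh$ explicitly as a free module with basis adapted to this decomposition, and the lifted extension is then visibly torsion-free, hence free. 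So your diagnosis of the ``hard step'' is right, and it is dispatchable.

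For part \textit{(b)}, however, the paper does \emph{not} dualize the machinery of \textit{(a)}; it gives a one-line direct construction that is much simpler than the route you sketch. Namely: set $\hat\lieh:=\lieh\oplus\lieh'$ with $\lieh'$ a free $\kh$--module of rank $|I|$ on a basis $\{e_j\}_{j\in I}$, keep $\hat\Pi^\vee:=\Pi^\vee$ sitting inside $\lieh\subseteq\hat\lieh$, and extend each $\alpha_i$ to $\hat\alpha_i$ by $\hat\alpha_i|_\lieh:=\alpha_i$ and $\hat\alpha_i(e_j):=\delta_{i,j}$. The natural projection $\pi:\hat\lieh\twoheadrightarrow\lieh$ furnishes the epimorphism, and since $\hat T_i^\pm=T_i^\pm$ the restriction $\hat\pi:\hat\lieh_{{}_T}\to\lieh_{{}_T}$ is literally the identity. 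Straightness is immediate from $\hat\alpha_i(e_j)=\delta_{i,j}$, while splitness and minimality are inherited because $\hat\Pi^\vee=\Pi^\vee$ and the $T$--span is unchanged. Your opening intuition for \textit{(b)} (``enlarge $\lieh$ transverse to $\lieh_{{}_T}$, extend the $\alpha_i$ to become independent'') is exactly this, but there is no need to route it through a dualized Ext/Smith-form argument: the asymmetry between the two halves is that in \textit{(b)} the coroot data are left completely untouched, so no freeness obstruction ever arises and the construction is elementary.
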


\begin{proof}
   \textit{(a)}\,  This is nothing but Lemma 2.1.8 in  \cite{GG3}.
 \vskip5pt
   \textit{(b)}\,  Take  $ \, \hat{\lieh} := \lieh \oplus \lieh' \, $  for some  $ \kh $--module  $ \lieh' $  such that the  $ \alpha_i $'s  extend from  $ \lieh $  to all of  $ \hat{\lieh} $  so to give linearly independent elements  $ \hat{\alpha}_i $'s  of  $ \hat{\lieh} \, $:  e.g., choose  $ \lieh' $  to be a free  $ \kh $--module  of rank  $ |I| $  with  $ \kh $--basis  $ \, {\{e_j\}}_{j \in I} \, $  and set  $ \, \hat{\alpha}_i{\big|}_{\lieh} := \alpha_i \, $  and  $ \, \hat{\alpha}_i(e_j) := \delta_{i,j} \, $  for all  $ \, i, j \in I \, $.  Then  $ \; \hat{\R} := \big(\, \hat{\lieh} \, , \hat{\Pi} := {\big\{\, \hat{\alpha}_i \,\big\}}_{i \in I} \, , {\hat{\Pi}}^\vee := \Pi^\vee \,\big) \, $  is a  \textsl{straight\/}  realization of  $ P $  as required, which is even split, respectively minimal, if so is  $ \R \, $;  moreover, an epimorphism  $ \; \underline{\pi} : \hat{\R} \relbar\joinrel\twoheadrightarrow \R \; $  is given by  the natural projection  $ \, \pi : \hat{\lieh} := \lieh \oplus \lieh' \relbar\joinrel\twoheadrightarrow \lieh \, $,  which induces an isomorphism  $ \; \hat{\pi} : \hat{\lieh}_{{}_T} \cong \lieh_{{}_T} \; $  as required.
\end{proof}

\vskip7pt

   A last, significant result concerns morphisms between realizations.

\vskip11pt

\begin{lema}  \label{lemma: ker-morph's_realiz's}
  \textsl{(cf.\ \cite{GG3}, Lemma 2.1.8)}
 \vskip3pt
 Let  $ \; \hat{\R} \, := \, \big(\, \hat{\lieh} \, , \hat{\Pi} \, , \hat{\Pi}^\vee \,\big) \; $  and
 $ \; \check{\R} \, := \, \big(\, \check{\lieh} \, , \check{\Pi} \, , \check{\Pi}^\vee \,\big) \; $
 be two realizations of a same  $ \, P \in M_n\big(\kh\big) \, $,  and let
 $ \; \underline{\phi} : \hat{\R} \longrightarrow \check{\R} \; $
 be a morphism between them.  Then
 $ \; \Ker\big(\, \phi : \hat{\lieh} \longrightarrow \check{\lieh} \,\big) \, \subseteq \,
 \bigcap\limits_{j \in I} \Ker(\hat{\alpha}_j) \;\, $.   \hfill  $ \square $
\end{lema}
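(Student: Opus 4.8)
The plan is to unwind the definition of a morphism of realizations; once that is done, the statement follows in a single line.  First I would recall from Definition \ref{def: realization of P}(c) that the datum of $ \, \underline{\phi} : \hat{\R} \longrightarrow \check{\R} \, $ amounts to a $ \kh $--module  map $ \, \phi : \hat{\lieh} \longrightarrow \check{\lieh} \, $ together with a permutation $ \, \sigma \in \mathbb{S}_I \, $ such that $ \, \phi\big(\hat{T}_i^\pm\big) = \check{T}_{\sigma(i)}^\pm \, $ and   --- what really matters here ---   $ \, \phi^*\big(\check{\alpha}_{\sigma(i)}\big) = \hat{\alpha}_i \, $ for all $ \, i \in I \, $, where $ \, \phi^* : \check{\lieh}^* \longrightarrow \hat{\lieh}^* \, $ is the transpose of $ \phi $, so that $ \, \big(\phi^*\lambda\big)(h) = \lambda\big(\phi(h)\big) \, $ for every $ \, \lambda \in \check{\lieh}^* \, $ and $ \, h \in \hat{\lieh} \, $.

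Next I would pick an arbitrary $ \, h \in \Ker(\phi) \, $, i.e.\ $ \, \phi(h) = 0 \, $, and evaluate each relation $ \, \phi^*\big(\check{\alpha}_{\sigma(i)}\big) = \hat{\alpha}_i \, $ on $ h \, $:  this gives
\[
  \hat{\alpha}_i(h) \;=\; \big(\phi^*\check{\alpha}_{\sigma(i)}\big)(h) \;=\; \check{\alpha}_{\sigma(i)}\big(\phi(h)\big) \;=\; \check{\alpha}_{\sigma(i)}(0) \;=\; 0 \qquad \text{for all } i \in I \,.
\]
Since $ \sigma $ is a bijection of $ I $, letting $ i $ range over $ I $ shows that $ \, \hat{\alpha}_j(h) = 0 \, $ for every $ \, j \in I \, $, that is $ \, h \in \bigcap_{j \in I} \Ker(\hat{\alpha}_j) \, $.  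As $ h $ was an arbitrary element of $ \Ker(\phi) $, the asserted inclusion $ \, \Ker(\phi) \subseteq \bigcap_{j \in I} \Ker(\hat{\alpha}_j) \, $ follows.

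I do not expect any genuine obstacle: the argument uses only the compatibility of $ \phi^* $ with the simple roots that is built into the very notion of morphism of realizations, and it never appeals to the $ T_i^\pm $--part of the datum nor to any property of the matrix $ P $.  In particular, the same one-line computation applies verbatim to realizations over a ground field $ \k $ in the sense of Definition \ref{def: realization of P}(e).
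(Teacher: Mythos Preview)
Your argument is correct and is exactly the natural one-line proof: evaluate the defining identity $ \phi^*(\check{\alpha}_{\sigma(i)}) = \hat{\alpha}_i $ on any $ h \in \Ker(\phi) $.  The paper itself does not spell out a proof for this lemma --- it is stated with a closing $\square$ and a reference to \cite{GG3}, Lemma 2.1.8 --- so there is nothing to compare against beyond confirming that your reasoning matches the intended (and only reasonable) approach.  One tiny remark: the observation that $\sigma$ is a bijection is not actually needed, since the relation $\hat{\alpha}_i(h)=0$ is indexed by $i$ itself, not by $\sigma(i)$; but it does no harm.
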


\vskip9pt

\begin{rmk}
 Working with matrices in  $ \, M_n(\Bbbk) \, $  and realizations of them over  $ \Bbbk $  (for some field  $ \Bbbk \, $),  all the previous constructions still make sense; some results (e.g.,  Proposition \ref{prop: exist-realiz's}  and  Lemma \ref{lemma: split/straight-lifting})  even get stronger and/or have simpler proofs.
\end{rmk}

\medskip
  \eject

\subsection{Twist deformations of multiparameters and realizations}
  \label{subsec: twisted-realiz}  {\ }
 \vskip7pt

   In this subsection we introduce the notion of  \textit{deformation by twist\/}  of realizations, which will be needed later when dealing with (quantum) super-Lie theoretical objects.  Like before, we are essentially recollecting results from  \cite{GG3}, \S 2.2,  so we skip those details that can be found there.

\vskip9pt

\begin{free text}  \label{twist-deform.'s x mpmatr.'s & realiz.'s}
 {\bf Deforming realizations (and matrices) by twist.}  Fix  $ \, P := {\big(\, p_{i,j} \big)}_{i, j \in I} \in M_n\big(\kh\big) \, $
 and a realization  $ \; \R \, := \, \big(\, \lieh \, , \Pi \, , \Pi^\vee \,\big) \, $,
 \,possibly   --- up to changing minimal details in what follows ---   over  $ \Bbbk $  if  $ \, P \in M_n(\Bbbk) \, $.
                                                                       \par
   Recall that  $ \lieh $  is, by assumption, a free  $ \kh $--module  of finite rank  $ \, t := \rk(\lieh) \, $.
   We fix in  $ \lieh $  any  $ \kh $--basis  $ \, {\big\{ H_g \big\}}_{g \in \G} \, $,
   where  $ \G $  is an index set with  $ \, |\G| = \rk(\lieh) = t \, $.
                                                                       \par
   Fix an antisymmetric square matrix  $ \; \Phi = \big( \phi_{gk} \big)_{g, k \in \G} \in \lieso_t\big(\kh\big) \; $
   --- in fact, we might work with any  $ \, \Phi \in M_t\big(\kh\big) \, $,
   \,but at some point we should single out its antisymmetric part
   $ \, \Phi_a := 2^{-1} \big( \Phi - \Phi^{\scriptscriptstyle T} \,\big) \, $
   which would be all that matters.  We define the  \textsl{twisted ``distinguished toral elements'' (or ``coroots'')}
\begin{equation}  \label{eq: T-phi}
  T^\pm_{{\scriptscriptstyle \Phi},\ell} \; := \;
  T^\pm_\ell \, \pm {\textstyle \sum_{g,k=1}^t} \alpha_\ell(H_g) \, \phi_{kg} \, H_k
\end{equation}
 As a matter of notation, let
 $ \, \fT := \bigg(\, {\displaystyle {\underline{T}^+ \atop \underline{T}^-}} \bigg) \, $
 be the  $ (2n \times 1) $--matrix  given by the column vectors
 $ \, \underline{T}^\pm = {\big(\, T_i^\pm \big)}_{i \in I} \, $.  Similarly, let
 $ \; \fT_{{\scriptscriptstyle \Phi}} :=
 \bigg(\, {\displaystyle {\underline{T}^+_{\,\scriptscriptstyle \Phi} \atop \underline{T}^-_{\,\scriptscriptstyle \Phi}} }\bigg) \; $
 be the  $ (2n \times 1) $--matrix  given by the (superposed) two column vectors
 $ \, \underline{T}^\pm_{\,\scriptscriptstyle \Phi} = {\big(\, T_{{\scriptscriptstyle \Phi}, i}^\pm \big)}_{i \in I} \, $,
 \,and  $ \fH $  the column vector  $ \, \fH := {\big(\, H_g \big)}_{g \in \G} \, $.
 Moreover, denote by  $ \mathfrak{A} $  the  $ (n \times t) $--matrix  with entries in  $ \kh $  given by
 $ \; \mathfrak{A} := {\big(\, \alpha_\ell(H_g) \big)}_{\ell \in I}^{g \in \G} \; $,  \,and set
 $ \; \mathfrak{A}_\bullet  :=  \begin{pmatrix}
        +\mathfrak{A} \,  \\
        -\mathfrak{A} \,
                        \end{pmatrix} \; $
 --- a matrix of size  $ (2n \times t) \, $.
 \vskip3pt
   Now, using matrix notation we have
  $ \; \fT_{\scriptscriptstyle \Phi} \, = \, \fT - \, \mathfrak{A}_\bullet \, \Phi \, \fH \; $.
 Eventually, define also
\begin{equation}  \label{def-P_Phi}
  P_{\scriptscriptstyle \Phi} \, =
   \, {\big(\, p^{\scriptscriptstyle \Phi}_{i,j} \big)}_{i, j \in I} \, :=
   \, {\Big(\, p_{i,j} + {\textstyle \sum_{g,k=1}^t} \alpha_j(H_g) \, \phi_{g,k} \, \alpha_i(H_k) \Big)}_{i, j \in I} \, =
   \, P - \mathfrak{A} \, \Phi \, \mathfrak{A}^{\,\scriptscriptstyle T}
\end{equation}

\vskip5pt

   Now, using the above notation, a direct computation yields
  $$  \displaylines{
   S_{{\scriptscriptstyle \Phi},i} := \, 2^{-1} \big(\, T^+_{{\scriptscriptstyle \Phi},i} + T^-_{{\scriptscriptstyle \Phi},i} \big) \,
   = \, 2^{-1} \big(\, T^+_i + T^-_i \big) = S_i   \qquad \qquad  \forall \;\; i \in I  \cr
   \qquad   \alpha_j\big(\,T^+_{{\scriptscriptstyle \Phi},i}\,\big) \, = \, p^{\scriptscriptstyle \Phi}_{i,j}  \quad ,
   \qquad  \alpha_j\big(\,T^-_{{\scriptscriptstyle \Phi},i}\,\big) \, = \, p^{\scriptscriptstyle \Phi}_{j,i}
   \qquad \qquad \qquad  \forall \;\; i, j \in I  }  $$
 so that the triple  $ \; \R_\Phi := \big(\, \lieh_\Phi \, , \Pi_\Phi \, , \Pi^\vee_{\scriptscriptstyle \Phi} \,\big) \; $
 with  $ \, \lieh_\Phi := \lieh \, $,  $ \, \Pi_\Phi := \Pi \, $  and
 $ \, \Pi^\vee_{\scriptscriptstyle \Phi} := \big\{\, T^+_{{\scriptscriptstyle \Phi},i} \, ,
 T^-_{{\scriptscriptstyle \Phi},i} \,\big|\, i \in I \,\big\} \, $,  \,is a realization of the matrix
 $ \, P_{\scriptscriptstyle \Phi} = {\big(\, p^{\scriptscriptstyle \Phi}_{i,j} \big)}_{i, j \in I} \, $
 --- as in  Definition \ref{def: realization of P};  also, by construction
 $ \R_{\scriptscriptstyle \Phi} $  is also \textsl{straight},  resp.\  \textsl{small},
 if and only if such is  $ \R \, $.  Moreover,  $ P_{\scriptscriptstyle \Phi} $
 is the sum of  $ P $  plus an antisymmetric matrix, so
 \textsl{the symmetric part of  $ P_{\scriptscriptstyle \Phi} $  is the same as  $ P $},
 i.e.\  $ \, {\big( P_{\scriptscriptstyle \Phi} \big)}_s = P_s \, $.
 In particular, if  $ P $  is of Cartan type, then so is  $ P_{\scriptscriptstyle \Phi} \, $,
 and they are associated with the same Cartan matrix. In short, we get:
\end{free text}

\vskip7pt

\begin{prop}  \label{prop: twist-realizations}
  \textsl{(cf.\ \cite{GG3}, Proposition 2.2.2)}
 \vskip1pt
 With notation as above, the following holds true:
 \vskip3pt
   (a)\,  the matrix  $ \; P_{\scriptscriptstyle \Phi} \, := \, P -  \, \mathfrak{A} \, \Phi \, \mathfrak{A}^{\,\scriptscriptstyle T} \; $
obeys  $ \, {\big( P_{\scriptscriptstyle \Phi} \big)}_s = P_s \, $;  \,in particular, if
$ P $  is of Cartan type, then so is  $ P_{\scriptscriptstyle \Phi} \, $,
and they are associated with the same Cartan matrix.
 \vskip3pt
   (b)\,  the triple  $ \, \R_{\scriptscriptstyle \Phi} := \big(\, \lieh_\Phi \! := \lieh \, ,
   \, \Pi_\Phi \! := \Pi \, , \, \Pi^\vee_{\scriptscriptstyle \Phi} \! :=
   {\big\{ T^+_{{\scriptscriptstyle \Phi},i} \, , T^-_{{\scriptscriptstyle \Phi},i} \big\}}_{i \in I} \,\big) \, $
   is a realization of  $ \, P_{\scriptscriptstyle \Phi} \, $;  \,moreover,  $ \R_\Phi $  is  \textsl{straight},
   resp.\  \textsl{small},  if and only if such is  $ \R \, $.   \hfill  $\square $
\end{prop}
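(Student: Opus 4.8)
The plan is to organize into a clean verification the computations already assembled in \S\ref{subsec: twisted-realiz}.

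For part \textit{(a)}, the point to make is that $\mathfrak{A}\,\Phi\,\mathfrak{A}^{\scriptscriptstyle T}$ is antisymmetric: indeed $\big(\mathfrak{A}\,\Phi\,\mathfrak{A}^{\scriptscriptstyle T}\big)^{\scriptscriptstyle T} = \mathfrak{A}\,\Phi^{\scriptscriptstyle T}\mathfrak{A}^{\scriptscriptstyle T} = -\,\mathfrak{A}\,\Phi\,\mathfrak{A}^{\scriptscriptstyle T}$ because $\Phi \in \lieso_t\big(\kh\big)$ (and in any case only its antisymmetric part would contribute). Therefore $P_{\scriptscriptstyle \Phi} = P - \mathfrak{A}\,\Phi\,\mathfrak{A}^{\scriptscriptstyle T}$ has the same symmetric part as $P$, i.e.\ $\big(P_{\scriptscriptstyle \Phi}\big)_s = P_s$; and since ``$P$ is of Cartan type with associated Cartan matrix $A$'' means precisely $P_s = DA$ by Definition \ref{def: realization of P}\textit{(d)} --- a condition on $P_s$ alone --- the remaining assertion of \textit{(a)} is immediate.

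For part \textit{(b)}, I would check that $\R_{\scriptscriptstyle \Phi} = \big(\lieh\,, \Pi\,, \Pi^\vee_{\scriptscriptstyle \Phi}\big)$ satisfies conditions \textit{(a.1)} and \textit{(a.2)} of Definition \ref{def: realization of P} relative to $P_{\scriptscriptstyle \Phi}$. Applying $\alpha_j$ to \eqref{eq: T-phi} gives
\[
\alpha_j\big(T^{\pm}_{{\scriptscriptstyle \Phi},i}\big) \,=\, \alpha_j\big(T^{\pm}_i\big) \,\pm\, {\textstyle\sum_{g,k=1}^{t}}\,\alpha_i(H_g)\,\phi_{kg}\,\alpha_j(H_k)\,;
\]
for the ``$+$'' coroot, relabelling $g \leftrightarrow k$ rewrites the double sum as $\sum_{g,k}\alpha_j(H_g)\,\phi_{gk}\,\alpha_i(H_k)$, which by \eqref{def-P_Phi} equals $p^{\scriptscriptstyle \Phi}_{i,j} - p_{i,j}$, so that $\alpha_j\big(T^+_{{\scriptscriptstyle \Phi},i}\big) = p^{\scriptscriptstyle \Phi}_{i,j}$ using \textit{(a.1)} for $\R$; for the ``$-$'' coroot one must, in addition, invoke the antisymmetry $\phi_{kg} = -\phi_{gk}$ to turn $-\sum_{g,k}\alpha_i(H_g)\,\phi_{kg}\,\alpha_j(H_k)$ into $\sum_{g,k}\alpha_i(H_g)\,\phi_{gk}\,\alpha_j(H_k)$ and thereby obtain $\alpha_j\big(T^-_{{\scriptscriptstyle \Phi},i}\big) = p^{\scriptscriptstyle \Phi}_{j,i}$. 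Condition \textit{(a.2)} is then costless: averaging \eqref{eq: T-phi} over the sign annihilates the correction terms, so $S_{{\scriptscriptstyle \Phi},i} = S_i$ for all $i \in I$, hence $\overline{\Sigma}_{\scriptscriptstyle \Phi} = \overline{\Sigma}$, which is $\k$--linearly independent since $\R$ is a realization. This establishes that $\R_{\scriptscriptstyle \Phi}$ is a realization of $P_{\scriptscriptstyle \Phi}$.

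Finally, the adjectives transfer: straightness is automatic because $\Pi_{\scriptscriptstyle \Phi} = \Pi$, hence $\overline{\Pi}_{\scriptscriptstyle \Phi} = \overline{\Pi}$; and smallness is obtained by the same kind of inspection of $\textsl{Span}_\kh\big(\Pi^\vee_{\scriptscriptstyle \Phi}\big)$, using once more $S_{{\scriptscriptstyle \Phi},i} = S_i$, exactly as in \cite{GG3}, Proposition 2.2.2. I expect the only real work to be the index bookkeeping in \textit{(a.1)}: one must be careful that the relabelling $g \leftrightarrow k$ and the antisymmetry of $\Phi$ are each used at exactly the place where the $\pm$ sign forces it; everything else is a formal consequence of \eqref{eq: T-phi}--\eqref{def-P_Phi}.
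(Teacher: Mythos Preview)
Your proof is correct and follows essentially the same approach as the paper: the paper's argument is contained in the discussion of \S\ref{twist-deform.'s x mpmatr.'s & realiz.'s} immediately preceding the proposition (the proposition itself carries only a $\square$ and a reference to \cite{GG3}), where the identities $S_{{\scriptscriptstyle \Phi},i}=S_i$ and $\alpha_j\big(T^\pm_{{\scriptscriptstyle \Phi},i}\big)=p^{\scriptscriptstyle \Phi}_{i,j}\,,\,p^{\scriptscriptstyle \Phi}_{j,i}$ are recorded as ``a direct computation'' and the antisymmetry of $\mathfrak{A}\,\Phi\,\mathfrak{A}^{\scriptscriptstyle T}$ is noted to force $(P_{\scriptscriptstyle \Phi})_s=P_s$. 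Your write-up is in fact slightly more explicit than the paper's, spelling out exactly where the relabelling $g\leftrightarrow k$ and the antisymmetry $\phi_{kg}=-\phi_{gk}$ enter in verifying condition \textit{(a.1)} for the two signs.
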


\vskip7pt

\begin{definition}  \label{def: twisted-realization}
 The realization  $ \, \R_\Phi := \big(\, \lieh \, , \Pi \, , \Pi^\vee_{\scriptscriptstyle \Phi} \,\big) \, $
 of the matrix  $ \, P_{\scriptscriptstyle \Phi} = {\big(\, p^{\scriptscriptstyle \Phi}_{i,j} \big)}_{i, j \in I} $  \,
 is called a  \textit{twist deformation\/}  (via  $ \Phi \, $)  of the realization
 $ \; \R = \big(\, \lieh \, , \Pi \, , \Pi^\vee \,\big) \, $  of  $ P \, $.
                                                                    \par
   Similarly, the matrix  $ P_{\scriptscriptstyle \Phi} $  is called a  \textit{twist deformation\/}
   of the matrix  $ P $.   \hfill  $ \diamondsuit $
\end{definition}

\vskip7pt

\begin{rmks}  \label{rmks: transitivity-twist}
 \textit{(a)}\,  Observe that, by the very definition of twisting one has that
  $$  \big(P_{\scriptscriptstyle \Phi}\big)_{\scriptscriptstyle \Phi'} = P_{\scriptscriptstyle \Phi + \Phi'}  \qquad \text{ and }
  \qquad  \big(\R_{\scriptscriptstyle \Phi}\big)_{\scriptscriptstyle \Phi'} = \R_{\scriptscriptstyle \Phi + \Phi'}
  \qquad  \text{ for all }  \quad  \Phi \, , \, \Phi' \in \lieso_t\big(\kh\big)  $$
 Therefore, the additive group  $ \, \lieso_t\big(\kh\big) \, $  acts on the set of
 (multiparameter) matrices of size  $ \, n := |I| \, $  with fixed symmetric part,
 as well as on the set of their realizations of (any) fixed rank.  \textsl{When two matrices
 $ \, P , P' \in M_n\big(\kh\big) \, $  belong to the same orbit of this  $ \lieso_t\big(\kh\big) $--action  we say that
 \textit{$ P $  and  $ P' $  are twist equivalent}}.
 \vskip5pt
   \textit{(b)}\,  It follows from  Proposition \ref{prop: twist-realizations}\textit{(a)\/}
   that if two multiparameter matrices  $ P $  and  $ P' $  are twist equivalent,
   then their symmetric part is the same, that is  $ \, P_s = P'_s \, $.
   Next result shows the converse holds true as well,  \textit{up to taking the group
   $ \lieso_t\big(\kh\big) $  big enough, namely with  $ \, t \geq 3\,n-\rk(P_s) \, $}.
\end{rmks}

\vskip3pt

\begin{lema}  \label{lemma: twist=sym}
  \textsl{(cf.\ \cite{GG3}, Lemma 2.2.5)}
 \vskip3pt
 With notation as above, let  $ \, P , P' \in M_n\big(\kh\big) \, $,
 \,and consider the aforementioned action by twist on  $ M_n\big(\kh\big) $
 by any additive group  $ \lieso_t\big(\kh\big) $  with  $ \, t \geq 3\,n-\rk(P_s) \, $.
 Then  $ P $  and  $ P' \, $  are twist equivalent if and only if  $ \, P_s = P'_s \, $.   \hfill  $ \square $
\end{lema}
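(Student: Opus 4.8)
The plan is to establish the two implications separately. For the ``only if'' part I would first observe that, by the additivity $(\R_\Phi)_{\Phi'}=\R_{\Phi+\Phi'}$ and $(P_\Phi)_{\Phi'}=P_{\Phi+\Phi'}$ recorded in Remarks \ref{rmks: transitivity-twist}(a), the twist operation is a genuine action of the additive group $\lieso_t\big(\kh\big)$, so that any two twist-equivalent matrices differ by a \emph{single} twist; then Proposition \ref{prop: twist-realizations}(a) says every twist preserves the symmetric part, whence $P\sim P'$ forces $P_s=P'_s$.

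The substantial half is the converse. Here I would start from the remark that, since $2$ is a unit in $\kh$, the condition $P_s=P'_s$ is equivalent to $C:=P-P'$ being antisymmetric: indeed $P+P^{\,\scriptscriptstyle T}={P'}+{P'}^{\,\scriptscriptstyle T}$ gives $C^{\,\scriptscriptstyle T}=-C$, i.e.\ $C\in\lieso_n\big(\kh\big)$. The goal is then to produce a rank-$t$ realization $\R=\big(\lieh,\Pi,\Pi^\vee\big)$ of $P$ and a matrix $\Phi\in\lieso_t\big(\kh\big)$ with $P_\Phi=P'$; by \eqref{def-P_Phi} this amounts to solving $\mathfrak{A}\,\Phi\,\mathfrak{A}^{\,\scriptscriptstyle T}=C$ for antisymmetric $\Phi$, where $\mathfrak{A}={\big(\alpha_\ell(H_g)\big)}$ is the evaluation matrix attached to $\R$.

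The key idea is to pick $\R$ so that $\mathfrak{A}$ is surjective, which makes this equation trivial to solve. Since $2\in\kh^{\times}$ one has $\rk\big(P+P^{\,\scriptscriptstyle T}\big)=\rk\big(2P_s\big)=\rk(P_s)$, so the hypothesis $t\geq 3n-\rk(P_s)$ reads exactly $t\geq 3n-\rk\big(P+P^{\,\scriptscriptstyle T}\big)$, and Proposition \ref{prop: exist-realiz's}(a) then provides a straight (split) realization $\R$ of $P$ with $\rk_\kh(\lieh)=t$ — only straightness will be used. Straightness means $\Pi$ extends to a $\kh$-basis of $\lieh^*$, so dually the evaluation map $\mathfrak{A}\colon\lieh\to\kh^{I}$ is a split epimorphism of finite free $\kh$-modules and admits a $\kh$-linear section $B$ with $\mathfrak{A}\,B=\id$. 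Viewing $B$ as a $(t\times n)$-matrix over $\kh$, I would set $\Phi:=B\,C\,B^{\,\scriptscriptstyle T}$: it is antisymmetric because $C$ is, hence $\Phi\in\lieso_t\big(\kh\big)$, and $\mathfrak{A}\,\Phi\,\mathfrak{A}^{\,\scriptscriptstyle T}=(\mathfrak{A}B)\,C\,(\mathfrak{A}B)^{\,\scriptscriptstyle T}=C$, which gives $P_\Phi=P-\mathfrak{A}\Phi\mathfrak{A}^{\,\scriptscriptstyle T}=P-C=P'$. Using again surjectivity of $\mathfrak{A}$, one can also choose coroots $\dot{T}^{\pm}_i\in\lieh$ with $\alpha_j\big(\dot{T}^{+}_i\big)=p'_{ij}$, $\alpha_j\big(\dot{T}^{-}_i\big)=p'_{ji}$ and $2^{-1}\big(\dot{T}^{+}_i+\dot{T}^{-}_i\big)=S_i$ (so that condition \textit{(a.2)} of Definition \ref{def: realization of P} is preserved), thereby realizing $P'$ through a realization sharing its $\big(\lieh,\Pi\big)$-part with $\R$; by construction this realization is the twist $\R_\Phi$, so $P$ and $P'$ lie in one and the same $\lieso_t\big(\kh\big)$-orbit.

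The step I expect to demand the most care is not the linear algebra — the identity $\mathfrak{A}BCB^{\,\scriptscriptstyle T}\mathfrak{A}^{\,\scriptscriptstyle T}=C$ is immediate once the section $B$ is at hand — but the bookkeeping around the action: since the twist operation on matrices is defined only relative to a realization, one must verify that the (essentially unique) straight rank-$t$ realization of $P$ supplied by Proposition \ref{prop: exist-realiz's} can simultaneously serve as the $\big(\lieh,\Pi\big)$-datum of a realization of $P'$, so that $\R$ and the chosen realization of $P'$ genuinely lie in a common orbit rather than merely satisfying ``$\mathfrak{A}\Phi\mathfrak{A}^{\,\scriptscriptstyle T}=C$ is solvable''. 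This reduces once more to surjectivity of $\mathfrak{A}$; over the local ring $\kh$ no flatness or completeness subtleties intervene, as every module in sight is free of finite rank.
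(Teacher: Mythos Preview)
Your argument is correct. The paper does not actually supply a proof of this lemma: it is imported verbatim from \cite{GG3}, Lemma 2.2.5, and marked with $\square$. So there is no in-paper proof to compare against, but your approach matches the spirit of the surrounding material and is self-contained.

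Two minor remarks. First, the last paragraph of your proposal---constructing coroots $\dot{T}^{\pm}_i$ so that $(\lieh,\Pi)$ also underlies a realization of $P'$---is not needed for the lemma as stated: twist equivalence of \emph{matrices} only asks that $P_\Phi=P'$ for some $\Phi$ computed through \emph{some} realization of $P$, and you have already secured that with $\Phi=BCB^{\scriptscriptstyle T}$. What you sketch there is really the content of Proposition~\ref{prop: realiz=twist-standard}, which upgrades the statement to the level of realizations. Second, your discussion of the ``bookkeeping'' concern is well placed but slightly overcautious: once a straight realization $\R$ of $P$ is fixed, the map $P\mapsto P_\Phi=P-\mathfrak{A}\,\Phi\,\mathfrak{A}^{\scriptscriptstyle T}$ is a bona fide $\lieso_t(\kh)$-action on all of $M_n(\kh)$ (it only uses $\mathfrak{A}$, not $\Pi^\vee$), so producing $\Phi$ with $P_\Phi=P'$ already places $P$ and $P'$ in the same orbit without further checks.
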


\vskip7pt

   Next proposition ``upgrades'' the previous result to the level of realizations.

\vskip11pt

\begin{prop}  \label{prop: realiz=twist-standard}
  \textsl{(cf.\ \cite{GG3}, Proposition 2.2.7)}
 Let  $ P $  and  $ P' $  be two matrices in  $ M_n\big(\kh\big) $  with the same symmetric part, i.e.\ such that  $ \, P_s = P'_s \; $.
 \vskip5pt
   (a)\,  Let  $ \, \R $  be a  \textsl{straight}  realization of  $ P $ of rank  $ \, t \geq 3\,n-\rk(P_s) \, $.  Then there is  $ \, \Phi \in \lieso_t\big(\kh\big) \, $  such that  $ \, P' = P_{\scriptscriptstyle \Phi} \, $  and the corresponding realization  $ \, \R_{\scriptscriptstyle \Phi} \, $  is straight.
                                                            \par
   In short, if  $ \, P'_s = P_s \, $  then from any  \textsl{straight}  realization of  $ P $  we can obtain by twist deformation a straight realization (of the same rank) for  $ P' $,  and viceversa.
 \vskip5pt
   (b)\,  Let  $ \, \R $  and  $ \R' $  be  \textsl{straight small}  realizations of  $ \, P $  and  $ P' \, $,  such that  $ \, \rk(\R) = \rk(\R') =: t \geq 3\,n-\rk(P_s) \, $.
   Then there exists
 $ \, \Phi \in \lieso_t\big(\kh\big)
 \, $  such that  $ \, \R' \cong \R_{\scriptscriptstyle \Phi} \, $.
                                                            \par
   In short, if  $ \, P'_s = P_s \, $  then any straight small realization of  $ P' $  is isomorphic to a twist deformation of a
   straight small realization of  $ P $  of same rank, and viceversa.
 \vskip5pt
   (c)\,  Every  \textsl{straight small}  realization  $ \, \R $  of  $ P $  is isomorphic to some
   twist deformation of the standard realization of  $ P_s $  of the same rank as  $ \, \R \, $,
   \hbox{as in  Lemma \ref{lem: realiz_P => realiz_A}.   \hfill  $ \square $}
\end{prop}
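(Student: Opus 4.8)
The plan is to reduce all three claims to a single linear‑algebra fact. Let $\R = \big(\lieh,\Pi,\Pi^\vee\big)$ be a \emph{straight} realization of $P \in M_n\big(\kh\big)$ of rank $t \geq n$, fix a $\kh$--basis $\,{\{ H_g \}}_{g \in \G}\,$ of $\lieh$, and let $\mathfrak{A} = {\big(\alpha_\ell(H_g)\big)}_{\ell \in I}^{g \in \G}$ be the associated $(n \times t)$--matrix, as in \S\ref{subsec: twisted-realiz}. I would first establish that the map $\,\lieso_t\big(\kh\big) \longrightarrow \lieso_n\big(\kh\big)$, $\,\Phi \mapsto \mathfrak{A}\,\Phi\,\mathfrak{A}^{\,\scriptscriptstyle T}$, is onto. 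Granting this, part \textit{(a)} is immediate: since $P_s = P'_s$ the matrix $P - P'$ lies in $\lieso_n\big(\kh\big)$, so one picks $\Phi \in \lieso_t\big(\kh\big)$ with $\mathfrak{A}\,\Phi\,\mathfrak{A}^{\,\scriptscriptstyle T} = P - P'$; then by \eqref{def-P_Phi} one gets $P_{\scriptscriptstyle \Phi} = P - \mathfrak{A}\,\Phi\,\mathfrak{A}^{\,\scriptscriptstyle T} = P'$, while $\R_{\scriptscriptstyle \Phi}$ is a realization of $P_{\scriptscriptstyle \Phi} = P'$ which is straight because $\R$ is, by Proposition \ref{prop: twist-realizations}\textit{(b)}. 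The hypothesis $t \geq 3n - \rk(P_s)$ in particular forces $t \geq n$, which is all the argument needs.

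For part \textit{(b)}, I would apply \textit{(a)} to the straight realization $\R$ of $P$ (which is moreover small): one obtains $\Phi \in \lieso_t\big(\kh\big)$ with $P_{\scriptscriptstyle \Phi} = P'$ and with $\R_{\scriptscriptstyle \Phi}$ a straight realization of $P'$ that is small too, since $\R$ is, again by Proposition \ref{prop: twist-realizations}\textit{(b)}. Thus $\R_{\scriptscriptstyle \Phi}$ and $\R'$ are two straight small realizations of the \emph{same} matrix $P'$, of the same rank $t$. The existence of a small realization of $P'$ forces $\rk\!\big(P'_s \,\big|\, P'_a\big) = \rk(P'_s)$ (cf.\ the Remark preceding Proposition \ref{prop: exist-realiz's_small}), and $t \geq 3n - \rk(P_s) \geq 2n - \rk(P'_s)$ because $P_s = P'_s$; so the uniqueness clause of Proposition \ref{prop: exist-realiz's_small} yields $\R_{\scriptscriptstyle \Phi} \cong \R'$, i.e.\ $\R' \cong \R_{\scriptscriptstyle \Phi}$, as asserted.

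For part \textit{(c)}, let $\R$ be a straight small realization of $P$, of rank $t$; as above $\rk\!\big(P_s \,\big|\, P_a\big) = \rk(P_s)$, and since $(P_s)_a = 0$ one also has $\rk\!\big((P_s)_s \,\big|\, (P_s)_a\big) = \rk(P_s)$, so Proposition \ref{prop: exist-realiz's_small} provides, uniquely up to isomorphism, a straight small realization $\R_0$ of $P_s$ of rank $t$ — this is ``the standard realization of $P_s$'' of that rank, the one attached to $P_s$ as in Lemma \ref{lem: realiz_P => realiz_A} (and indeed $t \geq 2n - \rk(P_s)$, the minimal rank of a straight small realization). Running the argument of \textit{(a)} with $\big(P_s,\R_0\big)$ in the role of $\big(P,\R\big)$ — legitimate since $\R_0$ is straight — one finds $\Phi \in \lieso_t\big(\kh\big)$ with $\mathfrak{A}_0\,\Phi\,{\mathfrak{A}_0}^{\,\scriptscriptstyle T} = P_s - P = -P_a \in \lieso_n\big(\kh\big)$, i.e.\ $\,{(P_s)}_{\scriptscriptstyle \Phi} = P\,$. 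Then $\,{(\R_0)}_{\scriptscriptstyle \Phi}\,$ is a straight small realization of $P$ of rank $t$, whence $\,{(\R_0)}_{\scriptscriptstyle \Phi} \cong \R\,$ by Proposition \ref{prop: exist-realiz's_small} once more; in other words $\R$ is isomorphic to a twist deformation of the standard realization of $P_s$, as claimed.

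The hard part will be the surjectivity of $\Phi \mapsto \mathfrak{A}\,\Phi\,\mathfrak{A}^{\,\scriptscriptstyle T}$ — everything else is bookkeeping with \eqref{def-P_Phi} and Propositions \ref{prop: twist-realizations} and \ref{prop: exist-realiz's_small}. Here straightness of $\R$ says exactly that the reduction $\overline{\mathfrak{A}}$ modulo $\hbar$ has full row rank $n$ over $\k$; as $\kh$ is a discrete valuation ring with uniformizer $\hbar$, Smith normal form then forces the invariant factors of $\mathfrak{A}$ to be units, so $\mathfrak{A} = U\,\big(I_n \,\big|\, 0\big)\,V$ with $U \in \mathrm{GL}_n\big(\kh\big)$ and $V \in \mathrm{GL}_t\big(\kh\big)$. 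Consequently $\mathfrak{A}\,\Phi\,\mathfrak{A}^{\,\scriptscriptstyle T} = U\,B_{\scriptscriptstyle \Phi}\,U^{\,\scriptscriptstyle T}$, where $B_{\scriptscriptstyle \Phi}$ is the top-left $(n\times n)$--block of $V\,\Phi\,V^{\,\scriptscriptstyle T}$; and as $\Phi$ runs over $\lieso_t\big(\kh\big)$ so does $V\,\Phi\,V^{\,\scriptscriptstyle T}$, hence $B_{\scriptscriptstyle \Phi}$ runs over all of $\lieso_n\big(\kh\big)$, and conjugating by the invertible $U$ leaves this so. These are essentially the arguments of \cite{GG3}, Proposition 2.2.7, and they transfer unchanged, realizations and their twist deformations being insensitive to the super structure.
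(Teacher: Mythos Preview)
Your argument is correct. The paper itself does not prove this proposition: it is stated with a closing $\square$ and an explicit reference to \cite{GG3}, Proposition 2.2.7, so there is nothing in the present paper to compare against beyond that citation. Your reduction to the surjectivity of $\,\Phi \mapsto \mathfrak{A}\,\Phi\,\mathfrak{A}^{\,\scriptscriptstyle T}\,$ onto $\lieso_n\big(\kh\big)$, proved via Smith normal form over the DVR $\kh$ using that straightness makes $\overline{\mathfrak{A}}$ of full row rank, is exactly the expected route, and your subsequent appeals to Proposition \ref{prop: twist-realizations}\textit{(b)} and the uniqueness clause of Proposition \ref{prop: exist-realiz's_small} are legitimate. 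One small point worth making explicit in \textit{(c)}: you invoke Proposition \ref{prop: exist-realiz's_small} for rank $t$ without the hypothesis $t \geq 3n - \rk(P_s)$, so you should record that any straight realization automatically has $t \geq 2n - \rk(P_s)$ --- this follows because the $(n-r)$--dimensional subspace of $\textsl{Span}\big(\{\overline{\alpha_j}\}\big)$ annihilating $\textsl{Span}\big(\{\overline{S_i}\}\big)$ must sit inside the $(t-n)$--dimensional annihilator of the latter in $\overline{\lieh}^*$. With that in hand your argument for \textit{(c)} is complete, and the identification of your $\R_0$ with the realization of Lemma \ref{lem: realiz_P => realiz_A} is immediate from the same uniqueness.
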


\vskip11pt

\begin{free text}  \label{further_stability}
 {\bf Stability issues for twisted realizations.}  Applying deformation by twist upon a given realization provides a new realization (for a different matrix, in general).  If the initial realization is  \textsl{split},  however, the new realization achieved through deformation is  \textsl{not\/}  necessarily split, in general.  This issue is discussed at length in  \cite{GG3}, \S\S 2.2.8--9,  where examples and counterexamples are given as well.
\end{free text}

\medskip

\subsection{2-cocycle deformations of multiparameters and realizations}
  \label{subsec: 2-cocycle-realiz}  {\ }
 \vskip7pt
   In this subsection we introduce the notion of  \textit{deformation by 2--cocycles\/}  of realizations (and of multiparameters), which is dual to deformation by twist.  Here again, we directly report material from  \cite{GG3}, \S 2.3,  so we skip all secondary details that can be found there.

\vskip7pt

\begin{free text}  \label{2-cocycle-deform.'s x mpmatr.'s & realiz.'s}
 {\bf Deforming realizations (and matrices) by  $ 2 $--cocycles.}  Fix again
 $ \, P := {\big(\, p_{i,j} \big)}_{i, j \in I} \in M_n\big( \kh \big) \, $  and a realization
 $ \, \R := \big(\, \lieh \, , \Pi \, , \Pi^\vee \,\big) \, $  of it.
 We consider special deformations of realizations, called ``2-cocycle deformations''.  To this end, like in  \S \ref{subsec: twisted-realiz},  we fix in  $ \lieh $  a  $ \kh $--basis
 $ \, {\big\{ H_g \big\}}_{g \in \G} \, $,  with  $ \G $  an index set,  $ \, |\G| = \rk(\lieh) = t \, $.
 \vskip5pt
   Let  $ \, \chi : \lieh \times \lieh \relbar\joinrel\longrightarrow \kh \, $  be any  $ \kh $--bilinear  map:
   note that it bijectively corresponds to some
   $ \, X = {\big(\, \chi_{g{}\gamma} \big)}_{g , \gamma \in \G} \in M_t\big(\kh\big) \, $  via
   $ \, \chi_{g{}\gamma} = \chi(H_g\,,H_\gamma) \, $.  We assume that  $ \chi $ is antisymmetric, which means
   $ \, \chi^{\scriptscriptstyle T}(x,y) = -\chi(x,y) \, $  where
   $ \, \chi^{\scriptscriptstyle T}(x,y) := \chi(y,x) \, $,  \,for all  $ \, x, y \in \lieh \, $;
   \,this is equivalent to saying that  $ X $  is antisymmetric, i.e.\  $ \, X \in \lieso_t\big(\kh\big) \, $.
   We denote by  $ \, \text{\sl Alt}_{\,\kh}\big(\, \lieh \times \lieh \, , \, \kh \,\big) \, $  the set of all antisymmetric,
   $ \kh $--bilinear  maps from  $\, \lieh \times \lieh \, $  to  $ \, \kh \, $.
   We assume also that  $ \chi $  obeys
\begin{equation}  \label{eq: condition-chi}
  \chi(S_i \, ,\,-\,)  \, = \,  0  \, = \,  \chi(\,-\,,S_i)   \qquad \qquad \forall \;\; i \in I
\end{equation}
 where  $ \, S_i := 2^{-1} \big(\, T^+_i + T^-_i \big) \, $  for all  $ \, i \in I \, $.  In particular,
 this implies  (for  $ \, i \in I \, $,  $ \, T \in \lieh \, $)  that
  $ \; \chi\big( +T_i^+ , \, T \,\big)  \, = \,  \chi\big( -T_i^- , \, T \,\big) \; $,  $ \; \chi\big(\, T \, , +T_i^+ \big)  \, = \,  \chi\big(\, T \, , -T_i^- \big) \; $,
 \,hence
\begin{equation}  \label{eq: condition-chi-chit}
 +\chi\big(\, \text{--} \, , T_i^+ \big) = -\chi\big(\, \text{--} \, , T_i^- \big)   \qquad \qquad  \forall \;\; i \in I
\end{equation}
 For later use,  \textit{we introduce also the notation}
\begin{equation}  \label{eq: def-Alt_S}
 \text{\sl Alt}_{\,\kh}^{\,S}(\lieh)  \, := \,
 \Big\{\, \chi \in \text{\sl Alt}_{\,\kh}\big(\, \lieh \times \lieh \, , \, \kh \,\big) \,\Big|\; \chi \text{\ obeys\ }  (\ref{eq: condition-chi}) \,\Big\}
\end{equation}
 and to each  $ \, \chi \in  \text{\sl Alt}_{\,\kh}^{\,S}(\lieh)  \, $  we associate
  $ \, \mathring{X} := {\Big( \mathring{\chi}_{i{}j} = \chi\big(\,T_i^+,T_j^+\big) \!\Big)}_{i, j \in I} \! \in \lieso_n\big(\kh\big) \, $.
 \vskip5pt
  Basing on the above, we define
  $$
  P_{(\chi)}  := \,  P \, + \, \mathring{X}  \, = \,  {\Big(\, p^{(\chi)}_{i{}j} := \,  p_{ij} + \mathring{\chi}_{i{}j} \Big)}_{\! i, j \in I}  \;\, ,  \!\quad
   \Pi_{(\chi)}  := \,  {\Big\{\, \alpha_i^{(\chi)}  := \,
   \alpha_i \pm \chi\big(\, \text{--} \, , T_i^\pm \big)  \Big\}}_{i \in I}
   $$
\end{free text}

\vskip7pt

   We are now ready for our key result on  $ 2 $--cocycle  deformations.

\vskip13pt

\begin{prop}  \label{prop: 2cocdef-realiz}
  \textsl{(cf.\ \cite{GG3}, Proposition 2.3.2)}
 Keep notation as above. Then:
 \vskip5pt
   (a)\;  $ \, P_{(\chi)} \, := \, P \, + \, \mathring{X} \; $  obeys  $ \, {\big( P_{(\chi)} \big)}_s = P_s \, $;
   \,in particular, if  $ P $  is of Cartan type, then so is  $ P_{(\chi)} \, $,
   and they are associated with the same Cartan matrix.
 \vskip5pt
   (b)\;  the triple  $ \, \R_{(\chi)} \, = \, \big(\, \lieh \, , \Pi_{(\chi)} \, , \Pi^\vee \,\big) \, $
   is a realization of the matrix  $ \, P_{(\chi)} \, $,  \,which is minimal, resp.\ split, if so is  $ \, \R \, $.   \hfill  $ \square $
\end{prop}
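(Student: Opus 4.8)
The plan is to verify directly, against Definition \ref{def: realization of P}, that the triple $ \R_{(\chi)} = \big(\, \lieh \, , \Pi_{(\chi)} \, , \Pi^\vee \,\big) $ does realize $ P_{(\chi)} $, exploiting the decisive fact that in passing from $ \R $ to $ \R_{(\chi)} $ the coroot datum $ \Pi^\vee $ is untouched: only the simple roots change, from $ \Pi $ to $ \Pi_{(\chi)} $. Everything then reduces to bookkeeping, modulo one genuinely load-bearing use of the hypothesis $ \, \chi \in \text{\sl Alt}_{\,\kh}^{\,S}(\lieh) \, $.

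For claim (a), I would first record that $ \mathring{X} = {\big(\mathring{\chi}_{ij}\big)}_{i,j\in I} $ is antisymmetric, since $ \, \mathring{\chi}_{ji} = \chi\big(T_j^+,T_i^+\big) = -\chi\big(T_i^+,T_j^+\big) = -\mathring{\chi}_{ij} \, $ by antisymmetry of $ \chi \, $; \,thus $ \, \mathring{X} \in \lieso_n\big(\kh\big) \, $ and $ \, P_{(\chi)} - P = \mathring{X} \, $ is antisymmetric, whence
$$ {\big( P_{(\chi)} \big)}_s \, = \, 2^{-1}\big( P_{(\chi)} + P_{(\chi)}^{\,\scriptscriptstyle T} \big) \, = \, 2^{-1}\big( P + P^{\,\scriptscriptstyle T} \big) \, = \, P_s \; . $$
The Cartan--type statement drops out at once: if $ \, P_s = DA \, $ then also $ \, {\big(P_{(\chi)}\big)}_s = DA \, $, so $ P_{(\chi)} $ is of Cartan type with the same associated Cartan matrix.

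For claim (b), I would begin by checking that each $ \alpha_i^{(\chi)} $ is a well-defined element of $ \lieh^* \, $: indeed $ \, \chi(\text{--}\,,T_i^\pm) \in \lieh^* \, $ by $ \kh $--bilinearity of $ \chi \, $, and the two formulas $ \, \alpha_i + \chi(\text{--}\,,T_i^+) \, $ and $ \, \alpha_i - \chi(\text{--}\,,T_i^-) \, $ agree thanks to (\ref{eq: condition-chi-chit}). Next I would verify condition \textit{(a.1)\/} for $ P_{(\chi)} $: evaluating on $ T_i^+ $ and using that $ \R $ realizes $ P $ gives $ \, \alpha_j^{(\chi)}\big(T_i^+\big) = \alpha_j\big(T_i^+\big) + \chi\big(T_i^+,T_j^+\big) = p_{ij} + \mathring{\chi}_{ij} = p^{(\chi)}_{ij} \, $, whereas evaluating on $ T_i^- $, via (\ref{eq: condition-chi}) --- equivalently $ \, \chi\big(T_i^-,\text{--}\,\big) = -\chi\big(T_i^+,\text{--}\,\big) \, $ --- gives $ \, \alpha_j^{(\chi)}\big(T_i^-\big) = p_{ji} + \chi\big(T_i^-,T_j^+\big) = p_{ji} - \mathring{\chi}_{ij} = p_{ji} + \mathring{\chi}_{ji} = p^{(\chi)}_{ji} \, $, which is precisely \textit{(a.1)\/} for $ P_{(\chi)} $. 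Condition \textit{(a.2)\/} is a statement about $ \, \Sigma = {\{S_i\}}_{i\in I} \, $, hence about $ \Pi^\vee $ alone, so it is inherited verbatim from $ \R \, $; thus $ \R_{(\chi)} $ is a realization of $ P_{(\chi)} $. Likewise, being \textsl{minimal\/} or \textsl{split\/} is a matter of $ \Pi^\vee $ only (the $ \kh $--span of the $ T_i^\pm $, resp.\ the $ \k $--linear independence of the classes $ \overline{T_i^\pm} $ in $ \overline{\lieh} $), which is unchanged, so $ \R_{(\chi)} $ is minimal, resp.\ split, if and only if $ \R $ is. The sole point needing genuine care, rather than plain transcription, is the sign bookkeeping in the $ T_i^- $ case of \textit{(a.1)\/}, where the defining condition (\ref{eq: condition-chi}) of $ \text{\sl Alt}_{\,\kh}^{\,S}(\lieh) $ is exactly what makes the computation close.
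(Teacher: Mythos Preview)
Your proof is correct. The paper itself gives no proof of this proposition --- it is stated with a terminal $\square$ and referred to \cite{GG3}, Proposition 2.3.2 --- so your direct verification against Definition \ref{def: realization of P}, exploiting that $\Pi^\vee$ is unchanged while only $\Pi$ is perturbed, is exactly the natural argument and matches what the cited reference does.
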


\vskip7pt

\begin{definition}\label{def:2-cocycle-realization}
 The realization  $ \, \R_{(\chi)} = \big(\, \lieh \, , \Pi_{(\chi)} \, , \Pi^\vee \,\big) \, $  of
 $ \, P_{(\chi)} = {\big(\, p^{(\chi)}_{i{}j} \big)}_{\!i, j \in I} \, $  is called a
 \textit{2--cocycle deformation\/}  of the realization
 $ \; \R = \big(\, \lieh \, , \Pi \, , \Pi^\vee \,\big) \, $  of  $ P \, $.
                                                                    \par
   Similarly, the matrix  $ P_{(\chi)} $  is called a  \textit{2--cocycle deformation\/}  of the matrix  $ P $.   \hfill  $ \diamondsuit $
\end{definition}

\vskip7pt

\begin{rmks}  \label{rmks: cocycle-deform-realiz}
 \textit{(a)}\,  The very definitions give
  $$  \big(P_{(\chi)}\big)_{(\chi)'} = P_{(\chi + \chi')}  \quad \text{ and }  \quad
  \big(\R_{(\chi)}\big)_{(\chi')} = \R_{(\chi + \chi')}  \qquad  \text{ for all }
  \;\;  \chi \, , \, \chi' \in \text{\sl Alt}_{\,\kh}^{\,S}(\lieh)  $$
 Thus, the additive group  $ \, \text{\sl Alt}_{\,\kh}^{\,S}(\lieh) \, $  acts on the set of
 (multiparameter) matrices of size  $ \, n := |I| \, $
 with fixed symmetric part, as well as on the set of their realizations of (any) fixed rank.
 \textsl{When two matrices  $ P $  and  $ P' $  in  $ \, M_n\big(\kh\big) \, $
 belong to the same orbit of this  $ \text{\sl Alt}_{\,\kh}^{\,S}(\lieh) $--action,
 we say that  \textit{$ P $  and  $ P' $  are 2--cocycle equivalent}}.
 \vskip5pt
   \textit{(b)}\,  It follows from  Proposition \ref{prop: 2cocdef-realiz}\textit{(a)\/}
   that if two multiparameter matrices  $ P $  and  $ P' $  are  $ 2 $--cocycle  equivalent,
   then their symmetric part is the same, i.e.\  $ \, P_s = P'_s \, $.
   As a consequence of the next result, the converse holds true as well
   (cf.\  Lemma \ref{lemma: 2-cocycle=sym}  below),  \textit{under mild, additional assumptions}.
\end{rmks}

\vskip7pt

   Next result concerns the aforementioned  $ \text{\sl Alt}_{\,\kh}^{\,S}(\lieh) $--action
   on realizations; indeed, up to minor details it can be seen as the
   ``$ 2 $--cocycle  analogue'' of  Proposition \ref{prop: realiz=twist-standard}:

\vskip13pt

\begin{prop}  \label{prop: mutual-2-cocycle-def}
  \textsl{(cf.\ \cite{GG3}, Proposition 2.3.2)}
 \vskip3pt
 Let  $ \, P, P' \in M_n\big(\kh\big) \, $  be two matrices  with the same symmetric part,
 i.e.\ such that  $ \, P_s = P'_s \, $.  Moreover, let  $ \, \R $  be a  \textsl{split}  realization of  $ P $.
 \vskip5pt
   (a)\,  There exists a map $ \, \chi \in \text{\sl Alt}_{\,\kh}^{\,S}(\lieh) \, $
   such that  $ \, P' = P_{(\chi)} \, $  and the realization
   $ \, \R_{(\chi)}\,= \, \big(\, \lieh \, , \Pi_{(\chi)} \, , \Pi^\vee \,\big) \, $  of  $ \, P' = P_{(\chi)} \, $
   is  \textsl{split}.  In a nutshell, if  $ \, P'_s = P_s \, $  then from any  \textsl{split}  realization of  $ P $
   we can obtain a split realization (of the same rank) of  $ P' $  by 2--cocycle deformation, and viceversa.
 \vskip5pt
 (b)\,  Assume in addition that  $ \, \R $  be  \textsl{minimal}.
 Then  $ \, \R $  is isomorphic to a 2--cocycle deformation of the split minimal realization of  $ \, P_s \, $.   \hfill  $ \square $
\end{prop}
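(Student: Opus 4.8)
The plan is to exploit the fact that $P$ and $P'$ differ by an antisymmetric matrix and to manufacture the required $2$--cocycle explicitly on a convenient $\kh$--basis of $\lieh$, then feed it into Proposition \ref{prop: 2cocdef-realiz}. First set $\, M := P' - P \,$: since $\, M + M^{\scriptscriptstyle T} = 2\,P'_s - 2\,P_s = 0 \,$, we have $\, M \in \lieso_n\big(\kh\big) \,$, i.e.\ $M$ is antisymmetric.

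For part (a): as $\R$ is split, $\, \Sigma \cup \Lambda = {\{S_i\}}_{i \in I} \cup {\{\varLambda_i\}}_{i \in I} \,$ can be completed to a $\kh$--basis $\, \Sigma \cup \Lambda \cup {\{Z_k\}}_{k \in K} \,$ of $\lieh$ (an equivalent form of splitness, see the N.B.\ in Definition \ref{def: realization of P}). Define $\, \chi : \lieh \times \lieh \to \kh \,$ to be the $\kh$--bilinear map that on pairs of these basis vectors is given by $\, \chi\big(\varLambda_i\,,\varLambda_j\big) := M_{ij} \,$ and by $\, \chi(v,w) := 0 \,$ whenever $v$ or $w$ lies in $\, \Sigma \cup {\{Z_k\}}_{k \in K} \,$. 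Antisymmetry of $M$ yields antisymmetry of $\chi$, and $\chi$ manifestly satisfies \eqref{eq: condition-chi}, so $\, \chi \in \text{\sl Alt}_{\,\kh}^{\,S}(\lieh) \,$. Using $\, T_i^\pm = S_i \pm \varLambda_i \,$ together with \eqref{eq: condition-chi} one computes $\, \mathring{\chi}_{ij} = \chi\big(T_i^+,T_j^+\big) = \chi\big(\varLambda_i\,,\varLambda_j\big) = M_{ij} \,$, whence $\, P_{(\chi)} = P + \mathring{X} = P + M = P' \,$. Finally $\, \R_{(\chi)} = \big(\,\lieh\,,\Pi_{(\chi)}\,,\Pi^\vee\big) \,$ has the same coroot set $\Pi^\vee$, hence the same underlying module, as $\R$; so it is split by Proposition \ref{prop: 2cocdef-realiz}(b) and has the same rank as $\R$. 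The ``viceversa'' follows by interchanging the roles of $P$ and $P'$, or equivalently by noting that $\, \big(\R_{(\chi)}\big)_{(-\chi)} = \R \,$ via Remark \ref{rmks: cocycle-deform-realiz}(a), the $S_i$'s being untouched by the deformation so that $\, -\chi \in \text{\sl Alt}_{\,\kh}^{\,S}(\lieh) \,$ as well.

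For part (b): if $\R$ is moreover minimal, then $\, {\{T_i^\pm\}}_{i \in I} \,$ is a $\kh$--basis of $\lieh$ (split plus minimal, Definition \ref{def: realization of P}), so $\, \rk(\R) = 2\,n \,$. I would first record a \emph{uniqueness} fact: a split minimal realization of any $\, Q \in M_n\big(\kh\big) \,$ is unique up to isomorphism, since its underlying module is free on the $2\,n$ vectors $T_i^\pm$ and the functionals $\alpha_j$ are then forced by $\, \alpha_j\big(T_i^+\big) = q_{ij} \,$, $\, \alpha_j\big(T_i^-\big) = q_{ji} \,$. Next, the split minimal realization $\R_0$ of $P_s$ exists: take $\lieh_0$ free on $2\,n$ symbols $T_{0,i}^\pm$ and define the $\alpha_{0,j}$ by those formulas with $\, q = P_s \,$ — conditions (a.1)--(a.2), splitness and minimality hold because $\, {\{T_{0,i}^\pm\}} \,$ and $\, {\{S_{0,i}\,,\varLambda_{0,i}\}} \,$ are both $\kh$--bases of $\lieh_0$. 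Since $\, (P_s)_s = P_s = (P)_s \,$, applying part (a) to the split realization $\R_0$ of $P_s$ with target matrix $P$ produces $\, \chi \in \text{\sl Alt}_{\,\kh}^{\,S}(\lieh_0) \,$ with $\, (P_s)_{(\chi)} = P \,$ and $\, \big(\R_0\big)_{(\chi)} \,$ a split realization of $P$; its coroot set still spans $\lieh_0$, so $\big(\R_0\big)_{(\chi)}$ is also minimal, and by the uniqueness fact $\, \big(\R_0\big)_{(\chi)} \cong \R \,$, which is the assertion.

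The one genuinely delicate point is the construction of $\chi$ in part (a): one must check that demanding $\, \chi(\varLambda_i,\varLambda_j) = M_{ij} \,$ for an arbitrary antisymmetric $M$ is compatible with the constraint \eqref{eq: condition-chi} forcing $\chi$ to kill every $S_i$, and this compatibility is precisely what splitness of $\R$ provides, through the completion of $\Sigma \cup \Lambda$ to a $\kh$--basis. Everything else reduces to unwinding the definitions plus the already-available Proposition \ref{prop: 2cocdef-realiz}.
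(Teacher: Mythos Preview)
Your argument is correct. The paper itself gives no proof here, simply deferring to \cite{GG3}; your construction of $\chi$ on the basis $\Sigma \cup \Lambda \cup \{Z_k\}$ is exactly the natural approach, and the uniqueness-of-split-minimal-realizations observation for part (b) cleanly closes the argument.
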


\vskip9pt

   As a byproduct, we find the following ``2-cocycle counterpart'' of  Lemma \ref{lemma: twist=sym}:

\vskip11pt

\begin{lema}  \label{lemma: 2-cocycle=sym}
  \textsl{(cf.\ \cite[Lemma 2.3.6]{GG3} )}
 With notation as above, let  $ \, P , P' \in M_n\big(\kh\big) \, $.  Then  $ P $  and  $ P' $  are 2-cocycle equivalent (for the above 2-cocycle action on  $ M_n\big(\kh\big) $  of an additive group  $ \lieso_t\big(\kh\big) \, $)  if and only if  $ \, P_s = P'_s \, $.   \hfill  $ \square $
\end{lema}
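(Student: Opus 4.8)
The plan is to reduce the statement to the results on $2$--cocycle deformations of realizations already in place, exactly mirroring the proof of Lemma \ref{lemma: twist=sym} in the twist setting. The ``only if'' implication is the easy half: if $P$ and $P'$ are $2$--cocycle equivalent, say $\, P' = P_{(\chi)} \,$ for some $\, \chi \in \text{\sl Alt}_{\,\kh}^{\,S}(\lieh) \,$, then Proposition \ref{prop: 2cocdef-realiz}\textit{(a)} (equivalently Remarks \ref{rmks: cocycle-deform-realiz}\textit{(b)}) gives at once $\, P'_s = {\big( P_{(\chi)} \big)}_s = P_s \,$, so here I would just quote that.

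For the converse, assuming $\, P_s = P'_s \,$, the plan is as follows. First I would invoke Proposition \ref{prop: exist-realiz's}\textit{(b)} to produce, for $t$ large enough, a \textsl{split} realization $\, \R := \big(\, \lieh \, , \Pi \, , \Pi^\vee \,\big) \,$ of $P$ with $\, \rk(\lieh) = t \,$, together with a $\kh$--basis $\, {\{H_g\}}_{g \in \G} \,$ of $\lieh$ with respect to which the $2$--cocycle action is defined; since $\R$ is split I would choose this basis compatibly with the elements $S_i$ and $\varLambda_i$, so that the subgroup $\, \text{\sl Alt}_{\,\kh}^{\,S}(\lieh) \,$ of $\lieso_t(\kh)$ that actually acts (the antisymmetric bilinear forms vanishing on $\, \text{\sl Span}_\kh\big({\{S_i\}}_{i \in I}\big) \,$) is transparent. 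Then I would apply Proposition \ref{prop: mutual-2-cocycle-def}\textit{(a)} with this $\R$: as $\, P'_s = P_s \,$, it yields $\, \chi \in \text{\sl Alt}_{\,\kh}^{\,S}(\lieh) \,$ with $\, P' = P_{(\chi)} \,$, which by the very definition of the action means that $P$ and $P'$ lie in the same orbit, i.e.\ they are $2$--cocycle equivalent. (Alternatively one could route through Proposition \ref{prop: mutual-2-cocycle-def}\textit{(b)}, showing that both $P$ and $P'$ are $2$--cocycle equivalent to the standard realization of $\, P_s = P'_s \,$, but the direct argument via part \textit{(a)} is cleaner.)

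The only genuinely delicate point is the bookkeeping about which group acts and how large $t$ must be: one must make sure that the chosen $t$ both allows a split realization of $P$ of that rank to exist and meets the hypotheses of Proposition \ref{prop: mutual-2-cocycle-def}\textit{(a)}, and that the identification of $\, \text{\sl Alt}_{\,\kh}^{\,S}(\lieh) \,$ with the relevant additive subgroup of $\lieso_t(\kh)$ is done consistently with the statement. Everything else is a direct citation of the preceding propositions, with no new computation required.
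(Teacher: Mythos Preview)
Your proposal is correct and follows exactly the route the paper indicates: the lemma is stated as ``a byproduct'' of Proposition \ref{prop: mutual-2-cocycle-def}, with the proof itself delegated to \cite[Lemma 2.3.6]{GG3} (hence the $\square$ with no argument in the text). Your reduction of the ``if'' direction to Proposition \ref{prop: mutual-2-cocycle-def}\textit{(a)} via a split realization of sufficiently large rank, and of the ``only if'' direction to Proposition \ref{prop: 2cocdef-realiz}\textit{(a)}, is precisely the intended argument.
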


\vskip13pt

\begin{obs}  \label{obs: two deform's x P & R}
 To sum up, we wish to stress the following, remarkable fact.  Consider two matrices
 $ \, P , P' \in M_n\big(\kh\big) \, $
 with the same symmetric part  $ \, P_s = P'_s \, $,  and a realization
 $ \, \R = \big(\, \lieh \, , \Pi \, , \Pi^\vee \,\big) \, $  of  $ P $
 that is  \textsl{split\/}  and  \textsl{straight}.  Then, by
 Proposition \ref{prop: realiz=twist-standard}  and  Proposition \ref{prop: mutual-2-cocycle-def},
 one can construct  \textsl{two\/}  realizations  $ \R_{\scriptscriptstyle \Phi} $  and
 $ \R_{(\chi)} $  of  $ P' $  by a twist deformation,
 respectively a 2-cocycle deformation, of  $ \R $  that affects only the coroot set  $ \Pi^\vee $
 or the root set  $ \Pi \, $,  respectively; in particular,
 $ \R_{\scriptscriptstyle \Phi} $  is still straight (yet possibly not split) and  $ \R_{(\chi)} $
 is still split (yet possibly not straight),
 while both have the same rank as  $ \R \, $.
\end{obs}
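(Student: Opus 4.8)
The plan is to obtain the two realizations $\R_\Phi$ and $\R_{(\chi)}$ by one application each of Proposition~\ref{prop: realiz=twist-standard}(a) and Proposition~\ref{prop: mutual-2-cocycle-def}(a), and then to extract the bookkeeping --- which part of $\R$ gets modified, which property survives --- directly from the respective constructions, with essentially no computation left to do.

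For the twist side I would first arrange that the rank $t := \rk(\R)$ satisfies $t \ge 3\,n - \rk(P_s)$; if it does not, I would replace $\R$ by an enlargement, which stays split and straight by Lemma~\ref{lemma: split/straight-lifting}(b) and which can be chosen of sufficiently large rank. Then, since $P'_s = P_s$ and $\R$ is straight, Proposition~\ref{prop: realiz=twist-standard}(a) supplies a $\Phi \in \lieso_t\big(\kh\big)$ with $P' = P_\Phi$ whose twist deformation $\R_\Phi$ is again straight. From Definition~\ref{def: twisted-realization} and the formulas in \S\ref{subsec: twisted-realiz} (in particular~\eqref{eq: T-phi}) one has $\lieh_\Phi = \lieh$ and $\Pi_\Phi = \Pi$, while only the coroot set changes, to $\Pi^\vee_\Phi = \big\{ T^+_{\Phi,i}, T^-_{\Phi,i} \big\}_{i \in I}$; hence $\R_\Phi$ modifies only $\Pi^\vee$ and $\rk(\R_\Phi) = \rk(\R)$. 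That $\R_\Phi$ need not be split is precisely the phenomenon recorded --- with counterexamples --- in \S\ref{further_stability}.

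For the $2$--cocycle side no rank hypothesis is needed: since $P'_s = P_s$ and $\R$ is split, Proposition~\ref{prop: mutual-2-cocycle-def}(a) provides a $\chi \in \text{\sl Alt}_{\,\kh}^{\,S}(\lieh)$ with $P' = P_{(\chi)}$ whose $2$--cocycle deformation $\R_{(\chi)} = \big(\,\lieh\,,\,\Pi_{(\chi)}\,,\,\Pi^\vee\big)$ is again split. By Definition~\ref{def:2-cocycle-realization} and the formula for $\Pi_{(\chi)}$ in \S\ref{subsec: 2-cocycle-realiz}, both $\lieh$ and the coroot set $\Pi^\vee$ are left untouched, so $\R_{(\chi)}$ modifies only the root set $\Pi$ and again has the same rank as $\R$. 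Straightness, on the other hand, is not automatic: altering $\Pi$ alone, the classes $\overline{\alpha_i^{(\chi)}} = \overline{\alpha_i} \pm \overline{\chi(\,\text{--}\,,T_i^\pm)}$ in $\overline{\lieh^*}$ may cease to be $\k$--linearly independent even when the $\overline{\alpha_i}$ were --- the $2$--cocycle analogue of the issue in \S\ref{further_stability}.

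Collecting the two steps then yields exactly the assertion: $\R_\Phi$ and $\R_{(\chi)}$ realize one and the same matrix $P'$, have the same rank as $\R$, and are respectively a twist deformation touching only $\Pi^\vee$ (still straight, possibly not split) and a $2$--cocycle deformation touching only $\Pi$ (still split, possibly not straight). I do not expect a genuine obstacle here: the whole content sits inside the two cited propositions, and the remainder is a matter of unwinding Definitions~\ref{def: twisted-realization} and~\ref{def:2-cocycle-realization}. The single point deserving care is the rank bound $t \ge 3\,n - \rk(P_s)$ required to invoke Proposition~\ref{prop: realiz=twist-standard}(a), which is exactly why the twist side --- unlike the $2$--cocycle side --- must be prefaced by a possible enlargement of $\R$.
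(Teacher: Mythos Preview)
The paper gives no formal proof here --- the observation is offered as a summary of what Propositions~\ref{prop: realiz=twist-standard} and~\ref{prop: mutual-2-cocycle-def} jointly deliver --- so your fleshing-out is already more detailed than the original.  Your identification of which piece of data each deformation touches (only $\Pi^\vee$ for the twist, only $\Pi$ for the $2$--cocycle), the survival of straightness resp.\ splitness, and the pointer to \S\ref{further_stability} for the possible loss of splitness under twist are all exactly the intended content.

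There is, however, one genuine wrinkle in your twist-side argument: the proposed enlargement step undermines the conclusion you want.  If $\rk(\R) < 3\,n - \rk(P_s)$ and you pass via Lemma~\ref{lemma: split/straight-lifting}(b) to a larger $\tilde{\R}$, then the twist deformation you build is $\tilde{\R}_\Phi$, not $\R_\Phi$, and it has rank $\rk(\tilde{\R}) > \rk(\R)$ --- directly contradicting the assertion that ``both have the same rank as $\R$''.  The paper sidesteps this simply by citing Proposition~\ref{prop: realiz=twist-standard}, which carries the rank bound $t \ge 3\,n - \rk(P_s)$ as a standing hypothesis; the observation is meant to be read under that implicit assumption.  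You should therefore record the rank condition as a hypothesis rather than attempt to manufacture it by enlargement.
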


\bigskip
 \medskip

\section{Multiparameter Lie superbialgebras and their deformations}  \label{sec: MpLSbAs}
 \vskip7pt
   In this section we first recall the notion of Lie superbialgebra, and we explain their theory of deformations, both by twist and by 2--cocycles.  Then we introduce our  \textsl{multiparameter Lie superbialgebras\/}  (MpLSbAs in short).  Finally, we study deformations of MpLSbAs, both by toral twists and by toral 2--cocycles, proving in particular that our family of MpLSbAs is  \textsl{stable}  for deformations of either type.

\medskip

\subsection{Lie superbialgebras and their deformations}  \label{subsec: Lie-supbialg's & deform's}  {\ }
 \vskip7pt
 We recall hereafter a few notions   --- including some partially new, to the best of our knowledge ---   concerning Lie superbialgebras and their deformations.  For basic definitions and results see  \cite{An,Ka2};  as to notation, we adopt the same used for Lie bialgebras in  \cite{GG3}.  We work over any field of characteristic different from 2.

\medskip

\begin{free text}  \label{gen's on LSbA's}
 {\bf Generalities on Lie superbialgebras.}
We begin by giving some basic and (more or less) well-known definitions.
 \vskip3pt
   A  \emph{super vector space\/}  (or ``vector superspace'') is a  $ \ZZ_2 $--graded  vector space  $ \, V = V_\zero \oplus V_\one \, $;  the homogeneous component  $ V_\zero $  is called the even part and  $ V_\one $  the odd part.  For any homogeneous element  $ \, v \in V \, $,  \,we denote by  $ \, |v| \in \{\zero,\one\} =: \ZZ_2 \, $  the degree of  $ v \,$,  which is defined by  $ \, |v| = \overline{a} \iff v \in V_{\overline{a}} \; $.  By a common abuse of notation, in the sequel whenever a formula involves parity of some elements, one assumes that the concerned elements be homogeneous, and then extends the validity of that formula to all elements by linearity.
                                                           \par
   Any super vector space has a  \textsl{braiding\/}  map $ \, \sigma: V \otimes V \relbar\joinrel\relbar\joinrel\longrightarrow V \otimes V \, $  defined by  $ \, \sigma(v \otimes w) := {(-1)}^{|v||w|} w \otimes v \, $  for all  $ \, v, w \in V \, $,  \,which satisfies the braid equation  $ \, (\sigma \ot \id)(\id \ot\sigma)(\sigma \ot \id) = (\id \ot \sigma)(\sigma \ot \id)(\id \ot\sigma) \, $.  As  $ \, \sigma^2 = \id \, $,  the map  $ \sigma $  induces a representation of the symmetric group  $ \mathbb{S}_n $  on the  $ n $--th tensor  power  $ T^n(V) $  of  $ V $.  In particular, one has the super symmetrizer operator  $ \, \Sym_n : T^n(V) \longrightarrow T^n(V) \, $  given by  $ \, \Sym_n := \sum_{\mu \in \mathbb{S}_n} \mu \, $  and the super antisymmetrizer operator  $ \, \Alt_n : T^n(V) \longrightarrow T^n(V) \, $  given by  $ \, \Alt_n := \sum_{\mu \in \mathbb{S}_n} {(-1)}^{\ell(\mu)} \mu \, $;  \,note that, in order to compute the action of the operators, one has to write each permutation as a product of transpositions.  With this action in mind, the exterior powers  $ \, \Lambda^n(V) \, $  of  $ V $  are defined, as usual, as the subspace of  $ T^n(V) $  given by the image of  $ \Alt_n \, $,  \,or more explicitly as
  $$  \Lambda^n(V)  \,\; := \;\,  \big\{\, v_{(n)} \in T^n(V) \,\big|\, \mu\,.\,v_{(n)} = {(-1)}^{\ell(\mu)} v_{(n)} \,\; \forall \; \mu \in \mathbb{S}_n \,\big\}  $$
   In particular,  $ \Lambda^2(V) = \big\{\, v_{(2)} \in V \otimes V \,\big|\, \sigma\big(v_{(2)}\big) = -v_{(2)} \,\big\} = \Ker(\id + \sigma) \, $.
 \vskip1pt
   Similarly one defines the symmetric powers  $ \, S^n(V) := \Img(\Sym_n) \, $.
 \vskip3pt
   A  \emph{Lie superalgebra\/}  is a pair  $ \, \big(\, \lieg \, , [\,\ ,\ ] \,\big) \, $  where  $ \, \lieg = \lieg_{\zero} \oplus \lieg_{\one} \, $  is a super vector space and  $ \, [\,\ ,\ ] : \lieg \times \lieg \relbar\joinrel\longrightarrow \lieg \, $  is a  $ \ZZ_2 $--graded  bilinear map, i.e.\  $ \, [\lieg_\alpha,\lieg_\beta] \subseteq \lieg_{\alpha+\beta} \, $  for all  $ \, \alpha , \beta \in \ZZ_2 \, $,  \,called a (Lie) superbracket, satisfying the following conditions for any choice of homogeneous elements  $ \, x , y , z \in \lieg \, $:
\begin{equation}   \label{eq: super-Lie axioms}
  \begin{aligned}
    0  &  \, = \,  [x,y] + {(-1)}^{|x||y|} [y,x]  \\
    0  &  \, = \,  {(-1)}^{|x||z|} [x,[y,z]] + {(-1)}^{|y||x|} [y,[z,x]] + {(-1)}^{|z||y|} [z,[x,y]]
  \end{aligned}
\end{equation}
   \indent   The first condition in  \eqref{eq: super-Lie axioms}  is called  \emph{super
   antisymmetry\/}:  it can be expressed in element-free form by saying that  $ \, [\,\ ,\ ] \, $  is a map from  $ \, \lieg \otimes \lieg \, $  to  $ \lieg $  such that
  $$  0  \; = \;  [\,\ ,\ ] \circ (\id + \,\sigma)  \; = \;  [\,\ ,\ ] \circ \Sym_2  $$
   \indent   Note that in general one has that  $ \; (\id - (1,2)) \circ \Sym_{\mathbb{A}_n} \, = \, \Alt_n \; $,  \,where  $ \Sym_{\mathbb{A}_n} $ is the symmetrizer of the alternating subgroup  $ \mathbb{A}_n \, $.  In case  $ \, n = 3 \, $  this reads
  $$  \Alt_3  \,\; = \;\,  (\id - (1,2)) \circ \Sym_{\mathbb{A}_3}  \,\; = \;\,  (\id - (1,2)) \circ \big(\id + (1,2,3) + (3,2,1) \big)  $$
 The action of the operator  $ \Sym_{\mathbb{A}_3} $  on $ T(\lieg) $  is also usually called  \emph{``(action by) cyclic permutations''\/}  and written as \emph{``c.p.''}
                                                    \par
   The second condition in  \eqref{eq: super-Lie axioms}  is called the  \emph{super Jacobi\/}  identity.  One can express the latter by stating that the adjoint action of any element is a  \emph{super derivation},  i.e.
  $$  [x,[y,z]]   \,\; = \;\,  [[x,y],z] \, + \, {(-1)}^{|x||y|}[y,[x,z]]  $$
 or by using the antisymmetrizer operator
  $$  [[\,\ ,\ ]\,,\ ] \circ \Alt_3  \; = \;  0   \qquad  \text{which is equivalent to}  \qquad   [[\,\ ,\ ]\,,\ ] \circ \Sym_{\mathbb{A}_3}  \; = \;  0  $$
   \indent   It is worth noting that the operators  $\Sym_{\mathbb{A}_n} $  and  $ \Alt_n $  actually commute with the action of  $ \lieg $  on tensor powers  $ T^n(\lieg) \, $.  Indeed,  $ \lieg $  acts on itself by the adjoint action  $ \, x\,.\,y \, := \, \ad(x)(y) = [x,y] \, $  for all  $ \, x , y \in \lieg \, $.  This action extends to the super vector space  $ T^n(\lieg) $  by
\begin{equation}  \label{eq: action-g-tensor}
  x\,.\,(y_1 \ot y_2 \ot \cdots \ot y_n)  \,\; := \;\,
{\textstyle \sum\limits_{i=1}^n} \, {(-1)}^{|x| \sum_{k=1}^{i-1} |y_k|} y_1 \ot \cdots \ot [x,y_i] \ot \cdots \ot y_n
\end{equation}
 for all elementary tensors  $ \, y_1 \ot y_2 \ot \cdots \ot y_n \in T^n(\lieg) \, $  with homogeneous factors.  For example, the adjoint action on  $ \, \lieg \ot \lieg \, $  reads
  $$  x\,.\,(y\ot z)  \,\; = \;\,  [x,y] \ot z \, + \, {(-1)}^{|x| |y|} y \ot [x,z]  $$
 One can show that this action commutes with the action of the elementary transposition, whence with the operators  $\Sym_{\mathbb{A}_n} $ and  $ \Alt_n \, $.
 \vskip3pt
   A  \emph{Lie supercoalgebra\/}  is a pair  $ \, (\lieg\,,\delta\,) $  where  $ \, \lieg = \lieg_{\zero} \oplus \lieg_\one \, $  is a super vector space  and  $ \delta : \lieg \longrightarrow \Lambda^2(\lieg) \, $  is a morphism of super vector spaces, called a  \emph{super (Lie) cobracket},  that satisfies the  \emph{super co-Jacobi\/}  identity, namely
  $$  \Alt_3 \circ\, \big( (\id\ot\delta) \circ \delta \big)  \; = \;  0   \!\!\qquad  \text{which is equivalent to}  \qquad\!\!   \Sym_{\mathbb{A}_3} \circ\, \big( (\id \ot \delta) \circ \delta \big)  \; = \;  0  $$
 Equivalently again, this amounts to be the same as requiring that the pair  $ \, \big( \lieg^* , \delta^* \big) \, $  be a Lie superalgebra.  Indeed, let  $ \, \langle - \,, -\rangle : \lieg^* \times \lieg \longrightarrow \Bbbk \, $  be the canonical evaluation map.  If we identify  $ \, \lieg^* \otimes \lieg^* \, $  with a subspace of  $ \, {\big( \lieg \otimes \lieg \big)}^* \, $  via
  $$  \big\langle \alpha \ot \beta , x \ot y \big\rangle  \; = \;  {(-1)}^{|\beta| |x|} \langle \alpha , x \rangle \langle \beta , y \rangle  $$
 for all (homogeneous)  $ \, x , y \in \lieg \, $,  $ \, \alpha , \beta \in \lieg^* \, $,  we have  $ \, \big\langle \alpha \ot \beta ,\delta(x) \big\rangle = \big\langle \delta^*(\alpha \ot \beta) , x \big\rangle \, $.  Then, the image of  $ \delta $  is contained in  $ \Lambda^2(\lieg) $ if and only if  $ \, (\id+\sigma) \circ \delta = 0 \, $  if and only if  $ \, \delta^* \circ (\id + \sigma) = 0 \, $  if and only if  $ \delta^* $  satisfies the super
 antisymmetry condition. Moreover,  $ \delta $  satisfies the super co-Jacobi if and only if  $ \delta^* $  satisfies the super Jacobi.
                                                \par
   To express the equations above on elements, it is better to consider  $ \Lambda^2(\lieg) $  as the subspace of  $ \, V \ot V \, $  given by the image of  $ \Alt_2 \, $.  Then, for all (homogeneous)  $ \, x , y \in \lieg \, $,  \,we set
  $$  x \wedge y  \,\; := \;\,  2^{-1} \Alt_2(x \ot y)  \, = \, 2^{-1} \big( x \otimes y - {(-1)}^{|x||y|} y \otimes x \big)  $$
   \indent   Finally, as a matter of notation, all along the paper we shall use a Sweedler's-like notation  $ \, \delta(x) = x_{[1]} \wedge x_{[2]} \, $  (summation being understood) for any  $ \, x \in \lieg \, $.
 \vskip3pt
   A  \emph{Lie superbialgebra\/}  is a triple  $ \, \big(\, \lieg \, , [\,\ ,\ ] \, , \, \delta \,\big) \, $  where
 $ \, \big(\, \lieg \, , [\,\ ,\ ] \,\big) \, $  is a Lie superalgebra,  $ \, (\,\lieg \, , \delta\,) \, $  is a Lie supercoalgebra, and  $ \delta $  is a  $ 1 $--cocycle  for the Lie superalgebra cohomology,  cf.\ \cite{Le}.  This is a compatibility condition that, for any homogeneous  $ \, x , y \in \lieg \, $,  \,reads as follows
\begin{equation*}
  \delta\big([x,y]\big)  \; = \;  x\,.\,\delta(y) - {(-1)}^{|x||y|} y\,.\,\delta(x)
\end{equation*}
 where  $ \, x\,.\,\delta(y) \, $  denotes the action of  $ \, \lieg \, $  onto  $ \Lambda^2(\lieg) \, $  induced by the adjoint action on  $ \, \lieg \ot \lieg \, $  as in  \eqref{eq: action-g-tensor}.  Taking this into account, the compatibility condition above reads
\begin{align*}
  \delta\big([x,y]\big)  &  \; = \;  \big[x,y_{[1]}\big] \wedge y_{[2]} \, + \, {(-1)}^{|x||y_1|} y_{[1]} \wedge \big[x,y_{[2]}\big]  \, -  \\
  &  \quad - \, {(-1)}^{|x||y|} \big[y,x_{[1]}\big] \wedge x_{[2]} \, - \, {(-1)}^{|x_{2}||y|} x_{[1]} \wedge \big[y,x_{[2]}\big]
\end{align*}
 If we write the action above by  $ \, x\,.\,(y \ot z) = \big[ \Delta(x) , y \ot z \big] \, $,  \,we have
  $$  \delta\big([x,y]\big)  \; = \;  \big[ \Delta(x) ,\delta(y) \big] \, - \, {(-1)}^{|x||y|} \big[ \Delta(y) , \delta(x) \big]  \; = \;  \big[ \Delta(x) , \delta(y) \big] \, + \, \big[ \delta(x) , \Delta(y) \big]  $$
 So, the element-free formulation of the compatibility condition reads
  $$  \delta \circ [\,\ ,\ ]  \; = \;  [\,\ ,\ ] \circ \big( \Delta \ot \delta \big) \circ \Alt_2   \qquad \text{ inside }  \End(\lieg\ot\lieg)  $$
 \vskip3pt
   When  $ \, \big(\, \lieg \, , [\,\ ,\ ] \, , \delta \,\big) \, $  is a Lie superbialgebra, the same holds for  $ \, \big(\, \lieg^* , \delta^* , {[\,\ ,\ ]}^* \,\big) \, $   --- up to topological technicalities, if  $ \lieg $  is infinite-dimensional ---   which is called the  \textit{dual\/}  Lie superbialgebra to  $ \, \big(\, \lieg \, , [\,\ ,\ ] \, , \delta \,\big) \, $.  We simply write  $ \lieg $  for a Lie superbialgebra if the structure maps are clear from the context.
 \vskip3pt
   As claimed before, the super Lie cobracket in a Lie superbialgebra satisfies a  $ 1 $--cocycle  condition. Thus, as in the non-super case, one may define super cobrackets by taking a  $ 1 $--cochain:  one then says that  $ \, \big(\, \lieg \, , [\,\ ,\ ] \, , \delta \,\big) \, $  is a  \textit{coboundary\/}  Lie superbialgebra if there exists  $ \, c \in \lieg \ot \lieg \, $  such that  $ \, \delta = \partial{}c \, $, \,that is, for all (homogeneous)  $ \, x \in \lieg \, $  one has
  $$  \delta(x)  \; = \;  (\partial{}c)(x)  \; = \;  x\,.\,c  \; = \;  [x,c_1] \ot c_2 + {(-1)}^{|x||c_1|} c_1 \ot [x,c_2]  $$
 \vskip3pt
   Dually,  $ \, \big(\, \lieg \, , [\,\ ,\ ] \, , \delta \,\big) \, $  is a  \textit{boundary\/}  Lie superbialgebra if there exists  $ \, \chi \in {( \lieg \ot \lieg)}^* \, $  such that  $ \, [\,\ ,\ ] = \partial_*{}\chi \, $,  \,that is, for all (homogeneous)  $ \, x , y \in \lieg \, $,  \,one has
  $$  [x,y]  \; = \;  (\partial_*{}\chi)(x \ot y)  \; = \;  x_{[1]} \, \chi(x_{[2]},y) \, + \, {(-1)}^{|x| |y_{[1]}|} \, y_{[1]} \, \chi(x,y_{[2]})  $$
 where  $ \, \partial_* \, $  is the coboundary map for the Lie algebra  $ \lieg^* \, $.
                                                                      \par
   Finally, given  $ \, r = r_1 \otimes r_2 \, $  (summation understood) and  $ \, s = s_1 \otimes s_2 \, $  in  $ \, \lieg \otimes \lieg \, $,  \,set
  $$  [\![\,r\,,s\,]\!]  \, := \,  {(-1)}^{|s_1||r_2|} \big[\,r_1\,,s_1\big] \otimes r_2 \otimes s_2 + r_1 \otimes \big[\,r_2\,,s_1\big] \otimes s_2 + {(-1)}^{|s_1||r_2|} \, r_1 \otimes s_1 \otimes \big[\,r_2\,,s_2\big]  $$
 which in compact form reads
  $$  [\![\,r\,,s\,]\!]  \, := \,  (-1)^{|s_1||r_2|} \, [\,r_{1,2}\,,s_{1,3}\,] \, + \, [\,r_{1,2}\,,s_{2,3}\,] \, + \, {(-1)}^{|s_1||r_2|} \, [\,r_{1,3}\,,s_{2,3}\,]  $$
 The following proposition is a  \textit{super}--version  of  \cite[Proposition 8.1.3]{Mj}:
\end{free text}

\vskip5pt

\begin{prop}  \label{prop:coboundary-liesuperbialgebra}  {\ }
\vskip3pt
   (a)\;  Let  $ \, \big(\, \lieg \, , [\,\ ,\ ] \,\big) \, $  be a Lie superalgebra and  $ \, c \in \lieg \ot \lieg \, $.  Then  $ \, \big( \lieg \, , [\,\ ,\ ] \, , \partial c \,\big) \, $  is a Lie superbialgebra if and only if
for all  $ \, x \in \lieg \, $  one has
  $$  x\,.\,\big((\id + \sigma)(c)\big) \, = \, 0   \qquad  \text{ and } \qquad
x\,.\,[\![\, c \, , c \,]\!] \, = \, 0  $$
 \vskip3pt
   (b)\;  Let  $ \, (\, \lieg \, , \delta \,) \, $  be a Lie supercoalgebra and
$ \, \chi \in {(\, \lieg \ot \lieg )}^* \, $.  Then  $ \, (\, \lieg \, , \partial_* \chi \, , \delta \,) \, $  is a Lie superbialgebra if and only if for all  $ \, \alpha \in \lieg^* \, $  one has
  $$  \alpha\,.\,\big(\chi (\id + \sigma)\big) \, = \, 0   \qquad  \text{ and }  \qquad   \alpha\,.\,{[\![\, \chi \, , \chi \,]\!]}_* \, = \, 0  $$
 where  $ \, {[\![\, \alpha \, , \beta \,]\!]}_* \, $  stands for the corresponding object defined above in  $ \, {(\, \lieg \ot \lieg \ot \lieg \,)}^* \, $  associated with any pair  $ \, \alpha , \beta \in \lieg^* \, $.
\end{prop}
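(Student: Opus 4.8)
The plan is to prove part (a) by a direct computation and then deduce part (b) by duality. For (a), recall from \S\ref{gen's on LSbA's} that, given a Lie superalgebra $\, \big(\lieg,[\,\ ,\ ]\big) \, $, a map $\, \delta : \lieg \to \lieg \ot \lieg \, $ makes $\, \big(\lieg,[\,\ ,\ ],\delta\big) \, $ into a Lie superbialgebra exactly when: (i) $\, \Img(\delta) \subseteq \Lambda^2(\lieg) \, $, equivalently $\, (\id+\sigma)\circ\delta = 0 \, $; (ii) $\delta$ obeys super co-Jacobi, $\, \Alt_3\circ\big((\id\ot\delta)\circ\delta\big) = 0 \, $; and (iii) $\delta$ is a $1$--cocycle for the Lie superalgebra cohomology of $\lieg$ with coefficients in $\lieg\ot\lieg$. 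Taking $\, \delta := \partial c \, $, condition (iii) is automatic: the adjoint action turns $\lieg\ot\lieg$ into a $\lieg$--module (the module axiom being the super Jacobi identity), so $\, (\partial c)\big([x,y]\big) = [x,y]\,.\,c = x\,.\,(y\,.\,c) - {(-1)}^{|x||y|}\,y\,.\,(x\,.\,c) \, $, which is precisely the $1$--cocycle identity for $\partial c$. Hence only (i) and (ii) need to be unravelled, and they will yield the two displayed conditions.

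For (i): the $\lieg$--action on $T^2(\lieg)$ commutes with the braiding $\sigma$, since $\sigma$ is the image of the elementary transposition and --- as recalled in \S\ref{gen's on LSbA's} --- the action commutes with transpositions. Therefore, for all $\, x \in \lieg \, $,
\[
  (\id+\sigma)\big(\delta(x)\big) \; = \; (\id+\sigma)(x\,.\,c) \; = \; x\,.\,\big((\id+\sigma)(c)\big) \,,
\]
so $\, \delta(x) \in \Ker(\id+\sigma) \, $ for every $x$ if and only if $\, x\,.\,\big((\id+\sigma)(c)\big) = 0 \, $ for all $\, x \in \lieg \, $, which is the first asserted condition.

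For (ii): the crux is the identity
\[
  \Alt_3\Big( \big((\id\ot\delta)\circ\delta\big)(x) \Big) \; = \; \lambda \cdot x\,.\,[\![\, c \, , c \,]\!] \qquad (\forall \; x \in \lieg)
\]
for a suitable nonzero scalar $\lambda$, where $\, [\![\, c \, , c \,]\!] \, $ is as in \S\ref{gen's on LSbA's}. One proves it by substituting $\, \delta(x) = x\,.\,c = [x,c_1]\ot c_2 + {(-1)}^{|x||c_1|}\,c_1 \ot [x,c_2] \, $, applying $\, \id\ot\delta \, $, and then symmetrising with $\Alt_3$, using repeatedly that the $\lieg$--action commutes with all permutations (hence with $\Alt_3$) and the super Jacobi identity in the form ``$\ad$ is a super derivation''. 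This is the faithful super-analogue of the computation behind \cite[Proposition 8.1.3]{Mj}; the only extra ingredient is the systematic handling of Koszul signs. Granting the identity, super co-Jacobi holds if and only if $\, x\,.\,[\![\, c \, , c \,]\!] = 0 \, $ for all $\, x \in \lieg \, $; combined with (i) this proves (a). I expect this sign-tracking computation to be the only real difficulty; everything else is formal.

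Part (b) follows by dualising. By the duality of \S\ref{gen's on LSbA's}, $\, (\lieg,\partial_*\chi,\delta) \, $ is a Lie superbialgebra if and only if its dual $\, \big(\lieg^*,\delta^*,(\partial_*\chi)^*\big) \, $ is (with the customary topological care when $\lieg$ is infinite-dimensional). Identifying $\, \chi \in {(\lieg\ot\lieg)}^* \, $ with the corresponding $\, \chi^* \in \lieg^*\ot\lieg^* \, $, one has $\, (\partial_*\chi)^* = \partial\big(\chi^*\big) \, $, the coboundary of $\chi^*$ in the Lie superalgebra $\, \big(\lieg^*,\delta^*\big) \, $ --- this is exactly the way $\partial_*$ is defined as the dual of $\partial$. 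Applying part (a) to $\, \big(\lieg^*,\delta^*\big) \, $ with $\, c = \chi^* \, $, we obtain that $\, \big(\lieg^*,\delta^*,\partial(\chi^*)\big) \, $ is a Lie superbialgebra if and only if $\, \alpha\,.\,\big((\id+\sigma)(\chi^*)\big) = 0 \, $ and $\, \alpha\,.\,[\![\, \chi^* \, , \chi^* \,]\!] = 0 \, $ for all $\, \alpha \in \lieg^* \, $. Rewriting these two conditions through the canonical pairing $\, \langle -,-\rangle : \lieg^* \times \lieg \to \Bbbk \, $ turns them respectively into $\, \alpha\,.\,\big(\chi(\id+\sigma)\big) = 0 \, $ and $\, \alpha\,.\,{[\![\, \chi \, , \chi \,]\!]}_* = 0 \, $, which is exactly the statement of (b).
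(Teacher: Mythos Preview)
Your plan is correct and matches the paper's approach: both verify condition (i) by commuting the action with $\sigma$, reduce co-Jacobi to the $\ad$-invariance of $[\![c,c]\!]$, note that the $1$--cocycle condition is automatic for a coboundary, and obtain (b) by duality. The only refinement in the paper's execution that you do not mention is that, once the first condition forces $x.c^+=0$ for the symmetric part $c^+$, one may replace $\partial c$ by $\partial c^-$ and carry out the co-Jacobi computation with the antisymmetric part $c^-$ alone, reaching $\Sym_{\mathbb{A}_3}\big((\id\ot\partial c)\circ(\partial c)\big)(x)=x.[\![c^-,c^-]\!]$; this is a convenience rather than a different idea.
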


%
%
 \textit{Proof.}
 We prove only item  \textit{(a)},  and we leave instead  \textit{(b)\/}  to the reader as it follows quite directly from duality.
                                                       \par
   The first equality is the one that guarantees that the image of  $ \partial c $
is in  $ \, \Lambda^2(\lieg) \, $:  \,for any  $ \, x \in \lieg \, $  we have
  $$  (\id+\sigma)\big((\partial c )(x)\big)  \, = \,  \Sym_2(x.c)  \, = \,  x.\Sym_2(c)  \, = \,  x.\big((\id + \sigma)(c)\big)  $$
 Hence,  $ \, (\partial c )(x) \in \Ker (\id+\sigma) = \Lambda^2(\lieg) \, $  if and
only if  $ \, x\,.\,\big((\id + \sigma)(c)\big) = 0 \, $.
                                                       \par
   To check the super co-Jacobi identity, we proceed as follows.  Write  $ \, c = c^+ + c^- \, $  with  $ \, c^+ \in S^2(\lieg) \, $  being the symmetric part of $ c $  and  $ \, c^- \in \Lambda^2(\lieg) \, $  the antisymmetric part.  Then
one has that  $ \, (\partial c )(x) = (\partial c^-)(x) \, $  for all  $ \, x \in \lieg \, $.  Indeed, as  $ \, \Sym_2(c^+) = 2\,c^+ \, $  and  $ \, \Sym_2(c^-) = 0 \, $,  \,the equality above tells us that  $ \, x\,.\,c^+ = 0 \, $.  Thus we have that
 $ \, (\partial c )(x)  = x\,.\,c = x\,.\,c^- = (\partial c^-)(x) \, $.  Now we show that
  $$  \big( \Sym_{\mathbb{A}_3} \circ\, \big( \id \ot (\partial c ) \big) \circ (\partial c) \big)(x)  \,\; = \;\,  x\,.\,[\![\, c^- , c^- \,]\!]  $$
 for all  $ \, x \in \lieg \, $;  \,the claim then will follow from this.
                                            \par
   Take  $ \, x \in \lieg \, $.  We begin by checking the following:
\begin{align*}
  &  \big( \big( \id \ot (\partial c^-) \big) \circ (\partial c^-) \big)(x)  \; = \;  \big( \id \ot (\partial c^-) \big)(x\,.\,c^-)  \; =  \\
  &  \qquad   = \;  \big( \id \ot (\partial c^-) \big)\Big( [\,x\,,c^-_1] \ot c^-_2 \, + \, {(-1)}^{|x||c^-_1|} \, c^-_1 \ot [\,x\,,c^-_2] \Big)  \; =  \\
%
%
  &  \qquad   = \;  [\,x\,,c^-_1] \ot (c^-_2.\,c^-) + {(-1)}^{|x||c^-_1|} \, c^-_1 \ot \big(\, [\,x\,,c^-_2]\,.\,c^- \big)  \; =  \\
  &  \qquad  = \;  [\,x\,,c^-_1] \ot \big[\,c^-_2, {c'}^-_1\big] \ot {c'}^-_2 \, + \, {(-1)}^{|c^-_2||{c'}^-_1|} [\,x\,,c^-_1] \ot {c'}^-_1 \ot \big[\,c^-_2,{c'}^-_2\big] \, +  \\
  &  \qquad \qquad   + \, {(-1)}^{|x||c^-_1|} \, c^-_1 \ot \big[ [\,x\,,c^-_2] \, , {c'}^-_1 \big] \ot {c'}^-_2 \, +  \\
  &  \qquad \qquad \qquad   + \, {(-1)}^{|x||c^-_1|+|x||{c'}^-_1|+|c^-_2||{c'}^-_1|} \, c^-_1 \ot {c'}^-_1 \ot \big[ [\,x\,,c^-_2] \, , {c'}^-_2 \big]
\end{align*}
 Then we get
  $$  \displaylines{
   \big( \Sym_{\mathbb{A}_3} \circ\, \big( \id \ot (\partial c^-) \big) \circ (\partial c^-) \big)(x)  \; =   \hfill  \cr
   \quad   = \;  \Sym_{\mathbb{A}_3}\Big(\, [\,x\,,c^-_1] \ot \big[\,c^-_2 , {c'}^-_1 \big] \ot {c'}^-_2 \, + \, {(-1)}^{|c^-_2||{c'}^-_1|} \, [\,x\,,c^-_1] \ot {c'}^-_1 \ot \big[\,c^-_2 , {c'}^-_2 \big] \, +   \hfill  \cr
   \quad \qquad   + \, {(-1)}^{|x||c^-_1|} \, c^-_1 \ot \big[ [\,x\,,c^-_2] \, , {c'}^-_1 \big] \ot {c'}^-_2 \, +   \hfill  \cr
  \quad \qquad \qquad   + \, {(-1)}^{|x||c^-_1|+|x||{c'}^-_1|+|c^-_2||{c'}^-_1|} \, c^-_1 \ot {c'}^-_1 \ot \big[ [\,x\,,c^-_2]\,,{c'}^-_2\big] \,\Big)  \; =   \hfill  \cr
  \quad   = \;  \Sym_{\mathbb{A}_3}\Big(\, [\,x\,,c^-_1] \ot \big[\,c^-_2 , {c'}^-_1\big] \ot {c'}^-_2 \, + \, {(-1)}^{|c^-_2||{c'}^-_1|} \, [\,x\,,c^-_1] \ot {c'}^-_1 \ot \big[\,c^-_2,{c'}^-_2\big] \, +   \hfill  \cr
  \quad \;\;   + {(-1)}^{|x||c^-_1|+|c^-_1|(|x| + |c^-_2| + |{c'}^-_1|+|{c'}^-_2|)} \, \big[ [\,x\,,c^-_2] \, , {c'}^-_1 \big] \ot {c'}^-_2 \ot c^-_1 \, +   \hfill  \cr
  \quad \;\;\;\;   + {(-1)}^{|x|(|c^-_1|+|{c'}^-_1|)+|c^-_2||{c'}^-_1|+(|x|+|c^-_2|+|{c'}^-_2|)(|{c'}^-_1|+|c^-_1|)} \, \big[ [\,x\,,c^-_2] \, , {c'}^-_2 \big] \ot c^-_1 \ot {c'}^-_1 \,\Big)  \, =   \hfill  \cr
  \quad   = \;  \Sym_{\mathbb{A}_3}\Big(\, [\,x\,,c^-_1] \ot \big[\,c^-_2 , {c'}^-_1\big] \ot {c'}^-_2 \, + \, {(-1)}^{|c^-_2||{c'}^-_1|} \, [\,x\,,c^-_1] \ot {c'}^-_1 \ot \big[\,c^-_2,{c'}^-_2\big] \, +   \hfill  \cr
  \quad \qquad   + \, {(-1)}^{|c^-_1| |c^-_2| } \, \big[ [\,x\,,c^-_2] \, , {c'}^-_1 \big] \ot {c'}^-_2 \ot c^-_1 \, +   \hfill  \cr
  \quad \qquad \qquad   + \, {(-1)}^{|{c'}^-_2||{c'}^-_1|
+|c^-_2||c^-_1|+|{c'}^-_2| |c^-_1|} \, \big[ [\,x\,,c^-_2] \, , {c'}^-_2 \big] \ot c^-_1 \ot {c'}^-_1 \,\Big)  \; =   \hfill  \cr
  \quad   = \;  \Sym_{\mathbb{A}_3}\Big(\, [\,x\,,c^-_1] \ot \big[\,c^-_2 , {c'}^-_1\big] \ot {c'}^-_2 \, + \, {(-1)}^{|c^-_2||{c'}^-_1|} \, [\,x\,,c^-_1] \ot {c'}^-_1 \ot \big[\,c^-_2,{c'}^-_2\big] \, +   \hfill  \cr
  \quad \qquad   + \, {(-1)}^{||c^-_1| |c^-_2|} \, \big[\,x\,,\big[\,c^-_2,{c'}^-_1\big]\big] \ot {c'}^-_2 \ot c^-_1 \, -   \hfill  \cr
  \quad \qquad \qquad   - \, {(-1)}^{|c^-_1| |c^-_2| + |x| |c^-_2|} \, \big[\, c^-_2 , \big[\,x\,,{c'}^-_1\big] \big] \ot {c'}^-_2 \ot c^-_1 \, +   \hfill  \cr
  \quad \qquad \qquad \qquad   + \, {(-1)}^{|x||{c'}^-_2|+|{c'}^-_1||{c'}^-_2|} \, \big[\, {c'}^-_2 , [\,x\,,c^-_1] \big] \ot c^-_2 \ot {c'}^-_1 \,\Big)  \; =   \hfill  \cr
  \quad   = \;  \Sym_{\mathbb{A}_3}\Big(\, [\,x\,,c^-_1] \ot \big[\,c^-_2 , {c'}^-_1\big] \ot {c'}^-_2 \, +   \hfill  \cr
  \quad \qquad \qquad   + \, {(-1)}^{|c^-_2||{c'}^-_1|} \, [\,x\,,c^-_1] \ot {c'}^-_1 \ot \big[\,c^-_2,{c'}^-_2\big] \, +   \hfill  \cr
  \quad \qquad \qquad \qquad \qquad   + \, {(-1)}^{|c^-_1||c^-_2|} \, \big[\,x\,,\big[\,c^-_2,{c'}^-_1\big]\big] \ot {c'}^-_2 \ot c^-_1 \,\Big)  \; =   \hfill  \cr
  \quad   = \;  \Big( \Sym_{\mathbb{A}_3} \circ\, \big(\, [\,x\,,-\,] \ot \id \ot \id \big) \Big) \big(\, c^-_1 \ot \big[\,c^-_2,{c'}^-_1\big] \ot {c'}^-_2 \, +   \hfill  \cr
  \quad \qquad   + \, {(-1)}^{|c^-_2||{c'}^-_1|} \, c^-_1 \ot {c'}^-_1 \ot \big[\,c^-_2,{c'}^-_2\big] \, + \, {(-1)}^{|c^-_1||{c'}^-_1|} \, \big[\,{c'}^-_1,c^-_1\big] \ot {c'}^-_2 \ot c^-_2 \,\big)  \; =   \hfill  \cr
   }  $$
  $$  \displaylines{
  \quad   = \;  \Big( \Sym_{\mathbb{A}_3} \circ\, \big(\, [\,x\,,-\,] \ot \id \ot \id \big) \Big) \big(\, {(-1)}^{|c^-_1||{c'}^-_2|} \, \big[\,{c'}^-_1,c^-_1\big] \ot {c'}^-_2 \ot c^-_2 \,\big)  \, +   \hfill  \cr
  \quad \qquad   + \, c^-_1 \ot \big[\, c^-_2 , {c'}^-_1 \big] \ot {c'}^-_2 \, + \, {(-1)}^{|c^-_2||{c'}^-_1|} \, c^-_1 \ot {c'}^-_1 \ot \big[\,c^-_2,{c'}^-_2\big]  \; =   \hfill  \cr
  \quad \hskip101pt   = \;  \Big( \Sym_{\mathbb{A}_3} \circ\, \big(\, [\,x\,,-\,] \ot \id \ot \id \big) \Big) \big([\![\, c^- , c^- \,]\!]\big)  \; = \;  x\,.\,[\![\, c^- , c^- \,]\!]   }  $$
 Note that, although it is written in a compact form, the last equality involves a long computation where a lot of signs are involved.
                                                                    \par
   Finally, the cocycle condition follows from the fact that $\partial c$ is a
coboundary. Indeed, by using the co-Jacobi identity, for all  $ \, x , y \in \lieg \, $  we have
  $$  \displaylines{
   (\partial c)\big([x,y]\big)  \; = \;  [x,y]\,.\,c  \; = \;
\big[[x,y]\,,c_1\big] \ot c_2 \, + \, {(-1)}^{(|x|+|y|)|c_1|} \, c_1 \ot \big[[x,y]\,,c_2\big]  \; =   \hfill  \cr
   \quad   = \;  \big[x,[y,c_1]\big] \ot c_2 \, - \, {(-1)}^{|x||y|} \, \big[y,[x,c_1]\big] \ot c_2 \; +   \hfill  \cr
   \hfill   + \; {(-1)}^{(|x|+|y|)|c_1|} \, c_1 \ot \big[x,[y,c_2]\big] \,
- \, {(-1)}^{(|x|+|y|)|c_1|+|x||y|} \, c_1 \ot \big[y,[x,c_2]\big]  \; =   \quad \qquad  \cr
   \hfill   = \;  x\,.\big((\partial c)(y)\big) \, - \, {(-1)}^{|x||y|} \, y\,.\big((\partial c)(x)\big)   \quad  \qed  }  $$
%
%
%

\vskip9pt

\begin{free text}  \label{deformations of LSbA's}
{\bf Deformations of Lie superbialgebras.}
 To the best of the our knowledge, there is a lack of reference and bibliography about
 a general theory of  deformations  for Lie superbialgebras. The reason may lie in the fact that
 Lie superbialgebras may be related to Lie bialgebras through the process of bosonization. Moreover,
 the theory of deformations of Lie bialgebras can be considered as a sub-theory of that of Lie algebras. For
 the latter, we refer to
 \cite{CG},  \cite{MW}, and references therein.
 In fact, as in the present work we are mainly interested in two special kinds of deformations performed by
 \emph{even} objects,
 where either the Lie cobracket or the Lie bracket alone is deformed, leaving the ``other side'' of the overall structure untouched,
 we rely heavily on definitions and basic results on Lie bialgebra cohomology.

 \vskip3pt
  We study first a deformation of the super cobracket.  Let  $ (\lieg, [\,\ ,\ ],\, \delta)$
   be a Lie superbialgebra.  A homogeneous even element $ \, c \in \lieg \otimes \lieg \, $  is
   called a \emph{twist} if
\begin{equation}  \label{eq: twist-cond_Lie-superbialg}
   x\,.\,\big((\id +\sigma )(c)\big)  \; = \;  0
     \quad  \text{ and }  \quad
   x\,.\,\big( \Sym_{\mathbb{A}_{3}}(\id \otimes \delta)(c)  - [\![\, c \, , c \,]\!] \,\big)  \; = \;  0
\end{equation}
for all  $ \, x \in \lieg \, $;  \,here the (adjoint) action of  $ \, x \in \lieg \, $  is onto  $ \, \lieg \otimes \lieg \otimes \lieg \, $   --- cf.\ the action onto  $ \lieg \otimes \lieg \, $  above.  As a side note, observe that this definition coincides with that of twist for Lie bialgebras, but with respect to a different symmetric braiding.

\vskip11pt

\begin{prop}  \label{prop:twist-superliebial}
 Let  $ \, \big(\, \lieg \, , [\,\ ,\ ] \, , \, \delta \,\big) \, $  be a Lie superbialgebra and  $ \, c \in \lieg \otimes \lieg \, $   a twist. Then the map
$ \, \delta^{\,c} : \lieg \relbar\joinrel\longrightarrow \lieg \wedge \lieg \, $  defined by
\begin{equation}  \label{eq: def_twist-delta}
  \delta^{\,c}(x)  \; := \;  \delta(x) - (\partial c)(x)  \; = \;  \delta(x) - x\,.\,c   \qquad  \forall \;\; x \in \lieg
\end{equation}
 is a new super cobracket on the Lie superalgebra  $ \, \big(\, \lieg \, ; \, [\,\ ,\,\ ] \,\big) \, $  making  $ \, \big(\, \lieg \, ; \, [\,\ ,\ ] \, , \, \delta^{c} \,\big) \, $  into a new Lie superbialgebra.
\end{prop}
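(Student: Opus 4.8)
The plan is to check directly that $\delta^{\,c} = \delta - \partial c$ satisfies the three defining axioms of a Lie‑superbialgebra cobracket on the fixed Lie superalgebra $\big(\lieg,[\,\ ,\ ]\big)$: \textbf{(A)} $\Img(\delta^{\,c}) \subseteq \Lambda^2(\lieg)$ (so that $\delta^{\,c}$ is a morphism of super vector spaces into $\lieg\wedge\lieg$, which in particular forces it to be \emph{even}, being a difference of even maps); \textbf{(B)} the super co‑Jacobi identity $\Sym_{\mathbb{A}_3}\circ(\id\otimes\delta^{\,c})\circ\delta^{\,c} = 0$; and \textbf{(C)} the $1$‑cocycle compatibility with $[\,\ ,\ ]$. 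Since $\delta$ already enjoys (A)--(C), everything is reduced to controlling the $\partial c$ contribution, and the two requirements packaged into the notion of twist in \eqref{eq: twist-cond_Lie-superbialg} are tailored exactly for this.

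For (A): the braiding $\sigma$ commutes with the adjoint $\lieg$–action on $\lieg\otimes\lieg$ (as recorded in \S\ref{gen's on LSbA's}), so $(\id+\sigma)\big((\partial c)(x)\big) = (\id+\sigma)(x\,.\,c) = x\,.\,\big((\id+\sigma)(c)\big) = 0$ by the first twist condition; hence $(\partial c)(x)\in\Lambda^2(\lieg)$, and since $\delta(x)\in\Lambda^2(\lieg)$ too, so is $\delta^{\,c}(x)$. For (C): $\delta$ is a $1$–cocycle by hypothesis, and $\partial c$ is a $1$–coboundary, hence a $1$–cocycle as well; as the $1$–cocycles form a $\Bbbk$–subspace, $\delta^{\,c} = \delta - \partial c$ is again a $1$–cocycle, i.e.\ $\delta^{\,c}([x,y]) = x\,.\,\delta^{\,c}(y) - {(-1)}^{|x||y|}\,y\,.\,\delta^{\,c}(x)$ for all homogeneous $x,y$ — this is exactly the ``the cocycle condition follows because $\partial c$ is a coboundary'' argument already spelled out at the end of the proof of Proposition \ref{prop:coboundary-liesuperbialgebra}.

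The substance is (B). Expanding $(\id\otimes\delta^{\,c})\circ\delta^{\,c} = (\id\otimes\delta)\delta - (\id\otimes\delta)(\partial c) - (\id\otimes\partial c)\delta + (\id\otimes\partial c)(\partial c)$ and applying $\Sym_{\mathbb{A}_3}$, the first summand vanishes by co‑Jacobi for $\delta$. The last summand is precisely the term handled in the proof of Proposition \ref{prop:coboundary-liesuperbialgebra}: using $x\,.\,\big((\id+\sigma)(c)\big)=0$ one gets $(\partial c)(x) = (\partial c^-)(x)$, whence $\big(\Sym_{\mathbb{A}_3}\circ(\id\otimes\partial c)\circ(\partial c)\big)(x) = x\,.\,[\![\,c^-,c^-\,]\!] = x\,.\,[\![\,c,c\,]\!]$ (the last equality because $x\,.\,c^+ = 0$ and $x\,.\,(-)$ is a super derivation for $[\![\,\cdot\,,\cdot\,]\!]$). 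For the two cross terms one writes $c = c_1\otimes c_2$ (with $|c_1|+|c_2| = \zero$, as $c$ is even), computes $(\id\otimes\delta)(x\,.\,c)$ and $(\id\otimes\partial c)(\delta(x))$ elementwise, and invokes the $1$–cocycle identity for $\delta$ (to rewrite $\delta([x,c_2])$, etc.) together with the $\lieg$–equivariance of $\Sym_{\mathbb{A}_3}$ on $T^3(\lieg)$; the outcome is $\big(\Sym_{\mathbb{A}_3}\circ\big[(\id\otimes\delta)(\partial c) + (\id\otimes\partial c)\delta\big]\big)(x) = x\,.\,\big(\Sym_{\mathbb{A}_3}(\id\otimes\delta)(c)\big)$. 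Assembling the three contributions gives $\big(\Sym_{\mathbb{A}_3}(\id\otimes\delta^{\,c})\delta^{\,c}\big)(x) = x\,.\,\big([\![\,c,c\,]\!] - \Sym_{\mathbb{A}_3}(\id\otimes\delta)(c)\big) = 0$ by the second twist condition in \eqref{eq: twist-cond_Lie-superbialg}, which completes the verification that $\big(\lieg,[\,\ ,\ ],\delta^{\,c}\big)$ is a Lie superbialgebra.

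The main obstacle is purely computational: as in the proof of Proposition \ref{prop:coboundary-liesuperbialgebra}, the cross‑term identity above conceals a lengthy bookkeeping of Koszul signs, and one must use the action \eqref{eq: action-g-tensor} on $T^3(\lieg)$ consistently, expand each cyclic permutation in $\Sym_{\mathbb{A}_3}$ as a product of transpositions, and track the parities $|c_1|,|c_2|$ when commuting them past $x$, $x_{[1]}$, $x_{[2]}$. No new idea beyond those already used for Proposition \ref{prop:coboundary-liesuperbialgebra} is required; the decisive structural inputs are the $\lieg$–equivariance of $\Sym_{\mathbb{A}_3}$ and the fact that $\delta$ is a $1$–cocycle.
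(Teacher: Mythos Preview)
Your proposal is correct and follows essentially the same route as the paper's proof: the same three-axiom decomposition, the same use of Proposition~\ref{prop:coboundary-liesuperbialgebra} for the $(\id\otimes\partial c)(\partial c)$ term, the same cross-term identity $\Sym_{\mathbb{A}_3}\big[(\id\otimes\delta)(\partial c)+(\id\otimes\partial c)\delta\big](x)=x\,.\,\big(\Sym_{\mathbb{A}_3}(\id\otimes\delta)(c)\big)$, and the same ``coboundary $\Rightarrow$ cocycle'' argument for compatibility. You are in fact slightly more explicit than the paper in bridging $x\,.\,[\![c^-,c^-]\!]$ to $x\,.\,[\![c,c]\!]$ via the derivation property of the adjoint action together with $x\,.\,c^+=0$; the paper invokes this step tacitly.
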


\pf
 The proof follows  \cite[Theorem 8.1.7]{Mj},  mutatis mutandis, taking into account the sign changes.
                                                              \par
   As the image of  $ \delta $  lies in  $ \Lambda^2(\lieg) \, $,  the first condition in  \eqref{eq: twist-cond_Lie-superbialg}  (which is the same as the first in Proposition \ref{prop:coboundary-liesuperbialgebra})  ensures that also the image of   $ \delta^c $  is contained in  $ \Lambda^2(\lieg) \, $.
                                                              \par
   In order to prove that  $ \delta^c $  is a Lie super-cobracket, we have to check the identity  $ \, \Sym_{\mathbb{A}_3}\!\big( (\id \ot \delta^c) \circ \delta^c \,\big)(x) = 0 \, $  for all  $ \, x \in \lieg \, $.  But  $ \, \Sym_{\mathbb{A}_3}\!\big( (\id \ot \delta) \circ \delta \,\big)(x) = 0 \, $,  \,hence
\begin{align*}
   (\id \ot \delta^c) \circ \delta^c  &  \; = \;  \big(\id \ot (\delta - \partial c) \big) \circ (\delta - \partial c)  \; =  \\
   &  = \;  (\id \ot \delta\,) \circ \delta \, - \, (\id \ot \delta\,) \circ (\partial c) \, - \, (\id \ot \partial c) \circ \delta \, + \, (\id \ot \partial c) \circ (\partial c)
\end{align*}
 and we have
 $ \, \Sym_{\mathbb{A}_3}\!\big((\id \ot \partial c) \circ (\partial c)\big)(x) =
 x.[\![\, c \, , c \,]\!] \, $,
 by the proof of  Proposition \ref{prop:coboundary-liesuperbialgebra}:
 \,thus it is enough to prove that
  $$  \Sym_{\mathbb{A}_{3}}\!\big((\id\ot\delta)\circ(\partial c) + (\id\ot\partial c) \circ \delta \,\big)(x)  \; = \;  x\,.\,\big( \Sym_{\mathbb{A}_{3}}(\id \otimes \delta)(c)\big)  $$
for all  $ \, x \in \lieg \, $.  Set  $ \, \delta(x) = x_{[1]} \ot x_{[2]} \in \Lambda^2(\lieg) \, $:  \,then by the cocycle condition we have
\begin{align*}
& \big((\id \ot \delta) \circ (\partial c) + (\id \ot \partial c) \circ \delta \,\big) (x)  \; =  \\
& \qquad  = \;
[x,c_{1}]\ot \delta(c_{2}) + (-1)^{|x||c_{1}|}c_{1}\ot \delta([x,c_{2}]) +
x_{[1]}\ot [x_{[2]},c_{1}]\ot c_2 \, +  \\
&  \quad \qquad  + (-1)^{|x_{[2]}||c_{1}|} x_{[1]}\ot c_{1}\ot [x_{[2]},c_{2}]  \; =  \\
&  \qquad  = \;
[x,c_{1}]\ot \delta(c_{2}) + (-1)^{|x||c_{1}|}c_{1}\ot \big( x \,.\, \delta(c_{2})\big) - c_1 \ot \big( c_2 \,.\, \delta(x) \big) \, +  \\
&  \qquad \qquad
+ x_{[1]}\ot [x_{[2]},c_{1}]\ot c_2
  + (-1)^{|x_{[2]}||c_{1}|} x_{[1]}\ot c_{1}\ot [x_{[2]},c_{2}]  \; =  \\
&  \qquad  = \;
x \,.\, \big(c_{1}\ot \delta(c_{2})\big) -
c_1 \ot \big( c_2 \,.\, \delta(x) \big) +x_{[1]}\ot [x_{[2]},c_1]\ot c_2
\, +  \\
&   \qquad\qquad
  + (-1)^{|x_{[2]}||c_{1}|} x_{[1]}\ot c_{1}\ot [x_{[2]},c_{2}]  \; =  \\
&  \qquad  = \;
x \,.\, \big((\id\ot \delta)(c)\big) -
c_{1} \ot \big( c_2 \,.\, \delta(x) \big) +x_{[1]}\ot [x_{[2]},c_{1}] \ot c_2
\, +  \\
&   \qquad\qquad
  + (-1)^{|x_{[2]}||c_{1}|} x_{[1]}\ot c_{1}\ot [x_{[2]},c_{2}]
\end{align*}
 Since  $ \, \Sym_{\mathbb{A}_3}\!\big( x \,.\, \big((\id\ot \delta)(c)\big) \big) = x \,.\, \big( \Sym_{\mathbb{A}_{3}}(\id \otimes \delta)(c)\big) \, $,  \,to finish this calculation it is enough to prove that
  $$  \Sym_{\mathbb{A}_3}\!\big(-c_1 \ot (c_2\,.\,\delta(x)) +
  x_{[1]}\ot [x_{[2]},c_{1}] \ot c_2 + (-1)^{|x_{[2]}||c_{1}|} x_{[1]}\ot c_{1}\ot [x_{[2]},c_{2}]\big)  \; = \;  0  $$
which follows directly from a straightforward computation.
To finish the proof we just note that the super cobracket $\delta^{c} = \delta - \partial c$
satisfies the cocycle condition since it is a difference of two functions that they do,
$\delta$ because it is a super cobracket for a Lie superbialgebra and $\partial c$
because it is a coboundary.
\epf

\end{free text}
\vskip9pt

\begin{definition}  \label{def: twist-deform_Lie-bialg's}
 Let $ (\lieg, [\,\ ,\ ],\, \delta)$ be a Lie superbialgebra and
 $ \, c \in \lieg \otimes \lieg \, $ a twist. The associated Lie superbialgebra
 $ \; \lieg^{\,c} \, := \, \big(\, \lieg \, ; \, [\,\ ,\,\ ] \, , \, \delta^{\,c} \,\big) \; $
  is called a \textit{deformation by twist}
 (or  ``\textit{twist deformation\/}'')  of the original Lie superbialgebra
 $ \lieg \, $.   \hfill  $ \diamondsuit $
\end{definition}

\vskip7pt

   We introduce now the dual recipe of  \textit{deformation by  $ 2 $--cocycle},
   where one deforms the Lie superbracket while keeping the Lie supercobracket untouched.
   Let again  $ \big(\, \lieg \, ; \, [\,\ ,\,\ ] \, , \, \delta \,\big) \, $  be a Lie superbialgebra.
                                                           \par
   A homogeneous even element  $ \, \chi \in {(\lieg \otimes \lieg)}^* \, $  is called a  \textit{$ 2 $--cocycle\/}  if
\begin{equation}  \label{eq: 2-cocycle-cond_Lie-superbialg}
   \alpha\,.\,\big(\, \chi \circ (\id + \, \sigma )\big)  \, = \,  0
     \quad  \text{and}  \quad
   \alpha\,.\,\big(\, \chi \circ \big( \id \otimes [\,\ ,\,\ ] \big) \circ \Sym_{\mathbb{A}_{3}} - \, [\![\, \chi \, , \chi \,]\!]_* \,\big)  \; = \;  0  \;\;
\end{equation}
 for all  $ \, \alpha \in \lieg^* \, $.  The dual version of
 Proposition \ref{prop:twist-superliebial}  reads as follows.

\vskip11pt

\begin{prop}  \label{prop:cocycle-superliebial}
 Let  $ \, (\lieg, [\,\ ,\ ],\, \delta) \, $  be a Lie superbialgebra and
 $ \, \chi \in (\lieg \otimes \lieg)^{*} \, $  a  $ 2 $--cocycle.  Then the map
 $ \, [\,\ ,\ ]_{\chi} : \lieg \ot \lieg \relbar\joinrel\longrightarrow  \lieg \, $
 defined for all  $ \, x , y \in \lieg \, $  by
\begin{equation}  \label{eq: def_cocycle-bracket}
  [x,y ]_{\chi} := \, [x,y] - (\partial_{*} \chi)(x\ot y)\, =
  \, [x,y] - x_{[1]} \, \chi(x_{[2]},y) \, - \, {(-1)}^{|x| |y_{[1]}|} \, y_{[1]} \, \chi(x,y_{[2]})
\end{equation}
 is a new Lie superbracket on the Lie supercoalgebra
 $ \big(\, \lieg \, ; \, \delta\,\big) \, $  making
 $ \, \big(\, \lieg \, ; \, [\,\ ,\ ]_{\chi} \, , \, \delta \,\big) \, $  into a (new) Lie superbialgebra.
\qed
\end{prop}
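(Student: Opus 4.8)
The plan is to prove this by a straightforward duality argument, reducing the statement to the already-established Proposition \ref{prop:twist-superliebial}. Observe that a $2$--cocycle $\chi \in (\lieg \ot \lieg)^*$ for the Lie superbialgebra $\big(\, \lieg \, , [\,\ ,\ ] \, , \delta \,\big)$ is, by definition and by the self-duality of the axioms recorded in \S \ref{gen's on LSbA's}, exactly the same datum as a twist $c := \chi$ for the dual Lie superbialgebra $\big(\, \lieg^* , \delta^* , [\,\ ,\ ]^* \,\big)$: indeed, condition \eqref{eq: 2-cocycle-cond_Lie-superbialg} is obtained from \eqref{eq: twist-cond_Lie-superbialg} by replacing $\lieg$ with $\lieg^*$, the bracket $[\,\ ,\ ]$ with the cobracket $\delta$ seen as $[\,\ ,\ ]^{\raise1pt\hbox{$ \scriptscriptstyle * $}}$-dual, and $\delta$ with $[\,\ ,\ ]$, using the compatibility of the pairing with the braiding $\sigma$ worked out in \S \ref{gen's on LSbA's}. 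Likewise, the map $[\,\ ,\ ]_\chi$ of \eqref{eq: def_cocycle-bracket} is precisely the dual of the twisted cobracket $\delta^{\,c}$ of \eqref{eq: def_twist-delta} applied in $\lieg^*$, since $\partial_* \chi$ is by definition $(\partial c)^*$ for $c = \chi$.

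The key steps, in order, are as follows. First I would spell out the duality dictionary: the identification $\lieg^* \ot \lieg^* \hookrightarrow (\lieg \ot \lieg)^*$ via the sign-twisted pairing $\langle \alpha \ot \beta , x \ot y \rangle = {(-1)}^{|\beta||x|} \langle \alpha,x\rangle\langle\beta,y\rangle$ already fixed in \S \ref{gen's on LSbA's}, under which $\delta^*$ is a Lie superbracket on $\lieg^*$, $[\,\ ,\ ]^*$ is a Lie supercobracket on $\lieg^*$, and $\big(\lieg^*,\delta^*,[\,\ ,\ ]^*\big)$ is a Lie superbialgebra. Second, I would check that under this dictionary the two equations in \eqref{eq: 2-cocycle-cond_Lie-superbialg} transcribe verbatim into the two equations of \eqref{eq: twist-cond_Lie-superbialg} for $c = \chi$ in $\lieg^*$ — this requires matching $\Sym_{\mathbb{A}_3}\circ(\id\ot[\,\ ,\ ])$ with the dual of $\Sym_{\mathbb{A}_3}\circ(\id\ot\delta)$ and $[\![\,\chi\,,\chi\,]\!]_*$ with the dual of $[\![\,c\,,c\,]\!]$, which is exactly what the subscript ``$*$'' notation was set up to record in \S \ref{gen's on LSbA's}. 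Third, apply Proposition \ref{prop:twist-superliebial} to the dual Lie superbialgebra and the twist $c = \chi$: it yields that $\big(\lieg^*\,;\,\delta^*\,,\,(\delta^*)^{\,\chi}\big)$ is a Lie superbialgebra, where $(\delta^*)^{\,\chi} = \delta^* - \partial\chi$. Fourth, dualize back: taking duals of $\big(\lieg^*\,;\,\delta^*\,,\,(\delta^*)^{\,\chi}\big)$ returns $\big(\lieg\,;\,[\,\ ,\ ]_\chi\,,\,\delta\big)$, since $((\delta^*)^{\,\chi})^* = ([\,\ ,\ ]^*)^* - (\partial\chi)^* = [\,\ ,\ ] - \partial_*\chi = [\,\ ,\ ]_\chi$. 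Finally, note that $[\,\ ,\ ]_\chi$ lands in $\lieg$ (not merely a completion) because the first equation in \eqref{eq: 2-cocycle-cond_Lie-superbialg} guarantees $\partial_*\chi$ is genuinely antisymmetric, hence $[\,\ ,\ ]_\chi$ is an honest superbracket.

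The main obstacle I expect is bookkeeping the Koszul signs through the duality, specifically verifying that the sign-twisted pairing intertwines $[\![\,\chi\,,\chi\,]\!]_*$ on the $\lieg^*$ side with the $\lieg$-side bracket-square $[\![\,c\,,c\,]\!]$ evaluated in $\lieg^*$, and similarly for the mixed term involving $\Sym_{\mathbb{A}_3}$; in the infinite-dimensional case one must also be a little careful, but since our MpLSbAs are finite-dimensional (contragredient of type $A$--$G$) this is not a real issue here. A clean alternative, which avoids even mentioning the dual object, is to simply transpose the proof of Proposition \ref{prop:twist-superliebial} line by line, replacing each occurrence of $\delta$, $c$, $\partial c$ with $[\,\ ,\ ]$, $\chi$, $\partial_*\chi$ respectively and reversing arrows; since that proof reduced (modulo the cocycle identity and a ``straightforward computation'' of a $\Sym_{\mathbb{A}_3}$-symmetrized expression) to the identity $\Sym_{\mathbb{A}_3}\!\big((\id\ot\partial c)\circ(\partial c)\big)(x) = x\,.\,[\![\,c\,,c\,]\!]$ from the proof of Proposition \ref{prop:coboundary-liesuperbialgebra}, its dual version is immediate. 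Either way the proof is short, and it is reasonable to leave the sign verifications to the reader, exactly as item \textit{(b)} of Proposition \ref{prop:coboundary-liesuperbialgebra} was left, writing: the assertion follows from Proposition \ref{prop:twist-superliebial} by duality, along the lines spelled out above. \epf
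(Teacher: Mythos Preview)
Your proposal is correct and matches the paper's approach exactly: the paper simply states that ``the proof is a sheer dualization of that for Proposition \ref{prop:twist-superliebial}, hence is left to the reader,'' which is precisely the duality argument you outline. Your second alternative (transposing the proof of Proposition \ref{prop:twist-superliebial} line by line) is in fact the most literal reading of what the paper intends.
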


\vskip7pt

   The proof is a sheer dualization of that for  Proposition \ref{prop:twist-superliebial},
   hence is left to the reader.  The relevant terminology is fixed as follows:

\vskip11pt

\begin{definition}  \label{def: cocyc-deform_Lie-superbialg's}
 Let  $ \, (\lieg, [\,\ ,\ ],\, \delta) \, $  be a Lie superbialgebra and
 $ \, \chi \in (\lieg \otimes \lieg)^* \, $  be a  $ 2 $--cocycle for it.
 Then the Lie superbialgebra  $ \, \lieg_\chi \, := \, \big(\, \lieg \, ; \, {[\,\ ,\ ]}_\chi \, , \, \delta \,\big) \, $
 is called a  \textit{defor\-mation by 2--cocycle}  (or  ``\textit{2--cocycle deformation\/}'')
    \hbox{of the Lie superbialgebra  $ \lieg \, $.   \hskip17pt \hfill  $ \diamondsuit $}
\end{definition}

\vskip5pt

   At last, we point out that the two notions of ``twist'' and of
   ``$ 2 $--cocycle''  for Lie superbialgebras,
   as well as the associated deformations, are so devised as to be dual to each other.
   The following result then holds, whose proof is left to the reader:

\vskip7pt

\begin{prop}  \label{prop: duality-deforms x LbA's}
 Let  $ \lieg $  be a Lie superbialgebra,  $ \lieg^* $  the dual Lie superbialgebra.
 \vskip2pt
   {\it (a)}\,  Let  $ c $  be a twist for  $ \lieg \, $,  and  $ \chi_c $  the image of
   $ c $  in  $ {\big( \lieg^* \otimes \lieg^* \big)}^* $
   for the natural composed embedding
   $ \, \lieg \otimes \lieg \lhook\joinrel\relbar\joinrel\longrightarrow \lieg^{**}
   \otimes \lieg^{**} \lhook\joinrel\relbar\joinrel\longrightarrow {\big( \lieg^* \otimes \lieg^* \big)}^* \, $.
   Then  $ \chi_c $  is a 2--cocycle  for  $ \lieg^* \, $,  and there exists a canonical isomorphism
   $ \, {\big( \lieg^* \big)}_{\chi_c} \cong {\big( \lieg^c \big)}^* \, $.
 \vskip2pt
   {\it (b)}\,  Let  $ \chi $  be a 2--cocycle  for  $ \lieg \, $;  assume that
   $ \lieg $  is finite-dimensional, and let  $ c_{\,\chi} $  be the image of  $ \, \chi $
   in the natural identification  $ \, {(\lieg \otimes \lieg)}^* = \lieg^* \otimes \lieg^* \, $.
   Then  $ c_{\,\chi} $  is a twist for  $ \lieg^* \, $,
   and there exists a canonical isomorphism
   $ \, {\big( \lieg^* \big)}^{c_{\,\chi}} \cong {\big( \lieg_\chi \big)}^* \, $.
\qed
\end{prop}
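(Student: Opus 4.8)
The plan is to treat the whole statement as an exercise in duality, by first fixing once and for all the ``dictionary'' that turns a twist for $\lieg$ into a $2$--cocycle for $\lieg^*$, and then reading everything off from it. Recall that the pairing $\langle\alpha\ot\beta\,,x\ot y\rangle = {(-1)}^{|\beta||x|}\langle\alpha,x\rangle\langle\beta,y\rangle$ fixed above extends, in the same fashion, to a (super) pairing between $\lieg^{\ot 3}$ and $(\lieg^*)^{\ot 3}\hookrightarrow(\lieg^{\ot 3})^*$, which is perfect when $\lieg$ is finite-dimensional and, in general, on suitable completions. The key point to establish is that, with respect to this pairing: (i) the braiding $\sigma$ is self-adjoint; (ii) the operator $\Sym_{\mathbb{A}_3}$ (``action by cyclic permutations'') is self-adjoint; (iii) the ``big bracket'' $[\![\,-\,,-\,]\!]$ on $\lieg^{\ot 2}$ is adjoint to the map $[\![\,-\,,-\,]\!]_*$ on $(\lieg^*)^{\ot 2}$ introduced in Proposition \ref{prop:coboundary-liesuperbialgebra} (indeed this is essentially the definition of the latter); (iv) the coboundary $\partial$, sending $c$ to $\partial c = \big(x\mapsto x.c\big)$, is transpose to $\partial_*$ — this is the transposition property of the Lie (super)bialgebra coboundary operators, and is exactly what makes the two deformation recipes dual; and (v) the adjoint action of $\lieg$ on $\lieg^{\ot\bullet}$ is adjoint, under the pairing, to the (coadjoint) action of $\lieg^*$ on $(\lieg^{\ot\bullet})^*$ used throughout \S\ref{subsec: Lie-supbialg's & deform's}. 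Recall also that, by definition, the bracket of $\lieg^*$ is $\delta^*$ and its cobracket is $[\,\ ,\ ]^*$. Verifying (i)--(v) \emph{with all the Koszul signs in place} is the one genuinely laborious step: it runs exactly parallel to the sign computations already carried out in the proofs of Propositions \ref{prop:coboundary-liesuperbialgebra} and \ref{prop:twist-superliebial}, and I expect it to be the main obstacle. Everything else is formal.

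\textbf{Part (a).} Let $c\in\lieg\ot\lieg$ be a twist for $\lieg$, so that the two conditions in \eqref{eq: twist-cond_Lie-superbialg} hold, and let $\chi_c\in(\lieg^*\ot\lieg^*)^*$ denote its image under the composed embedding. Reading the first condition of \eqref{eq: twist-cond_Lie-superbialg} through the pairing and using (i) and (v) gives precisely the first condition of \eqref{eq: 2-cocycle-cond_Lie-superbialg} for $\chi_c$ relative to $\lieg^*$. For the second condition: by (ii), the element $\Sym_{\mathbb{A}_3}(\id\ot\delta)(c)\in\lieg^{\ot 3}$ corresponds, as a functional on $(\lieg^*)^{\ot 3}$, to $\chi_c\circ(\id\ot\delta^*)\circ\Sym_{\mathbb{A}_3}$, and since $\delta^*$ is by definition the bracket of $\lieg^*$ this is the ``$\chi\circ(\id\ot[\,\ ,\ ])\circ\Sym_{\mathbb{A}_3}$'' term of \eqref{eq: 2-cocycle-cond_Lie-superbialg} written for $\lieg^*$; meanwhile, by (iii), $[\![\,c\,,c\,]\!]$ corresponds to $[\![\,\chi_c\,,\chi_c\,]\!]_*$. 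Combining these and applying (v) once more, the second condition of \eqref{eq: twist-cond_Lie-superbialg} for $c$ transforms verbatim into the second condition of \eqref{eq: 2-cocycle-cond_Lie-superbialg} for $\chi_c$, so $\chi_c$ is a $2$--cocycle for $\lieg^*$. For the isomorphism, note that $(\lieg^*)_{\chi_c}$ and $(\lieg^c)^*$ have the same underlying super vector space $\lieg^*$, so it suffices to match the structure maps. By Definition \ref{def: cocyc-deform_Lie-superbialg's} together with Proposition \ref{prop:cocycle-superliebial} applied to $\lieg^*$, the cobracket of $(\lieg^*)_{\chi_c}$ is the unchanged cobracket $[\,\ ,\ ]^*$ of $\lieg^*$, while its bracket is $\delta^* - \partial_*\chi_c$. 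On the other hand, by Definition \ref{def: twist-deform_Lie-bialg's} the Lie superbialgebra $\lieg^c$ keeps the bracket $[\,\ ,\ ]$ of $\lieg$ and has cobracket $\delta^{\,c} = \delta - \partial c$ (see \eqref{eq: def_twist-delta}); dualizing, $(\lieg^c)^*$ has cobracket $[\,\ ,\ ]^*$ and bracket $\delta^* - (\partial c)^*$. By the transposition property (iv) one has $(\partial c)^* = \partial_*\chi_c$, so the two structures on $\lieg^*$ coincide and the identity map is the desired canonical isomorphism $\big(\lieg^*\big)_{\chi_c}\cong\big(\lieg^c\big)^*$.

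\textbf{Part (b).} When $\lieg$ is finite-dimensional the pairing of the first paragraph is perfect, so (i)--(v) become equivalences (not merely implications), $\lieg^{**}\cong\lieg$ canonically, and the identification $(\lieg\ot\lieg)^* = \lieg^*\ot\lieg^*$ is parity-preserving, whence the image $c_\chi$ of an even $\chi$ is an even element of $\lieg^*\ot\lieg^*$. Thus part (b) is obtained from part (a) simply by interchanging the roles of $\lieg$ and $\lieg^*$: reading the dictionary in the reverse direction shows that $\chi$ satisfies \eqref{eq: 2-cocycle-cond_Lie-superbialg} for $\lieg$ if and only if $c_\chi$ satisfies \eqref{eq: twist-cond_Lie-superbialg} for $\lieg^*$, so $c_\chi$ is a twist for $\lieg^*$. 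Applying part (a) to $\lieg^*$ with this twist then yields a canonical isomorphism $\big((\lieg^*)^*\big)_{\chi_{c_\chi}}\cong\big((\lieg^*)^{c_\chi}\big)^*$; since $\chi_{c_\chi}$ is $\chi$ read back through the above identification and $(\lieg^*)^* = \lieg$, the left-hand side is $\lieg_\chi$, and dualizing once more (legitimate since everything is finite-dimensional, $\lieg^{**}=\lieg$) gives the claimed canonical isomorphism $\big(\lieg_\chi\big)^*\cong\big(\lieg^*\big)^{c_\chi}$. Alternatively, one may argue exactly as in the isomorphism part of (a): both $(\lieg^*)^{c_\chi}$ and $(\lieg_\chi)^*$ are structures on $\lieg^*$, with common bracket $\delta^*$, and with cobrackets $[\,\ ,\ ]^* - \partial c_\chi$ and $[\,\ ,\ ]^* - (\partial_*\chi)^*$ respectively, which agree by (iv).
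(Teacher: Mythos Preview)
The paper does not actually prove this proposition: the text immediately preceding it says ``whose proof is left to the reader'', and the statement ends with a bare \qed. So there is nothing to compare your argument against beyond the implicit claim that the result is a formal duality exercise.

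Your proposal is exactly the intended duality argument and is correct in substance. Setting up the dictionary (i)--(v) and then reading conditions \eqref{eq: twist-cond_Lie-superbialg} and \eqref{eq: 2-cocycle-cond_Lie-superbialg} as transposes of one another under the super-pairing is the right strategy, and your treatment of the isomorphism part --- matching the two structures on the common underlying space $\lieg^*$ via $(\partial c)^* = \partial_*\chi_c$ --- is clean. One small caution: in part (a), without finite-dimensionality the cobracket of $\lieg^*$ only lands in $(\lieg\ot\lieg)^*$ rather than in $\lieg^*\ot\lieg^*$, so strictly speaking items (iv)--(v) and the formula $\partial_*\chi_c$ need the ``topological technicalities'' the paper alludes to earlier in \S\ref{gen's on LSbA's}; you gesture at this with ``on suitable completions'', which matches the paper's own level of rigor here. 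Also, item (v) as you phrase it (adjoint action on $\lieg$ versus coadjoint on $\lieg^*$) is really what transports the ``for all $x\in\lieg$'' quantifier into the ``for all $\alpha\in\lieg^{**}$'' quantifier, which in the infinite-dimensional case is again only an implication, not an equivalence --- but that is the direction you need for (a).
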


\vskip17pt

\subsection{Multiparameter Lie superbialgebras (=MpLSbA's)}  \label{subsec: MpLSbA's}  {\ }
 \vskip5pt
   Let  $ \, (A \, , p) \, $  be a fixed Cartan super-datum (cf.\  Definition \ref{def: Cartan-super-datum}):  we fix a matrix of Cartan type  $ \, P := {\big(\, p_{i,j} \big)}_{i, j \in I} \in M_n(\Bbbk) \, $  associated with  $ A \, $,  i.e.\  $ \, P_s := 2^{-1} \big( P + P^{\,\scriptscriptstyle T} \big) = DA =: P_{\!\Apicc} \, $,  and we let  $ \, \R := \big(\, \lieh \, , \Pi \, , \Pi^\vee \,\big) \, $  be a realization of  $ P $.
                                                                     \par
   With these data, we are going to associate some Lie superbialgebra  $ \liegRpP \, $,
   to be called ``multiparameter Lie superbialgebra'' (or MpLSbA's in short): these will depend explicitly on the parameters in  $ P $
   (i.e.\ its entries), but also on the chosen realization  $ \R \, $.
   Up to technicalities, these MpLSbA's arise as deformations by twist of Lie superbialgebras of a special family,
   which comes from quantum counterparts introduced by Yamane   --- hence we call them ``of Yamane's type''.

\vskip7pt

\begin{free text}  \label{Yamane's LieSBial's}
 {\bf Yamane's Lie superbialgebras.}  In the present subsection, we refer to the notions in  Definition \ref{def: realization of P}
 and all what follows in  \S \ref{sec: Cartan-data_realiz's},  \textsl{but\/}
 working now with  $ \Bbbk $  as ground ring instead of  $ \kh \, $;  then, since we are dealing with  $ \Bbbk $  rather than  $ \kh \, $,
 we identify the space  $ \lieh $  with  $ \overline{\lieh} \, $,  the roots  $ \alpha_j $  with  $ \overline{\alpha}_j \, $,  \,etc.
 (cf.\  \S \ref{def: realization of P}).  Following  \S \ref{MpMatrices, Cartan & realiz.'s},
 let  $ \, (A \, , p) \, $  be a Cartan super-datum, let  $ \, P_{\!\Apicc} = DA \, $  be the symmetrization of  $ A \, $,
 and fix a realization  $ \, \R := \big(\, \lieh \, , \Pi \, , \Pi^\vee \,\big) \, $  of  $ P_{\!\scriptscriptstyle A} \, $
 which is  \textsl{split},  \textsl{straight\/}  and  \textsl{minimal}.
 \vskip1pt
   We define the Lie superalgebra  $ \liegRpPa $  as being the one generated over
   $ \Bbbk $  by the  $ \Bbbk $--subspace  $ \lieh $  together with elements  $ E_i $
   and  $ F_i $  ($ \, i \in I \, $),  whose parity is fixed by
   $ \, |T| := \zero \, $  and  $ \, \big|E_i\big| := p(i) =:  \big|F_i\big| \, $
   (for all  $ \, T \in \lieh \, $  and  $ \, i \in I \, $),
   with relations as follows (for all  $ \; T \, , T' , T'' \in \lieh \, $,
   $ \; i , j, \ell, k \in I \, $,  $ \; i \neq \ell \, $):
  $$  \displaylines{
   \big[\, T' , T'' \,\big] \, = \, 0  \;\; ,
   \qquad   \big[\, T , E_j \,\big] \, - \, \alpha_j(\,T) \, E_j \, = \, 0  \;\; ,
   \qquad   \big[\, T , F_j \,\big] \, + \, \alpha_j(\,T) \, F_j \, = \, 0  \cr
   \big[\, E_i \, , F_\ell \,\big] \, = \, \delta_{i\ell} \, {{\;T_i^+ \! + T_i^-\;} \over {\;2\,d_i\;}}  \;\; ,
   \;\qquad  \big[ E_i \, , E_j \big] \, = \, 0  \;\; ,  \quad  \big[ F_i \, , F_j \big] \, = \, 0  \!\qquad   \text{if} \quad  a_{i,j} = 0  \cr
   {\big(\ad\,(E_i)\big)}^{1-a_{ij}}(E_j) \, = \, 0  \;\; ,   \;\quad
   {\big(\ad\,(F_i)\big)}^{1-a_{ij}}(F_j) \, = \, 0   \qquad  \text{with \ }
   p(i) = \zero \; , \;\; i \not= j  \cr
%
    \text{and then also all of the following, with  $ \, X \in \{E\,,F\} \, $:}_{\phantom{|}}^{\phantom{\big|}}
   \hfill  \cr
%
%
\Big[ \big[ [ X_i \, , X_j ] \, , X_k \big] \, , X_j \Big] \, = \, 0  \quad
\text{in any of the following cases:}  \cr
\begin{tikzpicture}
    \draw[black, very thin] (2-0.1, -0.1) -- (2+0.1, 0.1);
    \draw[black, very thin] (2-0.1, 0.1) -- (2+0.1, -0.1);
    \draw[black, very thin] (2.2, 0) -- (3.2, 0);
    \draw[black, very thin] (3.4-0.1, -0.1) -- (3.4+0.1, 0.1);
    \draw[black, very thin] (3.4-0.1, 0.1) -- (3.4+0.1, -0.1);
   \draw [black, very thin](3.4, 0) circle (0.15 cm);
    \draw[black, very thin] (3.6, 0) -- (4.6, 0);
    \draw[black, very thin] (4.8-0.1, -0.1) -- (4.8+0.1, 0.1);
    \draw[black, very thin] (4.8-0.1, 0.1) -- (4.8+0.1, -0.1);
\draw[black, very thin] (2,0)
node[above=0.2cm, scale=0.7] {$i$};
\draw[black, very thin] (3.4,0)
node[above=0.2cm, scale=0.7] {$j$};
\draw[black, very thin] (4.8,0)
node[above=0.2cm, scale=0.7] {$k$};
\end{tikzpicture}
  \quad \qquad
\begin{tikzpicture}
    \draw[black, very thin] (2-0.1, -0.1) -- (2+0.1, 0.1);
    \draw[black, very thin] (2-0.1, 0.1) -- (2+0.1, -0.1);
    \draw[black, very thin] (2.2, 0) -- (3.2, 0);
    \draw[black, very thin] (3.4-0.1, -0.1) -- (3.4+0.1, 0.1);
    \draw[black, very thin] (3.4-0.1, 0.1) -- (3.4+0.1, -0.1);
   \draw [black, very thin](3.4, 0) circle (0.15 cm);
    \draw[black, very thin] (3.6, +0.1) -- (4.6, +0.1);
    \draw[black, very thin] (3.6, -0.1) -- (4.6, -0.1);
    \draw[black, very thin] (4.1-0.1, -0.1-0.1) -- (4.1+0.1, 0.1-0.1);
    \draw[black, very thin] (4.1-0.1, 0.1+0.1) -- (4.1+0.1, -0.1+0.1);
    \draw [black, very thin, fill=black](4.8, 0) circle (0.15 cm);
\draw[black, very thin] (2,0)
node[above=0.2cm, scale=0.7] {$i$};
\draw[black, very thin] (3.4,0)
node[above=0.2cm, scale=0.7] {$j$};
\draw[black, very thin] (4.8,0)
node[above=0.2cm, scale=0.7] {$k$};
\end{tikzpicture}
  \qquad \quad
\begin{tikzpicture}
    \draw[black, very thin] (2-0.1, -0.1) -- (2+0.1, 0.1);
    \draw[black, very thin] (2-0.1, 0.1) -- (2+0.1, -0.1);
    \draw[black, very thin] (2.2, 0) -- (3.2, 0);
    \draw[black, very thin] (3.4-0.1, -0.1) -- (3.4+0.1, 0.1);
    \draw[black, very thin] (3.4-0.1, 0.1) -- (3.4+0.1, -0.1);
   \draw [black, very thin](3.4, 0) circle (0.15 cm);
    \draw[black, very thin] (3.6, +0.1) -- (4.6, +0.1);
    \draw[black, very thin] (3.6, -0.1) -- (4.6, -0.1);
    \draw[black, very thin] (4.1-0.1, -0.1-0.1) -- (4.1+0.1, 0.1-0.1);
    \draw[black, very thin] (4.1-0.1, 0.1+0.1) -- (4.1+0.1, -0.1+0.1);
    \draw[black, very thin] (4.8-0.1, -0.1) -- (4.8+0.1, 0.1);
    \draw[black, very thin] (4.8-0.1, 0.1) -- (4.8+0.1, -0.1);
\draw[black, very thin] (2,0)
node[above=0.2cm, scale=0.7] {$i$};
\draw[black, very thin] (3.4,0)
node[above=0.2cm, scale=0.7] {$j$};
\draw[black, very thin] (4.8,0)
node[above=0.2cm, scale=0.7] {$k$};
\end{tikzpicture}
   \cr
 }  $$
  $$  \displaylines{
   \Big[ \big[ \big[ X_{n-1} \, , X_n \big] \, , X_n \big] \, , X_n \Big]^{\phantom{\Big|}} = \; 0  \cr
\begin{tikzpicture}[scale=0.8]
\draw[black, very thin] (2-0.1, -0.1) -- (2+0.1, 0.1);
    \draw[black, very thin] (2-0.1, 0.1) -- (2+0.1, -0.1);
    \draw[black, very thin] (2.2, 0.08) -- (3.3, 0.08);
    \draw[black, very thin] (2.2, -0.08) -- (3.3, -0.08);
\draw [black, very thin, fill=black](3.7, 0) circle (0.15 cm);
\draw[black, very thin] (2.8-0.1, -0.3) -- (2.8+0.1, 0);
\draw[black, very thin] (2.8-0.1, 0.3) -- (2.8+0.1, 0);
\draw[black, very thin] (2,0)
node[above=0.2cm, scale=0.7] {$n\!-\!1$};
\draw[black, very thin] (3.5,0)
node[above=0.2cm, scale=0.7] {\hskip15pt $n$};
\draw[black, very thin] (0,0)
node { with };
\draw[black, very thin] (7,0)
node { and $A$ of type $B_n$ };
\end{tikzpicture}
  \cr
%
%
   \Big[ \big[ X_{n-2} \, ,X_{n-1} \big] \, , X_n \Big] \; - \; \Big[ \big[ X_{n-2} \, ,
   X_n \big] \, , X_{n-1} \Big]^{\phantom{\Big|}}  \; = \;\,  0  \cr
\begin{tikzpicture}    \quad
\draw[black, very thin] (4.3, 0.1) -- (5.2, 0.6);
\draw[black, very thin] (4.3, -0.1) -- (5.2, -0.6);
\draw[black, very thin] (5.35, 0.3) -- (5.35, -0.3);
\draw[black, very thin] (5.42, 0.3) -- (5.42, -0.3);
\draw [black, very thin](5.4, 0.6) circle (0.13 cm);
\draw [black, very thin](5.4, -0.6) circle (0.13 cm);
\draw[black, very thin] (4-0.1, 0.1) -- (4+0.1, -0.1);
\draw[black, very thin] (4-0.1, -0.1) -- (4+0.1, 0.1);
\draw[black, very thin] (5.4-0.1, 0.1+0.6) -- (5.4+0.1, -0.1+0.6);
\draw[black, very thin] (5.4-0.1, -0.1+0.6) -- (5.4+0.1, 0.1+0.6);
\draw[black, very thin] (5.4-0.1, 0.1-0.6) -- (5.4+0.1, -0.1-0.6);
\draw[black, very thin] (5.4-0.1, -0.1-0.6) -- (5.4+0.1, 0.1-0.6);
\draw[black, very thin] (5.4,0.6)
node[above=0.1cm, scale=0.7] {$n$};
\draw[black, very thin] (4,0)
node[above=0.1cm, scale=0.7] {$n\!-\!2$};
\draw[black, very thin] (5.4,-0.6)
node[below=0.1cm, scale=0.7] {$n\!-\!1$};
\draw[black, very thin] (2,0.3)
node[below=0.1cm] {with};
\draw[black, very thin] (8.5,0.3)
node[below=0.1cm] { and  $ A $  of type  $ \, D_n(I\!I) $};
\end{tikzpicture}
%
  \cr
   \bigg[ \Big[ \big[\, [X_{n-2} \, , X_{n-1}] \, , X_n \big] \, , [X_{n-2}, X_{n-1}] \Big] \, ,
   \, X_{n-1} \,\bigg]  \; = \;\,  0  \cr
   \text{for}  \quad
\begin{tikzpicture}[scale=0.8]
\draw[black, very thin] (2.2, 0) -- (3.2, 0);
\draw[black, very thin] (3.6, 0.07) -- (4.8, 0.07);
\draw[black, very thin] (3.6, -0.07) -- (4.8, -0.07);
\draw[black, very thin] (4.2-0.1, -0.1+0.1) -- (4.2+0.1, 0.1+0.1);
\draw[black, very thin] (4.2-0.1, 0.1-0.1) -- (4.2+0.1, -0.1-0.1);
\draw[black, very thin] (2-0.1, -0.1) -- (2+0.1, 0.1);
\draw[black, very thin] (2-0.1, 0.1) -- (2+0.1, -0.1);
\draw[black, very thin] (3.4-0.1, -0.1) -- (3.4+0.1, 0.1);
\draw[black, very thin] (3.4-0.1, 0.1) -- (3.4+0.1, -0.1);
\draw [black, very thin](2, 0) circle (0.15 cm);
\draw [black, very thin](3.4, 0) circle (0.15 cm);
\draw [black, very thin](5, 0) circle (0.15 cm);
\draw[black, very thin] (2,0) node[above=0.2cm, scale=0.7] {$n\!-\!2$};
\draw[black, very thin] (3.4,0) node[above=0.2cm, scale=0.7] {$n\!-\!1$};
\draw[black, very thin] (5,0) node[above=0.2cm, scale=0.7] {$n$};
\end{tikzpicture}
 \quad   \text{and  $ \, A \, $  of type  $ \, C_n $}  \cr
  \cr
   \Bigg[ \Bigg[ \bigg[ \Big[ \big[\,  [X_{n-3} \, , \, X_{n-2}] \, , \, X_{n-1} \big] \, ,
   X_n \,\Big] \, , \, X_{n-1} \,\bigg] \, , X_{n-2} \,\Bigg] \, , \, X_{n-1} \,\Bigg]
   \,\; = \;\;  0   \quad  \cr
%
    \centering   \text{for}  \quad
\begin{tikzpicture}[scale=0.8]
\draw[black, very thin] (0.8, 0) -- (1.8, 0);
\draw[black, very thin] (2.2, 0) -- (3.2, 0);
\draw[black, very thin] (3.6, 0.07) -- (4.8, 0.07);
\draw[black, very thin] (3.6, -0.07) -- (4.8, -0.07);
\draw[black, very thin] (4.2-0.1, -0.1+0.1) -- (4.2+0.1, 0.1+0.1);
\draw[black, very thin] (4.2-0.1, 0.1-0.1) -- (4.2+0.1, -0.1-0.1);
\draw[black, very thin] (0.6-0.1, -0.1) -- (0.6+0.1, 0.1);
\draw[black, very thin] (0.6-0.1, 0.1) -- (0.6+0.1, -0.1);
\draw[black, very thin] (2-0.1, -0.1) -- (2+0.1, 0.1);
\draw[black, very thin] (2-0.1, 0.1) -- (2+0.1, -0.1);
\draw[black, very thin] (3.4-0.1, -0.1) -- (3.4+0.1, 0.1);
\draw[black, very thin] (3.4-0.1, 0.1) -- (3.4+0.1, -0.1);
\draw [black, very thin](2, 0) circle (0.15 cm);
\draw [black, very thin](3.4, 0) circle (0.15 cm);
\draw [black, very thin](5, 0) circle (0.15 cm);
\draw[black, very thin] (0.6,0)
node[above=0.2cm, scale=0.7] {$n\!-\!3$};
\draw[black, very thin] (1.97,0)
node[above=0.2cm, scale=0.7] {$n\!-\!2$};
\draw[black, very thin] (3.4,0)
node[above=0.2cm, scale=0.7] {$n\!-\!1$};
\draw[black, very thin] (5,0)
node[above=0.2cm, scale=0.7] {$n$};
%
\end{tikzpicture}
 \quad  \text{and  $ \, A \, $  of type  $ \, C_n $}  }  $$
 \vskip5pt
   This  $ \liegRpPa $  is nothing but the simple Lie superalgebra associated with
   $ (A\,,p) \, $:  its presentation given above comes from the presentation of a suitable
   quantization of  $ U\big(\liegRpPa\big) $   --- in the sense of  \S \ref{QUESA's}  ---
   introduced in  \cite{Ya1}   --- in a ``quantum double version'', denoted in
   [\textit{loc.\ cit.}]  by  $ \mathcal{D}' $.  \textit{Moreover}, out of that quantization
   \textit{the Lie superalgebra  $ \liegRpPa $  inherits an additional Lie cobracket, which
   makes it into a Lie superbialgebra},  given on generators by  (for  $ \, T \in \lieh \, $
   and  $ \, i \in I \, $)
\begin{equation}  \label{eq: Lie-cobracket x gRPa}
  \delta\big(\,T\big) \, = \, 0  \quad ,   \qquad  \delta\big(E_i\big) \, = \, 2 \; T^+_i
  \!\wedge E_i  \quad ,   \qquad  \delta\big(F_i\big) \, = \,  2 \; T^-_i \!\wedge F_i
\end{equation}
   \indent   Let us now pick a realization  $ \R' $  of  $ P_\Apicc $  that is still
   straight and minimal but not necessarily split; then we can still define a Lie superalgebra
   $ \lieg^{\raise1pt\hbox{$ \scriptscriptstyle \R',p $}}_{\Ppicc_{\raise-3pt\hbox{$ \!\!
   \Apicc $}}} $
   with the same presentation as  $ \liegRpPa $  above (with a ``Cartan subalgebra''
   $ \lieh' $  replacing  $ \lieh \, $): indeed, the former will just be a quotient of the
   latter.  Moreover, the Lie cobracket of  $ \liegRpPa $  will clearly be pushed forward onto
   $ \lieg^{\raise1pt\hbox{$ \scriptscriptstyle \R',p $}}_{\Ppicc_{\raise-3pt\hbox{$ \!\!
   \Apicc $}}} $,
   so that the latter is indeed a Lie superbialgebra with the same description as
   $ \liegRpPa \, $,  but for changing  $ \R $  into  $ \R' $.  Finally, consider a realization
   $ \R'' $  of  $ P_\Apicc $  that is still straight but not necessarily split nor minimal.
   Then we can again define a Lie superalgebra
   $ \lieg^{\raise1pt\hbox{$ \scriptscriptstyle \R'',p $}}_{\Ppicc_{\raise-3pt\hbox{$ \!\!
   \Apicc $}}} $  with the same presentation as
   $ \lieg^{\raise1pt\hbox{$ \scriptscriptstyle \R',p $}}_{\Ppicc_{\raise-3pt\hbox{$ \!\!
   \Apicc $}}} $  (with  $ \lieh'' $  replacing  $ \lieh' \, $),
   the former being an extension of the latter, i.e.\
   $ \lieg^{\raise1pt\hbox{$ \scriptscriptstyle \R',p $}}_{\Ppicc_{\raise-3pt\hbox{$ \!\!
   \Apicc $}}} $
   is a Lie sub-superalgebra of  $ \lieg^{\raise1pt\hbox{$ \scriptscriptstyle \R'',p $}}_{\Ppicc_{\raise-3pt\hbox{$ \!\! \Apicc $}}} \, $;
   moreover, the Lie cobracket of  $ \lieg^{\raise1pt\hbox{$ \scriptscriptstyle \R',p $}}_{\Ppicc_{\raise-3pt\hbox{$ \!\! \Apicc $}}} $
   uniquely extends to all of
   $ \lieg^{\raise1pt\hbox{$ \scriptscriptstyle \R'',p $}}_{\Ppicc_{\raise-3pt\hbox{$ \!\!
   \Apicc $}}} $,
   so the latter is also a Lie  super\textsl{bi\/}algebra with Lie cobracket as in
   \eqref{eq: Lie-cobracket x gRPa}  again.
                                                          \par
   In the following, we will refer to any of the Lie superalgebras  $ \liegRpPa $
   just described   --- for every straight realization  $ \R $  of  $ P_\Apicc $  ---
   as \textit{``Yamane's Lie superbialgebras''}.
\end{free text}

\vskip11pt

\begin{free text}  \label{Constr-MpLieSBial's}
 {\bf Definition of multiparameter Lie superbialgebras.}  Let  $ \, (A \, , p) \, $  be a fixed
 Cartan super-datum (as in  Definition \ref{def: Cartan-super-datum}),  and let
 $ \, P := {\big(\, p_{i,j} \big)}_{i, j \in I} \in M_n(\Bbbk) \, $
 be a matrix of Cartan type associated with
 $ A \, $,  i.e.\  $ \, P_s := 2^{-1} \big( P + P^{\,\scriptscriptstyle T} \big) =
 DA =: P_{\!\Apicc} \, $.
%
                                                                      \par
    With these assumptions, let  $ \, \R := \big(\, \lieh \, , \Pi \, , \Pi^\vee \,\big) \, $
    be a realization of  $ P $.
 \vskip5pt
   \textit{We define the Lie  $ \k $--superalgebra  $ \liegRpP $  as the one having the very
   same presentation as  $ \liegRpPa $  above,  \textsl{but\/}  with respect to the
   realization  $ \R \, $}.  Then we prove that this  $ \liegRpP $  actually bears a
   structure of Lie super\textsl{bi\/}algebra,  through a series of steps.
 \vskip5pt
   First of all,  \textsl{let  $ \R $  be a straight realization of
   $ \, P_{\scriptscriptstyle A} := D\,A \, $};  \,then Yamane's Lie superbialgebra
   $ \liegRpPa $  is defined (see  \S \ref{Yamane's LieSBial's}  above).
                                                                  \par
   Second, for any antisymmetric matrix
   $ \; \Theta = \big( \theta_{gk} \big)_{g, k \in \G} \in \lieso_t(\Bbbk) \; $
   we consider
\begin{equation}  \label{eq: def_twist_Lie-bialg}
  j_\Thetapicc  \; := \;  {\textstyle \sum_{g,k=1}^t} \theta_{gk} \, H_g \otimes H_k  \;
  \in \; \lieh \otimes \lieh  \; \subseteq \;  \lieg \otimes \lieg
\end{equation}
 \textit{This is indeed a  \textsl{twist\/}  element\/}  (cf.\
 \eqref{eq: twist-cond_Lie-superbialg} and Definition \ref{def: twist-deform_Lie-bialg's})
 for Yamane's Lie superbialgebra  $ \liegRpPa \, $:  \,we call this  $ \, j_\Thetapicc \, $
 \textit{the toral twist\/}  (or  \textit{``twist of toral type''\/})  associated with
 $ \, \Theta\, $.  Then out of  $ \, \Theta \, $  we can consider the deformed Lie superalgebra
 $ \, {\big( \liegRpPa \big)}^{j_\Thetapicc} \, $   --- as in  \S \ref{deformations of LSbA's}
 ---   the ``deformed'' multiparameter matrix
 $ \; P_\Thetapicc = \, {\big(\, p^\Thetapicc_{i,j} \big)}_{i, j \in I} \, := \,
 P_\Apicc - \mathfrak{A} \, \Theta \, \mathfrak{A}^{\,\scriptscriptstyle T} \; $
 as in  \eqref{def-P_Phi}   --- again of Cartan type  $ A \, $, the same as  $ P $  ---
 and its corresponding ``deformed'' straight realization
 $ \, \R_\Thetapicc := \big(\, \lieh \, , \Pi \, , \Pi^\vee_\Thetapicc \! :=
 {\big\{ T^+_{\Thetapicc,i} \, , T^-_{\Thetapicc,i} \big\}}_{i \in I} \,\big) \, $   ---
 see  Definition \ref{def: twisted-realization}.  Accordingly, we write  $ \lieg_{P_\Theta}^{\R_\Thetapicc,p} $  for the Lie superalgebra associated with the realization  $ \R_\Thetapicc $  of  $ P_\Theta \, $.
 \vskip5pt
   Third, we find the following, strict link between all these ``deformed objects'':
\end{free text}

\vskip11pt

\begin{prop}  \label{prop: Yamane's twist-liegRPa = liegRP-twist}
   With assumptions as above, the following hold:
 \vskip5pt
   (a)\;  there exists a Lie superalgebra isomorphism
  $$  \phi^\Thetapicc_\Ppicc : \lieg_{P_\Theta}^{\R_\Thetapicc,p} \, {\buildrel \cong \over
 {\lhook\joinrel\relbar\joinrel\relbar\joinrel\relbar\joinrel\twoheadrightarrow}} \,
{\Big( \liegRpPa \Big)}^{j_\Thetapicc} \; ,  \qquad
  E_i \, \mapsto \, E_i \, , \; T \, \mapsto \, T \, , \; F_i \, \mapsto \, F_i \quad
  (\; i \in I \, , \, T \in \lieh \,)  $$
 \vskip1pt
   (b)\;  there exists a unique structure of Lie super\textsl{bi}algebra onto  $ \, \lieg_{P_\Theta}^{\R_\Thetapicc,p} \, $  such that the map  $ \phi^\Thetapicc_\Ppicc $
   in (a) above is an isomorphism of Lie super\textsl{bi}algebras;
 \vskip5pt
   (c)\;  the Lie cobracket of the Lie super\textsl{bi}algebra structure of  $ \, \lieg_{P_\Theta}^{\R_\Thetapicc,p} \, $  mentioned in (b) above is given by the formulas
   (for all  $ \, T \in \lieh \, $,  $ \, i \in I $)
  $$  \delta\big(\,T\big) \, = \, 0  \quad ,   \qquad  \delta\big(E_i\big) \, = \, 2 \; T^+_{\Thetapicc,i} \wedge E_i  \quad ,   \qquad  \delta\big(F_i\big) \, = \,  2 \; T^-_{\Thetapicc,i} \wedge F_i  $$
\end{prop}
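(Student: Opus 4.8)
The plan is to prove the three parts in the order (a) $\Rightarrow$ (b) $\Rightarrow$ (c), since once the algebra isomorphism of (a) is in hand, parts (b) and (c) are forced by transport of structure. The core of the work is therefore (a): I must check that the assignment $E_i \mapsto E_i$, $T \mapsto T$, $F_i \mapsto F_i$ is a well-defined Lie superalgebra homomorphism $\lieg_{P_\Theta}^{\R_\Thetapicc,p} \to \big(\liegRpPa\big)^{j_\Thetapicc}$, and then that it is bijective. Note that the underlying Lie superalgebra of $\big(\liegRpPa\big)^{j_\Thetapicc}$ is literally $\liegRpPa$ (twisting by $j_\Thetapicc$ changes only the cobracket), so the target has exactly the defining relations of $\liegRpPa$ with respect to the realization $\R$. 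The source $\lieg_{P_\Theta}^{\R_\Thetapicc,p}$ has the same shape of presentation but with respect to the deformed realization $\R_\Thetapicc = \big(\lieh, \Pi, \Pi^\vee_\Thetapicc\big)$ of $P_\Theta$. Crucially, $\R_\Thetapicc$ has the \emph{same} space $\lieh$, the \emph{same} roots $\Pi$, and the \emph{same} symmetrized matrix $P_s = P_\Apicc = DA$, hence the same Cartan matrix $A$ and the same $d_i$ (by Proposition \ref{prop: twist-realizations}(a)). So all the Serre-type and higher-order relations — which only involve $E_i$, $F_i$ and the combinatorial data $a_{ij}$, $p(i)$, and the Dynkin type — are literally identical in source and target. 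The only relations that could differ are the ``toral'' ones: $[T,E_j] = \alpha_j(T)E_j$, $[T,F_j] = -\alpha_j(T)F_j$, and $[E_i,F_\ell] = \delta_{i\ell}\,(T_i^+ + T_i^-)/(2d_i)$.

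\textbf{Checking the toral relations.} For the first two families, the roots $\alpha_j$ are unchanged in passing from $\R$ to $\R_\Thetapicc$, so $[T,E_j] - \alpha_j(T)E_j = 0$ reads the same on both sides — no issue. The one genuinely substantive point is the relation $[E_i, F_\ell] = \delta_{i\ell}\,\big(T^+_{\Thetapicc,i} + T^-_{\Thetapicc,i}\big)/(2d_i)$ that holds by definition in the source: I must verify it is consistent with the structure of the target. Here the key computation is the one already recorded in \S\ref{twist-deform.'s x mpmatr.'s & realiz.'s}: $S_{\Thetapicc,i} := 2^{-1}\big(T^+_{\Thetapicc,i} + T^-_{\Thetapicc,i}\big) = 2^{-1}\big(T^+_i + T^-_i\big) = S_i$ for all $i \in I$. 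Thus $T^+_{\Thetapicc,i} + T^-_{\Thetapicc,i} = T^+_i + T^-_i$ in $\lieh$, so the right-hand side of the bracket relation is \emph{literally the same element} of $\lieh$ in both presentations. Therefore the defining relations of $\lieg_{P_\Theta}^{\R_\Thetapicc,p}$ and of $\liegRpPa$ coincide term-by-term after the identity identification of generators, and $\phi^\Thetapicc_\Ppicc$ (and its inverse, given by the same formulas) is a well-defined isomorphism of Lie superalgebras. This proves (a). I expect this ``$S_i$ is twist-invariant'' observation to be the real crux, though it is a one-line consequence of \eqref{eq: T-phi}.

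\textbf{Parts (b) and (c).} For (b): $\big(\liegRpPa\big)^{j_\Thetapicc}$ is a Lie superbialgebra (by Proposition \ref{prop:twist-superliebial}, once one knows $j_\Thetapicc$ is a toral twist — which is asserted in \S\ref{Constr-MpLieSBial's} and can be checked directly from \eqref{eq: twist-cond_Lie-superbialg}, using that $j_\Thetapicc \in \lieh \otimes \lieh$ is symmetric-free and $\lieh$ acts diagonally). Transporting this cobracket through the isomorphism $\phi^\Thetapicc_\Ppicc$ of (a) endows $\lieg_{P_\Theta}^{\R_\Thetapicc,p}$ with a Lie superbialgebra structure, and it is the unique one making $\phi^\Thetapicc_\Ppicc$ a bialgebra isomorphism, since the cobracket is then forced to be $\delta_{\text{new}} = \big(\phi^\Thetapicc_\Ppicc{}^{-1} \otimes \phi^\Thetapicc_\Ppicc{}^{-1}\big) \circ \delta^{j_\Thetapicc} \circ \phi^\Thetapicc_\Ppicc$. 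For (c): I just compute $\delta^{j_\Thetapicc}$ on generators. By \eqref{eq: def_twist-delta}, $\delta^{j_\Thetapicc}(x) = \delta(x) - x\cdot j_\Thetapicc$ with $\delta$ as in \eqref{eq: Lie-cobracket x gRPa}. On $T \in \lieh$: $\delta(T) = 0$ and $T \cdot j_\Thetapicc = 0$ since $\lieh$ is abelian, so $\delta^{j_\Thetapicc}(T) = 0$. On $E_i$: $\delta(E_i) = 2\,T^+_i \wedge E_i$, and using \eqref{eq: action-g-tensor}, $E_i \cdot j_\Thetapicc = \sum_{g,k}\theta_{gk}\big([E_i,H_g]\otimes H_k + H_g \otimes [E_i,H_k]\big) = -\sum_{g,k}\theta_{gk}\big(\alpha_i(H_g)\,E_i \otimes H_k + \alpha_i(H_k)\,H_g\otimes E_i\big)$; antisymmetrizing and using antisymmetry of $\Theta$, this collapses to $-2\,\big(\sum_{g,k}\alpha_i(H_g)\theta_{kg}H_k\big)\wedge E_i$ (up to fixing the exact sign bookkeeping against \eqref{eq: T-phi}), so that $\delta^{j_\Thetapicc}(E_i) = 2\,T^+_i \wedge E_i + 2\big(\sum \alpha_i(H_g)\theta_{kg}H_k\big)\wedge E_i = 2\,T^+_{\Thetapicc,i}\wedge E_i$ by \eqref{eq: T-phi}. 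The computation for $F_i$ is identical with the sign of the correction term reversed, giving $\delta^{j_\Thetapicc}(F_i) = 2\,T^-_{\Thetapicc,i}\wedge F_i$. Pulling these back along $\phi^\Thetapicc_\Ppicc$ (which is the identity on generators) yields exactly the formulas in (c). The only nuisance here is matching signs and index placements in the double sum against the exact form of \eqref{eq: T-phi}; this is routine.
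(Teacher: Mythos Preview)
Your proposal is correct and follows essentially the same approach as the paper's own proof. In particular, both arguments hinge on the observation that the defining relations of the Lie superalgebra depend only on the $\alpha_j$'s and the sums $S_i = 2^{-1}(T_i^+ + T_i^-)$, both of which are twist-invariant; part (b) is transport of structure in both cases; and the computation in (c) of $\delta^{j_\Thetapicc}$ on generators via $\delta(x) - x\cdot j_\Thetapicc$ matches the paper's line-by-line.
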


\begin{proof}
 \textit{(a)}\,  By construction, the Lie algebra structure in  $ {\big( \liegRpPa \big)}^{j_\Thetapicc} $  is the same of  $ \liegRpPa \, $,
 and the latter only depends on the $ \alpha_j $'s  and the sums
 $ \, S_j := 2^{-1}\big(T_j^+ \! + T_j^-\big) \, $  ($ \, j \in I \, $).
 Now, both the  $ \alpha_j $'s  and the  $ S_j $'s  \textsl{do not change\/}
 (by construction)  when we pass from  $ \liegRpPa $  to  $ \lieg_{P_\Theta}^{\R_\Thetapicc,p} $
 or viceversa; therefore, the formulas in the claim   --- mapping each generator of  $
 \lieg_{P_\Theta}^{\R_\Thetapicc,p} $  onto the same name generator of
 $ \, \liegRpPa = {\big( \liegRpPa \big)}^{j_\Thetapicc} \, $  ---
 actually do provide an isomorphism  $ \phi^\Thetapicc_\Ppicc $  of Lie superalgebras,
 as claimed.
 \vskip5pt
   \textit{(b)}\,  By pull-back through  $ \phi^\Thetapicc_\Ppicc $  one gets onto
   $ \lieg_{P_\Theta}^{\R_\Thetapicc,p} $  the unique Lie cobracket which makes the former
   into a Lie superbialgebra such that the map  $ f^\Thetapicc_\Ppicc $
   is also an isomorphism of Lie super\textsl{bi\/}algebras.
 \vskip5pt
   \textit{(c)}\,  For the toral twist
  $ \; j_\Thetapicc := {\textstyle \sum_{g,k=1}^t} \theta_{gk} \, H_g \otimes H_k \; $
 in  \eqref{eq: def_twist_Lie-bialg},  formula  \eqref{eq: def_twist-delta}  yields
  $$  \delta^{j_\Thetapicc}(x)  \; := \;  \delta(x) - x.j_\Thetapicc  \; = \;
  \delta(x) - {\textstyle \sum_{g,k=1}^t} \theta_{gk} \,
  \big( \big[x,H_g\big] \otimes H_k + H_g \otimes \big[x,H_k\big] \big)  $$
 for all  $ \, x \in \liegRpPa \, $.  Now take  $ \, x := E_\ell \, $  ($ \, \ell \in I \, $):  \,then computations give
  $$  \displaylines{
   \delta^{j_\Thetapicc}(E_\ell)  \; := \;  \delta(E_\ell) - {\textstyle \sum_{g,k=1}^t} \, \theta_{gk} \,
   \big( \big[E_\ell \, , H_g\big] \otimes H_k + H_g \otimes \big[E_\ell \, , H_k\big] \big)  \; =   \hfill  \cr
   \quad   = \;  \delta(E_\ell) - {\textstyle \sum_{g,k=1}^t} \, \theta_{gk} \,
   \big(\! -\alpha_\ell(H_g) E_\ell \otimes H_k - H_g \otimes \alpha_\ell(H_k) E_\ell \,\big)  \; =   \hfill  \cr
   = \;  T^+_\ell \otimes E_\ell - E_\ell \otimes T^+_\ell + {\textstyle \sum_{g,k=1}^t} \, \theta_{gk} \,
   \big(\, \alpha_\ell(H_g) E_\ell \otimes H_k + \alpha_\ell(H_k) H_g \otimes E_\ell \,\big)  \; =   \quad  \cr
   \hfill   = \;  \Big(\, T^+_\ell + {\textstyle \sum_{g,k=1}^t} \, \theta_{kg} \, \alpha_\ell(H_g) H_k \Big) \otimes E_\ell \, -
   \, E_\ell \otimes \Big(\, T^+_\ell + {\textstyle \sum_{g,k=1}^t} \, \theta_{kg} \, \alpha_\ell(H_g) H_k \Big)  \; =  \cr
   \hfill   = \;  T^+_{\Thetapicc,\ell} \otimes E_\ell \, - \, E_\ell \otimes T^+_{\Thetapicc,\ell}  \; = \; 2 \;
   T^+_{\Thetapicc,\ell} \wedge \! E_\ell  }  $$
 hence in short we get  $ \; \delta^{j_\Thetapicc}(E_\ell) \, = \, 2 \; T^+_{\Thetapicc,\ell} \wedge \! E_\ell  \; $.
 Similar computations give
  $$
 \delta^{j_\Thetapicc}(T) = 0  \; ,   \!\quad
 \delta^{j_\Thetapicc}(F_\ell) = 2 \, T^-_{\Thetapicc,\ell} \wedge \! F_\ell  \; ,   \quad  \forall \; \ell \in I \, , \, T \!\in \lieh  $$
 Therefore, pulling back through  $ \phi_\Ppicc^\Thetapicc \, $  onto  $ \liegRpP $  the Lie cobracket of  $ {\Big( \liegRpPa \Big)}^{j_\Thetapicc} $,  \,we find that this Lie cobracket is described by the formulas in claim  \textit{(c)},  q.e.d.
\end{proof}

\vskip11pt

  With some extra effort we can get the following, key result:

\vskip11pt

\begin{theorem}  \label{thm: Lie-sBIalg x liegRP}
 Every Lie superalgebra  $ \liegRpP $  is indeed a Lie superbialgebra,
 whose Lie cobracket is given on generators by the formulas
 (for all  $ \, T \in \lieh \, $,  $ \, i \in I $)
\begin{equation}  \label{eq: Lie-cobracket x gRP}
 \delta\big(\,T\big) \, = \, 0  \quad ,   \qquad  \delta\big(E_i\big) \, =
 \, 2 \; T^+_i \!\wedge E_i  \quad ,   \qquad  \delta\big(F_i\big) \, = \,
 2 \; T^-_i \!\wedge F_i
\end{equation}
\end{theorem}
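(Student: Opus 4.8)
The plan is to deduce the statement from Proposition \ref{prop: Yamane's twist-liegRPa = liegRP-twist} by two successive reductions, and then to transport the resulting Lie superbialgebra structure through a surjection of realizations.

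\emph{First reduction: to a large straight realization of $ P $.} Starting from the given realization $ \, \R = \big(\, \lieh \, , \Pi \, , \Pi^\vee \,\big) \, $ of $ P $, I would apply Lemma \ref{lemma: split/straight-lifting}\textit{(b)} and, if necessary, further enlarge the Cartan space by a free direct summand on which all the $ \alpha_j $ vanish; since this enlargement preserves straightness, the realization property and the epimorphism, the outcome is a \emph{straight} realization $ \, \widehat{\R} = \big(\, \widehat{\lieh} \, , \widehat{\Pi} \, , \widehat{\Pi}^\vee \,\big) \, $ of the \emph{same} matrix $ P $, with $ \, \rk\big(\widehat{\R}\big) =: t \geq 3\,n - \rk(P_s) \, $, together with an epimorphism of realizations $ \, \underline{\pi} : \widehat{\R} \relbar\joinrel\twoheadrightarrow \R \, $ whose underlying surjection $ \, \pi : \widehat{\lieh} \twoheadrightarrow \lieh \, $ may be taken to be the identity on $ I $. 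It then suffices to prove the theorem for $ \widehat{\R} $ and to descend.

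\emph{Second reduction: $ \widehat{\R} $ is a toral twist of a straight realization of $ P_{\!\Apicc} $.} As $ P $ is of Cartan type, $ \, P_s = D\,A = P_{\!\Apicc} \, $, so $ P $ and $ P_{\!\Apicc} $ share their symmetric part; by Lemma \ref{lemma: twist=sym} they are twist equivalent under $ \lieso_t(\k) \, $, and applying Proposition \ref{prop: realiz=twist-standard}\textit{(a)} to $ \widehat{\R} $ (with $ P' := P_{\!\Apicc} $) together with Remarks \ref{rmks: transitivity-twist}, I obtain $ \, \Theta \in \lieso_t(\k) \, $ and a straight realization $ \R' $ of $ P_{\!\Apicc} $ of rank $ t $ such that $ \, \big( P_{\!\Apicc} \big)_\Thetapicc = P \, $ and $ \, \R'_\Thetapicc = \widehat{\R} \, $ (so that the coroots $ \, T^\pm_{\Thetapicc,i} \, $ of $ \R'_\Thetapicc $ are exactly the coroots $ \, \widehat{T}^\pm_i \, $ of $ \widehat{\R} $). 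Yamane's Lie superbialgebra attached to the straight realization $ \R' $ of $ P_{\!\Apicc} $ is defined (\S \ref{Yamane's LieSBial's}), hence Proposition \ref{prop: Yamane's twist-liegRPa = liegRP-twist} equips $ \, \lieg^{\R'_\Thetapicc,p}_{(P_{\!\Apicc})_\Thetapicc} \, $, which is nothing but $ \, \lieg^{\widehat{\R},p}_\Ppicc \, $, with a Lie superbialgebra structure whose cobracket is $ \, \delta(T) = 0 \, $, $ \, \delta(E_i) = 2\,\widehat{T}^+_i \wedge E_i \, $, $ \, \delta(F_i) = 2\,\widehat{T}^-_i \wedge F_i \, $. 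This proves the theorem for $ \widehat{\R} $.

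\emph{Descent along $ \underline{\pi} $.} The epimorphism $ \underline{\pi} $ induces a surjective morphism of Lie superalgebras $ \, \Pi : \lieg^{\widehat{\R},p}_\Ppicc \relbar\joinrel\twoheadrightarrow \liegRpP \, $, acting as $ \pi $ on $ \widehat{\lieh} $ and as the identity on the $ E_i $'s and $ F_i $'s; the defining relations survive because $ \, \widehat{\alpha}_j = \alpha_j \circ \pi \, $ and $ \, \pi\big(\widehat{T}_i^\pm\big) = T_i^\pm \, $, and comparing the two presentations identifies $ \Ker(\Pi) $ with the ideal generated by $ \, \Ker(\pi) \subseteq \widehat{\lieh} \, $. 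By Lemma \ref{lemma: ker-morph's_realiz's} one has $ \, \Ker(\pi) \subseteq \bigcap_{j \in I} \Ker\big(\widehat{\alpha}_j\big) \, $, so the relations $ \, \big[ T , E_j \big] = \widehat{\alpha}_j(T)\,E_j \, $, $ \, \big[ T , F_j \big] = - \widehat{\alpha}_j(T)\,F_j \, $ force $ \Ker(\pi) $ to be central in $ \lieg^{\widehat{\R},p}_\Ppicc $; hence the ideal it generates is $ \Ker(\pi) $ itself, and, since the cobracket of $ \lieg^{\widehat{\R},p}_\Ppicc $ vanishes on $ \widehat{\lieh} \supseteq \Ker(\pi) $, this ideal is also a coideal, i.e.\ a Lie superbialgebra ideal. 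Therefore $ \, \liegRpP \cong \lieg^{\widehat{\R},p}_\Ppicc \big/ \Ker(\Pi) \, $ inherits a Lie superbialgebra structure, with induced cobracket $ \, \delta(T) = 0 \, $, $ \, \delta(E_i) = 2\,\pi\big(\widehat{T}^+_i\big) \wedge E_i = 2\,T^+_i \wedge E_i \, $, $ \, \delta(F_i) = 2\,T^-_i \wedge F_i \, $, which is exactly \eqref{eq: Lie-cobracket x gRP}. The crux of the argument is this last step: one must be sure the Lie–ideal kernel of $ \Pi $ is annihilated by the cobracket, and for that it is essential that $ \Ker(\pi) $ be central — precisely the content of Lemma \ref{lemma: ker-morph's_realiz's}. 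A secondary difficulty is the bookkeeping in the second reduction, tracking how the twist $ \Theta $ carries the coroots of $ \R' $ onto those of $ \widehat{\R} $, as this is what pins down the exact shape of $ \delta $ in \eqref{eq: Lie-cobracket x gRP}.
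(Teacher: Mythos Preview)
Your proposal is correct and follows essentially the same route as the paper's proof: lift to a straight realization of $P$ of sufficiently large rank via Lemma~\ref{lemma: split/straight-lifting}\textit{(b)}, realize it as a toral twist of a straight realization of $P_{\!\Apicc}$ via Proposition~\ref{prop: realiz=twist-standard}\textit{(a)}, invoke Proposition~\ref{prop: Yamane's twist-liegRPa = liegRP-twist}, and then descend through the epimorphism using Lemma~\ref{lemma: ker-morph's_realiz's} to see that the kernel is central and killed by $\delta$. The only cosmetic difference is that the paper writes the twist as $-\Phi$ (starting from $\tilde{\R}$ and twisting to reach $P_{\!\Apicc}$, then reversing), whereas you work directly with a $\Theta$ going the other way; and you make explicit the observation that the Lie ideal generated by the central subspace $\Ker(\pi)$ is $\Ker(\pi)$ itself, which the paper leaves implicit.
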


\begin{proof}
 The result follows from the discussion above when the realization  $ \R $  of  $ P $  is straight.  To prove the statement for general realizations  $ \R \, $,  we proceed as follows.  By assumption, we have a fixed Cartan super-datum  $ \, (A,p) \, $;
%
%
 furthermore,  $ \, P_s := 2^{-1} \big( P + P^{\,\scriptscriptstyle T} \big) = DA =:
 P_{\!\Apicc} \, $,  \,and finally
 $ \, \R := \big(\, \lieh \, , \Pi \, , \Pi^\vee \,\big) \, $  a realization of  $ P $.
                                                                  \par
   By  Lemma \ref{lemma: split/straight-lifting},  there is a  \textsl{straight\/}  realization  $ \; \tilde{\R} := \big(\,\tilde{\lieh} \, , \tilde{\Pi} \, , {\tilde{\Pi}}^\vee \,\big) \, $   of  $ P $  and an epimorphism of realizations  $ \; \underline{\pi} : \tilde{\R} \relbar\joinrel\relbar\joinrel\twoheadrightarrow \R \; $;  \,moreover, up to possibly enlarging  $ \tilde{\lieh} \, $,  we can assume that  $ \, t := \text{rk}\big(\tilde{\lieh}\big) \geq 3n - \text{rk}\big(P_{\!\Apicc}\big) \, $.  Then, by  Proposition \ref{prop: realiz=twist-standard}\textit{(a)}   --- with  $ \, P' = P_{\!\Apicc} := DA \, $  ---   there exists some  $ \, \Phi \in \lieso_t\big(\kh\big) \, $  such that  $ \, P_{\!\Apicc} = P_{\scriptscriptstyle \Phi} \, $  while  $ \, \R_\Apicc := \tilde{\R}_{\,\scriptscriptstyle \Phi} \, $  is a straight realization of  $ \, P_{\!\Apicc} = P_{\scriptscriptstyle \Phi} \, $.  Hence, conversely, we have also  $ \, P = {\big( P_{\!\Apicc} \big)}_{\scriptscriptstyle -\Phi} \, $  and  $ \, \tilde{\R} = {\big( \R_\Apicc \big)}_{\scriptscriptstyle -\Phi} \, $.  Then there exists an isomorphism of Lie superbialgebras
  $ \; \phi^{\scriptscriptstyle -\Phi}_\Ppicc : \lieg_\Ppicc^{\tilde{\Rpicc},p} = \lieg_{(\Ppicc_\Apicc)_{-\Phipicc}}^{(\Rpicc_\Apicc)_{-\Phipicc},p} \, {\buildrel \cong \over
 {\lhook\joinrel\relbar\joinrel\relbar\joinrel\relbar\joinrel\twoheadrightarrow}} \,
{\Big( \lieg_{\Ppicc_{\!\Apicc}}^{\Rpicc_\Apicc,p} \Big)}^{j_{-\Phipicc}} \, $,
 \;explicitly described as in  Proposition \ref{prop: Yamane's twist-liegRPa = liegRP-twist},  up to switching  $ \Phi $  to  $ -\Phi \, $;  \,moreover, the Lie cobracket in  $ \, \lieg_\Ppicc^{\tilde{\Rpicc},p} = \lieg_{(\Ppicc_\Apicc)_{-\Phipicc}}^{(\Rpicc_\Apicc)_{-\Phipicc},p} \, $  is described as in  Proposition \ref{prop: Yamane's twist-liegRPa = liegRP-twist}\textit{(c)},  which in terms of the defining generators of  $ \lieg_\Ppicc^{\tilde{\Rpicc},p} $  reads exactly like  \eqref{eq: Lie-cobracket x gRP}  in the claim.
                                                                  \par
   Now, the epimorphism of realizations  $ \; \underline{\pi} : \tilde{\R} \relbar\joinrel\relbar\joinrel\twoheadrightarrow \R \; $  ``include'' an epimorphism  $ \, \pi : \tilde{\lieh} \relbar\joinrel\relbar\joinrel\twoheadrightarrow \lieh \, $  of (Abelian) Lie  $ \kh $--superalgebras; moreover, it follows at once from the presentation of both  $ \, \lieg_\Ppicc^{\tilde{\Rpicc},p} \, $  and  $ \, \liegRpP \, $  that  $ \pi $  extends to an epimorphism of Lie superalgebras  $ \; \L_{\underline{\pi}} : \lieg_\Ppicc^{\tilde{\Rpicc},p} \relbar\joinrel\relbar\joinrel\relbar\joinrel\twoheadrightarrow \liegRpP $,  \;and that
   $ \Ker\big(\L_{\underline{\pi}}\big) $  is generated by  $ \, \Ker(\pi) \, $.  Furthermore,
   $ \, \Ker(\pi) \, $  lies in the center of  $ \lieg_\Ppicc^{\tilde{\Rpicc},p} \, $,  \,by definitions and by  Lemma \ref{lemma: ker-morph's_realiz's};  moreover, the Lie cobracket of
   $ \lieg_\Ppicc^{\tilde{\Rpicc},p} $  kills  $ \Ker(\pi) \, $,  \,as it is trivial on  $
   \tilde{\lieh} \, $,  so the latter is a Lie  super\textsl{bi\/}ideal  in the Lie superbialgebra  $ \lieg_\Ppicc^{\tilde{\Rpicc},p} \, $.  Thus  $ \liegRpP $  inherits via
   $ \L_{\underline{\pi}} $  a  \textsl{quotient Lie superbialgebra structure\/}  from  $ \lieg_\Ppicc^{\tilde{\Rpicc},p} \, $,  again described by  \eqref{eq: Lie-cobracket x gRP},
   q.e.d.
\end{proof}

\vskip9pt

\begin{definition}  \label{def: MpLSbA's}
   Every such Lie superbialgebra  $ \liegRpP $  considered above will be called  \textit{multiparameter Lie superbialgebra},  in short  \textit{MpLSbA}.
   In addition, we say that the MpLSbA  $ \liegRpP $  is  \textsl{straight},
   or \textsl{small},  or \textsl{minimal},  or  \textsl{split},  if such is  $ \R \, $.
 \vskip3pt
   Finally, we define the  \textsl{rank\/}  of  $ \liegRpP $  as  $ \; \rk\!\big( \liegRpP \big) := \rk(\R) = \rk_\Bbbk(\lieh) \; $.   \hfill  $ \diamondsuit $
\end{definition}

\vskip9pt

\begin{rmks}  \label{rmks: sub-MpLSbA's & triang-decomp.'s}
 Let  $ \lien^\Rpicc_+ \, $,  resp.\  $ \lien^\Rpicc_- \, $,  be the Lie sub-superalgebra of
 $ \liegRpP $
 generated by all the  $ E_i $'s,  resp.\ all the  $ F_i $'s; and let  $ \, \lieb^{\raise1pt\hbox{$ \scriptscriptstyle \R,p $}}_{\Ppicc,\pm} := \lieh \oplus \lien^\Rpicc_\pm = \lien^\Rpicc_\pm \oplus \lieh \, $.  Then:
 \vskip3pt
   \textit{(a)}\;  $ \lieb^{\raise1pt\hbox{$ \scriptscriptstyle \R,p $}}_{\Ppicc,\pm} $
   is a Lie sub-superbialgebra of  $ \liegRpP \, $,
 \vskip3pt
   \textit{(b)}\;  there exist obvious  \textsl{triangular decompositions}
\begin{equation}  \label{eq: triang-decomp's_Lie-bialg's}
  \liegRpP  \; = \;  \lien_+ \oplus \lieh \oplus \lien_-  \quad ,  \qquad \qquad  \liegRpP  \;
  = \;  \lien_- \oplus \lieh \oplus \lien_+
\end{equation}
%
%
\end{rmks}

\vskip9pt

   The following, last two result stress the fact that the dependence of MpLSbA's
   on realizations   --- for a common, fixed multiparameter matrix  $ P $  ---
   is  \textsl{functorial\/}:

\vskip13pt

\begin{prop}  \label{prop: functor_R->liegRP}
 Let  $ \, (A \, , p) \, $  be a fixed Cartan super-datum and let
 $ \, P \in M_n(\Bbbk) \, $ be of Cartan type $A$.
 If both  $ \R' $  and  $ \, \R'' $  are realizations of  $ P $  and
 $ \, \underline{\phi} : \R' \relbar\joinrel\relbar\joinrel\longrightarrow \R'' \, $  is a morphism between them, then there exists a unique morphism of Lie superbialgebras
 $ \; \Lc_{\underline{\phi}} : \lieg^{\scriptscriptstyle \R',p}_\Ppicc \!\relbar\joinrel\longrightarrow \lieg^{\scriptscriptstyle \R'',p}_\Ppicc \; $
 that extends the morphism
 $ \, \phi : \lieh' \!\relbar\joinrel\longrightarrow \lieh'' \, $
 given by  $ \underline{\phi} \, $;  \,moreover,
 $ \, \Lc_{\underline{\id}_\R} = \id_{\liegRpP} \, $  and
 $ \; \Lc_{\underline{\phi}' \circ\, \underline{\phi}} = \Lc_{\underline{\phi}'} \circ \Lc_{\underline{\phi}} \; $
 (whenever  $ \, \underline{\phi}' \circ\, \underline{\phi} \, $  is defined).
 In a nutshell, the construction  $ \, \R \mapsto \liegRpP \, $  (for fixed  $ P $)  is functorial in  $ \R \, $.
                                                                              \par
   Further, if  $ \underline{\phi} $  is an epimorphism, resp.\ a monomorphism, then
  $ \Lc_{\underline{\phi}} $  is an epimorphism, resp.\ a monomorphism, as well.
  Finally, for any morphism
  $ \, \underline{\phi} : \R' \relbar\joinrel\longrightarrow \R'' \, $,
  \,the kernel  $ \, \Ker\big(\Lc_{\underline{\phi}}\,\big) $  of
  $ \, \Lc_{\underline{\phi}} $  coincides with  $ \, \Ker(\phi) \, $,
  and the latter is central in
  $ \, \lieg^{\scriptscriptstyle \R',p}_\Ppicc \, $.
                                                                                \par
   In particular, when  $ \underline{\phi} \, $,  and hence also
   $ \Lc_{\underline{\phi}} \, $,  is an epimorphism, we have   ---
   setting  $ \; \liek := \Ker(\phi) \, $  ---
   a central exact sequence of Lie superbialgebras
  $$  0 \,\relbar\joinrel\relbar\joinrel\longrightarrow\, \liek
  \,\relbar\joinrel\relbar\joinrel\longrightarrow\, \lieg^{\scriptscriptstyle \R',p}_\Ppicc \,\;{\buildrel {\Lc_{\underline{\phi}}} \over {\relbar\joinrel\relbar\joinrel\longrightarrow}}\;\,
  \lieg^{\scriptscriptstyle \R'',p}_\Ppicc \,\relbar\joinrel\relbar\joinrel\longrightarrow\, 0  $$
\end{prop}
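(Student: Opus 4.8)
The plan is to build $ \Lc_{\underline{\phi}} $ by hand on the generators of $ \lieg^{\scriptscriptstyle \R',p}_\Ppicc $ in its Yamane-style presentation (\S\ref{Yamane's LieSBial's}), verify that it is a well defined morphism of Lie superbialgebras, and then deduce the remaining assertions from the triangular decompositions \eqref{eq: triang-decomp's_Lie-bialg's} and from Lemma \ref{lemma: ker-morph's_realiz's}. Write $ \sigma \in \mathbb{S}_I $ for the permutation attached to $ \underline{\phi} $ in Definition \ref{def: realization of P}\,\textit{(c)}, and decorate the data of $ \R'' $ with a dot, so that $ \phi\big(T_i^\pm\big) = \dot{T}_{\sigma(i)}^\pm $ and $ \phi^*\big(\dot{\alpha}_{\sigma(i)}\big) = \alpha_i $ for all $ i \in I $. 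I would then set $ \Lc_{\underline{\phi}}(T) := \phi(T) $ for $ T \in \lieh' $ and $ \Lc_{\underline{\phi}}(E_i) := E_{\sigma(i)} $, $ \Lc_{\underline{\phi}}(F_i) := F_{\sigma(i)} $ for $ i \in I $. Since $ \lieg^{\scriptscriptstyle \R',p}_\Ppicc $ is generated by $ \lieh' $ together with the $ E_i $'s and the $ F_i $'s, the construction fixes the images of all generators, so uniqueness is automatic and the real content is well definedness.

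For well definedness one must check that this assignment turns each defining relation of $ \lieg^{\scriptscriptstyle \R',p}_\Ppicc $ into a valid one in $ \lieg^{\scriptscriptstyle \R'',p}_\Ppicc $. The key preliminary remark is that $ \sigma $ is forced to be an automorphism of the Cartan super-datum $ (A,p) $: evaluating $ \phi^*\big(\dot{\alpha}_{\sigma(j)}\big) = \alpha_j $ on $ T_i^+ $ and using $ \phi\big(T_i^+\big) = \dot{T}_{\sigma(i)}^+ $ gives $ p_{i,j} = \alpha_j\big(T_i^+\big) = \dot{\alpha}_{\sigma(j)}\big(\dot{T}_{\sigma(i)}^+\big) = p_{\sigma(i),\sigma(j)} $ for all $ i,j \in I $, so $ \sigma $ is a symmetry of $ P $, hence of its canonically associated data $ A $ and $ D $, and hence of the parity $ p $ as well; in particular $ a_{i,j} = a_{\sigma(i),\sigma(j)} $, $ d_i = d_{\sigma(i)} $ and $ p(i) = p(\sigma(i)) $. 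Granting this, the purely ``$ E $-and-$ F $'' relations — the vanishing commutators when $ a_{i,j} = 0 $, the super quantum Serre relations, and all the higher-order relations attached to the types $ B_n $, $ C_n $, $ D_n(I\!I) $ and to $ F $, $ G $ — are carried to relations of the very same shape with $ \sigma $-relabelled indices, while the mixed relations are routine: $ \big[T',T''\big] \mapsto 0 $ since $ \lieh'' $ is abelian; $ \big[T,E_j\big] - \alpha_j(T)E_j \mapsto \big[\phi(T),E_{\sigma(j)}\big] - \dot{\alpha}_{\sigma(j)}(\phi(T))\,E_{\sigma(j)} = 0 $ and its $ F $-analogue, using $ \phi^*\big(\dot{\alpha}_{\sigma(j)}\big) = \alpha_j $; and $ \big[E_i,F_\ell\big] - \delta_{i\ell}\big(2 d_i\big)^{-1}\big(T_i^+ + T_i^-\big) \mapsto 0 $, using $ \phi\big(T_i^\pm\big) = \dot{T}_{\sigma(i)}^\pm $, $ d_i = d_{\sigma(i)} $ and $ \delta_{i\ell} = \delta_{\sigma(i)\sigma(\ell)} $. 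Compatibility with the cobrackets then only has to be tested on generators (as $ \Lc_{\underline{\phi}} $ is an algebra map), and since by Theorem \ref{thm: Lie-sBIalg x liegRP} both cobrackets are given by \eqref{eq: Lie-cobracket x gRP}, it is immediate: $ \delta(T) = 0 \mapsto 0 $ and $ \delta(E_i) = 2\,T_i^+ \wedge E_i \mapsto 2\,\phi(T_i^+) \wedge E_{\sigma(i)} = 2\,\dot{T}_{\sigma(i)}^+ \wedge E_{\sigma(i)} = \delta(E_{\sigma(i)}) $, and likewise for $ F_i $. Hence $ \Lc_{\underline{\phi}} $ is a morphism of Lie superbialgebras.

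The remaining claims are formal. Functoriality is clear on generators: $ \Lc_{\underline{\id}_\R} $ fixes them, so it equals $ \id $, and $ \underline{\phi}' \circ \underline{\phi} $ has underlying map $ \phi' \circ \phi $ and permutation $ \sigma' \circ \sigma $, so $ \Lc_{\underline{\phi}' \circ \underline{\phi}} $ and $ \Lc_{\underline{\phi}'} \circ \Lc_{\underline{\phi}} $ agree on generators and therefore coincide. For the structural statements I would use that, by \eqref{eq: triang-decomp's_Lie-bialg's}, $ \lieg^{\scriptscriptstyle \R',p}_\Ppicc = \lien^{\scriptscriptstyle \R'}_+ \oplus \lieh' \oplus \lien^{\scriptscriptstyle \R'}_- $ and similarly for $ \R'' $, with the nilpotent parts admitting presentations involving only $ (A,p) $ and not the toral part; because $ \sigma $ is a diagram automorphism, $ \Lc_{\underline{\phi}} $ restricts to isomorphisms $ \lien^{\scriptscriptstyle \R'}_\pm {\buildrel \sim \over \longrightarrow} \lien^{\scriptscriptstyle \R''}_\pm $ and to $ \phi $ on $ \lieh' $. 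Decomposing $ x = x_- + x_0 + x_+ $ accordingly shows at once that $ \Ker\big(\Lc_{\underline{\phi}}\big) = \Ker(\phi) \subseteq \lieh' $; in particular $ \Lc_{\underline{\phi}} $ is a monomorphism whenever $ \phi $ is, and an epimorphism whenever $ \phi $ is (as then $ \lieh'' $ and all the $ E_{\sigma(i)} $, $ F_{\sigma(i)} $ — which generate $ \lieg^{\scriptscriptstyle \R'',p}_\Ppicc $, $ \sigma $ being onto — lie in the image). Centrality of $ \liek := \Ker(\phi) $ follows from Lemma \ref{lemma: ker-morph's_realiz's}, which gives $ \Ker(\phi) \subseteq \bigcap_{j \in I} \Ker(\alpha_j) $, so that $ \big[T,E_j\big] = \alpha_j(T)E_j = 0 $, $ \big[T,F_j\big] = 0 $ and $ \big[T,\lieh'\big] = 0 $ for $ T \in \liek $; moreover $ \liek $ is killed by $ \delta $ (which is trivial on $ \lieh' $), hence is a Lie super-biideal, and the central short exact sequence in the epimorphism case drops out by combining this with the surjectivity above.

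The only step I expect to require genuine care is the relation check in the second paragraph — confirming that the $ \sigma $-relabelling sends \emph{every} one of Yamane's relations, in particular each of the several higher-order relations special to types $ B $ through $ G $, to a valid relation; this all hinges on the elementary but indispensable observation that a morphism of realizations of a fixed matrix $ P $ induces an automorphism of the underlying Cartan super-datum. Everything else is either immediate on generators or a bookkeeping exercise with the triangular decomposition.
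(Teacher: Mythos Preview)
Your proof is correct and follows essentially the same path as the paper's: both construct $\Lc_{\underline{\phi}}$ on generators and invoke Lemma~\ref{lemma: ker-morph's_realiz's} for centrality of $\Ker(\phi)$. The paper's own proof is extremely terse --- it declares the existence of $\Lc_{\underline{\phi}}$ and all the other claims ``obvious'' and only spells out the centrality argument --- so your version is a considerably more explicit rendering of the same idea rather than a different route.

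Two points where you go beyond the paper are worth noting. First, you take the permutation $\sigma$ in Definition~\ref{def: realization of P}\textit{(c)} seriously and verify that it must preserve $P$ (hence the Cartan super-datum), which is what makes the $\sigma$-relabelling of the higher-order relations legitimate; the paper's proof does not mention $\sigma$ at all. Second, you pin down $\Ker(\Lc_{\underline{\phi}}) = \Ker(\phi)$ via the triangular decomposition \eqref{eq: triang-decomp's_Lie-bialg's} and the observation that $\Lc_{\underline{\phi}}$ restricts to isomorphisms on $\lien_\pm^{\scriptscriptstyle \R'}$, whereas the paper simply asserts this. Both additions are sound and fill in what the paper leaves implicit.
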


\pf
 The existence of  $ \Lc_{\underline{\phi}} $  is obvious, as well as all the other claims; we only spend a moment on  $ \Ker(\phi) $  being central.
 Lemma \ref{lemma: ker-morph's_realiz's}  implies
 $ \; \Ker(\phi) \, \subseteq \, \bigcap\limits_{j \in I} \Ker(\alpha'_j) \; $;
 \,then from the relations among the generators of
 $ \lieg^{\scriptscriptstyle \R'}_\Ppicc $  we get that each element in  $ \Ker(\phi) $  commutes with all generators of  $ \lieg^{\scriptscriptstyle \R'}_\Ppicc \, $,  thus  $ \Ker(\phi) $  is central.
\epf

\vskip9pt

\begin{cor}  \label{cor: isom_R -> isom_liegRP}
 With notation as above, if  $ \; \R' \cong \R'' \, $  then
 $ \; \lieg^{\scriptscriptstyle \R',p}_\Ppicc \cong \lieg^{\scriptscriptstyle \R'',p}_\Ppicc \; $.
 \vskip3pt
   In particular, all MpLSbA's built upon split realizations,
   respectively small realizations, of the same matrix  $ P $
   and sharing the same rank of  $ \, \lieh \, $,  are isomorphic to each other,
   hence they are independent (up to isomorphisms) of the specific realization,
   but only depend on  $ P $  and on the rank of  $ \, \lieh \, $.
\end{cor}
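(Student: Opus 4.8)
The plan is to deduce the first assertion directly from the functoriality of the assignment $\,\R \mapsto \lieg^{\scriptscriptstyle \R,p}_\Ppicc\,$ recorded in Proposition~\ref{prop: functor_R->liegRP}, and to obtain the second from the uniqueness (up to isomorphism) of split, respectively straight small, realizations of a fixed multiparameter matrix of fixed rank, established in \S\ref{MpMatrices, Cartan & realiz.'s}.

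For the first assertion, I would argue as follows. Given an isomorphism of realizations $\,\underline{\phi} : \R' \longrightarrow \R''\,$, its inverse $\,\underline{\phi}^{-1} : \R'' \longrightarrow \R'\,$ is again an isomorphism of realizations (the inverse of the underlying $\Bbbk$--module isomorphism, together with the inverse permutation, satisfies the conditions of Definition~\ref{def: realization of P}\textit{(c)}). Applying Proposition~\ref{prop: functor_R->liegRP} to $\underline{\phi}$ and to $\underline{\phi}^{-1}$ produces morphisms of Lie superbialgebras $\,\Lc_{\underline{\phi}} : \lieg^{\scriptscriptstyle \R',p}_\Ppicc \longrightarrow \lieg^{\scriptscriptstyle \R'',p}_\Ppicc\,$ and $\,\Lc_{\underline{\phi}^{-1}} : \lieg^{\scriptscriptstyle \R'',p}_\Ppicc \longrightarrow \lieg^{\scriptscriptstyle \R',p}_\Ppicc\,$; the functoriality identities then yield $\,\Lc_{\underline{\phi}^{-1}} \circ \Lc_{\underline{\phi}} = \Lc_{\underline{\phi}^{-1} \circ\, \underline{\phi}} = \Lc_{\underline{\id}_{\R'}} = \id\,$ and, symmetrically, $\,\Lc_{\underline{\phi}} \circ \Lc_{\underline{\phi}^{-1}} = \id\,$. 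Hence $\Lc_{\underline{\phi}}$ is an isomorphism of Lie superbialgebras, so $\,\lieg^{\scriptscriptstyle \R',p}_\Ppicc \cong \lieg^{\scriptscriptstyle \R'',p}_\Ppicc\,$.

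For the second assertion, in view of what has just been proved it suffices to show that any two split realizations of $P$ of the same rank are isomorphic as realizations, and likewise for (straight) small ones. For the split case: a split realization of $P$ has $\,\rk_\Bbbk(\lieh) \geq 2\,n\,$, since the $2\,n$ elements $T^\pm_i$ are $\Bbbk$--linearly independent; hence Proposition~\ref{prop: exist-realiz's}\textit{(b)} applies and gives, for each rank $\,\ell \geq 2\,n\,$, a split realization of $P$ of rank $\ell$ that is unique up to isomorphism, whence any two split realizations of equal rank are isomorphic. For the small case: the mere existence of a small realization forces $\,\rk\!\big(P_s \,\big|\, P_a\big) = \rk(P_s)\,$ (the necessary condition noted after Proposition~\ref{prop: exist-realiz's}), and then Proposition~\ref{prop: exist-realiz's_small} gives, for each rank $\,\ell \geq 2\,n - \rk(P_s)\,$, the uniqueness up to isomorphism of the straight small realization of $P$ of that rank. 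In either case, combining this with the first assertion shows that all the MpLSbA's in question are pairwise isomorphic; in particular $\lieg^{\scriptscriptstyle \R,p}_\Ppicc$ depends, up to isomorphism, only on $P$ and on $\rk_\Bbbk(\lieh)$.

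No serious obstacle arises here: the statement is a direct corollary of Proposition~\ref{prop: functor_R->liegRP} and of the existence/uniqueness results for realizations. The only point requiring care is to make sure the hypotheses of the latter are in force — the rank bound $\,\ell \geq 2\,n\,$ in the split case, and the bound $\,\ell \geq 2\,n - \rk(P_s)\,$ together with $\,\rk\!\big(P_s \,\big|\, P_a\big) = \rk(P_s)\,$ (which is automatic whenever a small realization exists) in the small case — so that Proposition~\ref{prop: exist-realiz's}\textit{(b)} and Proposition~\ref{prop: exist-realiz's_small} can indeed be invoked.
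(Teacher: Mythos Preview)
Your proof is correct and follows exactly the route taken in the paper: the first assertion from the functoriality in Proposition~\ref{prop: functor_R->liegRP}, and the second from the uniqueness results in Proposition~\ref{prop: exist-realiz's} and Proposition~\ref{prop: exist-realiz's_small}. You have simply supplied the details (the explicit inverse via functoriality, and the verification of the rank hypotheses) that the paper leaves implicit in its one-line proof.
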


\pf
 This follows at once from  Proposition \ref{prop: functor_R->liegRP}
 together with the uniqueness result in  Proposition \ref{prop: exist-realiz's}
 and  Proposition \ref{prop: exist-realiz's_small}.
\epf

\medskip

\subsection{Deformations of MpLSbA's by toral twists}  \label{subsec: tor-twist def's_mp-Lie_supbialg's}  {\ }

\vskip7pt

   We will now go and study deformations of MpLSbA's, following
   \S \ref{deformations of LSbA's}.
   We begin with deformations by twist, choosing the latter to be of a special type.
 \vskip5pt
 Let  $ \liegRpP $  be a MpLSbA as in  \S \ref{subsec: MpLSbA's}  above;
 then  $ \lieh $  is a free  $ \k $--module  of finite rank  $ \, t := \rk(\lieh) \, $:
 \,we fix in it a  $ \k $--basis  $ \, {\big\{ H_g \big\}}_{g \in \G} \, $,  where
 $ \G $  is an index set with  $ \, |\G| = \rk(\lieh) =: t \, $.
 We begin introducing the so-called ``toral'' twists for  $ \liegRpP \, $.

\vskip9pt

\begin{definition}  \label{def: toral twists x MpLbA's}
 For any  \textsl{antisymmetric\/}  matrix
 $ \; \Theta = \big( \theta_{gk} \big)_{g, k \in \G} \in \lieso_t(\Bbbk) \; $,  \;we set
\begin{equation}  \label{eq: def_twist_Lie-bialg_2nd-time}
  j_\Thetapicc  \; := \;  {\textstyle \sum_{g,k=1}^t} \theta_{gk} \, H_g \otimes H_k  \; \in \;
  \lieh \otimes \lieh  \; \subseteq \;  \lieg \otimes \lieg
\end{equation}
 and we call this  $ \, j_\Thetapicc \, $  \textit{the toral twist\/}
 (or  \textit{``twist of toral type''\/})
 associated with  $ \, \Theta\, $.   \hfill  $ \diamondsuit $
\end{definition}

\vskip7pt

   Next result   --- which explains our use of terminology ---   follows by construction;
   in particular, it makes use of the antisymmetry condition on  $ \Theta \, $.

\vskip11pt

\begin{lema}  \label{lemma: toral twist x MpLbA's}
 For any  $ \; \Theta = \big( \theta_{gk} \big)_{g, k \in \G} \in \lieso_t(\Bbbk) \; $,  \,the element  $ \, j_\Thetapicc \, $  in  Defini\-tion \ref{def: toral twists x MpLbA's}  is a  \textsl{twist\/}  element for the Lie superbialgebra  $ \liegRpP \, $,  \,in the sense of  \eqref{eq: twist-cond_Lie-superbialg}.   \qed
\end{lema}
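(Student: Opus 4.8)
The plan is to verify the two conditions in \eqref{eq: twist-cond_Lie-superbialg} directly, exploiting the very special shape of $j_\Thetapicc$, namely that it lives in $\lieh \otimes \lieh$ with $\lieh$ abelian and with $\delta$ vanishing identically on $\lieh$.

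First I would check the first condition, $\, x\,.\,\big((\id+\sigma)(j_\Thetapicc)\big)=0 \,$ for all $x\in\liegRpP$. Since $\, j_\Thetapicc = \sum_{g,k} \theta_{gk}\, H_g\otimes H_k \,$ with $\Theta$ antisymmetric (so $\theta_{gk}=-\theta_{kg}$) and all $H_g$ even, one computes $\, (\id+\sigma)(j_\Thetapicc) = \sum_{g,k}\theta_{gk}\,(H_g\otimes H_k + H_k\otimes H_g) \,$; reindexing the second sum and using antisymmetry shows this is $0$ already in $\lieh\otimes\lieh$. Hence $x.\big((\id+\sigma)(j_\Thetapicc)\big)=x.0=0$ trivially. (This is exactly where the antisymmetry hypothesis on $\Theta$ enters, as the remark preceding the lemma points out.)

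Next I would address the second condition, $\, x\,.\,\big(\Sym_{\mathbb{A}_3}(\id\otimes\delta)(j_\Thetapicc) - [\![\, j_\Thetapicc\,,j_\Thetapicc\,]\!]\big)=0 \,$. Here the key observation is that $\, (\id\otimes\delta)(j_\Thetapicc) = \sum_{g,k}\theta_{gk}\, H_g\otimes\delta(H_k) = 0 \,$ because $\delta(H_k)=0$ for all $k$ by \eqref{eq: Lie-cobracket x gRP} (the $H_k$ lie in $\lieh$). So the first term drops out entirely and it remains to show $\, x\,.\,[\![\, j_\Thetapicc\,,j_\Thetapicc\,]\!] = 0 \,$. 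Now $\, [\![\, j_\Thetapicc\,,j_\Thetapicc\,]\!] \,$ is built from brackets of the form $\,[H_g\,,H_{g'}]\,$, all of which vanish since $\lieh$ is an abelian Lie sub-superalgebra of $\liegRpP$ (the first relation $[T',T'']=0$ in the presentation of $\liegRpPa$, inherited by $\liegRpP$); therefore $\, [\![\, j_\Thetapicc\,,j_\Thetapicc\,]\!] = 0 \,$ in $\lieg^{\otimes 3}$, and again $x.0=0$. This completes the verification that $j_\Thetapicc$ satisfies \eqref{eq: twist-cond_Lie-superbialg}, i.e.\ it is a twist for $\liegRpP$.

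I do not expect any genuine obstacle: the statement is essentially a bookkeeping check, and the only subtlety is making sure the parity signs in the definitions of $(\id+\sigma)$, of $[\![\,-\,,-\,]\!]$, and of $\Sym_{\mathbb{A}_3}$ are handled correctly — but since every element involved ($H_g$, hence $j_\Thetapicc$) is even and the relevant brackets and cobrackets all vanish, the signs never actually get tested. The one point worth stating explicitly is that the adjoint action of an arbitrary $x\in\liegRpP$ (which need not be toral) on $\lieh\otimes\lieh\otimes\lieh$ is irrelevant precisely because the arguments it would act on are already zero; this is why it suffices to know $\delta|_\lieh=0$ and $[\lieh,\lieh]=0$ rather than anything finer.
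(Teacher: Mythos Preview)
Your proof is correct and is exactly the direct verification the paper has in mind: the lemma is stated with an inline \qed\ and the preceding sentence says only that it ``follows by construction; in particular, it makes use of the antisymmetry condition on $\Theta$''. Your argument spells out precisely this --- antisymmetry kills $(\id+\sigma)(j_\Thetapicc)$, while $\delta|_\lieh=0$ and $[\lieh,\lieh]=0$ kill the second condition --- so there is nothing to add.
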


\vskip9pt

   Concerning deformations of MpLSbA's by toral twists, our main result is the next one.
   To settle its content, let  $ \, P\in M_t(\Bbbk) \, $  be a multiparameter matrix of Cartan
   type with associated Cartan super-datum  $ (A,p) \, $,
   let  $ \R $  be a realization of it, and let  $ \liegRpP $
   be the associated multiparameter Lie bialgebra; then, for any given antisymmetric matrix
   $ \; \Theta = \big( \theta_{gk} \big)_{g, k \in \G} \in \lieso_t(\Bbbk) \; $,  \,let
 $ \; j_\Thetapicc \, := \, {\textstyle \sum\limits_{g,k=1}^t} \theta_{gk} \, H_g \otimes H_k \; $
 be the associated twist as in  \eqref{eq: def_twist_Lie-bialg_2nd-time}.
 Moreover, we consider the ``deformed'' multiparameter matrix
 $ \; P_\Thetapicc = \, {\big(\, p^\Thetapicc_{i,j} \big)}_{i, j \in I} \, := \,
 P - \mathfrak{A} \, \Theta \, \mathfrak{A}^{\,\scriptscriptstyle T} \; $
 as in  \eqref{def-P_Phi}   --- again of Cartan type, the same as  $ P $  ---
 and its corresponding ``deformed'' realization
 $ \, \R_\Thetapicc := \big(\, \lieh \, , \Pi \, , \Pi^\vee_\Thetapicc \! := {\big\{ T^+_{\Thetapicc,i} \, , T^-_{\Thetapicc,i} \big\}}_{i \in I} \,\big) \, $.

\vskip11pt

\begin{theorem}  \label{thm: twist-liegRP=new-liegR'P'}
 There is a Lie superbialgebra isomorphism
  $ \; f^\Thetapicc_{\Ppicc} : \lieg_{P_\Thetapicc}^{\scriptscriptstyle \Rpicc_\Thetapicc,p} \, {\buildrel \cong \over
 {\lhook\joinrel\relbar\joinrel\twoheadrightarrow}} \,
{\big( \liegRpP \big)}^{j_\Thetapicc} $
 \; (notation as above, with in right-hand side the twist deformation
 $ \, {\big( \liegRpP \big)}^{j_\Thetapicc} $  of  $ \, \liegRpP $
 by  $ j_\Thetapicc $  occurs) given by
  $ \; E_i \, \mapsto \, E_i \, , \; T \, \mapsto \, T \, $  and
  $ \; F_i \, \mapsto \, F_i \; $
 for all  $ \, i \in I \, $,  $ \, T \in \lieh \, $.
 \vskip3pt
   In particular, the class of all MpLSbA's of any fixed Cartan type and
   of fixed rank is stable by toral twist deformations.
   Moreover, inside it the subclass of all such MpLSbA's associated with
   \textsl{straight},  resp.\  \textsl{small},  realizations is stable as well.
\end{theorem}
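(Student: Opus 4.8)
The plan is to exhibit $f^\Thetapicc_{\Ppicc}$ as the identity on the distinguished generators $\{E_i,F_i\}_{i\in I}\cup\lieh$, checking separately that this assignment respects the Lie superalgebra structure and the Lie supercobracket, and then to read off the stability statements from the behaviour of the twisted realization $\R_\Thetapicc$ recorded in Proposition \ref{prop: twist-realizations}.

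First I would observe that, \emph{as Lie superalgebras}, $\lieg_{P_\Thetapicc}^{\R_\Thetapicc,p}$ and $\liegRpP$ literally coincide. Indeed, both realizations $\R$ and $\R_\Thetapicc$ have the very same Cartan subalgebra $\lieh$ and the very same root set $\Pi = \Pi_\Thetapicc$, so only the coroot set differs; but the coroots enter the defining relations of such a Lie superalgebra only through the even elements $S_i := 2^{-1}\big(T^+_i + T^-_i\big)$, namely via $\big[E_i,F_\ell\big] = \delta_{i\ell}\,S_i/d_i$, and the direct computation of \S \ref{twist-deform.'s x mpmatr.'s & realiz.'s} gives $S_{\Thetapicc,i} = S_i$ for all $i\in I$. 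Since moreover $P_\Thetapicc$ is again of Cartan type $A$ by Proposition \ref{prop: twist-realizations}(a) --- so the remaining (Serre-type and higher-order) relations, depending only on $A$ and on the parity $p$, are unchanged --- the two presentations coincide verbatim. Hence $E_i\mapsto E_i$, $T\mapsto T$, $F_i\mapsto F_i$ defines an isomorphism of Lie superalgebras $\lieg_{P_\Thetapicc}^{\R_\Thetapicc,p}\,\cong\,\liegRpP = {\big(\liegRpP\big)}^{j_\Thetapicc}$, the twist $j_\Thetapicc$ affecting only the comultiplication.

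Next I would match the cobrackets. By Theorem \ref{thm: Lie-sBIalg x liegRP} applied to the realization $\R_\Thetapicc$ of $P_\Thetapicc$, the source is a Lie superbialgebra with $\delta(T)=0$, $\delta(E_i)=2\,T^+_{\Thetapicc,i}\wedge E_i$, $\delta(F_i)=2\,T^-_{\Thetapicc,i}\wedge F_i$. On the target, Lemma \ref{lemma: toral twist x MpLbA's} ensures $j_\Thetapicc$ is a genuine twist for $\liegRpP$, so by Proposition \ref{prop:twist-superliebial} and \eqref{eq: def_twist-delta} the deformed cobracket is $\delta^{j_\Thetapicc} = \delta - \partial j_\Thetapicc$, with $\delta$ as in \eqref{eq: Lie-cobracket x gRP}. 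Evaluating on generators --- repeating verbatim the computation carried out in the proof of Proposition \ref{prop: Yamane's twist-liegRPa = liegRP-twist}(c), which uses $\big[H_g,E_\ell\big]=\alpha_\ell(H_g)E_\ell$, $\big[H_g,F_\ell\big]=-\alpha_\ell(H_g)F_\ell$ and the defining formula \eqref{eq: T-phi} (with $\Phi=\Theta$) for $T^\pm_{\Thetapicc,\ell}$ --- one finds $\delta^{j_\Thetapicc}(T)=0$, $\delta^{j_\Thetapicc}(E_i)=2\,T^+_{\Thetapicc,i}\wedge E_i$, $\delta^{j_\Thetapicc}(F_i)=2\,T^-_{\Thetapicc,i}\wedge F_i$. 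These agree termwise with the cobracket of the source under the identity on generators, so $f^\Thetapicc_{\Ppicc}$ is an isomorphism of Lie superbialgebras.

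Finally, for the ``in particular'' clauses: by Proposition \ref{prop: twist-realizations}, $\R_\Thetapicc$ is a realization of $P_\Thetapicc$, which is of Cartan type $A$ exactly as $P$, and $\rk(\R_\Thetapicc) = \rk_\Bbbk(\lieh) = \rk(\R)$; hence ${\big(\liegRpP\big)}^{j_\Thetapicc}\cong\lieg_{P_\Thetapicc}^{\R_\Thetapicc,p}$ is again a MpLSbA of that Cartan type and that rank, so the class in question is stable under toral twist deformations. Since Proposition \ref{prop: twist-realizations}(b) says that $\R_\Thetapicc$ is straight, resp.\ small, precisely when $\R$ is, the corresponding subclasses are stable as well. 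I expect the only step requiring real attention to be the sign bookkeeping in the cobracket computation of the previous paragraph; but this is the same computation already settled for Yamane's realizations in Proposition \ref{prop: Yamane's twist-liegRPa = liegRP-twist}, so no genuinely new obstacle arises.
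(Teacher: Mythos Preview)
Your proof is correct and follows essentially the same approach as the paper: establish the Lie superalgebra isomorphism by noting that the defining relations depend only on the $\alpha_j$'s and the $S_j$'s (both unchanged under twist), then verify that the twisted cobracket $\delta^{j_\Thetapicc}$ on generators yields exactly the coroots $T^\pm_{\Thetapicc,i}$ of the deformed realization, and finally read off stability from Proposition~\ref{prop: twist-realizations}. The only cosmetic difference is that you defer the cobracket computation to the one already carried out in Proposition~\ref{prop: Yamane's twist-liegRPa = liegRP-twist}(c), whereas the paper repeats it explicitly; this is a perfectly legitimate shortcut since that computation uses only the commutation relations $[H_g,E_\ell]=\alpha_\ell(H_g)E_\ell$ and the formulas~\eqref{eq: Lie-cobracket x gRP}, \eqref{eq: T-phi}, all of which hold for an arbitrary realization $\R$.
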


\pf
 By definition   --- cf.\ \S \ref{deformations of LSbA's}  ---
 the Lie superalgebra structure in  $ {\big( \liegRpP \big)}^{j_\Thetapicc} $
 is the same as in  $ \liegRpP \, $,  and the latter only depends on the
 $ \alpha_j $'s  and the sums  $ \, S_j := 2^{-1}\big(T_j^+ \! + T_j^-\big) \, $
 ($ \, j \in I \, $).  Now, both the  $ \alpha_j $'s  and the  $ S_j $'s
 \textsl{do not change\/}  (again by construction)  when we pass from
 $ \liegRpP $  to  $ \lieg_{\Ppicc_\Theta}^{\Rpicc_\Thetapicc,p} $  or viceversa;
 therefore, the formulas in the claim  (mapping each generator of
 $ \lieg_{\Ppicc_\Theta}^{\Rpicc_\Thetapicc,p} $  onto the same name generator of
 $ \, \liegRpP = {\big( \liegRpP \big)}^{j_\Thetapicc} \, $)  provide an isomorphism of Lie superalgebras.
                                                                  \par
   Now consider the toral twist
  $ \; j_\Thetapicc := {\textstyle \sum_{g,k=1}^t} \theta_{gk} \, H_g \otimes H_k \; $
 given in  \eqref{eq: def_twist_Lie-bialg_2nd-time}.  By  \eqref{eq: def_twist-delta}
  $$  \delta^{j_\Thetapicc}(x)  \; := \;  \delta(x) - x\,.\,j_\Thetapicc  \; = \;
  \delta(x) - {\textstyle \sum_{g,k=1}^t} \theta_{gk} \,
  \big( \big[x,H_g\big] \otimes H_k + H_g \otimes \big[x,H_k\big] \big)  $$
 for all  $ \, x \in \lieg \, $.  Now take  $ \, x := E_\ell \, $  ($ \, \ell \in I \, $):  \,then the previous formula reads
  $$  \displaylines{
   \delta^{j_\Thetapicc}(E_\ell)  \; := \;  \delta(E_\ell) - {\textstyle \sum_{g,k=1}^t} \, \theta_{gk} \,
   \big( \big[E_\ell \, , H_g\big] \otimes H_k + H_g \otimes \big[E_\ell \, , H_k\big] \big)  \; =   \hfill  \cr
   \quad   = \;  \delta(E_\ell) - {\textstyle \sum_{g,k=1}^t} \, \theta_{gk} \,
   \big(\! -\alpha_\ell(H_g) E_\ell \otimes H_k - H_g \otimes \alpha_\ell(H_k) E_\ell \,\big)  \; =   \hfill  \cr
  = \;  T^+_\ell \otimes E_\ell - E_\ell \otimes T^+_\ell + {\textstyle \sum_{g,k=1}^t} \, \theta_{gk} \,
   \big(\, \alpha_\ell(H_g) E_\ell \otimes H_k + \alpha_\ell(H_k) H_g \otimes E_\ell \,\big)  \; =   \quad
 }  $$
   $$  \displaylines{
   \hfill   = \;  \Big(\, T^+_\ell + {\textstyle \sum_{g,k=1}^t} \, \theta_{kg} \, \alpha_\ell(H_g) H_k \Big) \otimes E_\ell \, -
   \, E_\ell \otimes \Big(\, T^+_\ell + {\textstyle \sum_{g,k=1}^t} \, \theta_{kg} \, \alpha_\ell(H_g) H_k \Big)  \; =  \cr
   \hfill   = \;  T^+_{\Thetapicc,\ell} \otimes E_\ell \, - \, E_\ell \otimes T^+_{\Thetapicc,\ell}  \; = \; 2 \;
   T^+_{\Thetapicc,\ell} \wedge \! E_\ell  }  $$
 hence in short we get  $ \; \delta^{j_\Thetapicc}(E_\ell) \, = \, 2 \; T^+_{\Thetapicc,\ell} \wedge \! E_\ell  \; $.
 Similar computations give
  $$
 \delta^{j_\Thetapicc}(T) = 0  \; ,   \!\quad
 \delta^{j_\Thetapicc}(F_\ell) = 2 \, T^-_{\Thetapicc,\ell} \wedge \! F_i  \; ,   \quad  \forall \; \ell \in I \, , \, T \!\in \lieh  $$
 This means that,  \textsl{through the Lie superalgebra isomorphism
 $ f_\Ppicc^\Thetapicc \, $,    \textit{the Lie coalgebra structure of
 $ {\big( \liegRpP \big)}^{j_\Thetapicc} $  is described on generators exactly like that of
 $ \lieg_{\Ppicc_\Theta}^{\Rpicc_\Thetapicc,p} \, $},  with the new ``coroots''
 $T^\pm_{\Thetapicc,\ell} $  ($ \, \ell \in I \, $)  in
 $ {\big( \liegRpP \big)}^{j_\Thetapicc} $  playing the role of the coroots
 $ T^\pm_\ell $  ($ \, \ell \in I \, $)  in
 $ \lieg_{\Ppicc_\Theta}^{\Rpicc_\Thetapicc,p} \, $}.
 Thus
 $ \; f^\Thetapicc_{\scriptscriptstyle P} : \lieg_{\Ppicc_\Theta}^{\Rpicc_\Thetapicc}
 \longrightarrow {\big( \liegRpP \big)}^{j_\Thetapicc} \; $
 is an isomorphism of Lie super\textsl{bi\/}algebras.
\epf

\vskip7pt

   In fact,  \textsl{the previous result can be reversed},  somehow.
   Namely, our next result shows, in particular, that
   \textit{every straight small MpLSbA can be realized as a toral twist
   deformation of the Yamane's MpLSbA  $ \liegRpPa $}
   (as in  \S \ref{Yamane's LieSBial's}),  cf.\ claim  \textit{(c)\/}  below.

\vskip7pt

\begin{theorem}  \label{thm: MpLSbA=twist-Yamane's}
 With assumptions as above, let  $ P $  and  $ P' $  be two matrices of Cartan type with the
 same associated Cartan super-datum $ (A,p) \, $,  \,i.e.\  $ \, P_s = P'_s \, $.
 \vskip3pt
   (a)\,  Let  $ \, \R $  be a  \textsl{straight}  realization of  $ P $  and let
   $ \lieg_\Ppicc^{\scriptscriptstyle \R,p} $
   be the associated MpLSbA.  Then there exists a matrix  $ \, \Theta \in \lieso_t(\Bbbk) \, $
   such that  $ \, P' = P_\Thetapicc \, $,  the corresponding $ \R_\Thetapicc $
   is a  \textsl{straight}  realization of  $ \, P' = P_\Thetapicc \, $,
   and for the twist element
   $ j_\Thetapicc $  as in  \eqref{eq: def_twist_Lie-bialg_2nd-time}  we have
  $$
  \lieg_{\scriptscriptstyle P'}^{\scriptscriptstyle \R_\Thetapicc,p}  \; \cong \;
  {\big(\, \lieg_\Ppicc^{\scriptscriptstyle \R,p} \big)}^{j_\Theta}
  $$
   \indent   In a nutshell, if  $ \, P'_s = P_s \, $  then from any straight MpLSbA over  $ P $
   we can obtain by toral twist deformation a straight MpLSbA (of the same rank) over  $ P' \, $.
 \vskip3pt
   (b)\,  Let  $ \R $  and  $ \R' $  be  \textsl{straight small}  realizations of
   $ P $  and  $ P' $
respectively, with  $ \, \rk(\R) = \rk(\R') =: t \, $,  and let  $ \liegRpP $  and
$ \lieg_{\scriptscriptstyle P'}^{\scriptscriptstyle \R'\!,p} $  be the associated MpLSbA's.
Then there exists a matrix  $ \, \Theta \in \lieso_t(\Bbbk) \, $
such that for the twist element
$ j_\Thetapicc $  as in  \eqref{eq: def_twist_Lie-bialg_2nd-time}  we have
  $$
  \lieg_{\scriptscriptstyle P'}^{\scriptscriptstyle \R'\!,p}  \; \cong \;
  {\big(\, \liegRpP \big)}^{j_\Theta}
  $$
   \indent   In a nutshell, if  $ \, P'_s = P_s \, $  then any straight small MpLSbA over
   $ P' $  is isomorphic to a toral twist deformation of any straight small MpLSbA over  $ P $  of the same rank.
 \vskip3pt
   (c)\,  Every straight small MpLSbA is isomorphic to some toral twist deformation of
   Yamane's MpLSbA  $ \liegRpPa $
   (over  $ \, P_{\scriptscriptstyle A} = DA = P_s \, $,  cf.\  \S \ref{Yamane's LieSBial's})
   of the same rank.
\end{theorem}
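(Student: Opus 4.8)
The plan is to read off all three statements from the ``forward'' Theorem \ref{thm: twist-liegRP=new-liegR'P'}, once it is combined with the parallel facts about realizations in Proposition \ref{prop: realiz=twist-standard} and with the invariance Corollary \ref{cor: isom_R -> isom_liegRP}; no new manipulation of the defining relations of the $\liegRpP$'s is needed, the whole content being a book-keeping of how twists of realizations, of matrices, and of Lie superbialgebras correspond. The one point that deserves attention is the compatibility of ranks: Proposition \ref{prop: realiz=twist-standard}\textit{(a)--(b)} is stated over $\kh$ under the hypothesis $\, \rk(\R) \geq 3n - \rk(P_s) \,$, but over the ground field $\Bbbk$ this is immaterial, since straightness of $\R$ forces the matrix $\, \mathfrak{A} = {\big(\alpha_\ell(H_g)\big)}_{\ell \in I}^{g \in \G} \,$ to have full row-rank $n$, so that $\, \Theta \mapsto \mathfrak{A}\,\Theta\,\mathfrak{A}^{\,\scriptscriptstyle T} \,$ is onto $\lieso_n(\Bbbk)$ and every $P'$ with $\, P'_s = P_s \,$ equals $P_\Thetapicc$ for a suitable $\, \Theta \in \lieso_t(\Bbbk) \,$ (alternatively one may enlarge the realizations via Lemma \ref{lemma: split/straight-lifting}, at the cost of a central extension, cf.\ Proposition \ref{prop: functor_R->liegRP}).

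For part \textit{(a)}: as $\, P_s = P'_s \,$ and $\R$ is straight, Proposition \ref{prop: realiz=twist-standard}\textit{(a)} (in the field version just recalled) yields $\, \Theta \in \lieso_t(\Bbbk) \,$ with $\, P' = P_\Thetapicc \,$; the twisted realization $\, \R_\Thetapicc \,$ is again straight because a toral twist leaves the root set $\Pi$ unchanged (Proposition \ref{prop: twist-realizations}\textit{(b)}). Feeding this very $\Theta$ into Theorem \ref{thm: twist-liegRP=new-liegR'P'} produces the Lie superbialgebra isomorphism $\; \lieg_{P_\Thetapicc}^{\scriptscriptstyle \Rpicc_\Thetapicc,p} \,{\buildrel \cong \over {\lhook\joinrel\relbar\joinrel\twoheadrightarrow}}\, {\big( \liegRpP \big)}^{j_\Thetapicc} \;$, whose source is $\, \lieg_{\scriptscriptstyle P'}^{\scriptscriptstyle \Rpicc_\Thetapicc,p} \,$ since $\, P' = P_\Thetapicc \,$; this is exactly the asserted isomorphism. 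For part \textit{(b)}: Proposition \ref{prop: realiz=twist-standard}\textit{(b)} provides $\, \Theta \in \lieso_t(\Bbbk) \,$ such that $\, P' = P_\Thetapicc \,$ and $\, \R' \cong \R_\Thetapicc \,$ as realizations; by Corollary \ref{cor: isom_R -> isom_liegRP} this upgrades to an isomorphism of Lie superbialgebras $\, \lieg_{\scriptscriptstyle P'}^{\scriptscriptstyle \R'\!,p} \cong \lieg_{P_\Thetapicc}^{\scriptscriptstyle \Rpicc_\Thetapicc,p} \,$, and composing it with the isomorphism $\, \lieg_{P_\Thetapicc}^{\scriptscriptstyle \Rpicc_\Thetapicc,p} \cong {\big( \liegRpP \big)}^{j_\Thetapicc} \,$ of Theorem \ref{thm: twist-liegRP=new-liegR'P'} gives $\, \lieg_{\scriptscriptstyle P'}^{\scriptscriptstyle \R'\!,p} \cong {\big( \liegRpP \big)}^{j_\Thetapicc} \,$.

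For part \textit{(c)}: let $\, \lieg_\Ppicc^{\scriptscriptstyle \R,p} \,$ be a straight small MpLSbA, so $\R$ is a straight small realization of a Cartan-type matrix $P$ with $\, P_s = P_{\!\Apicc} = DA \,$. By Proposition \ref{prop: realiz=twist-standard}\textit{(c)}, $\R$ is isomorphic, as a realization, to a toral twist deformation $\, {\big( \R_\Apicc \big)}_\Thetapicc \,$ of the standard straight small realization $\, \R_\Apicc \,$ of $\, P_{\!\Apicc} \,$ of the same rank (so that $\, P = {\big( P_{\!\Apicc} \big)}_\Thetapicc \,$); since $\, \R_\Apicc \,$ is straight, Yamane's MpLSbA $\, \liegRpPa \,$ built on it is defined (\S \ref{Yamane's LieSBial's}). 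Then Corollary \ref{cor: isom_R -> isom_liegRP} gives $\, \lieg_\Ppicc^{\scriptscriptstyle \R,p} \cong \lieg_{(P_{\!\Apicc})_\Thetapicc}^{\scriptscriptstyle {(\R_\Apicc)}_\Thetapicc,p} \,$, while Theorem \ref{thm: twist-liegRP=new-liegR'P'} gives $\, \lieg_{(P_{\!\Apicc})_\Thetapicc}^{\scriptscriptstyle {(\R_\Apicc)}_\Thetapicc,p} \cong {\big( \liegRpPa \big)}^{j_\Thetapicc} \,$, and chaining the two proves the claim. Thus the only real work is administrative --- matching ranks and keeping the twist parameter consistent across realizations, matrices and Lie superbialgebras --- and I foresee no conceptual obstacle beyond that.
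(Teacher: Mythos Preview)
Your proposal is correct and follows essentially the same route as the paper: part \textit{(a)} from Theorem \ref{thm: twist-liegRP=new-liegR'P'} plus the existence of a suitable $\Theta$ (the paper cites Lemma \ref{lemma: twist=sym}, you cite the realization-level Proposition \ref{prop: realiz=twist-standard}\textit{(a)}, which amounts to the same thing), part \textit{(b)} from \textit{(a)} together with uniqueness of straight small realizations and functoriality (Proposition \ref{prop: functor_R->liegRP} / Corollary \ref{cor: isom_R -> isom_liegRP}), and part \textit{(c)} as the special case $P = P_{\!\Apicc}$. Your explicit treatment of the rank hypothesis --- observing that straightness makes $\mathfrak{A}$ of full row rank over $\Bbbk$, so $\Theta \mapsto \mathfrak{A}\,\Theta\,\mathfrak{A}^{\scriptscriptstyle T}$ surjects onto $\lieso_n(\Bbbk)$ --- is a nice touch that the paper's proof leaves implicit.
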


\pf
 \textit{(a)}\,  By  Theorem \ref{thm: twist-liegRP=new-liegR'P'}  it is enough to find
 $ \, \Theta \in \lieso_t(\Bbbk)  \, $  such that  $ \, P' = P_\Thetapicc \, $,  \,that is
 $ \; P' = P - \mathfrak{A} \, \Theta \, \mathfrak{A}^{\,\scriptscriptstyle T} \, $;
 \,but this is guaranteed by  Lemma \ref{lemma: twist=sym},  so we are done.
 \vskip3pt
   \textit{(b)}\,  This follows from claim  \textit{(a)},  along with the uniqueness of straight small realizations   --- cf.\ Proposition \ref{prop: exist-realiz's}\textit{(b)}  ---   and  Proposition \ref{prop: functor_R->liegRP}.
 \vskip3pt
   \textit{(c)}\,  This follows from  \textit{(b)},  once we take as
   $ \lieg_{\scriptscriptstyle P'}^{\scriptscriptstyle \R'\!,p} $
   the given straight small MpLSbA and as  $ \liegRpP $  Yamane's MpLSbA  $ \lieg_{\scriptscriptstyle P'}^{\scriptscriptstyle \R'\!,p} $  over
   $ \, P := P_{\scriptscriptstyle A} = DA \, $.
\epf

\vskip7pt

\begin{obs's}  \label{obs: parameter_(P,Phi) x MpLbA's}  {\ }
 \vskip3pt
   \textit{(a)}\,  Theorems \ref{thm: twist-liegRP=new-liegR'P'}  and
   \ref{thm: MpLSbA=twist-Yamane's}  allow the following interpretation.
   Our MpLSbA's  $ \, \liegRpP \, $  depend on the multiparameter  $ P \, $;
   \,at a further level, once we perform onto  $ \, \liegRpP \, $  a deformation by
   toral twist, the outcome
   $ \; \lieg^{\scriptscriptstyle \R,p}_{\Ppicc,\Thetapicc} := {\big(\, \liegRpP \big)}^{j_\Theta} \; $  depends on  \textsl{two\/}  multiparameters, namely
   $ P $  \textsl{and\/}  $ \Theta \, $.  So all the
   $ \lieg^{\scriptscriptstyle \R,p}_{\Ppicc,\Thetapicc} $'s  form a
   (seemingly) richer family of
   ``twice-multiparametric'' Lie superbialgebras.
   In spite of this,  Theorem \ref{thm: twist-liegRP=new-liegR'P'}
   proves that this family still  \textsl{coincides\/}
   with our initial family of MpLSbA's.
                                                              \par
   In short,  Theorems \ref{thm: twist-liegRP=new-liegR'P'}  and
   \ref{thm: MpLSbA=twist-Yamane's}  show the following.  The dependence of the Lie
   superbialgebra structure of  $ \lieg^{\scriptscriptstyle \R , p}_{\Ppicc,\Thetapicc} $
   on the ``double parameter''  $ (P\,,\Theta) $  is ``split'' in the algebraic structure
   (ruled by  $ P \, $)  and in the coalgebraic structure (ruled by  $ \Theta $).
   Now,  Theorems \ref{thm: twist-liegRP=new-liegR'P'}  and  \ref{thm: MpLSbA=twist-Yamane's}
   enable us to ``polarize'' this dependence so to codify it either entirely
   within the algebraic structure (while the coalgebraic one is reduced to a ``canonical form'')
   --- so the single multiparameter  $ P_\Theta $  is enough to describe it ---
   or entirely within the coalgebraic structure (with the algebraic one being reduced
   to the ``standard'' one)   --- so that one multiparameter  $ \Theta_P $
   is enough to describe it.
 \vskip3pt
   \textit{(b)}\,  As we saw at the end of  \S \ref{further_stability},
   the (sub)class of  \textsl{split\/}
   realizations is  \textsl{not closed\/}  under twist deformation; as a consequence,
   the subclass of all MpLSbA's that are ``split'' is not closed either
   under twist deformation.
\end{obs's}

\medskip

\subsection{Deformations of MpLSbA's by toral  $ 2 $--cocycles}  \label{subsec: tor-2cocyc def's_mp-Lie_supbialg's}  {\ }

\vskip7pt

   Let $ \liegRpP $  be a MpLSbA as in  \S \ref{subsec: MpLSbA's},
   and keep notation as above.  Dually to what we did before in
   \S \ref{subsec: tor-twist def's_mp-Lie_supbialg's},  we consider now the so-called
   ``toral'' 2--cocycles for  $ \liegRpP $  and the corresponding deformations by them.

\vskip9pt

\begin{definition}  \label{def: toral_2-cocyc x MpLbA's}
 Fix an antisymmetric  $ \Bbbk $--linear  map  $ \, \chi : \lieh \otimes \lieh \relbar\joinrel\longrightarrow \Bbbk \, $
 such that
\begin{equation}  \label{eq: condition-eta}
  \chi(S_i \, ,\,-\,)  \, = \,  0  \, = \,  \chi(\,-\,,S_i)
  \qquad \qquad \forall \;\; i \in I
\end{equation}
 where  $ \, S_i := 2^{-1} \big(\, T^+_i + T^-_i \big) \, $
 for all  $ \, i \in I \, $  (cf.\ Definition \ref{def: realization of P});
 in other words, we have  $ \, \chi \in \text{\sl Alt}_{\,\kh}^{\,S}(\lieh) \, $,  cf.\ \eqref{eq: def-Alt_S}.
 Moreover, let
 $ \,\; \pi^{\liegRpP}_\lieh : \liegRpP \relbar\joinrel\twoheadrightarrow \lieh \;\, $
 be the canonical linear projection induced by any one of the triangular
 decompositions in
 \eqref{eq: triang-decomp's_Lie-bialg's}.  We define
  $$
  \chi_\lieg := \chi  \circ \Big(\, \pi^{\liegRpP}_\lieh \otimes \pi^{\liegRpP}_\lieh \Big) :
  \liegRpP \otimes \liegRpP \relbar\joinrel\relbar\joinrel\twoheadrightarrow
  \lieh \otimes \lieh
  \relbar\joinrel\longrightarrow \Bbbk   \qquad
  $$
 and we call it  \textit{the toral 2--cocycle\/}
 (or  \textit{``2--cocycle  of toral type''\/})
 \hbox{associated with  $ \, \chi \, $.   \hfill  $ \diamondsuit $}
\end{definition}

\vskip9pt

   The following lemma explains our use of terminology:

\vskip11pt

\begin{lema}  \label{lemma: toral 2-cocyc x MpLbA's}
 For any  antisymmetric
 $ \Bbbk $--linear  map  $ \, \chi : \lieh \otimes \lieh \longrightarrow \Bbbk \, $  obeying  \eqref{eq: condition-eta},
  $$  \chi_\lieg := \chi \circ \Big( \pi^{\liegRpP}_\lieh \otimes \pi^{\liegRpP}_\lieh \,\Big) : \liegRpP
  \otimes \liegRpP \relbar\joinrel\relbar\joinrel\twoheadrightarrow \lieh \otimes \lieh \relbar\joinrel\longrightarrow \Bbbk  $$
 is a  \textsl{2--cocycle\/}  map for the Lie bialgebra  $ \liegRpP \, $,  \,in the sense of  \eqref{eq: 2-cocycle-cond_Lie-superbialg}.
\end{lema}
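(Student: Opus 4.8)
Write $\lieg:=\liegRpP$. The plan is to verify directly the two requirements of \eqref{eq: 2-cocycle-cond_Lie-superbialg} for the even functional $\chi_\lieg$, using throughout the $\ZZ^I$--grading of $\lieg$ in which $E_i$ has degree $e_i$, $F_i$ has degree $-e_i$ and $\lieh$ has degree $0$ (here $(e_i)_{i\in I}$ is the standard basis of $\ZZ^I$). By the triangular decompositions \eqref{eq: triang-decomp's_Lie-bialg's} the degree--$0$ component of $\lieg$ is exactly $\lieh$, so the canonical projection $\pi^{\liegRpP}_\lieh$ of Definition \ref{def: toral_2-cocyc x MpLbA's} (abbreviated $\pi_\lieh$ below) coincides with the projection onto that component; in particular $\chi_\lieg(a\otimes b)=\chi\big(\pi_\lieh a\,,\pi_\lieh b\big)$ is zero unless both $a$ and $b$ have a non-trivial degree--$0$ part. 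The first condition in \eqref{eq: 2-cocycle-cond_Lie-superbialg} then follows at once: since $\lieh$ is purely even, $\pi_\lieh$ kills every odd element, so for homogeneous $x,y$ the value $\chi_\lieg\big((\id+\sigma)(x\otimes y)\big)$ already vanishes unless $x$ and $y$ are both even, in which case it equals $\chi(\pi_\lieh x,\pi_\lieh y)+\chi(\pi_\lieh y,\pi_\lieh x)=0$ by antisymmetry of $\chi$; thus $\chi_\lieg\circ(\id+\sigma)=0$, and a fortiori $\alpha\,.\,\big(\chi_\lieg\circ(\id+\sigma)\big)=0$ for every $\alpha\in\lieg^*$.

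For the second condition the strategy — paralleling the ``by construction'' argument behind Lemma \ref{lemma: toral twist x MpLbA's} — is to prove that \emph{both} summands $\chi_\lieg\circ\big(\id\otimes[\,\ ,\ ]\big)\circ\Sym_{\mathbb{A}_3}$ and $[\![\,\chi_\lieg\,,\chi_\lieg\,]\!]_*$ vanish \emph{identically} as elements of $\big(\lieg^{\otimes 3}\big)^*$, so that their difference is the zero functional, which $\alpha\,.\,(-)$ kills for every $\alpha$. This rests on two structural facts. \textit{(i)}\, For all $b,c\in\lieg$ one has $\pi_\lieh\big([b,c]\big)\in\textsl{Span}_\Bbbk\{S_i\mid i\in I\}$: indeed the degree--$0$ part of $[b,c]$ equals $\sum_m[\,b_m\,,c_{-m}\,]$, the summand $m=0$ being zero because $\lieh$ is abelian, while for $m\neq0$ — which by \eqref{eq: triang-decomp's_Lie-bialg's} forces $m\in\NN^I$ or $m\in-\NN^I$ — the bracket $[\lieg_m\,,\lieg_{-m}]$ lies in $\textsl{Span}_\Bbbk\{S_i\}$, since repeated use of the super Jacobi identity reduces it to a scalar combination of the $[E_i,F_i]=S_i/d_i$. \textit{(ii)}\, By \eqref{eq: Lie-cobracket x gRP} and the $1$--cocycle identity for $\delta$, the cobracket $\delta$ is homogeneous of degree $0$ with $\delta(\lieh)=0$; hence in any summand $w_{[1]}\otimes w_{[2]}$ of $\delta(w)$ at least one factor has non-zero degree — hence is killed by $\pi_\lieh$ — unless $w$ has degree $0$, in which case $\delta(w)=0$.

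Granting \textit{(i)}--\textit{(ii)}, the conclusion runs as follows. First summand: $\chi_\lieg\circ\big(\id\otimes[\,\ ,\ ]\big)$ sends $a\otimes b\otimes c$ to $\chi\big(\pi_\lieh a\,,\pi_\lieh[b,c]\big)$, which is $0$ by \textit{(i)} together with the defining condition $\chi(S_i\,,-)=0=\chi(-,S_i)$ of \eqref{eq: condition-eta}; thus $\chi_\lieg\circ\big(\id\otimes[\,\ ,\ ]\big)=0$ already, and post-composition with $\Sym_{\mathbb{A}_3}$ changes nothing. Second summand: one unwinds $[\![\,\chi_\lieg\,,\chi_\lieg\,]\!]_*$ as the object dual to the operation $[\![\,\ ,\ ]\!]$ of \S \ref{gen's on LSbA's} — so that it is built from the \emph{cobracket} $\delta$ of $\lieg$, not from its bracket. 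It is a sum of three terms, the $k$--th obtained by applying $\delta$ to the $k$--th tensor slot of $x\otimes y\otimes z$ and distributing the two resulting legs, together with the remaining two slots, among the two copies of $\chi_\lieg$. Since $\chi_\lieg$ pairs two elements non-trivially only when both carry degree $0$, each such term forces all three of $x,y,z$, and both legs coming out of the relevant $\delta$, to have degree $0$; but then by \textit{(ii)} that $\delta$ vanishes, so the term is $0$. Hence $[\![\,\chi_\lieg\,,\chi_\lieg\,]\!]_*=0$, the bracketed expression in the second condition of \eqref{eq: 2-cocycle-cond_Lie-superbialg} is identically zero, and $\chi_\lieg$ is a $2$--cocycle for $\lieg=\liegRpP$.

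The step I expect to be most delicate is this last one: one must interpret $[\![\,\chi_\lieg\,,\chi_\lieg\,]\!]_*$ correctly — in particular that it involves $\delta$ and not $[\,\ ,\ ]$ — and keep track, term by term, of which leg of each of $\delta(x)$, $\delta(y)$, $\delta(z)$ is fed to which copy of $\chi_\lieg$, so that the degree bookkeeping of \textit{(i)}--\textit{(ii)} applies; the numerous super-signs are immaterial here, as every relevant monomial is shown to vanish outright. A second, routine point is the verification of \textit{(i)} for non-simple roots, i.e.\ that $[\lieg_m\,,\lieg_{-m}]\subseteq\textsl{Span}_\Bbbk\{S_i\}$, which follows directly from the presentation of $\liegRpP$ via the relations $[E_i,F_\ell]=\delta_{i\ell}\,(S_i/d_i)$, $[T,E_j]=\alpha_j(T)\,E_j$ and $[T,F_j]=-\alpha_j(T)\,F_j$.
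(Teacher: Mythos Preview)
Your proof is correct. The paper does not give a proof at all; it simply records ``Follows from \cite[Lemma 3.4.2]{GG4} \emph{mutatis mutandis}.'' So the comparison is that you supply a direct, self-contained argument in place of an external citation. Your route --- exploiting the $\ZZ^I$--grading so that $\pi_\lieh$ is the degree--$0$ projection, and then killing each of the two summands in \eqref{eq: 2-cocycle-cond_Lie-superbialg} \emph{separately} via the structural facts \textit{(i)} $\pi_\lieh\big([\lieg,\lieg]\big)\subseteq\textsl{Span}_\Bbbk\{S_i\}$ together with condition \eqref{eq: condition-eta}, and \textit{(ii)} $\delta$ is degree--preserving with $\delta|_\lieh=0$ --- is exactly the natural elementary argument and is essentially what the cited lemma in \cite{GG4} does in the non-super case. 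Your reading of $[\![\,\chi_\lieg\,,\chi_\lieg\,]\!]_*$ as built from $\delta$ (the bracket of $\lieg^*$) is correct, and your degree bookkeeping for each of its three terms is sound: in every term both legs of the relevant $\delta(w)$ must land in $\lieh$, forcing $w\in\lieh$ and hence $\delta(w)=0$. The only places that deserve a word of care are those you already flag: the inductive check of \textit{(i)} for non-simple root spaces (which is indeed routine from the relations $[E_i,F_\ell]=\delta_{i\ell}\,S_i/d_i$ and super Jacobi), and making explicit that $\lieg_0=\lieh$ follows from the triangular decomposition \eqref{eq: triang-decomp's_Lie-bialg's}. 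What you buy over the paper's one-line citation is independence from \cite{GG4} and a transparent reason why the assumption $\chi(S_i,-)=0$ in \eqref{eq: condition-eta} is exactly what is needed.
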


\pf
Follows from \cite[Lemma 3.4.2]{GG4} \emph{mutatis mutandis}.
\epf

\vskip7pt

   A dual analog of  Theorems \ref{thm: twist-liegRP=new-liegR'P'} and
   \ref{thm: MpLSbA=twist-Yamane's} asserts that one may obtain any (straight,
   minimal, split) MpLSbA
   as a $2$-cocycle deformation of some Yamane's MpLSbA.
   To state properly the result, we need some notation.  Let $(A,p)$ be a Cartan super-datum,
   $ \, P\in M_t(\Bbbk) \, $ a multiparameter matrix of Cartan type with Cartan matrix
   $ A \, $,
   $ \R $  be a realization of it, and
   $ \liegRpP $ the associated multiparameter Lie superbialgebra.
   For any  $ \, \chi \in \text{\sl Alt}_{\,\kh}^{\,S}(\lieh) \, $
   as in  \eqref{eq: def-Alt_S},  let
   $ \, \chi_\lieg : \liegRpP \!\otimes \liegRpP \longrightarrow \Bbbk \, $
   be the  $ 2 $--cocycle  as in Lemma \ref{lemma: toral 2-cocyc x MpLbA's}.

\vskip5pt

   Consider the antisymmetric matrix
   $ \, \mathring{X} := {\Big(\, \mathring{\chi}_{i{}j} = \chi\big(\,T_i^+\,,T_j^+\big) \Big)}_{i, j \in I} \! \in \lieso_n(\Bbbk) \, $.
   By  Proposition \ref{prop: 2cocdef-realiz},  we have a matrix  $ P_{(\chi)} $  and a realization  $ \R_{(\chi)} $  of it,
   given by
  $$
  P_{(\chi)}  := \,  P \, + \, \mathring{X}  \, = \,  {\Big(\, p^{(\chi)}_{i{}j} := \,  p_{ij} + \mathring{\chi}_{i{}j} \Big)}_{\! i, j \in I}  \;\; ,  \quad
   \Pi_{(\chi)}  := \,  {\Big\{\, \alpha_i^{(\chi)}  := \,
   \alpha_i \pm \chi\big(\, \text{--} \, , T_i^\pm \big)  \Big\}}_{i \in I}
   $$
 Note that $P_{(\chi)} $ is also a multiparameter matrix of Cartan type with Cartan
 matrix $A$, for we are adding just an antisymmetric part.

\vskip13pt

\begin{theorem}  \label{thm: 2-cocycle-def-MpLSbA}
 Keep notation as above.
 \vskip2pt
   (a)\,  There exists a Lie superbialgebra isomorphism
  $ \lieg_{P_{(\chi)}}^{\R_{(\chi)},p} \simeq  {\big( \liegRpP \big)}_{\chi_\lieg} \; $.
 \vskip3pt
   (b)\,  Let  $ P $  and  $ P' $  be two matrices of Cartan type with the same associated Cartan matrix $ A \, $.  Then the following holds:
 \vskip3pt
   \, (b.1)\,  Let  $ \, \R $  be a  \textsl{split}  realization of  $ P $  and let
   $ \liegRpP $
   be the associated MpLSbA.  Then there is  $ \, \chi \in \text{\sl Alt}_{\,\Bbbk}^{\,S}(\lieh) \, $ such that
   $ \, P' = P_{(\chi)} \, $,  the corresponding $ \R_{(\chi)} $  is a  \textsl{split}  realization of  $ \, P' \, $,
   and for the 2--cocycle  $ \chi_\lieg $  as in Definition \ref{def: toral_2-cocyc x MpLbA's}  we have
  $$
  \lieg_{\scriptscriptstyle P'}^{\scriptscriptstyle \R_{(\chi)},p}  \; \cong \;
  {\big(\, \lieg_\Ppicc^{\scriptscriptstyle \R} \big)}_{\chi_{{}_\lieg}}  $$
 \vskip3pt
   \, (b.2)\,  Let  $ \R $  and  $ \R' $  be  \textsl{split minimal}
   realizations of  $ P $  and  $ P' $
respectively, and let  $ \liegRpP $  and
$ \lieg_{\scriptscriptstyle P'}^{\scriptscriptstyle \R',p} $
be the associated MpLSbA's.
Then there exists  $ \, \chi \in \text{\sl Alt}_{\,\Bbbk}^{\,S}(\lieh) \, $
such that for the 2--cocycle  $ \chi_\lieg $
as in Definition \ref{def: toral_2-cocyc x MpLbA's}  we have
  $$
  \lieg_{\scriptscriptstyle P'}^{\scriptscriptstyle \R',p}  \; \cong \;
  {\big(\, \liegRpP \big)}_{\chi_{\lieg}}
  $$
 \vskip3pt
   (b.3)\,  Every split minimal MpLSbA is isomorphic to some
   toral 2--cocycle deformation of Yamane's Lie superbialgebra.
\end{theorem}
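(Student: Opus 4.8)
The plan is to follow the dual of the toral--twist argument of Theorem~\ref{thm: MpLSbA=twist-Yamane's}: the whole statement reduces to part~\textit{(a)}, which is a bracket computation on generators, and parts~\textit{(b.1)}--\textit{(b.3)} then come out formally by feeding \textit{(a)} the realization--level results of \S\ref{subsec: 2-cocycle-realiz} (Proposition~\ref{prop: 2cocdef-realiz}, Proposition~\ref{prop: mutual-2-cocycle-def}) together with Corollary~\ref{cor: isom_R -> isom_liegRP}. Recall that a $2$--cocycle deformation changes only the Lie superbracket while leaving the supercobracket \eqref{eq: Lie-cobracket x gRP} untouched, and that, by Lemma~\ref{lemma: toral 2-cocyc x MpLbA's}, the toral map $\chi_\lieg$ really is a $2$--cocycle for $\liegRpP$, so that ${\big(\liegRpP\big)}_{\chi_\lieg}$ is a genuine Lie superbialgebra by Proposition~\ref{prop:cocycle-superliebial}, and $\lieg_{P_{(\chi)}}^{\R_{(\chi)},p}$ is one by Theorem~\ref{thm: Lie-sBIalg x liegRP}.

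For \textit{(a)} I would compute the deformed bracket $[\,\ ,\ ]_{\chi_\lieg}$ of ${\big(\liegRpP\big)}_{\chi_\lieg}$ on all pairs of the generators $T, T' \in \lieh$, $E_i$, $F_i$ via \eqref{eq: def_cocycle-bracket} and the explicit cobracket $\delta(T)=0$, $\delta(E_i)=T_i^+\otimes E_i - E_i\otimes T_i^+$, $\delta(F_i)=T_i^-\otimes F_i - F_i\otimes T_i^-$. Since $\chi_\lieg$ factors through $\pi^{\liegRpP}_\lieh\otimes\pi^{\liegRpP}_\lieh$, every evaluation $\chi_\lieg(-,-)$ appearing in $\partial_*\chi_\lieg$ vanishes unless both arguments are sent into $\lieh$; a short check then shows that $\partial_*\chi_\lieg$ vanishes on $T\otimes T'$, on $E_i\otimes E_j$, on $F_i\otimes F_j$ and on $E_i\otimes F_j$, while $(\partial_*\chi_\lieg)(T\otimes E_j) = -\chi(T,T_j^+)\,E_j$ and $(\partial_*\chi_\lieg)(T\otimes F_j) = -\chi(T,T_j^-)\,F_j$. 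Hence the only relations that move are those governing the action of $\lieh$ on $E_j$ and $F_j$, which become $[T,E_j]_{\chi_\lieg} = \alpha_j^{(\chi)}(T)\,E_j$ and $[T,F_j]_{\chi_\lieg} = -\alpha_j^{(\chi)}(T)\,F_j$, with $\alpha_j^{(\chi)} = \alpha_j + \chi(-,T_j^+) = \alpha_j - \chi(-,T_j^-)$ — the two forms agreeing by \eqref{eq: condition-chi-chit}. The coroots $T_i^\pm$, and therefore the Cartan relation $[E_i,F_\ell]=\delta_{i\ell}(T_i^+ + T_i^-)/(2\,d_i)$ as well as every bracket internal to the $E$'s or to the $F$'s, are unchanged, so all commutation, super Serre and higher--order relations in the presentation of $\liegRpP$ are preserved verbatim. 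Consequently $E_i\mapsto E_i$, $F_i\mapsto F_i$, $T\mapsto T$ is a parity--preserving Lie superalgebra isomorphism ${\big(\liegRpP\big)}_{\chi_\lieg} \cong \lieg_{P_{(\chi)}}^{\R_{(\chi)},p}$; as the cobracket on the right is again \eqref{eq: Lie-cobracket x gRP} (Theorem~\ref{thm: Lie-sBIalg x liegRP} applied to $\R_{(\chi)}$, whose coroot set is still $\Pi^\vee$) and the cobracket on the left is unchanged, it is an isomorphism of Lie superbialgebras, proving \textit{(a)}.

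Then \textit{(b.1)} follows at once: since $P_s = P'_s$ and $\R$ is split, Proposition~\ref{prop: mutual-2-cocycle-def}\textit{(a)} gives $\chi \in \text{\sl Alt}_{\,\Bbbk}^{\,S}(\lieh)$ with $P' = P_{(\chi)}$ and $\R_{(\chi)}$ split, whence $\lieg_{P'}^{\R_{(\chi)},p} \cong {\big(\liegRpP\big)}_{\chi_\lieg}$ by \textit{(a)}. For \textit{(b.2)} I would run \textit{(b.1)} and observe that $\R_{(\chi)}$ is moreover minimal because $\R$ is (Proposition~\ref{prop: 2cocdef-realiz}\textit{(b)}), so $\R_{(\chi)}$ and $\R'$ are split realizations of $P'$ of the same rank $2n$, hence isomorphic by the uniqueness part of Proposition~\ref{prop: exist-realiz's}\textit{(b)}; Corollary~\ref{cor: isom_R -> isom_liegRP} then yields $\lieg_{P'}^{\R',p} \cong \lieg_{P'}^{\R_{(\chi)},p} \cong {\big(\liegRpP\big)}_{\chi_\lieg}$. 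Finally \textit{(b.3)} is the instance of \textit{(b.2)} in which one takes $P := P_{\!\Apicc} = DA = P'_s$ and $\R$ the split minimal realization of $P_{\!\Apicc}$, so that $\liegRpP = \lieg_{P_{\!\Apicc}}^{\R,p}$ is Yamane's Lie superbialgebra over $P_{\!\Apicc}$ (cf.\ \S\ref{Yamane's LieSBial's}).

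I expect the only genuinely delicate point to be the sign bookkeeping in the super setting while expanding $\partial_*\chi_\lieg$ and while checking that the generator map respects the $\ZZ_2$--grading; everything else is routine and reduces to quoting earlier results. A subtlety worth flagging in \textit{(b.3)} is that when $A$ is degenerate (for instance in type $A$ corresponding to $\liesl(n|n)$, where $\det A = 0$) the split minimal realization of $P_{\!\Apicc}$ fails to be straight; there one must read ``Yamane's Lie superbialgebra over $P_{\!\Apicc}$'' as the Lie superbialgebra attached to this split minimal (possibly non--straight) realization, which is still perfectly well defined by Theorem~\ref{thm: Lie-sBIalg x liegRP}, so the statement remains correct under this reading.
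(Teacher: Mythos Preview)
Your proposal is correct and follows essentially the same approach as the paper: for \textit{(a)} you compute the deformed bracket on generators via \eqref{eq: def_cocycle-bracket}, observing (as the paper does, phrasing it as ``$x,y$ in the radical of $\chi_\lieg$'') that the correction $\partial_*\chi_\lieg$ vanishes on any pair involving an element of $\lien_+\cup\lien_-$, so only the $\lieh$--action changes to $\alpha_j^{(\chi)}$; for \textit{(b.1)}--\textit{(b.3)} you invoke exactly the same realization--level inputs (Proposition~\ref{prop: mutual-2-cocycle-def}, uniqueness of split realizations from Proposition~\ref{prop: exist-realiz's}\textit{(b)}, Corollary~\ref{cor: isom_R -> isom_liegRP}) as the paper. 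Your extra remark about degenerate $A$ in \textit{(b.3)} is a useful caveat but does not alter the argument.
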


\smallskip

\pf
 \textit{(a)}\, Since the Lie coalgebra structure in
 $ {\big( \liegRpP \big)}_{\chi_\lieg} $  is the same as in  $ \liegRpP \, $,
 and the latter coincides with the one in
 $ \lieg_{P_{(\chi)}}^{\R_{(\chi)},p} $, we have a Lie coalgebra isomorphism among them.
 Let us show that it is a Lie algebra isomorphism as well.
 The Lie bracket in  $ {\big( \liegRpP \big)}_{\chi_\lieg} $
 is a deformation of that of  $ \liegRpP $ by the formula
 \eqref{eq: def_cocycle-bracket}:
$$
  [x,y ]_{\chi_\lieg} := \, [x,y] - (\partial_{*} \chi_\lieg)(x\ot y)\, =
  \, [x,y] - x_{[1]} \, \chi_\lieg(x_{[2]},y) \, - \, {(-1)}^{|x| |y_{[1]}|} \, y_{[1]} \, \chi_\lieg(x,y_{[2]})
$$
 Let us check that the modified defining relations in
 $ {\big( \liegRpP \big)}_{\chi_\lieg} $  coincide with the ones in
 $ \lieg_{P_{(\chi)}}^{\R_{(\chi)},p} \, $. First note that by the formula above,
 $[x,y ]_{\chi_\lieg} = [x,y ]$ whenever $x,y$ are elements in the radical of
 $\chi_\lieg$; in particular for all elements $E_i, F_i$ for all $i\in I$.
 Hence, we need to check only those relations involving elements in
 the Cartan subalgebra  $ \lieh $  of  $ \lieg_{P_{(\chi)}}^{\R_{(\chi)},p} $.
 As the Lie cobracket on this subalgebra is trivial, we get that
  $$  {[T',T'']}_{\chi_{{}_\lieg}}  \, = \;  [T',T''] \, - \, T'_{[1]} \, \chi_\lieg\big(T'_{[2]},T''\big) \, +
  \, T''_{[1]} \, \chi_\lieg\big(T''_{[2]},T'\big)  \, = \,  [T',T'']  \, = \,  0  $$
 for all  $ \, T', T'' \in \lieh \, $.  Take now  $ \, i \in I \, $:  since
 $ \, \delta(T) = 0 \, $,  $ \, \delta(E_i) = 2 \, T_i^+ \!\wedge E_i \, $  and
 $ \, \delta(F_i) = 2 \, T_i^- \!\wedge F_i \, $,  \,we get that
\begin{align*}
   {[T,E_i]}_{\chi_\lieg}  &  \, = \,  [T,E_i] - E_i \, \chi_\lieg\big(T_i^+,T\big)  \, = \,
   \alpha_i(T) E_i + \chi\big(T, T_i^+\big) E_i  \, = \, \alpha_i^{(\chi)}(T)  \, E_i  \\
   {[T,F_i]}_{\chi_\lieg}  &  \, = \,  [T,F_i] - F_i \, \chi_\lieg\big(T_i^-,T\big)  \, = \,
   -\alpha_i(T) F_i + \chi_{\lieg}\big(T, T_i^-\big) \, F_i  \, =  \\
   &  = \,  -\Big(\alpha_i(T) F_i - \chi\big(T,T_i^-\big)\Big) \, F_i  \, =
   \,  -\alpha_i^{(\chi)}(T)\, F_i
\end{align*}

 \vskip7pt
   \textit{(b.1)}\, By  \textit{(a)},  it is enough to find an antisymmetric
   $ \Bbbk $--linear
  $ \, \chi : \lieh \otimes \lieh \!\longrightarrow\! \Bbbk \, $  obeying
  \eqref{eq: condition-eta} such that
  $ \, P' = P_{(\chi)} \, $;
  this is guaranteed by  Proposition \ref{prop: mutual-2-cocycle-def}.  Now
\textit{(b.2)}\, follows from  \textit{(b.1)},
and the uniqueness of split realizations
   --- cf.\ Proposition \ref{prop: exist-realiz's}\textit{(b)}, and
   \textit{(b.3)}\,  follows from \textit{(b.2)},  taking as
   $ \lieg_{\scriptscriptstyle P'}^{\scriptscriptstyle \R',p} $
   the given split minimal MpLSbA and for  $ \liegRpP $  Yamane's  MpLSbA
   which by definition is straight and split minimal.
\epf

\vskip9pt

\begin{rmk}
 The theorem above tell us that the class of all MpLSbA's of any fixed Cartan type and of fixed rank is stable by toral 2--cocycle deformations, and inside it, the subclass of all such MpLSbA's associated with \textsl{split},  resp.\  \textsl{minimal},  realizations is stable as well.  Also, for
$ P $  and  $ P' $  two matrices of Cartan type with the same associated Cartan matrix $ A \, $,  one can construct by a toral 2--cocycle deformation a split MpLSbA over one of them from any split MpLSbA over the other.  Besides, any split minimal MpLSbA over  $ P' $ is isomorphic to a toral 2--cocycle deformation of any split minimal MpLSbA over  $ P $.
\end{rmk}

\vskip3pt

\begin{free text}  \label{gener_GaGa-alg.'s}
 \textbf{Generalizations.}  The entire analysis and constructions devised in this section can be applied  \textit{verbatim\/}  to the setup of  \textsl{affine\/}  Lie superalgebras, starting form their presentation as provided in \cite{Ya2}  and  \cite{Ya3}.  One has to take care of even more additional relations and their transformations, but this is indeed a matter of bookkeeping, nothing more.  What is more relevant, though, is that the key ideas underlying the whole recipe can indeed be applied to a much larger framework (and specific examples): we expand on this in  \cite{GG5}.
\end{free text}

\bigskip
 \vskip13pt

\section{Formal Multiparameter QUE Superalgebras (=FoMpQUESA's)}  \label{sec: FoMpQUESA's}
 \vskip7pt

   This section is devoted to introduce formal multiparameter quantized universal enveloping superalgebras (in short, FoMpQUESA's) and to study their deformations.
 \vskip15pt

\subsection{The Hopf (super)algebra setup and QUESA's}  \label{sec: Hopf-setup & QUEA's}  {\ }

\vskip7pt

   We will deal in the following with Hopf superalgebras and Hopf algebras alike, for which we
   will use standard notation.  Most of the cases, the ground ring will be the ring  $ \kh $  of
   formal power series in the indeterminate  $ \hbar $  with coefficients in the field  $ \k $
   (which is always assumed to have characteristic zero).  For this ring, we will consider
   $ \hbar $--adically  complete  $ \kh $--modules:  these form a tensor category, whose tensor
   product is nothing but the  $ \hbar $--adic  completion of the standard (algebraic) tensor
   product.  When we deal with ``super'' objects (i.e., they are  $ \ZZ_2 $--graded),  the tensor
   product is graded as well, so that the relevant category is indeed a (non-trivially) braided
   symmetric category.  In particular, in this category a ``Hopf (super)algebra''
   will be a Hopf algebra in the generalized, categorical sense
   (i.e., with respect to the tensor category)
   --- in particular, it will be topological and  $ \hbar $--adically  complete:
     for further details, see for instance  \cite{CP},  \cite{KS},  and  \cite{Ge1}  for the super case.

\vskip9pt

\begin{free text}  \label{Topol-issues-topol-Hopf-super-alg}
{\bf Topological issues and topological Hopf (super)algebras.}  The ring  $ \kh $  mentioned
above carries a natural topology, called the  \textit{$ \hbar $--adic  topology},  coming from
the so-called  \textit{$ \hbar $--adic  norm\/}  with respect to which it is complete, namely
  $$  \big\|\, a_n \hbar^n + a_{n+1} \hbar^{n+1} + \cdots \big\|  \; := \;  C^{-n}
  \qquad  \big(\, a_n \not= 0 \,\big)  $$
 where $ \, C > 1 \, $  is any fixed constant in  $ \mathbb{R} \, $.
 In this sense, we shall consider \textit{topological\/}  $ \kh $--modules  and the  \textit{completed\/}  tensor products among them, which we denote by
 $ \, \widehat{\otimes}_\kh \, $  or simply by  $ \, \widehat{\otimes} \, $.
 For any  $ \Bbbk $--vector  space  $ V \, $,  set
  $$  V[[\hbar]]  \; := \;  \Big\{\, {\textstyle \sum_{n \geq 0}}
  v_n \hbar^n \,\Big|\, v_n \in V \, , \; \forall \, n \geq 0 \,\Big\}  $$
 then  $ V[[\hbar]] $  is a complete  $ \kh $--module.  We call a topological  $ \kh $--module
 \textit{topologically free\/}  if it is isomorphic to  $ V[[\hbar]] $  for some
 $ \Bbbk $--vector  space  $ V \, $.  For two topologically free modules
 $ V[[\hbar]] $  and  $ W[[\hbar]] $  one has that
 $ \; V[[\hbar]] \,\widehat{\otimes}\, W[[\hbar]] \, \cong \,
 \big(V \otimes W\big)[[\hbar]] \, $.
 %
 %
   \eject

  All completed tensor products between  $ \kh $--modules
 will be denoted simply by  $ \otimes \, $,
 unless we intend to stress the topological aspect.  In particular,
 we make no distinction on the notation
 between Hopf superalgebras and topological
 Hopf superalgebras; we assume it is well-understood from the context.
\end{free text}

\vskip11pt

\begin{free text}  \label{QUESA's}
 {\bf Quantized Universal Enveloping Superalgebras.}\,
 Recall that a  \textit{deformation superalgebra\/}  is a topological, unital, associative
 $ \kh $--superalgebra  $ A $  which is topologically free as a  $ \kh $--module.
 Conversely, a deformation of a (unital, associative)  $ \Bbbk $--superalgebra  $ A_0 $  is
 by definition a deformation algebra  $ A $  such that
 $ \; A_0 \, \cong \, A \big/ \hbar \, A \; $.  The same criteria apply to the notion of
 ``deformation Hopf superalgebra'', just replacing ``topological, unital, associative
 $ \kh $--superalgebra''  with ``topological Hopf  $ \kh $--superalgebra''.
 Following Geer  (cf.\ \cite{Ge1},  \S 2.3, and references therein),
 we say that a deformation Hopf superalgebra  $ H $  is a
 \textit{quantized universal enveloping superalgebra\/}  (or  \textsl{QUESA\/}  in short) if
 $ \; H \big/ \hbar \, H \, \cong \, U(\lieg) \; $  for some Lie superalgebra  $ \lieg \, $.
 In particular, the Lie superbracket in  $ \lieg $  comes from the multiplication in
 $ \; U(\lieg) \, \cong \, H \big/ \hbar H \; $.
   Moreover, this  $ \lieg $  inherits a Lie supercoalgebra structure from the QUESA,
   making it into a Lie \textsl{superbialgebra},  thanks to the following result:
\end{free text}

\vskip9pt

\begin{theorem}  (cf.\ \cite[\S 2.3]{Ge1})  \label{thm:Lie-coalgebra-limit}
 Let  $ H $  be a quantized universal enveloping superalgebra with
 $ \; H \big/ \hbar \, H \, \cong \, U(\lieg) \; $.
 Then the Lie superalgebra  $ \lieg $  is naturally equipped with a
 Lie superbialgebra structure, whose Lie cobracket is defined by
\begin{equation}  \label{def: cocorchete-semiclasico}
 \delta(\text{\rm x})  \, := \,  \frac{\,\Delta(x) - \Delta^{\op}(x)\,}{\,\hbar\,}   \qquad  \big(\hskip-10pt \mod \hbar \,\big)
\end{equation}
where  $ \, x \in H \, $  is any lifting of  $ \; \text{\rm x} \in \lieg \subseteq U(\lieg) \, \cong \, H \big/ \hbar H \, $.   \qed
\end{theorem}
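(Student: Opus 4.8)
The plan is to adapt Drinfeld's classical-limit argument to the braided symmetric category of super vector spaces, keeping track of Koszul signs. Write $H \cong V[[\hbar]]$ as a topologically free $\kh$--module, so that $H \,\widehat{\otimes}\, H \cong (V \otimes V)[[\hbar]]$ and reduction modulo $\hbar$ of the completed tensor square recovers $U(\lieg) \otimes U(\lieg)$. First I would check that $\delta$ is well defined and $\Bbbk$--linear. If $x \in H$ is any lift of $\mathrm{x} \in \lieg$, then $\bar{x}$ is primitive in $U(\lieg)$, so $\Delta(x) = x \otimes 1 + 1 \otimes x + \hbar\,t$ for some $t \in H \,\widehat{\otimes}\, H$; since $|1| = \zero$ one gets $\Delta^{\op}(x) = \sigma\big(\Delta(x)\big) = 1 \otimes x + x \otimes 1 + \hbar\,\sigma(t)$, whence $\Delta(x) - \Delta^{\op}(x) = \hbar\big(t - \sigma(t)\big) \in \hbar\big(H \,\widehat{\otimes}\, H\big)$ and $\delta(\mathrm{x}) \equiv \bar{t} - \sigma(\bar{t}) \pmod{\hbar}$. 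Independence of the chosen lift follows because $U(\lieg) = H/\hbar H$ is cocommutative with respect to $\sigma$: replacing $x$ by $x + \hbar\,y$ adds $\big(\bar\Delta - \bar\Delta^{\op}\big)(\bar{y}) = 0$ to $\delta(\mathrm{x})$. The same formula shows $\delta$ is even, so it descends to an even $\Bbbk$--linear map $\lieg \longrightarrow U(\lieg) \otimes U(\lieg)$.

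Next I would pin down the target. From $\sigma \circ (\Delta - \Delta^{\op}) = -(\Delta - \Delta^{\op})$ one gets at once $(\id + \sigma)\big(\delta(\mathrm{x})\big) = 0$, i.e.\ $\delta(\mathrm{x}) \in \Ker(\id + \sigma) = \Lambda^2\big(U(\lieg)\big)$. To upgrade this to $\delta(\mathrm{x}) \in \Lambda^2(\lieg) = \lieg \wedge \lieg$ I would exploit the counit and coassociativity constraints on the element $t$ above: applying $\eps \otimes \id$ and $\id \otimes \eps$ to $\Delta(x) = x \otimes 1 + 1 \otimes x + \hbar\,t$ forces $(\eps \otimes \id)(t) = 0 = (\id \otimes \eps)(t)$, while comparing the $\hbar$--linear parts of $(\Delta \otimes \id)\Delta(x) = (\id \otimes \Delta)\Delta(x)$ shows that $\bar{t}$ is a normalized Hochschild $2$--cocycle of the coalgebra $U(\lieg)$. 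For a cocommutative, primitively generated Hopf superalgebra such as $U(\lieg)$ — here one invokes the super PBW theorem, equivalently the Milnor--Moore/Kostant structure theorem internal to the symmetric category of super vector spaces — the antisymmetrization $\bar{t} - \sigma(\bar{t})$ of such a cocycle necessarily lies in $\lieg \wedge \lieg$. This is the one genuinely structural input of the proof, and the step I expect to be the main obstacle, since it is where the super PBW combinatorics (and the sign in the identification $\lieg^* \otimes \lieg^* \hookrightarrow (\lieg \otimes \lieg)^*$ of \S\ref{gen's on LSbA's}) really intervene; everything else is sign-bookkeeping. Hence $\delta : \lieg \longrightarrow \lieg \wedge \lieg$.

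It remains to verify that $(\lieg, [\,\ ,\ ], \delta)$ is a Lie superbialgebra, i.e.\ the super co-Jacobi identity and the $1$--cocycle compatibility of \S\ref{gen's on LSbA's}. For compatibility: lift $[\mathrm{x},\mathrm{y}]$ to the super-commutator $[x,y]$ in $H$; since $\Delta$ and $\Delta^{\op}$ are morphisms of superalgebras, $\Delta([x,y]) - \Delta^{\op}([x,y]) = \big[\Delta(x),\Delta(y)\big] - \big[\Delta^{\op}(x),\Delta^{\op}(y)\big]$ inside $H \,\widehat{\otimes}\, H$. Expanding $\Delta(x) = \Delta^{\op}(x) + \hbar\,\big(t - \sigma(t)\big)$ (and similarly for $y$), dividing by $\hbar$ and reducing modulo $\hbar$ — using again $\bar\Delta^{\op} = \bar\Delta$ — the surviving terms reassemble, after the Koszul-sign bookkeeping, precisely into $\mathrm{x}\,.\,\delta(\mathrm{y}) - {(-1)}^{|\mathrm{x}||\mathrm{y}|}\,\mathrm{y}\,.\,\delta(\mathrm{x})$, which is the $1$--cocycle identity. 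For super co-Jacobi: apply the super cyclic operator $\Sym_{\mathbb{A}_3}$ (acting through $\sigma$, as in \S\ref{gen's on LSbA's}) to $(\id \otimes \delta) \circ \delta$; lifting to $H$ and using coassociativity of $\Delta$ together with $\sigma \circ (\Delta - \Delta^{\op}) = -(\Delta - \Delta^{\op})$, the second-order ($\hbar^2$) term of the suitably alternated coassociativity identity is seen to vanish identically, which is exactly $\Sym_{\mathbb{A}_3} \circ \big((\id \otimes \delta) \circ \delta\big) = 0$. Both of these last verifications are routine once the sign conventions are fixed, so the core of the argument is the structural point of the previous paragraph.
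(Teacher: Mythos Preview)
Your outline is the standard Drinfeld semiclassical-limit argument, correctly transported to the super setting, and the steps you flag (well-definedness from cocommutativity of $U(\lieg)$, antisymmetry, the $1$--cocycle condition from multiplicativity of $\Delta$, co-Jacobi from coassociativity) are all sound. The one genuinely nontrivial step --- that $\delta(\mathrm{x})$ lands in $\lieg \wedge \lieg$ rather than merely in $\Lambda^2\big(U(\lieg)\big)$ --- you have identified correctly, and the tool you name (super PBW / Milnor--Moore, i.e.\ that primitives of $U(\lieg)$ are exactly $\lieg$) is the right one; in practice one usually phrases it as showing that $\delta(\mathrm{x})$ is primitive in each tensor factor of $U(\lieg) \otimes U(\lieg)$, which follows from the coassociativity constraint you wrote down together with $\bar\Delta(\mathrm{x}) = \mathrm{x} \otimes 1 + 1 \otimes \mathrm{x}$.

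As for comparison with the paper: there is nothing to compare. The paper does not prove this theorem; it states it with a reference to Geer \cite{Ge1}, \S 2.3, and closes with a \qed. Your write-up is therefore strictly more than what the paper offers, and is essentially the argument one would find by unwinding that reference (or its non-super antecedents in Drinfeld and Chari--Pressley).
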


\vskip11pt

\begin{definition}  \cite{CP,Ge1}
 The semiclassical limit of a quantized universal enveloping superalgebra
 $ H $ is the Lie superbialgebra  $ \, \big(\, \lieg \, , \, [-,-] \, , \, \delta \,\big) \, $
 where  $ \lieg $  is the Lie superalgebra such that
 $ \, H \big/ \hbar \, H \cong U(\lieg) \, $  and  $ \delta $  is defined as above.
 Conversely, we say that  $ H $  is a quantization of the Lie superbialgebra
 $ \big(\, \lieg \, , \, [-,-] \, , \, \delta \,\big) \, $.   \hfill  $ \diamondsuit $
\end{definition}

\vskip11pt

\begin{free text}  \label{defs_Hopf-algs}
 {\bf Hopf superalgebra deformations.}
 There exist two standard methods to deform Hopf algebras in braided (symmetric) categories,
 thus in particular Hopf superalgebras
 (and even quasi-Hopf superalgebras, more in general),
 leading to so-called ``twist deformations'' and to  ``$ 2 $--cocycle
 deformations'': hereafter we recall both procedures   --- which are naturally
   dual to each other,
%
%
 in a very precise sense ---   adapting them to the setup of  \textsl{topological\/}
 Hopf superalgebras, then later on we apply them to formal quantum supergroups.
 We will be somewhat sketchy, since these objects for a Hopf algebra in any symmetric tensor category are defined in the same way as in the category of vector spaces; further details can be found in literature.

\vskip11pt

 \textsl{$ \underline{\text{Twist deformations}} $:}\,
 Let  $ H $  be a Hopf superalgebra (possibly in topological sense).  We call
 ``{\it twist element}'',  or just a  ``{\it twist}'',  of  $ H $  any even (hence homogeneous) , invertible element  $ \, \F \in H \otimes H \, $  such that
  $$  \F_{1,2} \, \big( \Delta \otimes \text{id} \big)(\F)  \, =
  \,  \F_{2,3} \, \big( \text{id} \otimes \Delta \big)(\F)  \quad ,
  \qquad  \big( \epsilon \otimes \text{id} \big)(\F)  =  1
  =  \big( \text{id} \otimes \epsilon \big)(\F)  $$
 Then  $ H $  bears a second Hopf superalgebra structure, denoted  $ H^\F $
 and called  \textit{twist deformation\/}  of the old one, with the old product,
 unit and counit, but with new ``twisted'' coproduct  $ \Delta^\F $ and antipode
 $ \SS^\F $  given by
  $$  \qquad   \Delta^{\!\F}(x)  \, := \,  \F \, \Delta(x) \, \F^{-1}  \quad ,
    \qquad  \SS^\F(x) \, := \, v\,\SS(x)\,v^{-1}
    \quad \qquad  \forall \;\; x \in H  $$
 where  $ \, v := \sum_\F \SS(f'_1)\,f'_2 \, $   --- with
 $ \, \sum_\F f'_1 \otimes f'_2 = \F^{-1} \, $  ---
 is invertible in  $ H \, $.
 %
 %
  \textsl{When  $ H $  is in fact a  \textit{topological}  Hopf superalgebra}
  --- meaning that, in particular, its coproduct  $ \Delta $
 takes values into  $ \, H \otimes H \, $  where now  ``$ \, \otimes \, $''
 stands for a suitable topological tensor product ---
 then the same notions still make sense, and the related results apply again,
 up to properly reading them.  Moreover, if  $ H $  is only a superbialgebra
 (possibly topological), then the same recipe makes  $ H^\F $
 into a superbialgebra again.

\vskip11pt

 \textsl{$ \underline{\text{Cocycle deformations}} $:}\,
 Let $ \, \big( H, m\,, 1\,, \Delta\,, \epsilon \big) \, $
 be a superbialgebra over a ring  $ \k \, $.  We call ``(normalized) {\it Hopf 2-cocycle\/}''
(or just a  ``$ 2 $--cocycle''  if no confusion arises) any convolution invertible,
parity-preserving map  $ \sigma $  in  $ \, \Hom_{\Bbbk}(H \otimes H, \k \,) \, $
such that
  $$  {(-1)}^{|b_{(2)}| |c_{(1)}|} \, \sigma\big(b_{(1)},c_{(1)}\big) \, \sigma\big(a,b_{(2)}c_{(2)}\big)  \,\; = \;\,  {(-1)}^{|a_{(2)}| |b_{(1)}|} \, \sigma\big(a_{(1)},b_{(1)}\big) \, \sigma\big(a_{(2)}b_{(2)},c\big)  $$
and  $ \, \sigma (a,1) = \eps(a) = \sigma(1,a) \, $  for all  $ \, a, b, c \in H \, $.
                                                                     \par
   Using a  $ 2 $--cocycle  $ \sigma $  it is possible to define a new
   superalgebra structure on  $ H $  by deforming the multiplication.
   Indeed, define
   $ \, m_{\sigma} = \sigma * m * \sigma^{-1} : H \otimes H \longrightarrow H \, $  by
  $$  m_{\sigma}(a,b)  \, = \,  a \cdot_{\sigma} b  \, =  \,  {(-1)}^{|a_{(2)}| |b_{(1)}| + |a_{(3)}| |b_{(1)}| + |a_{(3)}| |b_{(2)}|} \, \sigma\big(a_{(1)},b_{(1)}\big) \, a_{(2)} \, b_{(2)} \, \sigma^{-1}\big(a_{(3)},b_{(3)}\big)  $$
for all  $ \, a , b \in H \, $.  If in addition  $ H $  is a Hopf superalgebra with antipode  $ \SS \, $,  then define also the ``deformed antipode''  $ \, \SS_\sigma : H \longrightarrow H \, $  by
  $$  \SS_{\sigma}(a)  \, = \,  \sigma\big(a_{(1)},\SS(a_{(2)})\big) \, \SS(a_{(3)}) \, \sigma^{-1}\big(\SS(a_{(4)}),a_{(5)}\big)   \eqno \forall \;\, a \in H  \quad  $$
 It is known  that  $ \, \big( H, m_\sigma, 1, \Delta, \eps \big) \, $
 is in turn a superbialgebra, and  $ \, \big( H, m_\sigma, 1, \Delta, \eps, \SS_\sigma \big) $  \, is a Hopf superalgebra: we shall call such a new structure on  $ H $  a
 \textit{cocycle deforma\-tion\/}  of the old one, and we shall graphically denote it by
 $ H_\sigma \, $.
 \vskip7pt
   \textsl{N.B.:}\,  in  \S \ref{subsec: q2cocyc-def_FoMpQUESAs}  later on, we will actually apply a suitable generalization of this procedure, namely the process of ``deformation by  \textsl{polar--2--cocycle\/}'':  this is introduced in  \cite{GG4}  for Hopf algebras that are quantum groups, we shall presently extend it (in a specific example) to the context of quantum  \textsl{super\/}groups.
\end{free text}

\medskip

\begin{free text}  \label{q-numbers}
 {\bf Some  $ q $--numbers.}\,
%
%
 Let  $ \Zqqm $  be the ring of Laurent polynomials with integral coefficients in the indeterminate  $ q \, $.  For every  $ \, n \in \NN_{+} \, $ we define
  $$  \displaylines{
   {(0)}_q  \, := \;  1 \;\; ,  \quad
    {(n)}_q  \, := \;  \frac{\,q^n -1\,}{\,q-1\,}  \; = \;  1 + q + \cdots + q^{n-1}  \; =
    \; {\textstyle \sum\limits_{s=0}^{n-1}} \, q^s  \qquad  \big(\, \in \, \ZZ[q] \,\big)  \cr
   {(n)}_q!  \, := \;  {(0)}_q {(1)}_q \cdots {(n)}_q  := \,  {\textstyle \prod\limits_{s=0}^n}
{(s)}_q  \;\; ,  \quad
    {\binom{n}{k}}_{\!q}  \, := \;  \frac{\,{(n)}_q!\,}{\;{(k)}_q! {(n-k)}_q! \,}  \qquad
\big(\, \in \, \ZZ[q] \,\big)  \cr
   {[0]}_q  := \,  1  \; ,  \;\;
    {[n]}_q  := \,  \frac{\,q^n -q^{-n}\,}{\,q-q^{-1}\,}  =
\, q^{-(n-1)} + \cdots + q^{n-1}  =  {\textstyle \sum\limits_{s=0}^{n-1}}
\, q^{2\,s - n + 1}  \!\quad  \big( \in \Zqqm \,\big)  \cr
   {[n]}_q!  \, := \;  {[0]}_q {[1]}_q \cdots {[n]}_q  = \,
{\textstyle \prod\limits_{s=0}^n} {[s]}_q  \;\; ,  \quad
    {\bigg[\, {n \atop k} \,\bigg]}_q  \, := \;  \frac{\,{[n]}_q!\,}{\;{[k]}_q! {[n-k]}_q!\,}
\qquad  \big(\, \in \, \Zqqm \,\big)  }  $$
 \vskip5pt
\noindent
   In particular, we have the identities
  $$
  {(n)}_{q^2} = q^{n-1} {[n]}_q \;\; ,  \qquad  {(n)}_{q^2}! =
q^{\frac{n(n-1)}{2}} {[n]}_q ! \;\; ,
\qquad  {\binom{n}{k}}_{\!\!q^{2}} = q^{k(n-k)} {\bigg[\, {n \atop k} \,\bigg]}_q  \;\; .
$$
 Moreover, for any field  $ \FF $  we can think of Laurent polynomials as functions on  $ \FF^\times $,
 hence for any  $ \, q \in \FF^\times \, $  we shall read every symbol above as a suitable element in  $ \, \FF \, $.
\end{free text}

\medskip

\subsection{Formal multiparameter QUESA's (=FoMpQUESA's)}  \label{subsec: FoMpQUESA's}
 {\ }
 \vskip11pt
We introduce now the notion of  {\sl formal multiparameter quantum universal enveloping
superalgebra}   --- or just ``FoMpQUESA'', in short ---
in particular proving, somewhat indirectly, that it carries a structure of Hopf superalgebra.
 \vskip3pt
   Hereafter,  $ \Bbbk $  is again a field of characteristic zero,
   $ \kh $  the ring of formal power series in  $ \hbar $  with coefficients in  $ \Bbbk \, $.
   In any topological,  $ \kh $--adically  complete  $ \kh $--algebra  $ \mathcal{A} \, $,
   if  $ \, X \in \mathcal{A} \, $  we set
   $ \; e^{\hbar\,X} := \exp(\hbar\,X) \, = \, {\textstyle \sum\limits_{n=0}^{+\infty}} \, \hbar^n X^n \big/ n! \; \in \, \mathcal{A} \; $.

\vskip9pt

\begin{definition}  \label{def: params-x-Uphgd}
 For later use, we fix some more notation:
 \vskip5pt
   \textit{(a)}\,  Let  $ \, A := {\big(\, a_{i,j} \big)}_{i, j \in I} \, $
   be a fixed Cartan matrix as in  Definition \ref{def: Cartan-super-datum},
   with  $ \, I := \{1,\dots,n\} \, $,  and let
   $ \, P := {\big(\, p_{i,j} \big)}_{i, j \in I} \in M_n\big( \kh \big) \, $
   be a square matrix of Cartan type associated with  $ A $
   (cf.\  Definition \ref{def: realization of P}\textit{(d)\/}),  so
 $ \, P + P^{\,\scriptscriptstyle T} = 2\,D\,A \, $,
 i.e.\  $ \, p_{ij} + p_{ji} = 2\,d_i\,a_{ij} \, $  for all
 $ \, i, j \in I \, $,  which implies
 $ \, p_{ii} = 2\,d_i \, $  for all  $ \, i \in I \, $
 (notation of  \S \ref{MpMatrices, Cartan & realiz.'s}).
                                             \par
   We define the following elements in  $ \kh \, $,  \,for all  $ \, i, j \in I \, $:
  $$  \displaylines{
   q := e^\hbar = \exp(\hbar) \;\; ,  \quad  q_i := e^{\hbar\,d_i} = q^{d_i} \;\; ,
   \quad  q_{ij} := e^{\hbar\,p_{ij}} = q^{p_{ij}} \;\; ,  \quad  q_{ij}^{1/2} := e^{\hbar\,p_{ij}/2}  \cr
   k_{ij} \, := \, e^{\hbar\,p_{i,j}/2} e^{-\hbar\,p_{j,i}/2} \, = \, q^{+p_{ij}/2} \, q^{-p_{ji}/2} \, =
   \, q_{ij}^{1/2} \, q_{ji}^{-1/2}  }  $$
 \textsl{In particular we have  $ \, q_{ii}^{1/2}= e^{\hbar{}d_i} = q_i \, $  and
 $ \; q_{ij} \, q_{ji} = q_{ii}^{\,a_{ij}} \; $  for all  $ \, i, j \in I \, $}.
                                             \par
   For any  $ \, i \in I \, $,  we set  $ \, \nu_i := q^{\varepsilon_i} \, $,
   \,where the  $ \varepsilon_i $'s  are given as follows:

%
%
%
\begin{equation}
  \begin{aligned}
   (\varepsilon_1\,,\varepsilon_2\,,\varepsilon_3\,,\varepsilon_4)  &  \, := \;  (6,-2,-2,-2)
   \qquad  \text{ if } \  A = F_4 \;\; ,  \\
   (\varepsilon_1\,,\varepsilon_2\,,\varepsilon_3)  &  \, := \;  (-2,2,6)  \qquad   \text{ if } \  A = G_3 \;\; ,  \\
   \varepsilon_i \, := \; \pm 1  &  \quad \forall \;\; i \quad  \text{otherwise,  \textsl{but\/}  for the conditions}  \\
   \varepsilon_n \,= \; +\varepsilon_{n-1}  &  \text{\ \ if } \  A = D_n(I) \; ,  \quad  \varepsilon_n \,= \; -\varepsilon_{n-1}  \text{\ \ if } \  A = D_n(I\!I)
  \end{aligned}
\end{equation}

In fact this corresponds to choosing an orthogonal bilinear form
 --- in some vector space containing the roots of the Lie superalgebra built out of  $ A $  and some parity function ---   so that
 $ \,\varepsilon_i = (\epsilon_i \, , \epsilon_i) \, $,  \,cf.\ \cite{Ya2}.
\vskip5pt
   \textit{(b)}\,  Given any superalgebra  $ A $  over some ring  $ R \, $,
   we define the ``supercommutator''
   $ \: {[X,Y]}_c := X \, Y - c \, {(-1)}^{p(X){}p(Y)} \, Y \, X \; $
   for any  $ \, c \in R \, $  and for any homogeneous  $ X $,  $ Y \in A \, $,
   and then extend by linearity as usual.  Moreover, for  $ \, c = 1 \, $
   we adopt the simpler notation  $ \; [X,Y] := {[X,Y]}_1 \; $.   \hfill
   $ \diamondsuit $
\end{definition}

\vskip7pt

   We can now define our FoMpQUESA's, using notation as in
   Definition \ref{def: params-x-Uphgd}  above.

\vskip11pt

\begin{definition}  \label{def: FoMpQUESA}
%
 Let  $ \, \big(\, A := {\big(\, a_{i,j} \big)}_{i, j \in I} \, , \, p \,\big) \, $
 be a Cartan super-datum, as in  Definition \ref{def: Cartan-super-datum},  and
 $ \, P := {\big(\, p_{i,j} \big)}_{i, j \in I} \in M_n\big( \kh \big) \, $
 a matrix of Cartan type associated with  $ A $.
 We fix a realization  $ \, \R := \big(\, \lieh \, , \Pi \, , \Pi^\vee \,\big) \, $
 of  $ P $  as in  Definition \ref{def: realization of P}.
 \vskip3pt
   {\it (a)}\,  We define the  {\sl formal multiparameter quantum universal super
   enveloping algebra}   --- in short  {\sl formal MpQUESA, or simply FoMpQUESA}  ---
   with multiparameter  $ P $  and realization  $ \R $  as follows.
   It is the unital, associative, topological,  $ \hbar $--adically  complete
   $ \kh $--superalgebra  $ \uRpPhg $  generated (in topological sense) by the
   $ \kh $--submodule  $ \lieh $  together with elements
 $ \, E_i \, $,  $ F_i \, $  (for all  $ \, i \in I \, $),
 with the  $ \ZZ_2 $--grading  given by setting parity on generators
  $$  |T| := \zero \quad  \forall \;\; T \in \lieh \;\; ,
  \qquad   |E_i| := p(i) =: |F_i|  \quad \forall \;\; i \in I  $$
 and with relations (for all  $ \, T , T' , T'' \in \lieh \, $,  $ \, i \, , j \in I \, $)
 \vskip-5pt
 \begin{align}
 \label{eq: rel T-E / T-T / T-F}
   T \, E_j - E_j \, T \, = \,  +\alpha_j(T) \, E_j  \; ,  \quad
   T' \, T'' \, = \, T'' \, T'  \; ,
   \quad T \, F_j - F_j \, T \, = \, -\alpha_j(T) \, F_j \\
 \label{eq: rel E-F}
   [E_i \, , F_j]  \,\; = \;\,
   \delta_{i,j} \, {{\; e^{+\hbar \, T_i^+} -
   \, e^{-\hbar \, T_i^-} \;} \over {\; q_i^{+1} - \, q_i^{-1} \;}}  \hskip93pt
\end{align}

\begin{align}
   \sum\limits_{s=0}^{1 - a_{ij}} {(-1)}^s {\left[ {{1 - a_{ij}} \atop s}
\right]}_{\!q_i}  &
 k_{ij}^{\,s}
 \, E_i^{1 - a_{ij} - s} E_j E_i^s  \; = \;  0   \quad\;\;  \big(\, i \neq j \, , \, p(i) = \zero \,\big)   \hskip3pt   \label{eq: q-Serre (E)}  \\
%
%
%
 \sum\limits_{s=0}^{1 - a_{ij}} {(-1)}^s {\left[{{1 - a_{ij}} \atop s}
\right]}_{\!q_i}  &
 k_{ji}^{\,s}
 \, F_i^{1 - a_{ij} - s} F_j F_i^{\,s}  \; = \;  0   \quad\;\;  \big(\, i \neq j \, , \, p(i) = \zero \,\big)   \hskip5pt   \label{eq: q-Serre (F)}
\end{align}

\begin{align}  \label{eq: rel1}
   \big[ E_i \, , E_j \big]_{k_{ij}} = \, 0  \;\; ,  \quad
   \big[ F_i \, , F_j \big]_{k_{ji}} = \, 0
     \qquad \qquad   \text{if} \quad  a_{i,j} = 0
\end{align}

\begin{align}  \label{eq: rel2}
   {\Big[ \big[ {[ E_i \, , E_j ]}_{\nu_j k_{ij}} \, ,
   E_k \big]_{\nu_{j+1} k_{ik} k_{jk}} \, , E_j \Big]}_{k_{ij} k_{kj}} \! = \, 0  \;\;\;  \text{in any of the following cases:}
\end{align}

\centerline{
\begin{tikzpicture}
    \draw[black, very thin] (2-0.1, -0.1) -- (2+0.1, 0.1);
    \draw[black, very thin] (2-0.1, 0.1) -- (2+0.1, -0.1);
    \draw[black, very thin] (2.2, 0) -- (3.2, 0);
    \draw[black, very thin] (3.4-0.1, -0.1) -- (3.4+0.1, 0.1);
    \draw[black, very thin] (3.4-0.1, 0.1) -- (3.4+0.1, -0.1);
   \draw [black, very thin](3.4, 0) circle (0.15 cm);
    \draw[black, very thin] (3.6, 0) -- (4.6, 0);
    \draw[black, very thin] (4.8-0.1, -0.1) -- (4.8+0.1, 0.1);
    \draw[black, very thin] (4.8-0.1, 0.1) -- (4.8+0.1, -0.1);
\draw[black, very thin] (2,0)
node[above=0.2cm, scale=0.7] {$i$};
\draw[black, very thin] (3.4,0)
node[above=0.2cm, scale=0.7] {$j$};
\draw[black, very thin] (4.8,0)
node[above=0.2cm, scale=0.7] {$k$};
\end{tikzpicture}
  \quad \qquad
\begin{tikzpicture}
    \draw[black, very thin] (2-0.1, -0.1) -- (2+0.1, 0.1);
    \draw[black, very thin] (2-0.1, 0.1) -- (2+0.1, -0.1);
    \draw[black, very thin] (2.2, 0) -- (3.2, 0);
    \draw[black, very thin] (3.4-0.1, -0.1) -- (3.4+0.1, 0.1);
    \draw[black, very thin] (3.4-0.1, 0.1) -- (3.4+0.1, -0.1);
   \draw [black, very thin](3.4, 0) circle (0.15 cm);
    \draw[black, very thin] (3.6, +0.1) -- (4.6, +0.1);
    \draw[black, very thin] (3.6, -0.1) -- (4.6, -0.1);
    \draw[black, very thin] (4.1-0.1, -0.1-0.1) -- (4.1+0.1, 0.1-0.1);
    \draw[black, very thin] (4.1-0.1, 0.1+0.1) -- (4.1+0.1, -0.1+0.1);
    \draw [black, very thin, fill=black](4.8, 0) circle (0.15 cm);
\draw[black, very thin] (2,0)
node[above=0.2cm, scale=0.7] {$i$};
\draw[black, very thin] (3.4,0)
node[above=0.2cm, scale=0.7] {$j$};
\draw[black, very thin] (4.8,0)
node[above=0.2cm, scale=0.7] {$k$};
\end{tikzpicture}
  \qquad \quad
\begin{tikzpicture}
    \draw[black, very thin] (2-0.1, -0.1) -- (2+0.1, 0.1);
    \draw[black, very thin] (2-0.1, 0.1) -- (2+0.1, -0.1);
    \draw[black, very thin] (2.2, 0) -- (3.2, 0);
    \draw[black, very thin] (3.4-0.1, -0.1) -- (3.4+0.1, 0.1);
    \draw[black, very thin] (3.4-0.1, 0.1) -- (3.4+0.1, -0.1);
   \draw [black, very thin](3.4, 0) circle (0.15 cm);
    \draw[black, very thin] (3.6, +0.1) -- (4.6, +0.1);
    \draw[black, very thin] (3.6, -0.1) -- (4.6, -0.1);
    \draw[black, very thin] (4.1-0.1, -0.1-0.1) -- (4.1+0.1, 0.1-0.1);
    \draw[black, very thin] (4.1-0.1, 0.1+0.1) -- (4.1+0.1, -0.1+0.1);
    \draw[black, very thin] (4.8-0.1, -0.1) -- (4.8+0.1, 0.1);
    \draw[black, very thin] (4.8-0.1, 0.1) -- (4.8+0.1, -0.1);
\draw[black, very thin] (2,0)
node[above=0.2cm, scale=0.7] {$i$};
\draw[black, very thin] (3.4,0)
node[above=0.2cm, scale=0.7] {$j$};
\draw[black, very thin] (4.8,0)
node[above=0.2cm, scale=0.7] {$k$};
\end{tikzpicture} }

\begin{align}  \label{eq: rel3}
   \Big[ \big[ {\big[ E_{n-1} \, , E_n \big]}_{\nu_n k_{n-1,n}} \, ,
   E_n \big]_{k_{n-1,n}} \, , E_n \Big]_{\nu_n^{-1} k_{n-1,n}} \; = \; 0
\end{align}

\begin{figure}[H]
    \centering
\begin{tikzpicture}[scale=0.8]
\draw[black, very thin] (2-0.1, -0.1) -- (2+0.1, 0.1);
    \draw[black, very thin] (2-0.1, 0.1) -- (2+0.1, -0.1);
    \draw[black, very thin] (2.2, 0.08) -- (3.3, 0.08);
    \draw[black, very thin] (2.2, -0.08) -- (3.3, -0.08);
\draw [black, very thin, fill=black](3.7, 0) circle (0.15 cm);

\draw[black, very thin] (2.8-0.1, -0.3) -- (2.8+0.1, 0);
\draw[black, very thin] (2.8-0.1, 0.3) -- (2.8+0.1, 0);
\draw[black, very thin] (2,0)
node[above=0.2cm, scale=0.7] {$n\!-\!1$};
\draw[black, very thin] (3.5,0)
node[above=0.2cm, scale=0.7] {\hskip15pt $n$};

\draw[black, very thin] (0,0)
node { with };
\draw[black, very thin] (7,0)
node { and $A$ of type $B_n$ };
\end{tikzpicture}
\end{figure}

\begin{equation}  \label{eq: rel4}
\begin{aligned}
    &  \Big[ \big[ E_{n-2} \, ,E_{n-1} \big]_{\nu_{n-1}k_{n-2,n-1}} \, ,
    E_n \Big]_{\nu_n k_{n-2,n} k_{n-1,n}} \; -  \\
    &  \qquad   - \; \Big[ \big[ E_{n-2} \, , E_n \big]_{\nu_{n-1} k_{n-2,n}} \, ,
    E_{n-1} \Big]_{\nu_n k_{n-2,n-1} k_{n,n-1}}  \; = \;  0
\end{aligned}
\end{equation}

\begin{figure}[H]
\begin{tikzpicture}    \quad
\draw[black, very thin] (4.3, 0.1) -- (5.2, 0.6);
\draw[black, very thin] (4.3, -0.1) -- (5.2, -0.6);

\draw[black, very thin] (5.35, 0.3) -- (5.35, -0.3);
\draw[black, very thin] (5.42, 0.3) -- (5.42, -0.3);

\draw [black, very thin](5.4, 0.6) circle (0.13 cm);
\draw [black, very thin](5.4, -0.6) circle (0.13 cm);

\draw[black, very thin] (4-0.1, 0.1) -- (4+0.1, -0.1);
\draw[black, very thin] (4-0.1, -0.1) -- (4+0.1, 0.1);

\draw[black, very thin] (5.4-0.1, 0.1+0.6) -- (5.4+0.1, -0.1+0.6);
\draw[black, very thin] (5.4-0.1, -0.1+0.6) -- (5.4+0.1, 0.1+0.6);

\draw[black, very thin] (5.4-0.1, 0.1-0.6) -- (5.4+0.1, -0.1-0.6);
\draw[black, very thin] (5.4-0.1, -0.1-0.6) -- (5.4+0.1, 0.1-0.6);

\draw[black, very thin] (5.4,0.6)
node[above=0.1cm, scale=0.7] {$n$};

\draw[black, very thin] (4,0)
node[above=0.1cm, scale=0.7] {$n\!-\!2$};

\draw[black, very thin] (5.4,-0.6)
node[below=0.1cm, scale=0.7] {$n\!-\!1$};

\draw[black, very thin] (2,0.3)
node[below=0.1cm] {with};
\draw[black, very thin] (8.5,0.3)
node[below=0.1cm] { and  $ A $  of type  $ \, D_n(I\!I) $};

\end{tikzpicture}
\end{figure}

\begin{equation}  \label{eq: rel5}
\begin{aligned}
  &  \bigg[ \Big[ \big[\, [E_{n-2} \, , E_{n-1}]_{\nu_{n-1} k_{n-2,n-1}} \, ,
  E_n \big]_{\nu_n k_{n-2,n-1} k_{n-1,n}} \, ,   \hfill  \\
  &  \hskip85pt   [E_{n-2}, E_{n-1}]_{\nu_{n-1} k_{n-2,n-1}} \Big] \, , \, E_{n-1} \,\bigg]_{\nu_{n-1}k_{n-2,n-1}}  \; = \;\,  0
\end{aligned}
\end{equation}

\begin{figure}[H]
    \centering  for  \quad
\begin{tikzpicture}[scale=0.8]
\draw[black, very thin] (2.2, 0) -- (3.2, 0);
\draw[black, very thin] (3.6, 0.07) -- (4.8, 0.07);
\draw[black, very thin] (3.6, -0.07) -- (4.8, -0.07);
\draw[black, very thin] (4.2-0.1, -0.1+0.1) -- (4.2+0.1, 0.1+0.1);
\draw[black, very thin] (4.2-0.1, 0.1-0.1) -- (4.2+0.1, -0.1-0.1);

\draw[black, very thin] (2-0.1, -0.1) -- (2+0.1, 0.1);
\draw[black, very thin] (2-0.1, 0.1) -- (2+0.1, -0.1);

\draw[black, very thin] (3.4-0.1, -0.1) -- (3.4+0.1, 0.1);
\draw[black, very thin] (3.4-0.1, 0.1) -- (3.4+0.1, -0.1);

\draw [black, very thin](2, 0) circle (0.15 cm);
\draw [black, very thin](3.4, 0) circle (0.15 cm);
\draw [black, very thin](5, 0) circle (0.15 cm);

\draw[black, very thin] (2,0)
node[above=0.2cm, scale=0.7] {$n\!-\!2$};

\draw[black, very thin] (3.4,0)
node[above=0.2cm, scale=0.7] {$n\!-\!1$};

\draw[black, very thin] (5,0)
node[above=0.2cm, scale=0.7] {$n$};

\end{tikzpicture}
 \quad  and  $ \, A \, $  of type  $ \, C_n $
\end{figure}

\begin{equation}
  \begin{aligned}  \label{eq: rel6}
  &  \Bigg[ \Bigg[ \bigg[ \Big[ {\big[\,  {[E_{n-3} \, ,
  \, E_{n-2}]}_{\nu_{n-2} \, k_{n-3,n-2}} \, , \, E_{n-1} \big]}_{\nu_{n-1} \, k_{n-3,n-1} \, k_{n-2,n-1}} \, ,   \hskip25pt  \\
  &  \quad  E_n \,{\Big]}_{2 \, \nu_n \, k_{n-3,n} \, k_{n-2,n} \, k_{n-1,n}} \, , \, E_{n-1}\,{\bigg]}_{\nu_n \, k_{n-3,n-1} \, k_{n-2,n-1} \, k_{n,n-1}} \, ,  \\
  &  \qquad  E_{n-2} \,{\Bigg]}_{\nu_{n-1} \, k_{n-3,n-2} \, k_{n-1,n-2}^{\,2} k_{n,n-2}} \, , \, E_{n-1} \,{\Bigg]}_{k_{n-3,n-1} \, k_{n-2,n-1}^{\,2} \,  k_{n,n-1}}  \,\; = \;\;  0
  \end{aligned}
\end{equation}

\begin{figure}[H]
    \centering  for  \quad
  \begin{tikzpicture}[scale=0.8]
\draw[black, very thin] (0.8, 0) -- (1.8, 0);
\draw[black, very thin] (2.2, 0) -- (3.2, 0);
\draw[black, very thin] (3.6, 0.07) -- (4.8, 0.07);
\draw[black, very thin] (3.6, -0.07) -- (4.8, -0.07);
\draw[black, very thin] (4.2-0.1, -0.1+0.1) -- (4.2+0.1, 0.1+0.1);
\draw[black, very thin] (4.2-0.1, 0.1-0.1) -- (4.2+0.1, -0.1-0.1);

\draw[black, very thin] (0.6-0.1, -0.1) -- (0.6+0.1, 0.1);
\draw[black, very thin] (0.6-0.1, 0.1) -- (0.6+0.1, -0.1);
\draw[black, very thin] (2-0.1, -0.1) -- (2+0.1, 0.1);
\draw[black, very thin] (2-0.1, 0.1) -- (2+0.1, -0.1);
\draw[black, very thin] (3.4-0.1, -0.1) -- (3.4+0.1, 0.1);
\draw[black, very thin] (3.4-0.1, 0.1) -- (3.4+0.1, -0.1);
\draw [black, very thin](2, 0) circle (0.15 cm);
\draw [black, very thin](3.4, 0) circle (0.15 cm);
\draw [black, very thin](5, 0) circle (0.15 cm);

  \draw[black, very thin] (0.6,0)
node[above=0.2cm, scale=0.7] {$n\!-\!3$};
  \draw[black, very thin] (2,0)
node[above=0.2cm, scale=0.7] {$n\!-\!2$};
  \draw[black, very thin] (3.4,0)
node[above=0.2cm, scale=0.7] {$n\!-\!1$};
  \draw[black, very thin] (5,0)
node[above=0.2cm, scale=0.7] {$n$};
%
\end{tikzpicture}
 \quad  and  $ \, A \, $ of type  $ \, C_n $
\end{figure}
\noindent
 \textsl{and also the like of relations  \eqref{eq: rel2},  \eqref{eq: rel3}, \eqref{eq: rel4}, \eqref{eq: rel5}, \eqref{eq: rel6}  with the  $ E_i $'s  replaced by  $ F_i $'s,  the  $ q_{ij} $'s  by  $ q_{ji} $'s  and the  $ k_{ij} $'s  by  $ k_{ji} $'s.}
 \vskip1pt
   \textit{$ \underline{\text{N.B.}} $:\/}  In  \eqref{eq: rel6}  we have removed a subscript ``2'' which appears in Yamane's original formula in  \cite{Ya1},  seemingly as a misprint.  Also, the like of formulas  \eqref{eq: q-Serre (E)}  and  \eqref{eq: q-Serre (F)}  in  [\textit{loc.\ cit.}]  have  ``$ \, +\big|a_{ij}\big| \, $''  instead of  ``$ \, -a_{ij} \, $'',  \,yet the two do coincide because of the assumption  $ \, p(i) = \zero \, $.
 \vskip3pt
   {\it (b)}\,  We say that  $ \uRpPhg $  is  \textsl{straight},  or  \textsl{small},
   or \textsl{minimal},  or  \textsl{split},
   if such is  $ \R \, $;  also, we define the  \textsl{rank\/}  of  $ \uRpPhg $  as
   $ \; \rk\!\big( \uRpPhg \big) := \rk(\R) = \rk_\kh(\lieh) \, $.
 \vskip3pt
   {\it (c)}\,  We define the  {\sl Cartan subalgebra\/}
   $ \uRpPhh \, $,  or just  $ U_\hbar(\lieh) \, $,  of a FoMpQUESA  $ \uRpPhg $
   as being the unital,  $ \hbar $--adically  complete topological
   $ \kh $--subsuperalgebra  of  $ \uRpPhg $
   generated by the  $ \kh $--submodule  $ \lieh \, $.
 \vskip3pt
   {\it (d)}\,  We define the  {\sl positive, resp.\ negative, Borel subsuperalgebra\/}
   $ \uRpPhbp $,  resp.\  $ \uRpPhbm $,
 of  $ \, \uRpPhg $  to be
 the unital,  $ \hbar $--adically  complete topological  $ \kh $--subsu\-peralgebra of
 $ \uRpPhg $  generated by  $ \lieh $  and the  $ E_i $'s,  resp.\  by  $ \lieh $  and the
 $ F_i $'s  ($ \, i \in I \, $).
 \vskip5pt
   {\it (e)}\,  We define the  {\sl positive, resp.\ negative, nilpotent subsuperalgebra\/}
   $ \uRpPhnp $,  resp.\  $ \uRpPhnm $,
   of a FoMpQUESA  $ \uRpPhg $  to be the unital,  $ \hbar $--adically  complete topological
   $ \kh $--subsuperalgebra
   of  $ \uRpPhg $  generated by all the  $ E_i $'s,  resp.\ all the  $ F_i $'s
   ($ \, i \in I \, $).   \hfill  $ \diamondsuit $
\end{definition}

\vskip9pt

\begin{rmk}  \label{rmk: q-Serre = iterated q-commutator}
 In  Definition \ref{def: FoMpQUESA}\textit{(a)},  all relations  \eqref{eq: rel1}
 through  \eqref{eq: rel6},  as well as their counterparts with the  $ F_\ell $'s
 replacing the  $ E_\ell $'s,  are all given in terms of ``iterated
 $ q $--supercommutators''  (either identifying such a  $ q $--supercommutator  to zero,
 or identifying two of them).  Similarly, it is important to stress that also
 \textsl{the quantum Serre's relations, namely  \eqref{eq: q-Serre (E)}  and
 \eqref{eq: q-Serre (F)},  can be expressed in that way}.
 We prove this for  \eqref{eq: q-Serre (E)},  the case of  \eqref{eq: q-Serre (F)}  being entirely similar, up to switching every  $ q_{ij} \, $,  resp.\  every  $ k_{ij} \, $,  to  $ q_{ji} \, $,  resp.\ to $ k_{ji} \, $.
                                                        \par
   Indeed, let  $ E_{i,j} $  denote the left-hand side of  \eqref{eq: q-Serre (E)}.  Then with notation as in  Definition \ref{def: params-x-Uphgd}\textit{(b)},  straightforward computations yield
\begin{align}
\label{eq: q-S = q-bra (0)}
   a_{i,j} = 0  &  \quad \Longrightarrow \quad
   E_{i,j} \; = \; {[E_i \, , E_j]}_{q_{ij}} \qquad
   \\
\label{eq: q-S = q-bra (1)}
   a_{i,j} = -1  &  \quad \Longrightarrow \quad  E_{i,j} \; = \; \big[ E_i \, , {[E_i \, , E_j]}_{q_{ij}} \big]_{q_{ii} \, q_{ij}}  \\
\label{eq: q-S = q-bra (2)}
   a_{i,j} = -2  &  \quad \Longrightarrow \quad  E_{i,j} \; = \; \Big[ E_i \, ,
   \big[ E_i \, , {[E_i \, , E_j]}_{q_{ij}} \big]_{q_{ii} \, q_{ij}} \Big]_{q_{ii}^{\,2} \, q_{ij}}  \\
\label{eq: q-S = q-bra (3)}
   a_{i,j} = -3  &  \quad \Longrightarrow \quad  E_{i,j} \; = \; \bigg[ E_i \, , \Big[ E_i \, ,
   \big[ E_i \, , {[E_i \, , E_j]}_{q_{ij}} \big]_{q_{ii} \, q_{ij}}
   \Big]_{q_{ii}^{\,2} \, q_{ij}} \bigg]_{q_{ii}^{\,3} \, q_{ij}}
\end{align}

                                                                 \par
   In more general terms, the iterated  $ q $--supercommutators
   arise from a  \emph{braided} adjoint action.
   This braiding is the canonical one given in the category of super
   Yetter-Drindfeld modules over the Cartan subalgebra.
\end{rmk}

\vskip9pt

   The following two results underscore that the dependence of FoMpQUSEA's
   on realizations (which includes that on the multiparameter matrix) is functorial:

\vskip11pt

\begin{prop}  \label{prop: functor_R->uRPhg}
 Let  $ \, P \in M_{n}\big( \kh \big) \, $,  and
 $ \, p : I \longrightarrow \ZZ_2 \, $  a parity function.
 If both  $ \R' $  and  $ \, \R'' $  are realizations of  $ P $  and
 $ \, \underline{\phi} : \R' \relbar\joinrel\relbar\joinrel\longrightarrow \R'' \, $
 is a morphism between them,  then there exists a unique morphism
 $ \; U_{\underline{\phi}} : U^{\,\R'\!,\,p}_{\!P,\,\hbar}(\lieg) \relbar\joinrel\relbar\joinrel\longrightarrow U^{\,\R''\!,\,p}_{\!P,\,\hbar}(\lieg) \; $ of unital topological
 $ \, \kh $--algebras
 that extends the morphism  $ \, \phi : \lieh' \relbar\joinrel\relbar\joinrel\longrightarrow \lieh'' \, $
 of  $ \, \kh $--modules  given by
 $ \underline{\phi} \, $;  \,moreover,
 $ \, U_{\underline{\id}_\R} = \id_{\uRpPhg} \, $  and
 $ \; U_{\underline{\phi}' \circ\, \underline{\phi}} = U_{\underline{\phi}'} \circ U_{\underline{\phi}} \; $
 (whenever  $ \, \underline{\phi}' \circ\, \underline{\phi} \, $  is defined).
 Therefore, the construction  $ \, \R \mapsto \uRpPhg \, $
 --- for any fixed  $ P $  and  $ p $  ---   is functorial in  $ \R \, $.
                                                                              \par
   Moreover, if  $ \underline{\phi} $  is an epimorphism,
   resp.\ a monomorphism, then  $ U_{\underline{\phi}} $
   is an epimorphism, resp.\ a monomorphism, as well.
                                                                              \par
   Finally, for any morphism  $ \, \underline{\phi} : \R' \relbar\joinrel\longrightarrow \R'' \, $,
   \,the kernel  $ \, \Ker\big(U_{\underline{\phi}}\big) $
   of  $ \, U_{\underline{\phi}} $  is the two-sided ideal in
   $ \, U^{\,\R'\!,\,p}_{\!P,\hbar}(\lieg) $  generated by
   $ \, \Ker(\phi) \, $,  and the latter is central in  $ \, U^{\,\R'\!,\,p}_{\!P,\hbar}(\lieg) \, $.
\end{prop}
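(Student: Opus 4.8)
The plan is to follow the proof of Proposition~\ref{prop: functor_R->liegRP} (the MpLSbA analogue) essentially verbatim, adding only the bookkeeping forced by the $q$--commutators and the super signs. Given a morphism of realizations $\underline{\phi} : \R' \to \R''$, with underlying $\kh$--module map $\phi : \lieh' \to \lieh''$ and associated permutation $\sigma \in \mathbb{S}_I$, I would \emph{define} $U_{\underline{\phi}}$ on the topological generators by $T \mapsto \phi(T)$ for $T \in \lieh'$, $E_i \mapsto E_{\sigma(i)}$ and $F_i \mapsto F_{\sigma(i)}$. The first task is to check that these images obey all the defining relations \eqref{eq: rel T-E / T-T / T-F}--\eqref{eq: rel6} of $U^{\,\R'\!,\,p}_{\!P,\hbar}(\lieg)$, together with their $F$--versions, so that $U_{\underline{\phi}}$ is a well-defined morphism of unital topological $\kh$--algebras; once this is in place uniqueness is automatic (the $\lieh'$, $E_i$, $F_i$ topologically generate), and $U_{\underline{\id}_\R} = \id$ together with $U_{\underline{\phi}'\circ\underline{\phi}} = U_{\underline{\phi}'}\circ U_{\underline{\phi}}$ are formal.

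The key point that makes the relation-check routine is that \emph{all} the scalars occurring in the relations --- the $q_i$, $q_{ij}$, $q_{ij}^{1/2}$, $k_{ij}$, $\nu_i$, and the $q$--binomials --- are determined by $P$ and $p$ alone, with no dependence on the realization, and are moreover invariant under the permutation $\sigma$ underlying $\underline{\phi}$ (which, being part of a morphism of realizations of the same matrix $P$, is forced to be a symmetry of $P$, hence of $A$, $D$ and $p$). So each of \eqref{eq: q-Serre (E)}--\eqref{eq: rel6} and its $F$--counterpart, involving only the $E_i$'s (resp.\ $F_i$'s), maps to a relation of literally the same shape in $U^{\,\R''\!,\,p}_{\!P,\hbar}(\lieg)$. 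The only relations carrying realization data are \eqref{eq: rel T-E / T-T / T-F} and \eqref{eq: rel E-F}: for the first I would use $\phi^*(\dot\alpha_{\sigma(j)}) = \alpha_j$ and the commutativity of $\lieh''$, which give $\phi(T)\,E_{\sigma(j)} - E_{\sigma(j)}\,\phi(T) = \alpha_j(T)\,E_{\sigma(j)}$; for the second I would use $\phi(T_j^\pm) = \dot{T}_{\sigma(j)}^\pm$, hence $e^{\pm\hbar\,\phi(T_j^\pm)} = e^{\pm\hbar\,\dot{T}_{\sigma(j)}^\pm}$, together with $\delta_{ij} = \delta_{\sigma(i)\sigma(j)}$ and $q_i = q_{\sigma(i)}$. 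If $\underline{\phi}$ is an epimorphism, i.e.\ $\phi$ is onto, then $\Img(U_{\underline{\phi}})$ already contains $\lieh''$ and every $E_j$, $F_j$, so $U_{\underline{\phi}}$ is onto.

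For the assertions about monomorphisms and about the kernel I would factor $\underline{\phi}$, in the category of realizations, as an epimorphism followed by a monomorphism, $\R' \twoheadrightarrow \bar{\R}' \hookrightarrow \R''$ --- this is possible because $\kh$ is a discrete valuation ring, so $\Img(\phi)\subseteq\lieh''$ is free and inherits a realization structure $\bar{\R}'$. For the epimorphism $\R'\twoheadrightarrow\bar{\R}'$, Lemma~\ref{lemma: ker-morph's_realiz's} gives $\liek := \Ker(\phi)\subseteq\bigcap_{j\in I}\Ker(\alpha'_j)$; then the outer two relations in \eqref{eq: rel T-E / T-T / T-F} show that each element of $\liek\subseteq\lieh'$ commutes with all the $E_j$'s and $F_j$'s, while the middle one gives commutation with $\lieh'$, so $\liek$ is central in $U^{\,\R'\!,\,p}_{\!P,\hbar}(\lieg)$ and the two-sided ideal it generates is contained in $\Ker(U_{\underline{\phi}})$; conversely, that ideal is exactly the kernel of the natural surjection $U^{\,\R'\!,\,p}_{\!P,\hbar}(\lieg)\twoheadrightarrow U^{\,\bar{\R}'\!,\,p}_{\!P,\hbar}(\lieg)$. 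It then remains to see that the monomorphism half $U^{\,\bar{\R}'\!,\,p}_{\!P,\hbar}(\lieg)\to U^{\,\R''\!,\,p}_{\!P,\hbar}(\lieg)$ is injective; here I would invoke the triangular decomposition $U^{\,\R\!,\,p}_{\!P,\hbar}(\lieg)\cong\uRpPhnm\,\widehat{\otimes}\,U_\hbar(\lieh)\,\widehat{\otimes}\,\uRpPhnp$, under which the map becomes the identity on the (realization-independent) nilpotent factors $\uRpPhnpm$ tensored with the $\hbar$--adic completion of the injection $S(\bar{\lieh}')\hookrightarrow S(\lieh'')$.

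The hard part will be exactly this control of $\Ker(U_{\underline{\phi}})$ from above, which rests on having a triangular (PBW-type) decomposition for FoMpQUESAs available --- the moral being that changing only the toral part $\lieh$ of the realization perturbs $U^{\,\R\!,\,p}_{\!P,\hbar}(\lieg)$ solely through its Cartan tensor factor. The residual ``super'' input is the verification that the higher Serre-type relations \eqref{eq: rel2}--\eqref{eq: rel6}, and their iterated $q$--supercommutator reformulations as in Remark~\ref{rmk: q-Serre = iterated q-commutator}, contribute nothing beyond $\uRpPhnpm$ and carry no realization dependence, which is immediate from their form.
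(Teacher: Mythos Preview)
The paper's own proof is a single sentence: it cites Proposition~4.2.3 of \cite{GG3} and remarks that adding the parity function is harmless. Your proposal is therefore not so much a different route as an attempt to unpack what that reference presumably contains; the relation-checking part (defining $U_{\underline{\phi}}$ on generators, noting all scalars $q_i,q_{ij},k_{ij},\nu_i$ depend only on $P$ and $p$, using $\phi^*(\dot\alpha_{\sigma(j)})=\alpha_j$ and $\phi(T_j^\pm)=\dot T_{\sigma(j)}^\pm$) is exactly right and matches the spirit of the analogous Proposition~\ref{prop: functor_R->liegRP}.

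There is, however, a logical ordering issue you should be aware of. You propose to control $\Ker(U_{\underline{\phi}})$ and the monomorphism case by invoking the triangular decomposition of Theorem~\ref{thm: triang-decomp}. In this paper that theorem is stated and proved \emph{after} the present proposition; more importantly, Proposition~\ref{prop: functor_R->uRPhg} (specifically the kernel statement) is used in the proof of Theorem~\ref{thm: FoMpQUEA's}, which in turn precedes the triangular decomposition. So within the paper's internal architecture your argument is circular. The resolution is that in \cite{GG3} the ``Third Proof'' of triangular decomposition (\S4.4.9 there) is a self-contained, purely algebraic PBW argument that does not rely on the Hopf structure or on functoriality, so one can establish it first and then run your argument; alternatively, one proves the kernel statement directly by comparing presentations (the quotient of $U^{\,\R'\!,\,p}_{\!P,\hbar}(\lieg)$ by the ideal generated by $\Ker(\phi)$ visibly has the defining presentation of $U^{\,\bar\R'\!,\,p}_{\!P,\hbar}(\lieg)$). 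Either way your outline is essentially correct, but if you write it up you should make the non-circularity explicit.
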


\pf
 This is just  Proposition 4.2.3 in \cite{GG3},  but for the
 addition of the parity function, which nevertheless is harmless.
\epf

\vskip7pt

\begin{cor}  \label{cor: isom_R -> isom_uRPhg}
 With notation as above, if  $ \; \R' \cong \R'' \, $  then  $ \; U^{\,\R'\!,\,p}_{P,\hbar}(\lieg) \cong U^{\,\R''\!,\,p}_{P,\hbar}(\lieg) \; $.
                                                     \par
   In particular, all FoMpQUSEA's built upon split realizations, respectively small realizations, of the same matrix  $ P $  and sharing the same rank of  $ \, \lieh $  and the same parity function  $ p \, $,  are isomorphic to each other   --- hence they are independent (up to isomorphisms)  of the specific realization, but only depend on
 $ P $,  on the rank of  $ \, \lieh \, $,  and on the parity function.
\end{cor}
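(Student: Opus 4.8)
The plan is to deduce this statement as a direct consequence of Proposition \ref{prop: functor_R->uRPhg} together with the uniqueness-up-to-isomorphism results for realizations of prescribed type. First I would observe that if $\underline{\phi} : \R' \to \R''$ is an isomorphism of realizations, then its inverse $\underline{\phi}^{-1} : \R'' \to \R'$ is also a morphism of realizations, and the functoriality statement in Proposition \ref{prop: functor_R->uRPhg} gives $U_{\underline{\phi}^{-1}} \circ U_{\underline{\phi}} = U_{\underline{\id}_{\R'}} = \id$ and symmetrically $U_{\underline{\phi}} \circ U_{\underline{\phi}^{-1}} = \id$; hence $U_{\underline{\phi}}$ is an isomorphism of topological $\kh$-algebras, proving the first assertion. (One should note that here $U_{\underline{\phi}}$ is automatically a morphism of Hopf superalgebras once the Hopf structure is in place, so this is really an isomorphism of FoMpQUESA's in the full sense; but even at the level of algebras the statement as written follows.)

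For the second assertion, I would invoke Proposition \ref{prop: exist-realiz's}: for a fixed matrix $P$ and a fixed rank $\ell$, any two straight split realizations of $P$ of rank $\ell$ are isomorphic, and likewise (dropping ``straight'', or using Proposition \ref{prop: exist-realiz's_small} in the small case) any two realizations of the relevant type and rank are isomorphic. Applying the first assertion to such an isomorphism — now also keeping the parity function $p$ fixed, which enters the presentation only through the parities $|E_i| = p(i) = |F_i|$ and is therefore unchanged under the isomorphism — yields $U^{\,\R'\!,\,p}_{P,\hbar}(\lieg) \cong U^{\,\R''\!,\,p}_{P,\hbar}(\lieg)$ whenever $\R'$ and $\R''$ are both split (resp.\ both small) realizations of the same $P$ of the same rank. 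Thus the FoMpQUESA depends, up to isomorphism, only on $P$, on $\rk_\kh(\lieh)$, and on $p$.

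This is essentially a formal consequence of two already-established facts, so there is no serious obstacle; the only point requiring a moment's care is making sure the hypotheses of Proposition \ref{prop: exist-realiz's} (resp.\ \ref{prop: exist-realiz's_small}) are met — in particular that the rank is large enough for the uniqueness clause to apply — but this is subsumed in the phrase ``sharing the same rank of $\lieh$'' in the statement, since realizations of the given type of that rank exist and are unique precisely when that rank lies in the admissible range. I would also remark in passing that the analogue of this corollary for MpLSbA's (Corollary \ref{cor: isom_R -> isom_liegRP}) is proved by exactly the same argument, so the proof can be kept to a single line referring back to Proposition \ref{prop: functor_R->uRPhg}, Proposition \ref{prop: exist-realiz's}, and Proposition \ref{prop: exist-realiz's_small}.
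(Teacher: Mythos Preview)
Your proposal is correct and matches the paper's approach exactly: the paper's proof simply cites the analogous result in \cite{GG3}, and the parallel Corollary \ref{cor: isom_R -> isom_liegRP} is proved precisely by invoking Proposition \ref{prop: functor_R->liegRP} together with the uniqueness clauses in Proposition \ref{prop: exist-realiz's} and Proposition \ref{prop: exist-realiz's_small}. Your argument is the same, with Proposition \ref{prop: functor_R->uRPhg} in place of Proposition \ref{prop: functor_R->liegRP}.
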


\pf
 Again, this is essentially Corollary 4.2.4 in \cite{GG3}.
\epf

\vskip9pt

\begin{exas}  \label{example-Yamane's_FoQUEASA's}
 \textsl{(Yamane's QUESA's as examples of FoMpQUESA's)}
 \vskip3pt
   Fix a Cartan super-datum  $ (A,p) $  as in  Definition \ref{def: Cartan-super-datum},  with
   $ D $  the diagonal matrix such that  $ \, D A \, $  is symmetric, and a realization
   $ \; \R \, := \, \big(\, \lieh \, , \Pi \, , \Pi^\vee \,\big) \; $  of  $ \, P := D A \, $.
 \vskip3pt
   \textit{(a)}\,  Assume that  $ \R $  is straight and such that  $ \, T_i^+ = T_i^- \, $
   for all  $ \, i \in I \, $.  Then  $ \, \uRpPhg \, $  is nothing but the quantized universal enveloping superalgebra
   (=QUESA) introduced by Yamane in  \cite{Ya1},  and denoted by  $ \, U_\hbar(\mathcal{G}) = U_\hbar(\Pi,p) \, $.
   See also  \cite{Ge1}  for a further study of these QUESA's.
 \vskip5pt
   \textit{(b)}\,  Assume that  $ \R $  is straight and split.  Then  $ \, \uRpPhg \, $
   is nothing but the ``quantum double version'' of Yamane's  $ \, U_\hbar(\mathcal{G}) = U_\hbar(\Pi,p) \, $,
   again introduced in  \cite{Ya1}  where it is denoted by  $ \mathcal{D}' \, $.
 \vskip5pt
   \textit{(c)}\,  Assume that  $ \R $  is just straight.  Then  $ \, \uRpPhg \, $
   is a suitable quotient (by an ideal generated by a submodule of the ``Cartan subalgebra'') of
   the ``quantum double version'' of Yamane's QUESA, i.e.\ the  $ \mathcal{D}' $  mentioned in  \textit{(b)\/}  above.
   We refer to such an example as a ``Yamane's QUESA''.
\end{exas}

\vskip9pt

\begin{free text}  \label{FoMpQUESA from Yamane's QUESA}
 \textbf{Hopf superalgebra structure on FoMpQUESA's.}
 The examples of FoMpQUESA's from Yamane's work can be modified so to yield more examples.

   Let us begin with  $ (A\,,p\,) $,  $ \, P_A := DA \, $  and a realization
   $ \; \R := \big(\, \lieh \, , \Pi \, , \Pi^\vee \,\big) \; $  of  $ P_A \, $,  as in  Definition \ref{def: FoMpQUESA};
   we assume in addition that  $ \, \R $  be straight, so that  $ \, U_{P_A,\,\hbar}^{\,\R,\,p}(\lieg) \, $
   is just a ``Yamane's QUESA'', as in  Example \ref{example-Yamane's_FoQUEASA's}\textit{(c)\/}  above.
   From Yamane's work  \cite{Ya1}  we know that  $ \, U_{P_A,\,\hbar}^{\,\R,\,p}(\lieg) \, $
   bears indeed a  \textsl{Hopf structure\/}  which is given on (super)algebra generators by the formulas
\begin{equation}  \label{eq: coprod_x_uRpPhg}
 \begin{aligned}
   \Delta \big(E_\ell\big)  \, = \,  E_\ell \otimes 1 \, + \, e^{+\hbar \, T_\ell^+} \otimes E_\ell  \\
   \Delta\big(T\big)  \, = \,  T \otimes 1 \, + \, 1 \otimes T   \hskip25pt  \\
   \Delta\big(F_\ell\big)  \, = \, F_\ell \otimes e^{-\hbar \, T_\ell^-} \, + \, 1 \otimes F_\ell
 \end{aligned}   \qquad
\end{equation}
 (for all  $ \, T \in \lieh \, $,  $ \, \ell \in I $)  for the coproduct,
 and for the counit and antipode by
 \vskip-9pt
\begin{eqnarray}
   \epsilon\big(E_\ell\big) \, = \, 0  \;\;\; ,  \qquad  &   \;\quad
   \epsilon\big(T\big) \, = \, 0  \;\;\; ,  &
   \;\qquad  \epsilon\big(F_\ell\big) \, = \, 0  \;\;\;\;  \label{eq: counit_x_uPhg}  \\
   \SS\big(E_\ell\big)  \, = \,  - e^{-\hbar \, T_\ell^+} E_\ell  \;\; ,  &   \;\;\;
   \SS\big(T\big)  \, = \,  - T   \;\; ,  &
   \;\;\;  \SS\big(F_\ell\big)  \, = \,  - F_\ell \, e^{+\hbar \, T_\ell^-}   \qquad
  \label{eq: antipode_x_uPhg}
\end{eqnarray}
\par
   Now we fix in  $ \lieh $  any  $ \kh $--basis  $ \, {\big\{ H_g \big\}}_{g \in \G} \, $,
   where  $ \, |\G| = \rk(\lieh) = t \, $.  Pick an antisymmetric  $ (t \times t) $--matrix
   $ \; \Phi = \big( \phi_{gk} \big)_{g, k \in \G} \in \lieso_t\big(\kh\big)  \; $,  \,and set
  $$
  \JJ_\Phi  \; := \;  {\textstyle \sum\limits_{g,k=1}^t} \phi_{gk} \, H_g \otimes H_k  \; \in \;
  \lieh \otimes \lieh  \; \subseteq \; U^\R_{\!P_A,\hskip0,7pt\hbar}(\lieh) \otimes U^\R_{\!P_A,\hskip0,7pt\hbar}(\lieh)
  $$
 By direct check, we see that the element
\begin{equation}  \label{eq: Resh-twist_F-uPhgd - 1}
  \F_\Phi  \,\; := \;\,
e^{\,\hbar \, 2^{-1} \JJ_\Phi}
 \,\; = \;\,  \exp\Big(\hskip1pt \hbar \,  \, 2^{-1} \,
 {\textstyle \sum_{g,k=1}^t} \phi_{gk} \, H_g \otimes H_k \Big)
\end{equation}
 in  $ \, U^\R_{\!P_A,\,\hbar}(\lieh) \,\widehat{\otimes}\, U^\R_{\!P_A,\,\hbar}(\lieh) \, $
 is actually a  {\sl twist\/}  for  $ U_{P_A,\,\hbar}^{\,\R,\,p}(\lieg) \, $,  in the sense of  \S \ref{defs_Hopf-algs}.
 Using it, we construct a new (topological) Hopf algebra  $ \, {\big(\, U_{P_A,\,\hbar}^{\,\R,\,p}(\lieg) \big)}^{\F_\Phi} \, $,
 isomorphic to  $ U_{P_A,\,\hbar}^{\,\R,\,p}(\lieg) $  as an algebra but with a new, twisted coalgebra structure,
 as in  \S \ref{defs_Hopf-algs}.  A direct calculation yields formulas for the new coproduct on generators, namely
  $$  \displaylines{
   \qquad   \Delta^{\!\F_\Phi}\big(E_\ell\big)  \; = \;
 E_\ell \otimes \L_{\Phi, \ell}^{+1} \, + \, e^{+\hbar \, T^+_\ell} \K_{\Phi, \ell}^{+1} \otimes
E_\ell   \quad \qquad  \big(\, \forall \;\; \ell \in I \,\big)  \cr
   \qquad \qquad \quad   \Delta^{\!\F_\Phi}\big(T\big)  \; = \;  T \otimes 1 \, + \, 1 \otimes T
   \quad \quad \qquad \qquad  \big(\, \forall \;\; T \in \lieh \,\big)  \cr
   \qquad   \Delta^{\!\F_\Phi}\big(F_\ell\big)  \; = \;
   F_\ell \otimes \L_{\Phi, \ell}^{-1} \, e^{-\hbar \, T^-_\ell } + \, \K_{\Phi, \ell}^{-1} \otimes F_\ell
   \quad \qquad  \big(\, \forall \;\; \ell \in I \,\big)  }  $$
with
  $ \; \L_{\Phi, \ell} := e^{+ \hbar \, 2^{-1} \sum_{g,k=1}^t \alpha_\ell(H_g) \, \phi_{gk} H_k} \; $,
  $ \; \K_{\Phi,\ell} := e^{+ \hbar \, 2^{-1} \sum_{g,k=1}^t \alpha_\ell(H_g) \, \phi_{kg} H_k} = \L_{\Phi, \ell}^{-1} \; $  for  $ \, \ell \in I \, $.
 Similarly, the ``twisted'' antipode  $ \SS^{\F_\Phi} $  and the
%
%
 counit  $ \epsilon^{\F_\Phi} \! := \epsilon $   are given by
  $$  \begin{matrix}
   \qquad   \SS^{\F_\Phi}\big(E_\ell\big)  \, = \,
   - e^{-\hbar \, T^+_\ell} \K_{\Phi, \ell}^{-1} \, E_\ell \, \L_{\Phi, \ell}^{-1}  \quad ,
   &  \qquad  \epsilon^{\F_\Phi}\big(E_\ell\big) \, = \, 0   \qquad \qquad  \big(\, \forall \;\; \ell \in I \,\big)  \\
   \qquad
 \SS^{\F_\Phi}\big(T\big)  \, = \,  -T  \quad ,  \phantom{\Big|^|}
  &  \qquad
 \epsilon^{\F_\Phi}\big(T\big) \, = \, 0   \qquad \qquad  \big(\, \forall \;\; T \in \lieh \,\big)
 \\
   \qquad   \SS^{\F_\Phi}\big(F_\ell\big)  \, = \,
   - \K_{\Phi, \ell}^{+1} \, F_\ell \, \L_{\Phi, \ell}^{+1} \, e^{+\hbar \, T^-_\ell}  \quad ,
   &  \qquad  \epsilon^{\F_\Phi}\big(F_\ell\big) \, = \, 0   \qquad \qquad  \big(\, \forall \;\; \ell \in I \,\big)
\end{matrix}  $$
             \par
   In short, starting from Yamane's QUESA  $ U_{P_A,\,\hbar}^{\,\R,\,p}(\lieg) $  we get,
   via a twist deformation process, a new Hopf superalgebra  $ {\big( U_{P_A,\,\hbar}^{\,\R,\,p}(\lieg) \big)}^{\F_\Phi} \, $;
   moreover, one immediately see that, by construction, this  $ {\big( U_{P_A,\,\hbar}^{\,\R,\,p}(\lieg) \big)}^{\F_\Phi} \, $
   is yet another QUESA (in the sense of  \S \ref{QUESA's}).
   Again by construction, this QUESA is of ``multiparametric nature'', in that it depends on the entries of
   $ \Phi $  as ``parameters''.  Indeed, by means of a suitable change of generators
   (hence a change of presentation), the following result shows that
   $ {\big( U_{P_A,\,\hbar}^{\,\R,\,p}(\lieg) \big)}^{\F_\Phi} $
   is in fact a FoMpQUESA in the sense of  Definition \ref{def: FoMpQUESA}:
\end{free text}

\vskip9pt

\begin{prop}  \label{prop: deform-Yamane=FoMpQUESA}
 Let  $ (A,p) $  be a Cartan super-datum,  and  $ \R $  a realization of  $ \, P_A := DA \, $;
%
%
we assume that  $ \, \R $  be straight, so
   $ \, U_{P_A,\hbar}^{\,\R,\,p}(\lieg) \, $  is just a ``Yamane's QUESA'', as in
 Example \ref{example-Yamane's_FoQUEASA's}\textit{(c)}.  We fix a twist
 $ \;  \F_\Phi \, := \, \exp\Big(\hskip1pt \hbar \,  \, 2^{-1} \,
 {\textstyle \sum_{g,k=1}^t} \phi_{gk} \, H_g \otimes H_k \Big) \; $
 as above, and we let  $ \, {\big( U_{P_A,\,\hbar}^{\,\R,\,p}(\lieg) \big)}^{\F_\Phi} \, $  be the corresponding deformed
 Hopf superalgebra.
                                                       \par
  Then there exists an isomorphism of topological superalgebras
  $$  f^{\scriptscriptstyle \Phi}_{\scriptscriptstyle P} \; :
   \; U_{\!{(P_A)}_\Phi,\,\hbar}^{\R_{\scriptscriptstyle \Phi},\,p}(\lieg)  \,\; {\buildrel \cong \over {\lhook\joinrel\relbar\joinrel\relbar\joinrel\relbar\joinrel\relbar\joinrel\relbar\joinrel\twoheadrightarrow}} \;\,
   {\big(\, U_{\!P_A,\,\hbar}^{\R,\,p}(\lieg)\big)}^{\F_\Phi}  $$
 given by
  $ \; E_i \, \mapsto \, E^{\scriptscriptstyle \Phi}_i := \, \L_{\Phi,i}^{-1} \, E_i \; , \,\; T \, \mapsto \, T \, $  and  $\; F_i \, \mapsto \, F^{\scriptscriptstyle \Phi}_i := \, F_i \, \K_{\Phi,i}^{+1} \; $
 \,with
 $ \; \L_{\Phi,i} := e^{+ \hbar \, 2^{-1} \sum_{g,k=1}^t \alpha_i(H_g) \, \phi_{gk} H_k} \; $
 and
 $ \; \K_{\Phi,i} := e^{+ \hbar \, 2^{-1} \sum_{g,k=1}^t \alpha_i(H_g) \, \phi_{kg} H_k} \, \big(= \L_{\Phi, \ell}^{-1} \,\big) \; $
 for all  $ \, i \in I \, $,  $ \, T \in \lieh \, $,  \,where  $ \, \R_\Phi $  and  $ P_\Phi $  are the twist deformations (via  $ \Phi $)  of  $ \, \R $  and  $ P \, $,  as in  Definition \ref{def: twisted-realization}.
\end{prop}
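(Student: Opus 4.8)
Following the strategy of the non-super case in \cite{GG3}, the plan is to observe that a twist deformation affects \emph{only} the coalgebra structure, so the assertion is really a statement about the underlying superalgebras. Concretely, $ {\big( U_{\!P_A,\,\hbar}^{\R,\,p}(\lieg)\big)}^{\F_\Phi} $ has exactly the same product, unit and $ \ZZ_2 $--grading as $ U_{\!P_A,\,\hbar}^{\R,\,p}(\lieg) $ (see \S\ref{defs_Hopf-algs}), so it is enough to exhibit a mutually inverse pair of homomorphisms of topological $ \kh $--superalgebras between $ U_{\!{(P_A)}_\Phi,\,\hbar}^{\R_{\scriptscriptstyle \Phi},\,p}(\lieg) $ and $ U_{\!P_A,\,\hbar}^{\R,\,p}(\lieg) $, the first being $ f^{\scriptscriptstyle \Phi}_{\scriptscriptstyle P} $ as prescribed on generators. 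Since each of the two algebras is presented as a quotient of a topologically free algebra on $ \lieh $ together with the $ E_i $'s and $ F_i $'s, the map $ f^{\scriptscriptstyle \Phi}_{\scriptscriptstyle P} $ is well defined as soon as the elements $ E_i^{\scriptscriptstyle \Phi} := \L_{\Phi,i}^{-1} E_i $, $ T $ ($ T \in \lieh $) and $ F_i^{\scriptscriptstyle \Phi} := F_i\,\K_{\Phi,i}^{+1} $ of $ U_{\!P_A,\,\hbar}^{\R,\,p}(\lieg) $ are shown to satisfy the defining relations \eqref{eq: rel T-E / T-T / T-F}--\eqref{eq: rel6} \emph{of the realization $ \R_\Phi $ of $ {(P_A)}_\Phi $}; the well-definedness of the candidate inverse $ E_i \mapsto \L_{\Phi,i} E_i^{\scriptscriptstyle \Phi} $, $ T \mapsto T $, $ F_i \mapsto F_i^{\scriptscriptstyle \Phi}\K_{\Phi,i}^{-1} $ is a verification of the same kind (indeed it is just $ f^{\scriptscriptstyle -\Phi}_{\scriptscriptstyle {(P_A)}_\Phi} $ read backwards, using $ \big({(P_A)}_\Phi\big)_{-\Phi} = P_A $ and $ \big(\R_\Phi\big)_{-\Phi} = \R $, cf.\ Remark \ref{rmks: transitivity-twist}), and once both are available they are visibly inverse to each other on generators, so $ f^{\scriptscriptstyle \Phi}_{\scriptscriptstyle P} $ is an isomorphism.

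The verification rests on one elementary observation: $ \L_{\Phi,i} $ and $ \K_{\Phi,i} $ are invertible elements of the commutative algebra $ U_\hbar(\lieh) $, and conjugation by either of them multiplies a weight vector of weight $ \beta \in \lieh^* $ by an explicit scalar in $ \kh^{\times} $ (so $ E_j \mapsto (\text{scalar})\,E_j $, $ F_j \mapsto (\text{scalar})\,F_j $); moreover, since $ \Phi $ is antisymmetric one has $ \L_{\Phi,i}\,E_i = E_i\,\L_{\Phi,i} $ and $ \K_{\Phi,i}\,F_i = F_i\,\K_{\Phi,i} $, because the self-pairing $ \sum_{g,k}\alpha_i(H_g)\phi_{gk}\alpha_i(H_k) $ vanishes. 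Granting this, the relations of $ \R_\Phi $ come out family by family. The Cartan relations \eqref{eq: rel T-E / T-T / T-F} are immediate, since $ \R_\Phi $ has the same roots $ \alpha_j $ as $ \R $ and $ \L_{\Phi,i},\K_{\Phi,i} $ are central in $ U_\hbar(\lieh) $. For \eqref{eq: rel E-F} with $ i = j $ one computes $ [E_i^{\scriptscriptstyle \Phi},F_i^{\scriptscriptstyle \Phi}] = \L_{\Phi,i}^{-1}\K_{\Phi,i}\,[E_i,F_i] $, and then $ \L_{\Phi,i}^{-1}\K_{\Phi,i}\,e^{\pm\hbar\,T_i^{\pm}} = e^{\pm\hbar\,T^{\pm}_{\Phi,i}} $ exactly by the definition \eqref{eq: T-phi} of the twisted coroots, while $ q_i = e^{\hbar d_i} $ is unchanged because $ {(P_A)}_\Phi $ is again of Cartan type with the same diagonal $ D $ (Proposition \ref{prop: twist-realizations}(a)); the case $ i \ne j $ reduces to $ [E_i,F_j] = 0 $ after pulling the scalar conjugation factors through. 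Finally the $ q $--Serre relations \eqref{eq: q-Serre (E)}--\eqref{eq: q-Serre (F)} and the higher relations \eqref{eq: rel1}--\eqref{eq: rel6} (which by Proposition \ref{prop: twist-realizations}(a) are indexed by the same list of Dynkin types on both sides, and which, by Remark \ref{rmk: q-Serre = iterated q-commutator}, are all equalities between iterated $ q $--supercommutators in the $ E_i $'s, resp.\ the $ F_i $'s) transform under $ f^{\scriptscriptstyle \Phi}_{\scriptscriptstyle P} $ into scalar multiples of iterated $ q $--supercommutators of the \emph{same} shape, but with each braiding parameter $ k_{ij} $ replaced by $ k_{ij}\,e^{-\hbar\,\sum_{g,k}\alpha_i(H_g)\phi_{gk}\alpha_j(H_k)} $, which by \eqref{def-P_Phi} is precisely the parameter $ k^{\scriptscriptstyle \Phi}_{ij} $ attached to $ {(P_A)}_\Phi $, whereas the scalars $ \nu_i = q^{\varepsilon_i} $ are untouched. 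Hence each defining relation of $ \R_\Phi $ becomes the corresponding defining relation of $ \R $ up to an invertible scalar, and therefore holds.

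Once $ f^{\scriptscriptstyle \Phi}_{\scriptscriptstyle P} $ is known to be an isomorphism of superalgebras, a direct comparison with the twisted coproduct, counit and antipode recorded in \S\ref{FoMpQUESA from Yamane's QUESA} shows that it also intertwines the coalgebra data of $ {\big( U_{\!P_A,\,\hbar}^{\R,\,p}(\lieg)\big)}^{\F_\Phi} $ with the standard Hopf data \eqref{eq: coprod_x_uRpPhg} attached to the realization $ \R_\Phi $; this goes beyond what the present statement demands, but it is exactly the observation that equips every FoMpQUESA with a Hopf superalgebra structure. I expect the only genuine labour to lie in the last part of the second paragraph: keeping track, through the nested $ q $--supercommutators of \eqref{eq: rel2}--\eqref{eq: rel6}, of how the various products $ k_{i_1 i_2}\,k_{i_1 i_3}\cdots $ and the factors $ \nu_\ell $ conjugate, and checking that the accumulated scalar shift matches exactly the passage from $ P_A $ to $ {(P_A)}_\Phi $ dictated by \eqref{def-P_Phi}. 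This is mechanical given the weight-conjugation observation above, but it is the bookkeeping that must be carried out with care; everything else is formal.
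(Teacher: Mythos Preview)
Your proposal is correct and follows essentially the same route as the paper's proof. The paper likewise reduces to a pure superalgebra statement, rewrites the old defining relations in terms of the rescaled generators $E_i^{\scriptscriptstyle\Phi}=\L_{\Phi,i}^{-1}E_i$, $F_i^{\scriptscriptstyle\Phi}=F_i\,\K_{\Phi,i}$, and checks family by family that each relation for $P_A$ transforms into the corresponding relation for ${(P_A)}_\Phi$; the key step is exactly your observation that a single $q$--supercommutator picks up the factor $k_{ij}^{\scriptscriptstyle\Phi}$, which the paper records as the identity $[E_i,E_j]_\nu=(k_{ij}^{\scriptscriptstyle\Phi})^{-1/2}\,\L_{\Phi,i}\L_{\Phi,j}\,[E_i^{\scriptscriptstyle\Phi},E_j^{\scriptscriptstyle\Phi}]^{}_{\nu\,k_{ij}^{\scriptscriptstyle\Phi}}$ and then iterates through \eqref{eq: rel1}--\eqref{eq: rel6}.
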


\begin{proof}
 It is clear that the  $ E^{\scriptscriptstyle \Phi}_i $'s,  the  $ T $'s  and the  $ F^{\scriptscriptstyle \Phi}_i $'s  (for  $ \, i \in I \, $  and  $ \, T \in \lieh \, $)  altoge\-ther generate  $ {\big(\, U_{\!P_A,\,\hbar}^{\R,\,p}(\lieg) \big)}^{\F_\Phi} $,  as the latter as an algebra is just  $ U_{\!P_A,\,\hbar}^{\R,\,p}(\lieg) \, $.  So we find a complete presentation of this algebra just by rewriting with respect to these new generators the old relations among the old ones, i.e.\ the  $ E_i $'s,  the  $ T $'s  and the  $ F_i $'s.  Hereafter we present the explicit computations for some of these relations, the other ones can be easily filled in by the interested reader.
 \vskip5pt
   First of all, from the relations in  \eqref{eq: rel T-E / T-T / T-F}  one gets at once identical relations with the  $ E_j^\Phipicc $'s,  resp.\  the  $ F_j^\Phipicc $'s,  replacing the  $ E_j $'s,  resp.\ the  $ F_j $'s.
 \vskip5pt
   Second, direct computations give
  $$  \displaylines{
   [E_i\,,F_j] \, - \, \delta_{i,j} \, \frac{\,e^{+\hbar\,T_i^+} - e^{-\hbar\,T_i^-} \,}{\,q_i^{+1} - q_i^{-1}\,}  \; = \;  \big[ \L_{\Phipicc,i}^{+1} \, E_i^\Phipicc \, , F_j^\Phipicc \,\K_{\Phipicc,j}^{-1} \big] \, - \, \delta_{i,j} \, \frac{\,e^{+\hbar\,T_i^+} - e^{-\hbar\,T_i^-} \,}{\,q_i^{+1} - q_i^{-1}\,}  \; =   \hfill  \cr
   = \;  \L_{\Phipicc,i}^{+1} \, E_i^\Phipicc \, F_j^\Phipicc \, \K_{\Phipicc,j}^{-1} \, - \,  {(-1)}^{p(i)\,p(j)} \, F_j^\Phipicc \, \K_{\Phipicc,j}^{-1} \, \L_{\Phipicc,i}^{+1} \, E_i^\Phipicc \, - \, \delta_{i,j} \, \frac{\,e^{+\hbar\,T_i^+} - e^{-\hbar\,T_i^-} \,}{\,q_i^{+1} - q_i^{-1}\,}  \; =   \qquad  \cr
   = \;  \L_{\Phipicc,i}^{+1} \, E_i^\Phipicc \, F_j^\Phipicc \, \K_{\Phipicc,j}^{-1} \, - \,  {(-1)}^{p(i)\,p(j)} \, \L_{\Phipicc,i}^{+1} \, F_j^\Phipicc \, E_i^\Phipicc \, \K_{\Phipicc,j}^{-1} \, - \, \delta_{i,j} \, \frac{\,e^{+\hbar\,T_i^+} - e^{-\hbar\,T_i^-} \,}{\,q_i^{+1} - q_i^{-1}\,}  \; =   \quad
   }  $$
  $$  \displaylines{
%
   \quad   = \;  \L_{\Phipicc,i}^{+1} \, \big( E_i^\Phipicc \, F_j^\Phipicc \, - \,  {(-1)}^{p(i)\,p(j)} \, F_j^\Phipicc \, E_i^\Phipicc \big) \, \K_{\Phipicc,j}^{-1} \, - \, \delta_{i,j} \, \L_{\Phipicc,i}^{+1} \, \frac{\,e^{+\hbar\,T_{\Phipicc,i}^+} - e^{-\hbar\,T_{\Phipicc,i}^-} \,}{\,q_i^{+1} - q_i^{-1}\,} \, \K_{\Phipicc,j}^{-1}  \; =  \cr
   \hfill   = \;  \L_{\Phipicc,i}^{+1} \, \left( E_i^\Phipicc \, F_j^\Phipicc \, - \,  {(-1)}^{p(i)\,p(j)} \, F_j^\Phipicc \, E_i^\Phipicc \, - \, \delta_{i,j} \, \frac{\,e^{+\hbar\,T_{\Phipicc,i}^+} - e^{-\hbar\,T_{\Phipicc,i}^-} \,}{\,q_i^{+1} - q_i^{-1}\,} \right) \, \K_{\Phipicc,j}^{-1}  \; =  \cr
   \hfill   = \;  \L_{\Phipicc,i}^{+1} \, \left( \big[ E_i^\Phipicc , F_j^\Phipicc \big] \, - \, \delta_{i,j} \, \frac{\,e^{+\hbar\,T_{\Phipicc,i}^+} - e^{-\hbar\,T_{\Phipicc,i}^-} \,}{\,q_i^{+1} - q_i^{-1}\,} \right) \, \K_{\Phipicc,j}^{-1}  }  $$
 from which we see that the ``old'' relation
  $ \,\; \displaystyle{ [E_i\,,F_j] \, - \, \delta_{i,j} \, \frac{\,e^{+\hbar\,T_i^+} - e^{-\hbar\,T_i^-} \,}{\,q_i^{+1} - q_i^{-1}\,}  \; = \;  0 } \;\, $
 is equivalent to the new one, that is
  $ \,\; \displaystyle{ \big[ E_i^\Phipicc , F_j^\Phipicc \big] \, - \, \delta_{i,j} \, \frac{\,e^{+\hbar\,T_{\Phipicc,i}^+} - e^{-\hbar\,T_{\Phipicc,i}^-} \,}{\,q_i^{+1} - q_i^{-1}\,}  \;\, = \;\,  0 } \;\, $.
 \vskip5pt
   Third, recall that all relations from  \eqref{eq: q-Serre (E)}  through  \eqref{eq: rel6}  can be written in terms of iterated  $ q $--supercommutators  (cf.\ \S \ref{rmk: q-Serre = iterated q-commutator}).  Now, for a single such  $ q $--supercommutator we have, for all  $ \, i, j \in I \, $  and  $ \, \nu \in \kh \, $,
  $$  \displaylines{
   {\big[ E_i \, , E_j \big]}_\nu  = \,  {\big[ \L_{\Phipicc,i}^{+1} \, E_i^\Phipicc , \L_{\Phipicc,j}^{+1} \, E_j^\Phipicc \big]}_\nu  = \,  \L_{\Phipicc,i}^{+1} \, E_i^\Phipicc \, \L_{\Phipicc,j}^{+1} \, E_j^\Phipicc - {(-1)}^{p(i)\,p(j)} \nu \, \L_{\Phipicc,j}^{+1} \, E_j^\Phipicc \, \L_{\Phipicc,i}^{+1} \, E_i^\Phipicc  =   \hfill  \cr
   \qquad   = \;  e^{-2^{-1} \hbar \sum_{g,k=1}^t \alpha_j(H_g) \, \phi_{gk} \, \alpha_i(H_k)} \, \L_{\Phipicc,i}^{+1} \, \L_{\Phipicc,j}^{+1} \, E_i^\Phipicc \, E_j^\Phipicc \, -   \hfill  \cr
   \hfill   - \, {(-1)}^{p(i)\,p(j)} \nu \, e^{-2^{-1} \hbar \sum_{g,k=1}^t \alpha_i(H_g) \, \phi_{gk} \, \alpha_j(H_k)} \, \L_{\Phipicc,j}^{+1} \, \L_{\Phipicc,i}^{+1} \, E_j^\Phipicc \, E_i^\Phipicc  \; =   \qquad  \cr
   \qquad   = \;  e^{-2^{-1} \hbar \sum_{g,k=1}^t \alpha_j(H_g) \, \phi_{gk} \, \alpha_i(H_k)} \, \L_{\Phipicc,i}^{+1} \, \L_{\Phipicc,j}^{+1} \, \times   \hfill  \cr
   \hfill   \times \, \Big( E_i^\Phipicc \, E_j^\Phipicc \, - \, {(-1)}^{p(i)\,p(j)} \nu \, e^{+\hbar \sum_{g,k=1}^t \alpha_j(H_g) \, \phi_{gk} \, \alpha_i(H_k)} \, E_j^\Phipicc \, E_i^\Phipicc \Big)  \; =   \qquad  \cr
   \hfill   = \;  e^{-2^{-1} \hbar \sum_{g,k=1}^t \alpha_j(H_g) \, \phi_{gk} \, \alpha_i(H_k)} \, \L_{\Phipicc,i}^{+1} \, \L_{\Phipicc,j}^{+1} \, {\big[ E_i^\Phipicc , E_j^\Phipicc \big]}_{\nu \, e^{+\hbar \sum_{g,k=1}^t \alpha_j(H_g) \, \phi_{gk} \, \alpha_i(H_k)}}  }  $$
 i.e., using notation  $ \; k_{i{}j}^\Phipicc := e^{+\hbar \sum_{g,k=1}^t \alpha_j(H_g) \, \phi_{gk} \, \alpha_i(H_k)} \; $,  \,we find
\begin{equation}  \label{eq: Phi change q-bracket x E}
  {\big[ E_i \, , E_j \big]}_\nu  \,\; = \;\,  {\big( k_{i{}j}^\Phipicc \big)}^{-1/2} \, \L_{\Phipicc,i}^{+1} \, \L_{\Phipicc,j}^{+1} \, {\big[ E_i^\Phipicc , E_j^\Phipicc \big]}_{\nu \, k_{i{}j}^\Phipicc}
\end{equation}
%
%
                                                  \par
   Similarly, a parallel calculation gives
\begin{equation}  \label{eq: Phi change q-bracket x F}
  {\big[ F_i \, , F_j \big]}_\nu  \,\; = \;\,  {\big[ F_i^\Phipicc , F_j^\Phipicc \big]}_{\nu \, k_{j{}i}^\Phipicc} \, \K_{\Phipicc,i}^{-1} \, \K_{\Phipicc,j}^{-1} \, {\big( k_{j{}i}^\Phipicc \big)}^{-1/2}
\end{equation}
                                                  \par
   More in general, a similar computation yields
\begin{equation}  \label{eq: prod-E => prod-EPhi}
  E^\Phipicc_{i_1} \cdots E^\Phipicc_{i_\ell}  \; = \;
  \left({\textstyle \prod\limits_{1 \leq r \leq s \leq \ell}}
  q^{-2^{-1} \sum_{g,k=1}^t \alpha_{i_s}(H_g) \, \phi_{gk} \, \alpha_{i_r}(H_k)}\right)
  \L_{\Phipicc,i_1}^{+1} \cdots \L_{\Phipicc,i_\ell}^{+1} \,
 E_{i_1} \cdots E_{i_\ell}
\end{equation}
 and then from this one can deduce formulas generalizing  \eqref{eq: Phi change q-bracket x E}
 that express any  $ q $--supercommutator  of products of the  $ E^\Phipicc_i $'s
 in terms of a similar supercommutator of products of the  $ E_i $'s.  A similar outcome holds with  ``$ F $''  replacing  ``$ E $''  throughout.
                                                  \par
   Applying these arguments   --- in particular, applying the like of
   \eqref{eq: Phi change q-bracket x E}  ---   one can deduce all relations from
   \eqref{eq: q-Serre (E)}  through  \eqref{eq: rel6}  for the  $ E_j^\Phipicc $'s,
   resp.\ the  $ F_j^\Phipicc $'s,  from the parallel relations for the  $ E_j $'s,
   resp.\ the  $ F_j $'s.  Hereafter we show that in a few cases.
 \vskip5pt
   For instance, the original relation  \eqref{eq: rel1}  in
   $ \, U_{\!P_A,\,\hbar}^{\R\,,\,p}(\lieg) \, $  reads
   $ \, {\big[ E_i \, , E_j \big]} = 0 \, $  because  $ \, k_{i,j} = 1 \, $  when
   $ \, a_{i,j} = 0 \, $  for the matrix  $ P_{\scriptscriptstyle A} $  (as it is symmetric).
   But then this relation is  \textsl{equivalent},  by  \eqref{eq: Phi change q-bracket x E},
   to  $ \, {\big[ E_i^\Phipicc , E_j^\Phipicc \big]}_{k_{i,j}^\Phipicc} = 0 \, $,  \,q.e.d.
                                                  \par
   Similarly, consider the case of the quantum Serre relation  \eqref{eq: q-Serre (E)}
   for  $ \, a_{i,j} = -2 \, $.  Direct computations   --- taking into account
   \eqref{eq: q-S = q-bra (2)},  \eqref{eq: Phi change q-bracket x E}  and  \eqref{def-P_Phi},
   that in particular imply  $ \, q_{i,j} \, k_{i,j}^\Phipicc = e^{+\hbar \, p_{i,j}^\Phipicc} =: q_{i,j}^\Phipicc \, $  ---   give us
  $$  \displaylines{
   E_{i,j}  \; = \;  \Big[ E_i \, , \big[ E_i \, , {[E_i \, , E_j]}_{q_{ij}}
   \big]_{q_{ii} \, q_{ij}} \Big]_{q_{ii}^{\,2} \, q_{ij}}  \; =   \hfill  \cr
   = \;  {\big( k_{i,j}^\Phipicc \big)}^{-1/2} \,
   \Big[ E_i \, , \big[ E_i \, , \L_{\Phipicc,i}^{+1} \, \L_{\Phipicc,j}^{+1} \,
   {[E_i^\Phipicc \, , E_j^\Phipicc]}_{q_{i,j}^\Phipicc} \big]_{q_{ii} \, q_{ij}} \Big]_{q_{ii}^{\,2} \, q_{ij}}  \; =   \hskip45pt   \cr
   = \;  {\big( k_{i,j}^\Phipicc \big)}^{-1} \,
   \Big[ E_i \, , \L_{\Phipicc,i}^{+2} \, \L_{\Phipicc,j}^{+1} \, \big[ E_i^\Phipicc \, ,
   {[E_i^\Phipicc \, , E_j^\Phipicc]}_{q_{i,j}^\Phipicc} \,\big]_{q_{ii}^\Phipicc \, q_{ij}^\Phipicc} \,\Big]_{q_{ii}^{\,2} \, q_{ij}}  \; =   \cr
   \hfill   = \;  {\big( k_{i,j}^\Phipicc \big)}^{-3/2} \,
   \L_{\Phipicc,i}^{+3} \, \L_{\Phipicc,j}^{+1} \, \Big[ E_i^\Phipicc \, , \big[ E_i^\Phipicc \, ,
   {[E_i^\Phipicc \, , E_j^\Phipicc]}_{q_{i,j}^\Phipicc} \,\big]_{q_{ii}^\Phipicc \, q_{ij}^\Phipicc}
   \,\Big]_{{(q_{ii}^\Phipicc)}^{\,2} \, q_{ij}^\Phipicc}  \; =   \qquad  \cr
   \hfill   = \;  {\big( k_{i,j}^\Phipicc \big)}^{-3/2} \,
   \L_{\Phipicc,i}^{+3} \, \L_{\Phipicc,j}^{+1} \, E_{i,j}^\Phipicc  }  $$
 from which we infer that the quantum Serre relation  $ \, E_{i,j} = 0 \, $  for the  $ E_\ell $'s
 is equivalent to the similar quantum Serre relation  $ \, E_{i,j}^\Phipicc = 0 \, $  for the  $ E_\ell^\Phipicc $'s.
                                                  \par
   The other quantum Serre relations are disposed of in a similar way.
                                                  \par
   Finally, as to the other relations we consider for instance the case of  \eqref{eq: rel2}.
   In case  $ \, \, P = P_{\scriptscriptstyle A} \, $  (which is symmetric, hence the $ k_{i,j} $'s  are one)
   relation  \eqref{eq: rel2}  reads just
   $ \; \Big[ {\big[ {[E_i\,,E_j]}_{\nu_j} , E_k \big]}_{\nu_{j+1}} , E_j \Big] \, = \, 0 \; $.
   Now, direct computations give
  $$  \displaylines{
   \Big[ \big[ {[E_i\,,E_j]}_{\nu_j} , E_k {\big]}_{\nu_{j+1}} , E_j \Big]  \,\; = \;\, q^{-1}_{ij} \,
   \Big[ {\big[ \L_{\Phi,i}^{+1} \, \L_{\Phi,j}^{+1} \, {\big[ E_i^\Phipicc , E_j^\Phipicc \big]}_{\nu_i k_{ij}} ,
   E_k \big]}_{\nu_{j+1}} , E_j \,\Big]  \,\; =   \hfill  \cr
   = \;\,  q^{-1}_{ij} \, q^{-1}_{ik} \, q^{-1}_{jk} \, \Big[ \L_{\Phi,k}^{+1} \, \L_{\Phi,i}^{+1} \,
   \L_{\Phi,j}^{+1} \, {\big[ {\big[ E_i^\Phipicc , E_j^\Phipicc \big]}_{\nu_j k_{ij}} ,
   E_k^\Phipicc \big]}_{\nu_{j+1} k_{ik} k_{jk}} , E_j \,\Big]  \,\; =   \qquad  \cr
   \hfill   = \;\,  q^{-2}_{ij} \, q^{-1}_{ik} \, q^{-1}_{jk} \, q_{kj}^{-1} \, q^{-1}_{jj} \,
   \L_{\Phi,k}^{+1} \, \L_{\Phi,i}^{+1} \, \L_{\Phi,j}^{+2} \, {\Big[ {\big[ {\big[ E_i^\Phipicc ,
   E_j^\Phipicc \big]}_{\nu_j k_{ij}} , E_k^\Phipicc \big]}_{\nu_{j+1} k_{ik}k_{jk}} ,
   E_j^\Phipicc \,\Big]}_{k_{ij} k_{jj} k_{kj}}  \, =  \cr
   \hfill   = \;\,  q^{-2}_{ij} \, q^{-1}_{ik} \, q^{-1}_{jk} \, q_{kj}^{-1} \, q^{-1}_{jj} \,
   \L_{\Phi,k}^{+1} \, \L_{\Phi,i}^{+1} \, \L_{\Phi,j}^{+2} \, {\Big[ {\big[ {\big[ E_i^\Phipicc ,
   E_j^\Phipicc \big]}_{\nu_j k_{ij}} , E_k^\Phipicc \big]}_{\nu_{j+1} k_{ik} k_{jk}} ,
   E_j^\Phipicc \,\Big]}_{k_{ij} k_{kj}}  }  $$
%
%
 from which we eventually see that relation  \eqref{eq: rel2}  for the  $ E_\ell $'s
 is equivalent to relation  \eqref{eq: rel2}  for the  $ E_\ell^\Phipicc $'s.
 Similarly one deals with relations  \eqref{eq: rel3}  through  \eqref{eq: rel6}.
                                                             \par
   As a different method, one can adopt the following.  For any  $ \, b, d \in \NN \, $  a special instance of  \eqref{eq: prod-E => prod-EPhi}  reads just
  $ \; {\big( E^\Phipicc_i \big)}^b \, E^\Phipicc_j \, {\big( E^\Phipicc_i \big)}^d  \, = \,
  q^{\,\epsilon(i,j)} \L_{\Phipicc,\,i}^{d+b} \, \L_{\Phipicc,\,j}^{+1} \, E_i^{\,b} \, E_j \, E_i^{\,d} \; $
 with
  $$  \displaylines{
   \quad   \epsilon(i,j)  \, := \,  -2^{-1} {{d+b} \choose 2} {\textstyle \sum\limits_{g,k=1}^t}
   \alpha_i(H_g) \, \phi_{gk} \, \alpha_i(H_k) \, - \, 2^{-1} \,
   d {\textstyle \sum\limits_{g,k=1}^t} \alpha_i(H_g) \, \phi_{gk} \, \alpha_j(H_k) \, -   \hfill  \cr
   \hfill   - \, 2^{-1} \, b {\textstyle \sum\limits_{g,k=1}^t} \alpha_j(H_g) \, \phi_{gk} \, \alpha_i(H_k)  \,
   = \,  2^{-1} \, (d-b) {\textstyle \sum\limits_{g,k=1}^t} \alpha_j(H_g) \, \phi_{gk} \, \alpha_i(H_k)  }  $$
%
 so that  $ \; q^{\,\epsilon(i,j)} = e^{\hbar \, 2^{-1} (d-b) \sum_{g,k=1}^t \alpha_j(H_g) \,
 \phi_{gk} \, \alpha_i(H_k)} = {\big( k_{ij}^\Phipicc \big)}^{(d-b)/2} \, $.  Reversing that identity, we get
  $ \; E_i^{\,b} \, E_j \, E_i^{\,d}  =  {\big( k_{ij}^\Phipicc \big)}^{(d-b)/2}
  \L_{\Phipicc,\,i}^{-(d+b)} \, \L_{\Phipicc,\,j}^{-1} \, {\big( E^\Phipicc_i \big)}^b \,
  E^\Phipicc_j \, {\big( E^\Phipicc_i \big)}^d \, $,
 \;which for the pair  $ \, (b\,,d\,) = \big( 1\!-\!a_{ij}\!-\!s\,,s \big) \, $  reads
\begin{equation*}  \label{eq: EiEjEi ===> EiEjEk(Phi)}
  E_i^{\,1-a_{ij}-s} \, E_j \, E_i^{\,s}  \; = \,  {\big( k_{ij}^\Phipicc \big)}^{(a_{ij}-1)/2} \,
  {\big( k_{ij}^\Phipicc \big)}^s \, \L_{\Phipicc,\,i}^{\,a_{ij}-1} \, \L_{\Phipicc,\,j}^{\,-1} \,
  {\big( E^\Phipicc_i \big)}^{1-a_{ij}-s} \, E^\Phipicc_j \, {\big( E^\Phipicc_i \big)}^s
\end{equation*}
 Finally, replacing this last identity in the left-hand side of  \eqref{eq: q-Serre (E)}  we get
  $$  \displaylines{
   \sum\limits_{s=0}^{1 - a_{ij}} {(-1)}^s {\left[ {{1 \! - \! a_{ij}} \atop s} \right]}_{\!q_i}
 k_{ij}^{\,s}
 \, E_i^{1 - a_{ij} - s} E_j E_i^s  \; = \;
   {\big( k_{ij}^\Phipicc \big)}^{(a_{ij}-1)/2} \, \L_{\Phipicc,\,i}^{\,a_{ij}-1} \,
   \L_{\Phipicc,\,j}^{\,-1} \,\; \times   \hfill  \cr
   \hfill   \times \, \sum\limits_{s=0}^{1 - a_{ij}} {(-1)}^s {\left[ {{1 \! - \! a_{ij}} \atop s} \right]}_{\!q_i} q_{ij}^{+s/2\,} q_{ji}^{-s/2} \, {\big( k_{ij}^\Phipicc \big)}^s \,
   {\big( E^\Phipicc_i \big)}^{1-a_{ij}-s} \, E^\Phipicc_j \, {\big( E^\Phipicc_i \big)}^s  \; =   \quad
   }  $$
  $$  \displaylines{
%
   \qquad   = \;  {\big( k_{ij}^\Phipicc \big)}^{(a_{ij}-1)/2} \,
   \L_{\Phipicc,\,i}^{\,a_{ij}-1} \, \L_{\Phipicc,\,j}^{\,-1} \,\; \times   \hfill  \cr
   \hfill   \times \, \sum\limits_{s=0}^{1 - a_{ij}} {(-1)}^s {\left[ {{1 \! - \! a_{ij}} \atop s} \right]}_{\!q_i}
   {\big( q^\Phipicc_{ij} \big)}^{+s/2\,} {\big( q^\Phipicc_{ji} \big)}^{-s/2} \,
   {\big( E^\Phipicc_i \big)}^{1-a_{ij}-s} \, E^\Phipicc_j \, {\big( E^\Phipicc_i \big)}^s  }  $$
 --- since  $ \, k_{ij}^\Phipicc = {\big( k_{ji}^\Phipicc \big)}^{-1} \, $  and
 $ \, q_{ij} \, k_{ij}^\Phipicc = q_{i,j}^\Phipicc \, $  ---   and then we notice that the last factor
 (i.e., the last line above) is just the left-hand side of the quantum Serre relation  \eqref{eq: q-Serre (E)}
 for the  $ E^\Phipicc_\ell $'s:  so we deduce that  \eqref{eq: q-Serre (E)}  for the  $ E_\ell $'s
 --- inside  $ U_{\!P_A,\,\hbar}^{\R\,,\,p}(\lieg) $  ---   is indeed equivalent to \eqref{eq: q-Serre (E)}
 for the  $ E^\Phipicc_\ell $'s  inside  $ {\big( U_{\!P_A,\,\hbar}^{\R\,,\,p}(\lieg) \big)}^{\F_\Phi} $,
 and we are done.
 \vskip3pt
   In the same way, replacing the  $ E_k $'s  with the  $ F_k $'s,  one disposes of all relations involving the
   $ F_k $'s  (alone) turning into relations involving the  $ F^\Phipicc_k $'s  (again alone).
 \vskip7pt
   To sum up, the previous analysis prove that the  $ E_i^\Phipicc $'s,  the  $ T $'s  and the
   $ F_i^\Phipicc $'s  altogether do generate the algebra  $ {\big( U_{\!P_A,\,\hbar}^{\R\,,\,p}(\lieg) \big)}^{\F_\Phi} $
   \textsl{and\/}  the original ideal of defining relations of  $ U_{\!P_A,\,\hbar}^{\R\,,\,p}(\lieg) $
   --- described in terms of the original generators ---   is also generated by the same relations  \textsl{but\/}
   written  \textsl{in terms of the  \textit{new}  generators and of the deformed multiparameter matrix}  $ P_\Phipicc \, $.
   In a nutshell, using  $ \big\{\, E_i^\Phipicc \, , T , F_i^\Phipicc \,\big|\, i \in I , T \in\lieh \,\big\} \, $
   as set of generators, we get for  $ {\big( U_{\!P_A,\,\hbar}^{\R\,,\,p}(\lieg) \big)}^{\F_\Phi} $
   nothing but the defining presentation of  $ \, U_{\!P_\Phi,\,\hbar}^{\R_{\scriptscriptstyle \Phi},\,p}(\lieg) \, $.
   In a different wording, this means exactly that there exists a well-defined isomorphism
   $ \, f_{\scriptscriptstyle P}^\Phipicc \, $  of topological superalgebras as given in the claim.
\end{proof}

\vskip9pt

   A direct consequence of  Proposition \ref{prop: deform-Yamane=FoMpQUESA}  is that the FoMpQUESA
   $ \, U_{\!P_\Phi,\,\hbar}^{\R_{\scriptscriptstyle \Phi},\,p}(\lieg) \, $
   inherits, via pull-back through the isomorphism  $ \, f^{\scriptscriptstyle \Phi}_{\scriptscriptstyle P} \, $,
   a full structure of Hopf superalgebra, more precisely one of QUESA.  In fact, by a ``reverse--engineering argument'',
   we see that this result extends to  \textsl{all\/}  FoMpQUESA's
   (hence the very use of the termi-{}\break{}nology ``FoMpQUESA'' is indeed correct!).
   This is the content of our next result:

\vskip11pt

\begin{theorem}  \label{thm: FoMpQUEA's}
 Every FoMpQUESA  $ \uRpPhg $  is indeed a (topological) Hopf superalgebra, whose Hopf structure
 is given on (super)algebra generators by the formulas
\begin{equation}  \label{eq: coprod_x_uRpPhg_2nd-time}
 \begin{aligned}
   \Delta \big(E_\ell\big)  \, = \,  E_\ell \otimes 1 \, + \, e^{+\hbar \, T_\ell^+} \otimes E_\ell,  \\
   \Delta\big(T\big)  \, = \,  T \otimes 1 \, + \, 1 \otimes T,   \hskip25pt  \\
   \Delta\big(F_\ell\big)  \, = \, F_\ell \otimes e^{-\hbar \, T_\ell^-} \, + \, 1 \otimes F_\ell,
 \end{aligned}   \qquad
\end{equation}
 (for all  $ \, T \in \lieh \, $,  $ \, \ell \in I $)  for the coproduct, and for the counit and antipode by
 \vskip-9pt
\begin{eqnarray}
   \epsilon\big(E_\ell\big) \, = \, 0  \;\;\; ,  \qquad  &  \;\quad
   \epsilon\big(T\big) \, = \, 0  \;\;\; ,  &  \;\qquad  \epsilon\big(F_\ell\big) \, = \, 0   \;\;\;\;\;\;\;   \label{eq: counit_x_uRpPhg}  \\
   \SS\big(E_\ell\big)  \, = \,  - e^{-\hbar \, T_\ell^+} E_\ell  \;\; ,  &  \;\;\;
   \SS\big(T\big)  \, = \,  - T   \;\; ,  &  \;\;\;  \SS\big(F_\ell\big)  \, = \,  - F_\ell \, e^{+\hbar \, T_\ell^-} \;\;\;   \label{eq: antipode_x_uRpPhg}
\end{eqnarray}
\end{theorem}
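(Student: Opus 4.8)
The plan is to reduce the general case to the case of a \textsl{straight} realization, for which the Hopf structure has already been established as an example coming from Yamane's work (cf.\ \S \ref{FoMpQUESA from Yamane's QUESA}) together with  Proposition \ref{prop: deform-Yamane=FoMpQUESA}.  First I would observe that if  $ \R $  is straight, then by  Proposition \ref{prop: realiz=twist-standard}\textit{(a)}   --- after possibly enlarging  $ \lieh $  so that the rank is at least  $ 3n - \rk(P_{\!\Apicc}) \, $   ---   there is a twist datum  $ \Phi $  such that  $ P $  is the twist deformation  $ {(P_\Apicc)}_\Phi $  of  $ P_\Apicc := DA \, $,  and  $ \R $  is (isomorphic to) the corresponding twisted realization  $ \R_\Phi $  of a straight realization of  $ P_\Apicc \, $.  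Then  Proposition \ref{prop: deform-Yamane=FoMpQUESA}  gives an isomorphism of topological superalgebras  $ f^{\scriptscriptstyle \Phi}_{\scriptscriptstyle P} : \uRpPhg \cong {\big( U_{P_A,\,\hbar}^{\,\R,\,p}(\lieg) \big)}^{\F_\Phi} \, $;  pulling back the (twisted) Hopf structure of the right-hand side through  $ f^{\scriptscriptstyle \Phi}_{\scriptscriptstyle P} $,  and using the formulas for  $ \Delta^{\!\F_\Phi} $,  $ \SS^{\F_\Phi} $,  $ \epsilon^{\F_\Phi} $  on the  $ E_i^{\scriptscriptstyle \Phi} $,  $ T $,  $ F_i^{\scriptscriptstyle \Phi} $  recorded in  \S \ref{FoMpQUESA from Yamane's QUESA}  together with  $ f^{\scriptscriptstyle \Phi}_{\scriptscriptstyle P}(E_i) = E_i^{\scriptscriptstyle \Phi} = \L_{\Phi,i}^{-1} E_i \, $,  $ f^{\scriptscriptstyle \Phi}_{\scriptscriptstyle P}(F_i) = F_i^{\scriptscriptstyle \Phi} = F_i \K_{\Phi,i}^{+1} \, $,  a direct computation shows the pulled-back coproduct, counit and antipode on the generators  $ E_i $,  $ T $,  $ F_i $  of  $ \uRpPhg $  are exactly  \eqref{eq: coprod_x_uRpPhg_2nd-time}--\eqref{eq: antipode_x_uRpPhg}.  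Concretely, one checks  $ \L_{\Phi,i} \, \K_{\Phi,i} = e^{+\hbar\, T_i^+}/e^{+\hbar\, T_{\Phi,i}^+} $  type identities so that the ``correction factors''  $ \L_{\Phi,i} $,  $ \K_{\Phi,i} $  exactly conjugate  $ T_i^{\pm} $  into  $ T_{\Phi,i}^{\pm} \, $;  this is the only place real calculation is needed and it is routine.

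Next I would handle a \textsl{general} realization  $ \R $  of  $ P \, $.  By  Lemma \ref{lemma: split/straight-lifting}  there is a straight realization  $ \tilde{\R} $  of  $ P $  and an epimorphism of realizations  $ \underline{\pi} : \tilde{\R} \relbar\joinrel\twoheadrightarrow \R \, $;  by  Proposition \ref{prop: functor_R->uRPhg}  this induces an epimorphism of unital topological superalgebras  $ U_{\underline{\pi}} : U^{\tilde{\Rpicc},p}_{P,\hbar}(\lieg) \relbar\joinrel\twoheadrightarrow \uRpPhg \, $  whose kernel is the two-sided ideal generated by  $ \Ker(\pi) \, $,  with  $ \Ker(\pi) $  central in  $ U^{\tilde{\Rpicc},p}_{P,\hbar}(\lieg) \, $.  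By the straight case just treated,  $ U^{\tilde{\Rpicc},p}_{P,\hbar}(\lieg) $  is a topological Hopf superalgebra with coproduct, counit and antipode on generators given by the analogue of  \eqref{eq: coprod_x_uRpPhg_2nd-time}--\eqref{eq: antipode_x_uRpPhg}.  Since  $ \Ker(\pi) \subseteq \tilde{\lieh} $  and the coproduct is  $ \Delta(T) = T \otimes 1 + 1 \otimes T $  for all  $ T \in \tilde{\lieh} \, $,  \,while  $ \epsilon $  and  $ \SS $  are trivial, resp.\ minus the identity, on  $ \tilde{\lieh} \, $,  the ideal generated by  $ \Ker(\pi) $  is in fact a Hopf superideal; hence the Hopf superalgebra structure descends through  $ U_{\underline{\pi}} $  to  $ \uRpPhg \, $,  and by construction it is given on the images of the generators by exactly  \eqref{eq: coprod_x_uRpPhg_2nd-time}--\eqref{eq: antipode_x_uRpPhg}.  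This proves the statement in full generality.

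I expect the main obstacle to be purely bookkeeping rather than conceptual: verifying that the ideal generated by  $ \Ker(\pi) $  is a  \textsl{Hopf}  superideal (i.e.\ that it is a coideal and is stable under the antipode) and that the resulting quotient structure is still  \textsl{topological}  in the required  $ \hbar $--adic  sense   ---   one must be careful that  $ U_{\underline{\pi}} $  is a morphism of  $ \hbar $--adically  complete superalgebras so that the quotient is again  $ \hbar $--adically  complete, which follows from  Proposition \ref{prop: functor_R->uRPhg}.  A secondary point requiring care is that, in the straight case, enlarging  $ \lieh $  to meet the rank bound of  Proposition \ref{prop: realiz=twist-standard}  changes the FoMpQUESA; but then one first proves the claim for the enlarged realization and afterwards descends, via another application of  Proposition \ref{prop: functor_R->uRPhg}  and  Lemma \ref{lemma: ker-morph's_realiz's}, to the original  $ \R \, $   ---   so in practice both reduction steps are instances of the same ``lift to a convenient realization, establish the structure there, push it down along a central Hopf superideal'' argument.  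Everything else   ---   the compatibility of  $ \Delta $,  $ \epsilon $,  $ \SS $  with the defining relations   ---   is automatic once we know  $ \uRpPhg $  is obtained from a Hopf superalgebra by the operations above.

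\endinput
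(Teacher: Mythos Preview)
Your proposal is correct and follows essentially the same approach as the paper: lift to a straight realization $\tilde{\R}$ of $P$ with large enough rank via Lemma \ref{lemma: split/straight-lifting}, apply Proposition \ref{prop: deform-Yamane=FoMpQUESA} (with the twist going from $P_A$ to $P$) to pull back the Hopf structure from a twisted Yamane QUESA, and then descend along the epimorphism $U_{\underline{\pi}}$ of Proposition \ref{prop: functor_R->uRPhg}, using that $\Ker(\pi)\subseteq\tilde{\lieh}$ consists of primitive elements so the ideal it generates is a Hopf superideal. The paper organizes this as a single pass (lift, twist, descend) rather than your two-stage ``straight case first, then general'' presentation, but the content is the same; your remark that the enlargement of $\lieh$ in the straight case is itself handled by another descent is exactly how the paper absorbs that step into the single lift.
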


\begin{proof}
 By assumption, we have a fixed Cartan super-datum  $ \, (A,p) \, $,  as in  Definition \ref{def: Cartan-super-datum},  such that the symmetric part of  $ P $  is  $ \, P_s := 2^{-1} \big( P + P^{\,\scriptscriptstyle T} \big) = D\,A =: P_A \, $,  with  $ \, P = {\big(\, p_{i,j} \,\big)}_{i,j \in I} \, $  and  $ \, \R := \big(\, \lieh \, , \Pi \, , \Pi^\vee \,\big) \, $  a realization of  $ P $.
                                                                  \par
   By  Lemma \ref{lemma: split/straight-lifting},  there is a  \textsl{straight\/}  realization  $ \; \tilde{\R} := \big(\,\tilde{\lieh} \, , \tilde{\Pi} \, , {\tilde{\Pi}}^\vee \,\big) \, $   of the matrix  $ P $  and an epimorphism of realizations  $ \; \underline{\pi} : \tilde{\R} \relbar\joinrel\twoheadrightarrow \R \; $.  Up to possibly enlarging  $ \tilde{\lieh} \, $,  we can assume that  $ \, t := \text{rk}\big(\tilde{\lieh}\big) \geq 3n - \text{rk}\big(P_A\big) \, $.  Then, by  Proposition \ref{prop: realiz=twist-standard}\textit{(a)}   --- with  $ \, P' = P_A := DA \, $  ---   we get that there exists some  $ \, \Phi \in \lieso_t\big(\kh\big) \, $  such that  $ \, P_A = P_{\scriptscriptstyle \Phi} \, $  while  $ \, \R_A := \tilde{\R}_{\,\scriptscriptstyle \Phi} \, $  is a straight realization of  $ \, P_A = P_{\scriptscriptstyle \Phi} \, $.  Hence, conversely, we have also  $ \, P = {\big( P_A \big)}_{\scriptscriptstyle -\Phi} \, $  and  $ \, \tilde{\R} = {\big( \R_A \big)}_{\scriptscriptstyle -\Phi} \, $.  Then by  Proposition \ref{prop: deform-Yamane=FoMpQUESA}  there exists an isomorphism of topological superalgebras
  $$  f^{\scriptscriptstyle -\Phi}_{\scriptscriptstyle P_A} \; :
   \; U_{\!P,\,\hbar}^{\tilde{R},\,p}(\lieg) = U_{\!{(P_A)}_{-\Phi},\,\hbar}^{{(\R_A)}_{\scriptscriptstyle -\Phi},\,p}(\lieg)  \,\; {\buildrel \cong \over
{\lhook\joinrel\relbar\joinrel\relbar\joinrel\relbar\joinrel\relbar\joinrel\twoheadrightarrow}} \;\,   {\big(\, U_{\!P_A,\,\hbar}^{\R_A,\,p}(\lieg)\big)}^{\F_{-\Phi}}  $$
 which is described by explicit formulas as in  Proposition \ref{prop: deform-Yamane=FoMpQUESA}    --- up to paying attention to switching  $ \Phi $  to  $ -\Phi \, $.  Eventually, we can pull back onto  $ U_{\!P,\,\hbar}^{\tilde{R},\,p}(\lieg) $   --- via  $ f^{\scriptscriptstyle -\Phi}_{\scriptscriptstyle P_A} $  ---   the Hopf structure of  $ {\big(\, U_{\!P_A,\,\hbar}^{\R_A,\,p}(\lieg)\big)}^{\F_{-\Phi}} \, $:  \, then, when tracking the whole construction, the explicit description of this Hopf structure on the generators of  $ U_{\!P,\,\hbar}^{\tilde{R},\,p}(\lieg) $  is given exactly by the formulas in the claim.
                                                                   \par
   Finally,  Proposition \ref{prop: functor_R->uRPhg}  gives an epimorphism  $ \; U_{\underline{\pi}} : U^{\,\tilde{\R},\,p}_{\!P,\,\hbar}(\lieg) \relbar\joinrel\relbar\joinrel\relbar\joinrel\twoheadrightarrow \uRpPhg \; $  of unital topological  $ \, \kh $--algebras  that extends the epimorphism  $ \, \pi : \tilde{\lieh} \relbar\joinrel\relbar\joinrel\twoheadrightarrow \lieh \, $
 of  $ \, \kh $--modules  given by  $ \underline{\pi} \, $;  \,moreover,  $ \, \Ker\!\big(\, \underline{\pi} \,\big) \, $  is the two--sided ideal in  $ U^{\,\tilde{\R},\,p}_{\!P,\,\hbar}(\lieg) $  generated by  $ \Ker(\pi) \, $,  \,and the latter is central in  $ U^{\,\tilde{\R},\,p}_{\!P,\,\hbar}(\lieg) \, $.  But then, since  $ \Ker(\pi) $  is made of primitive elements,  $ \, \Ker\!\big(\, \underline{\pi} \,\big) \, $  is automatically a Hopf ideal: therefore, the quotient algebra  $ \, \uRpPhg = U^{\,\tilde{\R},\,p}_{\!P,\,\hbar}(\lieg) \Big/ \Ker\!\big(\, \underline{\pi} \,\big) \, $  inherits via the algebra epimorphism  $ U_{\underline{\pi}} \, $  the whole Hopf structure of  $ U^{\,\tilde{\R},\,p}_{\!P,\,\hbar}(\lieg) \, $,  \,hence we are done.
\end{proof}

\vskip11pt

\begin{cor}  \label{cor: Hopf-struct_Cartan-&-Borel}
%
%
%
 The Cartan subalgebra  $ U_\hbar(\lieh) $  and the Borel subsuperalgebras\break  $ \uRpPhbpm $  are actually (topological)  \textsl{Hopf}  subsuperalgebras of  $ \, \uRpPhg \, $,  their Hopf structure being described again via formulas  \eqref{eq: coprod_x_uRpPhg_2nd-time},  \eqref{eq: counit_x_uRpPhg}  and  \eqref{eq: antipode_x_uRpPhg}.   \hfill  \qed
\end{cor}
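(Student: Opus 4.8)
The plan is to deduce the statement for the Cartan and Borel subsuperalgebras directly from the structural results already established for the full FoMpQUESA $\uRpPhg$, exploiting the fact that the Hopf structure formulas \eqref{eq: coprod_x_uRpPhg_2nd-time}, \eqref{eq: counit_x_uRpPhg}, \eqref{eq: antipode_x_uRpPhg} are expressed purely on the generators, and that the generators of each subsuperalgebra are closed under these operations. Concretely, $U_\hbar(\lieh)$ is generated (topologically) by $\lieh$, $\uRpPhbp$ by $\lieh$ together with the $E_i$'s, and $\uRpPhbm$ by $\lieh$ together with the $F_i$'s.

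First I would observe that, by Theorem \ref{thm: FoMpQUEA's}, the coproduct $\Delta$, counit $\epsilon$ and antipode $\SS$ of $\uRpPhg$ send each of $T\in\lieh$, each $E_\ell$ and each $F_\ell$ into (completed tensor powers of) the corresponding subsuperalgebra: indeed $\Delta(T)=T\otimes 1+1\otimes T$ and $\SS(T)=-T$ visibly stay inside $U_\hbar(\lieh)\,\widehat{\otimes}\,U_\hbar(\lieh)$, resp.\ $U_\hbar(\lieh)$; likewise $\Delta(E_\ell)=E_\ell\otimes 1+e^{+\hbar T_\ell^+}\otimes E_\ell$ and $\SS(E_\ell)=-e^{-\hbar T_\ell^+}E_\ell$ lie in $\uRpPhbp\,\widehat{\otimes}\,\uRpPhbp$, resp.\ $\uRpPhbp$, since $e^{\pm\hbar T_\ell^+}\in U_\hbar(\lieh)\subseteq\uRpPhbp$; symmetrically for the $F_\ell$'s in $\uRpPhbm$. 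Since $\Delta$ is an algebra homomorphism and $\SS$ is an anti-homomorphism, and each subsuperalgebra is generated as a topological algebra by elements on which $\Delta$, $\epsilon$, $\SS$ take values in the appropriate sub-objects, it follows that the subsuperalgebra is a (topological) sub-bisuperalgebra, in fact a Hopf subsuperalgebra, of $\uRpPhg$. The only subtlety is the topological one: one must check that the completed tensor product $\uRpPhbpm\,\widehat{\otimes}\,\uRpPhbpm$ embeds as a closed subspace of $\uRpPhg\,\widehat{\otimes}\,\uRpPhg$, so that ``$\Delta$ restricts'' makes sense; this follows from the triangular decomposition of $\uRpPhg$ (the analogue, in the quantum setting, of \eqref{eq: triang-decomp's_Lie-bialg's}, which is asserted to hold for FoMpQUESA's in the introduction and whose classical shadow is Remarks \ref{rmks: sub-MpLSbA's & triang-decomp.'s}) together with the fact that each factor is topologically free.

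The one point deserving a genuine argument — and the step I expect to be the main obstacle — is that $\uRpPhbp$ (and dually $\uRpPhbm$) is \emph{closed} under $\Delta$ and $\SS$ as a \emph{topological} subalgebra, i.e.\ that no completion issue forces $\Delta(E_\ell)$ or an iterated coproduct out of the $\hbar$-adic closure of $\uRpPhbp\otimes\uRpPhbp$ inside $\uRpPhg\,\widehat{\otimes}\,\uRpPhg$. The cleanest way around this is to note that, by the triangular decomposition, $\uRpPhg\cong\uRpPhnm\,\widehat{\otimes}\,U_\hbar(\lieh)\,\widehat{\otimes}\,\uRpPhnp$ as topological $\kh$-modules, whence $\uRpPhbp\cong U_\hbar(\lieh)\,\widehat{\otimes}\,\uRpPhnp$ is a topologically free direct summand of $\uRpPhg$; consequently $\uRpPhbp\,\widehat{\otimes}\,\uRpPhbp$ is a topologically free direct summand of $\uRpPhg\,\widehat{\otimes}\,\uRpPhg$, in particular closed, and the set-theoretic inclusion $\Delta(\uRpPhbp)\subseteq\uRpPhbp\,\widehat{\otimes}\,\uRpPhbp$ — valid on generators and hence on the dense subalgebra they generate — passes to the closure. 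The same reasoning applies verbatim to $U_\hbar(\lieh)$, which is even simpler since $\lieh$ consists of primitive elements.

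Putting these pieces together gives the Corollary: each of $U_\hbar(\lieh)$, $\uRpPhbp$, $\uRpPhbm$ is a topological Hopf subsuperalgebra of $\uRpPhg$, with comultiplication, counit and antipode obtained by restriction, and hence described on generators by exactly the formulas \eqref{eq: coprod_x_uRpPhg_2nd-time}, \eqref{eq: counit_x_uRpPhg}, \eqref{eq: antipode_x_uRpPhg}. I would present this as a short paragraph invoking Theorem \ref{thm: FoMpQUEA's} for the formulas, the triangular decomposition for the topological closedness of the relevant tensor subspaces, and the elementary fact that a topologically generated subalgebra on whose generators $\Delta$, $\epsilon$, $\SS$ act internally is automatically a Hopf subalgebra.
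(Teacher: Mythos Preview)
Your argument is correct and its core matches the paper's (implicit) reasoning: the paper simply writes \qed, treating the claim as immediate from the formulas in Theorem~\ref{thm: FoMpQUEA's}, since on generators $\Delta$, $\epsilon$, $\SS$ visibly take values in the appropriate sub-objects and extend as (anti-)homomorphisms.

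The one difference worth flagging is that you invoke the triangular decomposition to settle the topological closedness of $\uRpPhbpm\,\widehat{\otimes}\,\uRpPhbpm$ inside $\uRpPhg\,\widehat{\otimes}\,\uRpPhg$. In the paper's logical order that decomposition is Theorem~\ref{thm: triang-decomp}, which appears \emph{after} this corollary; so as written your proof makes a forward reference. There is no genuine circularity---the proof of Theorem~\ref{thm: triang-decomp} does not use this corollary---but if you want to keep the paper's ordering you can either (i) postpone the corollary until after the triangular decomposition, or (ii) sidestep the issue entirely by noting that the Borel subsuperalgebras, being defined by generators and relations involving only $\lieh$ and the $E_i$'s (resp.\ $F_i$'s), are themselves topologically free $\kh$-modules (this follows already from the embedding into Yamane's QUESA used in the proof of Theorem~\ref{thm: FoMpQUEA's}), whence their completed tensor square embeds in that of $\uRpPhg$ without appealing to the full triangular decomposition. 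The paper evidently regards this point as routine and does not spell it out.
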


\vskip7pt

   The next result is a straightforward ``upgrading''  Proposition \ref{prop: functor_R->uRPhg}:

\vskip11pt

\begin{prop}  \label{prop: central-Hopf-extens_FoMpQUEAs}
 Let  $ \, \underline{\phi} : \R' \!\relbar\joinrel\relbar\joinrel\longrightarrow\! \R'' \, $
 be a morphism between realizations of a same matrix  $ \, P \in M_n\big( \kh \big) \, $.
 Then the morphism of unital topological  $ \, \kh $--superalgebras
 $ \, U_{\underline{\phi}} : U^{\,\R',\,p}_{\!P,\,\hbar}(\lieg) \relbar\joinrel\relbar\joinrel\longrightarrow U^{\,\R'',\,p}_{\!P,\,\hbar}(\lieg) \, $
 granted by  Proposition \ref{prop: functor_R->uRPhg}  is indeed a morphism of (unital topological)  \textsl{Hopf}  $ \, \kh $--superalgebras.
                                                                          \par
   If we set  $ \; \liek := \Ker(\phi) \, $, then  $ \, U_\hbar(\liek) \, $  is a unital,  $ \hbar $--adically  complete  $ \, \kh $--subsuper-\break{}algebra  of  $ U^{\,\R',\,p}_{\!P,\,\hbar}(\lieg) $  which is a central Hopf subsuperalgebra, isomorphic to a quantum Cartan (in the sense of  Definition \ref{def: FoMpQUESA}\textit{(c)\/}),  and  $ \; \Ker(U_{\underline{\phi}}) = U^{\,\R',p}_{\!P,\hbar}(\lieg) \, {U_\hbar(\liek)}^+ \, $   --- where  $ \, {U_\hbar(\liek)}^+ \, $  is the augmentation ideal of  $ \, U_\hbar(\liek) \, $.  In particular, if  $ \, U_{\underline{\phi}} $  is an  \textsl{epi}morphism,  then  $ \; U^{\,\R''}_{\!P,\hbar}(\lieg) \, \cong \, U^{\,\R',\,p}_{\!P,\,\hbar}(\lieg) \Big/ U^{\,\R',\,p}_{\!P,\,\hbar}(\lieg) \, {U_\hbar(\liek)}^+ \; $.   \hfill  $ \square $
\end{prop}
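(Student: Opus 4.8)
The plan is to obtain essentially everything as a ``Hopf-theoretic upgrade'' of the purely algebra-level statement already granted by Proposition \ref{prop: functor_R->uRPhg}, combined with the explicit Hopf structure established in Theorem \ref{thm: FoMpQUEA's}. First I would recall that by Proposition \ref{prop: functor_R->uRPhg} there is a unique morphism of unital topological $\kh$--superalgebras $U_{\underline{\phi}}$ extending $\phi : \lieh' \to \lieh''$, sending each generator of $U^{\,\R',\,p}_{\!P,\,\hbar}(\lieg)$ to the same-name generator of $U^{\,\R'',\,p}_{\!P,\,\hbar}(\lieg)$ (with $T\mapsto \phi(T)$ for $T\in\lieh'$, and $T_i'^{\pm}\mapsto T_i''^{\pm}$ since $\underline{\phi}$ is a morphism of realizations). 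To check that $U_{\underline{\phi}}$ is a morphism of Hopf superalgebras, it suffices to verify compatibility with $\Delta$, $\epsilon$ and $\SS$ on the algebra generators $E_i$, $F_i$, $T\in\lieh'$; but this is immediate from the formulas \eqref{eq: coprod_x_uRpPhg_2nd-time}, \eqref{eq: counit_x_uRpPhg}, \eqref{eq: antipode_x_uRpPhg} of Theorem \ref{thm: FoMpQUEA's}, because those formulas only involve the generators and the toral elements $T_i^{\pm}$, all of which are preserved by $U_{\underline{\phi}}$ (using $\phi(T_i'^{\pm})=T_i''^{\pm}$ and the continuity of $U_{\underline{\phi}}$ to handle the exponentials $e^{\pm\hbar T_i^{\pm}}$). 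Hence $U_{\underline{\phi}}$ intertwines all structure maps, so it is a morphism of topological Hopf $\kh$--superalgebras.

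Next I would analyse $\liek := \Ker(\phi)$. By Lemma \ref{lemma: ker-morph's_realiz's}, $\liek \subseteq \bigcap_{j\in I}\Ker(\alpha'_j)$, so from relations \eqref{eq: rel T-E / T-T / T-F} every element of $\liek$ is central in $U^{\,\R',\,p}_{\!P,\,\hbar}(\lieg)$; moreover, $\liek$ is an (abelian) Lie $\kh$--subsuperalgebra of $\lieh'$, and by the coproduct formula $\Delta(T)=T\otimes 1+1\otimes T$ each $T\in\liek$ is primitive. Therefore the topological $\kh$--subsuperalgebra $U_\hbar(\liek)$ generated by $\liek$ is central and is a sub-Hopf-superalgebra; since $\liek$ is a $\kh$--submodule of the Cartan $\lieh'$ whose generators satisfy only the commutation relations among elements of $\lieh'$, the algebra $U_\hbar(\liek)$ is a quantum Cartan in the sense of Definition \ref{def: FoMpQUESA}\textit{(c)}. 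Being generated by primitives, its augmentation ideal ${U_\hbar(\liek)}^+$ generates a Hopf ideal. It then remains to identify $\Ker(U_{\underline{\phi}})$. By Proposition \ref{prop: functor_R->uRPhg}, $\Ker(U_{\underline{\phi}})$ is the two-sided ideal generated by $\liek$; since $\liek$ is central, this two-sided ideal is exactly $U^{\,\R',\,p}_{\!P,\,\hbar}(\lieg)\cdot\liek = U^{\,\R',\,p}_{\!P,\,\hbar}(\lieg)\cdot{U_\hbar(\liek)}^+$ (the last equality because $U_\hbar(\liek)$ is, as a topological algebra, generated by $\liek$ together with the unit, so its augmentation ideal is the closure of $U_\hbar(\liek)\cdot\liek$, which gives the same two-sided ideal after multiplying by the ambient algebra). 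Finally, when $U_{\underline{\phi}}$ is an epimorphism, passing to the quotient gives the stated isomorphism $U^{\,\R'',p}_{\!P,\hbar}(\lieg)\cong U^{\,\R',\,p}_{\!P,\,\hbar}(\lieg)\big/U^{\,\R',\,p}_{\!P,\,\hbar}(\lieg)\,{U_\hbar(\liek)}^+$, which is automatically a Hopf superalgebra isomorphism since $U_{\underline{\phi}}$ is a Hopf morphism.

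The one point requiring genuine care — the main obstacle — is the topological bookkeeping: $U_\hbar(\liek)$, $\Ker(U_{\underline{\phi}})$ and the quotient all live in the $\hbar$--adically complete setting, so ``the ideal generated by $\liek$'' and ``${U_\hbar(\liek)}^+$'' must be read as closed ideals, and one must check that the closure operations are compatible (e.g. that the closed two-sided ideal generated by $\liek$ coincides with the closure of $U^{\,\R',p}_{\!P,\hbar}(\lieg)\cdot\liek$, which uses centrality of $\liek$ together with $\hbar$--adic continuity of multiplication). These are routine for topologically free $\kh$--modules but should be spelled out; everything else is a direct transcription of Proposition \ref{prop: functor_R->uRPhg} and the Hopf formulas of Theorem \ref{thm: FoMpQUEA's}, exactly in parallel with the non-super case treated in \cite{GG3}.
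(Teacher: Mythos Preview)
Your proposal is correct and matches the paper's approach exactly: the paper states this proposition with a bare $\square$, having just described it as ``a straightforward `upgrading' of Proposition \ref{prop: functor_R->uRPhg}'', and your argument spells out precisely that upgrading via the Hopf formulas of Theorem \ref{thm: FoMpQUEA's} together with the centrality of $\liek$ from Lemma \ref{lemma: ker-morph's_realiz's}. You are simply more explicit than the paper, including your care about the $\hbar$--adic topology, but there is no difference in strategy.
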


\vskip9pt

   We conclude this subsection proving that every FoMpQUESA is indeed a QUESA in the sense of  \S \ref{QUESA's}:  later on, in  \S \ref{sec: FoMpQUESA's-vs-MpLSbA's},  we will study in detail their semiclassical limit.

\vskip13pt

\begin{theorem}  \label{thm: FoMpQUESAs-are-QUESAs}
 Every FoMpQUESA is a QUESA.
\end{theorem}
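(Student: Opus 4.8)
The plan is to verify directly the two defining conditions of a QUESA (in the sense of \S \ref{QUESA's}): namely that a FoMpQUESA $\uRpPhg$ is a deformation Hopf superalgebra — i.e.\ topologically free as a $\kh$--module — and that its semiclassical reduction $\uRpPhg\big/\hbar\,\uRpPhg$ is isomorphic, as a Hopf superalgebra, to $U(\lieg)$ for a suitable Lie superalgebra $\lieg$. The Hopf superalgebra structure is already granted by Theorem \ref{thm: FoMpQUEA's}, so what remains is the ``flatness'' and the identification of the reduction.

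First I would reduce to the straight case. By Lemma \ref{lemma: split/straight-lifting} there is a straight realization $\tilde{\R}$ of $P$ with an epimorphism $\underline{\pi}:\tilde{\R}\twoheadrightarrow\R$, and by Proposition \ref{prop: central-Hopf-extens_FoMpQUEAs} the induced map $U_{\underline{\pi}}$ is a surjective morphism of Hopf superalgebras whose kernel is the Hopf ideal $U^{\tilde{\R},p}_{P,\hbar}(\lieg)\,{U_\hbar(\liek)}^+$ with $\liek=\Ker(\pi)$ central; since passing to such a quotient and reducing mod $\hbar$ commute, it suffices to treat $U^{\tilde{\R},p}_{P,\hbar}(\lieg)$, and then further, via Proposition \ref{prop: deform-Yamane=FoMpQUESA} and Proposition \ref{prop: realiz=twist-standard}, to treat a Yamane QUESA $U_{P_A,\hbar}^{\R_A,p}(\lieg)$ with $P_A=DA$, to which we apply a twist $\F_\Phi$ (twisting changes neither the underlying $\kh$--module nor the classical limit of the algebra structure). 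So the crux becomes: a Yamane QUESA (in the sense of Example \ref{example-Yamane's_FoQUEASA's}(c)) is a QUESA. For that I would invoke Yamane's results in \cite{Ya1} together with Geer's analysis in \cite{Ge1}, \S 2.3: Yamane establishes a PBW--type basis for $\mathcal{D}'$ over $\kh$, which shows topological freeness, and identifies the $\hbar=0$ reduction with $U\big(\liegRpPa\big)$; the quotient by the central Cartan ideal corresponding to $\tilde\R$ then yields the quotient of $U\big(\liegRpPa\big)$ by the corresponding central ideal, which is again an enveloping algebra of a Lie superalgebra (a ``Yamane's Lie superbialgebra'' as in \S \ref{Yamane's LieSBial's}).

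Having assembled the above, I would then push the statement back up: since $U_{\underline{\pi}}$ is a surjection of Hopf superalgebras with kernel generated by the central, primitive submodule $\liek$, and since reduction mod $\hbar$ is right exact, we get $\uRpPhg\big/\hbar\,\uRpPhg\,\cong\,U\big(\lieg_\Ppicc^{\tilde\Rpicc,p}\big)\big/\big(\liek\big)\,\cong\,U\big(\liegRpP\big)$, the last identification matching the presentation of $\liegRpP$ from \S \ref{Constr-MpLieSBial's}; topological freeness likewise descends because a PBW basis of $U^{\tilde{\R},p}_{P,\hbar}(\lieg)$ adapted to the decomposition $\tilde\lieh=\lieh_T\oplus\liek\oplus(\ldots)$ projects to one of $\uRpPhg$. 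Finally the compatibility of the coproduct formulas \eqref{eq: coprod_x_uRpPhg_2nd-time} with the standard coproduct on $U\big(\liegRpP\big)$ is immediate: modulo $\hbar$ one has $e^{\pm\hbar T_\ell^\pm}\equiv 1$, so $\Delta(E_\ell)\equiv E_\ell\otimes 1+1\otimes E_\ell$, etc.

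The main obstacle is the flatness (topological freeness) of $\uRpPhg$ over $\kh$, equivalently the existence of a PBW-type basis: this is the genuinely substantial input and is not formal. I would handle it by citing Yamane's construction of the PBW basis for $\mathcal{D}'$ in \cite{Ya1} (and Geer \cite{Ge1}) for the straight case, and then checking — using the centrality of $\Ker(\pi)$ from Lemma \ref{lemma: ker-morph's_realiz's} and Proposition \ref{prop: central-Hopf-extens_FoMpQUEAs} — that quotienting by the Cartan ideal $U_\hbar(\liek)^+$ preserves both topological freeness and the ``$\hbar=0$ gives an enveloping algebra'' property. Everything else (the Hopf axioms, the explicit coproduct/antipode, the shape of the classical limit) follows from results already established in the excerpt.
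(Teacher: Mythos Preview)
Your reduction to the straight (Yamane) case for topological freeness is exactly what the paper does, including the appeal to \cite{Ya1} for the PBW basis and the passage through the central quotient by $U_\hbar(\liek)^+$. On that score your proposal and the paper's proof coincide.

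Where you diverge is in establishing that the $\hbar=0$ reduction is an enveloping algebra. You proceed by explicitly identifying the quotient: track the reduction of the Yamane algebra, then quotient by the central ideal generated by $\liek$, and recognise the result as $U\big(\liegRpP\big)$. The paper instead short-circuits this: it simply observes that modulo $\hbar$ the generators $\overline{E_i},\overline{F_i},\overline{T}$ are primitive (immediate from \eqref{eq: coprod_x_uRpPhg_2nd-time} since $e^{\pm\hbar T_\ell^\pm}\equiv 1$), so $U_0$ is a Hopf superalgebra generated by primitives, and then invokes the Milnor--Moore theorem for Hopf superalgebras (citing \cite{Ko}) to conclude $U_0\cong U\big(P(U_0)\big)$. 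This is slicker and avoids having to check that the defining relations of $\uRpPhg$ reduce exactly to those of $\liegRpP$ --- the theorem only asks that the limit be \emph{some} enveloping superalgebra, not which one. Your route does more work but also yields the explicit identification of the limit, which is precisely the content of the later Theorem \ref{thm: semicl-limit FoMpQUESA}; the paper separates the two statements.
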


\begin{proof}
 Let  $ \, U_\hbar := \uRpPhg \, $  be any given FoMpQUESA, and let  $ \; U_0 := U_\hbar \Big/ \hbar\,U_\hbar \; $  denote its semiclassical limit; also, for any  $ \, X  \in U_\hbar \, $  we set  $ \, \overline{X} := X \; \big(\,\text{mod} \; \hbar\,U_\hbar \,\big) \, $  for its coset modulo  $ \, \hbar\,U_\hbar \, $  in  $ U_0 \, $.
                                                        \par
   First of all,  $ U_\hbar $  is topologically free (as a  $ \kh $--module).  Indeed, assume first that  $ \,\R $  is straight and  $ \lieh $  is ``large enough'', namely  $ \, \text{rk}(\lieh) \geq 3\,n - \text{rk}(DA) \ $:  \,then   --- see the proof of  Theorem \ref{thm: FoMpQUEA's}  ---   $ \uRpPhg $  is isomorphic (as a topological algebra) to some Yamane's QUESAs, and the latter is known to be topologically free, by  \cite{Ya1},  hence the same holds true for  $ \uRpPhg \, $.  The general case (for any  $ \lieh $  and any  $ \,\R \, $)  then is obtained like in the proof of  Theorem \ref{thm: FoMpQUEA's}  by reducing to that of some straight realization with large enough Cartan (sub)algebra (for more details, see the proof of the non-super case in  \cite{GG3}, Theorem 6.1.4   ---   the present case is similar).
                                                                    \par
   Second, as  $ U_\hbar $  is generated by the  $ E_i $'s,  $ F_i $'s  and  $ T $'s  ($ \, i \in I \, $,  $ \, T \in \lieh \, $),  \,it follows that  $ U_0 $  is generated by the  $ \overline{E_i} $'s,  $ \overline{F_i} $'s  and  $ T $'s.  Moreover, the formulas for the coproduct in  $ U_\hbar $  imply that the latter generators are  \textit{primitive\/}  in  $ U_0 \, $.  Therefore, the Hopf algebra  $ U_0 $  is generated by its subspace  $ P(U_0) $  of primitive elements, hence   --- by Milnor--Moore Theorem for Hopf superalgebras  cf.~\cite[Proposition 3.2]{Ko}  ---   we conclude that  $ P(U_0) $  is a Lie superalgebra and  $ U_0 $  is its universal enveloping superalgebra, hence we are done.
\end{proof}

\vskip9pt

   We finish with a typical ``triangular decomposition result'' for our FoMpQUESA's:

\vskip15pt

\begin{theorem}  \label{thm: triang-decomp}  {\ }
 \vskip3pt
   Every FoMpQUESA  $ \uRpPhg $  admits  \textsl{triangular decompositions}  such as
  $$  \uRpPhnp \otimes \uRpPhh \otimes \uRpPhnm  \, \cong \,  \uRpPhg  \, \cong \,  \uRpPhnm \otimes \uRpPhh \otimes \uRpPhnp  $$
 and similarly for Borel sub-superalgebras we have
  $$  \uRpPhnpm \otimes \uRpPhh  \,\; \cong \;\,  \uRpPhbpm  \, \cong \,  \uRpPhh \otimes \uRpPhnpm  $$
\end{theorem}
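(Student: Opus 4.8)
The plan is to obtain the triangular decomposition of $\uRpPhg$ by reducing everything, via the functoriality machinery already in place, to the case of a straight realization, where the result can be read off from Yamane's work. First I would recall from the proof of Theorem \ref{thm: FoMpQUEA's} that for any realization $\R$ of $P$ there is a straight realization $\tilde{\R}$ with an epimorphism $\underline{\pi} : \tilde{\R} \relbar\joinrel\twoheadrightarrow \R$, and $U_{\underline{\pi}} : U^{\,\tilde{\R},\,p}_{\!P,\,\hbar}(\lieg) \relbar\joinrel\twoheadrightarrow \uRpPhg$ is a Hopf superalgebra epimorphism whose kernel is the two-sided (Hopf) ideal generated by the central sub-superalgebra $U_\hbar(\liek)$ with $\liek := \Ker(\pi)$ (Proposition \ref{prop: central-Hopf-extens_FoMpQUEAs}). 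Since $\liek \subseteq \lieh$, this central Cartan piece is entirely absorbed into the Cartan factor, so it suffices to establish the decomposition for $U^{\,\tilde{\R},\,p}_{\!P,\,\hbar}(\lieg)$ and then quotient. Moreover, up to enlarging $\tilde{\lieh}$ we may assume $\tilde{\R}$ has rank $\geq 3n - \rk(DA)$, and then by Proposition \ref{prop: deform-Yamane=FoMpQUESA} together with Proposition \ref{prop: realiz=twist-standard} we have an isomorphism of topological superalgebras $U^{\,\tilde{\R},\,p}_{\!P,\,\hbar}(\lieg) \cong \big(U^{\R_A,\,p}_{\!P_A,\,\hbar}(\lieg)\big)^{\F_\Phi}$ onto a twist deformation of a genuine Yamane QUESA.

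Next I would note that a twist deformation changes only the coalgebra structure, not the superalgebra structure; hence the triangular decomposition as a decomposition of topological $\kh$-modules (indeed of $\lieh$-bimodules) for $U^{\,\tilde{\R},\,p}_{\!P,\,\hbar}(\lieg)$ is literally the same statement as for the underlying Yamane QUESA $U^{\R_A,\,p}_{\!P_A,\,\hbar}(\lieg)$, which is precisely the ``quantum double version'' $\mathcal{D}'$ (or a quotient thereof by part of the Cartan, as in Example \ref{example-Yamane's_FoQUEASA's}(c)). The PBW-type triangular decomposition $\uRpPhnp \otimes \uRpPhh \otimes \uRpPhnm \cong \uRpPhg$ for these objects is established in \cite{Ya1}; I would cite it directly. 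The identification of the three tensor factors is stable under the twist and under $U_{\underline{\pi}}$ because the nilpotent sub-superalgebras are generated by the $E_i$'s, resp.\ the $F_i$'s, which are mapped to the same-name generators by both the change of presentation $f^\Phi_P$ and the epimorphism $U_{\underline{\pi}}$, while $\uRpPhh$ is the image of $U_\hbar(\tilde{\lieh})$. Thus, denoting by $\mu$ the multiplication map, I would check that $\mu : \uRpPhnp \otimes \uRpPhh \otimes \uRpPhnm \to \uRpPhg$ is the composite of the (bijective) multiplication for $U^{\,\tilde{\R},\,p}_{\!P,\,\hbar}(\lieg)$ with $U_{\underline{\pi}}$, and that modding out by the central ideal generated by $U_\hbar(\liek)^+$ exactly collapses the $U_\hbar(\tilde{\lieh})$ factor onto $\uRpPhh = U_\hbar(\lieh)$ while leaving the two nilpotent factors untouched — giving a bijection. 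The Borel statement $\uRpPhnpm \otimes \uRpPhh \cong \uRpPhbpm$ follows from the same argument restricted to the sub-superalgebra generated by $\lieh$ and the $E_i$'s (resp.\ the $F_i$'s), using Corollary \ref{cor: Hopf-struct_Cartan-&-Borel} to know these are Hopf subalgebras and that the decomposition passes to them.

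The main obstacle I expect is the bookkeeping at the reduction step: one must make sure that the central kernel $U_\hbar(\liek)$ of $U_{\underline{\pi}}$ genuinely interacts only with the Cartan factor and not with $\uRpPhnpm$, i.e.\ that $\Ker(U_{\underline{\pi}}) \cap \uRpPhnpm = 0$ and that $U_{\underline{\pi}}$ restricts to isomorphisms on the nilpotent factors. This is where one needs the precise description $\Ker(U_{\underline{\pi}}) = U^{\,\tilde{\R},p}_{\!P,\hbar}(\lieg)\,U_\hbar(\liek)^+$ from Proposition \ref{prop: central-Hopf-extens_FoMpQUEAs} together with the triangular decomposition of $U^{\,\tilde{\R},\,p}_{\!P,\,\hbar}(\lieg)$ itself: writing a general element of the kernel in PBW form with the Cartan factor in the middle, one sees the kernel equals $\uRpPhnp \otimes U_\hbar(\tilde{\lieh})\,U_\hbar(\liek)^+ \otimes \uRpPhnm$, so the quotient is $\uRpPhnp \otimes \big(U_\hbar(\tilde{\lieh})/U_\hbar(\tilde{\lieh})U_\hbar(\liek)^+\big) \otimes \uRpPhnm$, and the middle factor is $U_\hbar(\lieh)$. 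Once this is in place, everything else is routine, and the theorem follows.
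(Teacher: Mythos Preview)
Your proposal is correct and self-contained. The paper's own proof is a one-line deferral: it cites the ``Third Proof'' in \cite[\S 4.4.9]{GG3} and asserts that the argument there (written for the non-super setting) carries over verbatim. By contrast, you spell out a full argument internal to the present paper --- reduce to a straight realization via the functorial epimorphism $U_{\underline{\pi}}$ of Proposition~\ref{prop: central-Hopf-extens_FoMpQUEAs}, identify the straight FoMpQUESA as a superalgebra with a toral twist of a Yamane QUESA (Proposition~\ref{prop: deform-Yamane=FoMpQUESA}), invoke Yamane's triangular decomposition from \cite{Ya1}, and then quotient out the central Cartan piece $U_\hbar(\liek)^+$. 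This is precisely the reduction scheme already used in the proofs of Theorems~\ref{thm: FoMpQUEA's} and~\ref{thm: FoMpQUESAs-are-QUESAs}, so it is very much in the spirit of the paper even if it is not literally what the cited reference does.

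The only point worth tightening is the one you yourself flag: that $\Ker(U_{\underline{\pi}})$ sits entirely in the middle tensor factor. This goes through cleanly once you observe that $\pi : \tilde{\lieh} \twoheadrightarrow \lieh$ is a surjection of free $\kh$-modules, hence splits, so $\tilde{\lieh} \cong \liek \oplus \lieh$ and consequently $U_\hbar(\tilde{\lieh}) \cong U_\hbar(\liek) \,\widehat{\otimes}\, U_\hbar(\lieh)$; the ideal $U_\hbar(\tilde{\lieh})\,U_\hbar(\liek)^+$ is then exactly $U_\hbar(\liek)^+ \,\widehat{\otimes}\, U_\hbar(\lieh)$, and the quotient collapses the Cartan factor to $U_\hbar(\lieh)$ while leaving the nilpotent factors untouched, as you claim.
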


\begin{proof}
 The claim follows from a standard, fairly ``classical'' argument, which is detailed in the  \textsl{Third Proof\/}  in  \cite[\S 4.4.9]{GG3};  that one is written there for the non-super setup, but it applies  \textit{verbatim\/}  to the present context as well.
\end{proof}

\bigskip
 \medskip

\section{Deformations of FoMpQUESA's}  \label{sec: deform-FoMpQUESA's}
 \vskip11pt
   After introducing formal MpQUEAs in the previous section, now we go and study their deformations, either by twist or by  polar--$ 2 $--cocycle  (both of ``toral type'', say).

\vskip13pt

\subsection{Deformations of FoMpQUESAs by toral twists}  \label{subsec: tw-def_FoMpQUESAs}
 {\ }
 \vskip9pt
   We discuss now suitable twist deformations (of ``toral type'') of FoMpQUESAs,
   proving that they are again FoMpQUESAs.
   In fact, this is but a generalization of the argument that we already used in
   \S \ref{subsec: FoMpQUESA's}  to  \textsl{define\/}  our FoMpQUESAs,
   as they are indeed introduced as deformations by toral twists of Yamane's QUESAs.

\vskip9pt

\begin{free text}  \label{twist-uPhgd}
 {\bf Toral twist deformations of  $ \uRpPhg \, $.}
 We fix a Cartan super-datum  $ \, (A\,,p) \, $  as in  Definition  \ref{def: Cartan-super-datum},
 a multiparameter matrix  $ \, P := {\big(\, p_{i,j} \big)}_{i, j \in I} \in M_n\big(\kh\big) \, $
 associated with  $ A \, $, and a realization  $ \, \R \, := \, \big(\, \lieh \, , \Pi \, , \Pi^\vee \,\big) \, $  of it,
 as in  \S \ref{sec: Cartan-data_realiz's}.  In particular, we have  $ \, d_i := p_{ii}/2 \, $  ($ \, i \in I \, $),
 and  $ \lieh $  is a free  $ \kh $--module  of finite rank  $ \, t := \rk(\lieh) \, $.  We fix in
 $ \lieh $  any  $ \kh $--basis  $ \, {\big\{ H_g \big\}}_{g \in \G} \, $,  where  $ \, |\G| = \rk(\lieh) = t \, $.
                                                              \par
   Now let  $ \uRpPhg $  be the FoMpQUESA given in  Definition \S \ref{def: FoMpQUESA}.
   We pick  $ \; \Phi = \big( \phi_{gk} \big)_{g, k \in \G} \in \lieso_t\big(\kh\big)  \; $,  \,and set
  $$
  \JJ_\Phi  \; := \;  {\textstyle \sum\limits_{g,k=1}^t} \phi_{gk} \, H_g \otimes H_k  \; \in \;
  \lieh \otimes \lieh  \; \subseteq \; U^\R_{\!P,\hskip0,7pt\hbar}(\lieh) \otimes U^\R_{\!P,\hskip0,7pt\hbar}(\lieh)\;.
  $$
 By direct check, we see that the element
\begin{equation}  \label{eq: Resh-twist_F-uPhgd}
  \F_\Phi  \,\; := \;\,
e^{\,\hbar \, 2^{-1} \JJ_\Phi}
 \,\; = \;\,  \exp\Big(\hskip1pt \hbar \,  \, 2^{-1} \,
 {\textstyle \sum_{g,k=1}^t} \phi_{gk} \, H_g \otimes H_k \Big)
\end{equation}
 in  $ \, U^{\R,p}_{\!P,\hskip0,7pt\hbar}(\lieh) \,\widehat{\otimes}\, U^{\R,p}_{\!P,\hskip0,7pt\hbar}(\lieh) \, $  is actually a  {\sl twist\/}  for  $ \uRpPhg $  in the sense of  \S \ref{defs_Hopf-algs}.  Using it, we construct a new (topological) Hopf algebra  $ \, {\big(\, \uRpPhg \big)}^{\F_\Phi} \, $,  isomorphic to  $ \uRpPhg $  as an algebra but with a new, twisted coalgebra structure, as in  \S \ref{defs_Hopf-algs}.
 \vskip5pt
   Our key result now is that this new Hopf algebra  $ \, {\big(\, \uRpPhg \big)}^{\F_\Phi} \, $  is yet another FoMpQUESA, and even more we know exactly which one it is:
\end{free text}

\vskip11pt

\begin{theorem}  \label{thm: twist-uRpPhg=new-uRpPhg}
 There exists an isomomorphism of topological Hopf algebras
  $$  f^{\scriptscriptstyle \Phi}_{\scriptscriptstyle P} \; :
   \; U_{\!P_\Phi,\,\hbar}^{\R_{\scriptscriptstyle \Phi},\,p}(\lieg)  \,\; {\buildrel \cong \over
   {\lhook\joinrel\relbar\joinrel\relbar\joinrel\relbar\joinrel\twoheadrightarrow}} \;\,
   {\big(\, \uRpPhg\big)}^{\F_\Phi}  $$
 given by
  $ \; E_\ell \, \mapsto \, E^{\scriptscriptstyle \Phi}_\ell := \, \L_{\Phi, \ell}^{-1} \, E_\ell \, , \; T \, \mapsto \, T \, $,  $ \; F_\ell \, \mapsto \, F^{\scriptscriptstyle \Phi}_\ell := \, F_\ell \, \K_{\Phi, \ell}^{+1}  \;\;\;  (\, \forall \; \ell \in I \, , \, T \in \lieh \,) \, $,
 \;with
 $ \;\; \L_{\Phi, \ell} := e^{+ \hbar \, 2^{-1} \sum_{g,k=1}^t \alpha_\ell(H_g) \, \phi_{gk} H_k} \; $,
 $ \;\; \K_{\Phi,\ell} := e^{+ \hbar \, 2^{-1} \sum_{g,k=1}^t \alpha_\ell(H_g) \, \phi_{kg} H_k} \; \big(\! = \L_{\Phi, \ell}^{\,-1} \,\big) \; $.
 \vskip3pt
   In particular, the class of all FoMpQUESAs of any fixed Cartan type and of fixed rank
%
%
 is stable by toral twist deformations.  Moreover, inside it the subclass of all such
 FoMpQUESAs associated with  \textsl{straight},  resp.\  \textsl{small},  realizations is stable as well.
 \vskip3pt
   Similar, parallel statements hold true for the Borel FoMpQUEAs, i.e.\ there exist isomorphisms
 $ \; f^{\scriptscriptstyle \Phi}_{\!\scriptscriptstyle P,\pm} :
   U_{\!P_\Phi,\,\hbar}^{\R_{\scriptscriptstyle \Phi},\,p}(\lieb_\pm) \, {\buildrel \cong \over
   {\lhook\joinrel\relbar\joinrel\relbar\joinrel\twoheadrightarrow}} \,
   {\big(\, U_{\!P,\,\hbar}^{\R,\,p}(\lieb_\pm)\big)}^{\F_\Phi} \; $
 given by formulas as above.
\end{theorem}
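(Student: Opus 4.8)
The plan is to prove this exactly along the lines already used in the proof of Proposition~\ref{prop: deform-Yamane=FoMpQUESA}, of which the present statement is a (mild) generalization: there the starting object was a straight Yamane's QUESA $U_{P_A,\,\hbar}^{\,\R,\,p}(\lieg)$, here it is an arbitrary FoMpQUESA $\uRpPhg$, but the twist $\F_\Phi$ and the change of generators $E_\ell \mapsto E^{\scriptscriptstyle\Phi}_\ell := \L_{\Phi,\ell}^{-1}E_\ell$, $T\mapsto T$, $F_\ell\mapsto F^{\scriptscriptstyle\Phi}_\ell := F_\ell\,\K_{\Phi,\ell}^{+1}$ are formally identical. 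First I would check that $\F_\Phi$ as in \eqref{eq: Resh-twist_F-uPhgd} is genuinely a twist for $\uRpPhg$ (already asserted in \S\ref{twist-uPhgd} by direct check, using that $\JJ_\Phi$ lies in the cocommutative Cartan part $\lieh\otimes\lieh$ and $\Phi$ is antisymmetric), so that $\big(\uRpPhg\big)^{\F_\Phi}$ is a well-defined topological Hopf superalgebra with the same superalgebra structure and the twisted coproduct/antipode computed in \S\ref{FoMpQUESA from Yamane's QUESA}.

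Next, the core is to show that $f^{\scriptscriptstyle\Phi}_{\scriptscriptstyle P}$ is a well-defined isomorphism of topological superalgebras. Since $\big(\uRpPhg\big)^{\F_\Phi}$ coincides with $\uRpPhg$ as an algebra, the elements $E^{\scriptscriptstyle\Phi}_\ell, T, F^{\scriptscriptstyle\Phi}_\ell$ generate it; so I must rewrite the defining relations of $\uRpPhg$ (those listed in Definition~\ref{def: FoMpQUESA}, namely \eqref{eq: rel T-E / T-T / T-F}--\eqref{eq: rel6} and their $F$-counterparts) in terms of the new generators and check they become exactly the defining relations of $U_{\!P_\Phi,\,\hbar}^{\R_{\scriptscriptstyle\Phi},\,p}(\lieg)$ relative to the twisted realization $\R_\Phi$ and deformed matrix $P_\Phi$. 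The computations are the same as in the proof of Proposition~\ref{prop: deform-Yamane=FoMpQUESA}: relations \eqref{eq: rel T-E / T-T / T-F} are immediate since $\alpha_j$ is unchanged; relation \eqref{eq: rel E-F} transforms using $[T^\pm_{\Phi,\ell}$ via the conjugation rule $E^{\scriptscriptstyle\Phi}_i F^{\scriptscriptstyle\Phi}_j \leftrightarrow \L_{\Phi,i}^{+1}E^{\scriptscriptstyle\Phi}_i\,F^{\scriptscriptstyle\Phi}_j\,\L_{\Phi,j}^{-1}$; and the quantum Serre and higher-order relations \eqref{eq: q-Serre (E)}--\eqref{eq: rel6} (all expressible as iterated $q$-supercommutators, cf.\ Remark~\ref{rmk: q-Serre = iterated q-commutator}) transform by the key identity \eqref{eq: Phi change q-bracket}, which sends $[E_i,E_j]_\nu$ to a scalar multiple of $\L_{\Phi,i}^{+1}\L_{\Phi,j}^{+1}[E^{\scriptscriptstyle\Phi}_i,E^{\scriptscriptstyle\Phi}_j]_{\nu k^\Phipicc_{ij}}$, together with its iterates \eqref{eq: prod-E => prod-EPhi}, using $q_{ij}k^\Phipicc_{ij}=q^\Phipicc_{ij}=e^{+\hbar p^\Phipicc_{ij}}$ and $k^\Phipicc_{ij}={(k^\Phipicc_{ji})}^{-1}$. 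The upshot is that, as a presented superalgebra, $\big(\uRpPhg\big)^{\F_\Phi}$ with generators $\{E_i,T,F_i\}$ has precisely the presentation of $U_{\!P_\Phi,\,\hbar}^{\R_\Phi,p}(\lieg)$, so $f^{\scriptscriptstyle\Phi}_{\scriptscriptstyle P}$ is a superalgebra isomorphism.

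Then I would upgrade this to an isomorphism of Hopf superalgebras: by Theorem~\ref{thm: FoMpQUEA's} the source $U_{\!P_\Phi,\,\hbar}^{\R_\Phi,p}(\lieg)$ carries the canonical FoMpQUESA Hopf structure \eqref{eq: coprod_x_uRpPhg_2nd-time}--\eqref{eq: antipode_x_uRpPhg} w.r.t.\ the coroots $T^\pm_{\Phi,i}$ of $\R_\Phi$; on the other hand the twisted coproduct on $\big(\uRpPhg\big)^{\F_\Phi}$ was computed in \S\ref{FoMpQUESA from Yamane's QUESA} to be $\Delta^{\!\F_\Phi}(E_\ell)=E_\ell\otimes\L_{\Phi,\ell}^{+1}+e^{+\hbar T^+_\ell}\K_{\Phi,\ell}^{+1}\otimes E_\ell$, etc. A direct substitution $E_\ell=\L_{\Phi,\ell}^{+1}E^{\scriptscriptstyle\Phi}_\ell$ (and the dual substitution for $F_\ell$), using $T^+_{\Phi,\ell}=T^+_\ell+\sum\theta_{kg}\alpha_\ell(H_g)H_k$-type identities, shows that $f^{\scriptscriptstyle\Phi}_{\scriptscriptstyle P}$ intertwines $\Delta$ on the source with $\Delta^{\!\F_\Phi}$ on the target (and likewise for counit and antipode) — this is exactly the reverse of the passage performed in \S\ref{FoMpQUESA from Yamane's QUESA}, so it requires only bookkeeping. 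The ``stability'' consequences are then formal: $\big(\uRpPhg\big)^{\F_\Phi}\cong U_{\!P_\Phi,\,\hbar}^{\R_\Phi,p}(\lieg)$ is a FoMpQUESA of the same Cartan type and rank; and since $\R_\Phi$ is straight, resp.\ small, iff $\R$ is (Proposition~\ref{prop: twist-realizations}(b)), the two subclasses are stable too. Finally, the Borel statements follow by restriction: the identical change of generators restricts to isomorphisms $U_{\!P_\Phi,\,\hbar}^{\R_\Phi,p}(\lieb_\pm)\xrightarrow{\cong}\big(U_{\!P,\,\hbar}^{\R,p}(\lieb_\pm)\big)^{\F_\Phi}$, because $\F_\Phi\in U_\hbar(\lieh)^{\widehat\otimes 2}$ lives inside each Borel and Corollary~\ref{cor: Hopf-struct_Cartan-&-Borel} guarantees the Borel subalgebras are Hopf subalgebras stable under this twist.

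The main obstacle I anticipate is not conceptual but clerical: carefully tracking all the $q$-power scalar factors (the various $q_{ij}, q_{ij}^{1/2}, k_{ij}, \nu_i, k^\Phipicc_{ij}$) through the iterated supercommutators in the long higher-order relations \eqref{eq: rel2}--\eqref{eq: rel6} and verifying that, after the substitution, every coefficient matches the one prescribed in Definition~\ref{def: FoMpQUESA} relative to $P_\Phi$ rather than $P$ — in particular confirming that the ``removed subscript $2$'' normalization in \eqref{eq: rel6} is consistently preserved. As in the proof of Proposition~\ref{prop: deform-Yamane=FoMpQUESA}, I would present one or two representative cases in full (e.g.\ \eqref{eq: rel E-F}, the $a_{ij}=-2$ quantum Serre relation, and \eqref{eq: rel2}) and leave the remaining ones to the reader, since they are disposed of identically.
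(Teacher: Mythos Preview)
Your proposal is correct and follows essentially the same approach as the paper's proof: the paper itself says ``the proof follows closely in the footprints of Proposition~\ref{prop: deform-Yamane=FoMpQUESA}, so we can be somewhat quicker now,'' then computes $\Delta^{\F_\Phi}$, $\SS^{\F_\Phi}$, $\epsilon^{\F_\Phi}$ on the original generators, introduces the twisted generators $E^{\scriptscriptstyle\Phi}_\ell$, $F^{\scriptscriptstyle\Phi}_\ell$, verifies the rewritten relations via \eqref{eq: Phi change q-bracket} exactly as you outline, and finally checks the Hopf operators on the twisted generators match \eqref{eq: coprod_x_uRpPhg_2nd-time}--\eqref{eq: antipode_x_uRpPhg} for $\R_\Phi$. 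The only cosmetic addition in the paper is a short motivational paragraph interpreting $\big(\uRpPhg\big)^{\F_\Phi}$ as a pointed Hopf superalgebra whose skew-primitives suggest the new generators; this is not logically needed, and your sketch captures all the actual steps (including the stability and Borel consequences).
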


\begin{proof}
 The proof follows closely in the footprints of  Proposition \ref{prop: deform-Yamane=FoMpQUESA},  so we can be somewhat quicker now.  First of all,  $ {\big(\, \uRpPhg\big)}^{\F_\Phi} $  coincides with  $ \uRpPhg $  as an algebra, so we can take the generators of the latter as generators of the former too.
                                                        \par
%
%
   Direct calculation yields explicit formulas for the new coproduct  $ \Delta^{\scriptscriptstyle \F_\Phi} $,
%
%
 namely
  $$  \displaylines{
   \qquad   \Delta^{\scriptscriptstyle \F_\Phi}\big(E_\ell\big)  \; = \;
 E_\ell \otimes \L_{\Phi, \ell}^{+1} \, + \, e^{+\hbar \, T^+_\ell} \K_{\Phi, \ell}^{+1} \otimes
E_\ell   \quad \qquad  \big(\, \forall \;\; \ell \in I \,\big)  \cr
   \qquad \qquad \quad   \Delta^{\scriptscriptstyle \F_\Phi}\big(T\big)  \; = \;  T \otimes 1 \, + \, 1 \otimes T
   \quad \quad \qquad \qquad  \big(\, \forall \;\; T \in \lieh \,\big)  \cr
   \qquad   \Delta^{\scriptscriptstyle \F_\Phi}\big(F_\ell\big)  \; = \;
   F_\ell \otimes \L_{\Phi, \ell}^{-1} \, e^{-\hbar \, T^-_\ell } + \, \K_{\Phi, \ell}^{-1} \otimes F_\ell
   \quad \qquad  \big(\, \forall \;\; \ell \in I \,\big)  }  $$
with
%
%
  $ \,\; \L_{\Phi, \ell} := e^{+ \hbar \, 2^{-1} \sum_{g,k=1}^t \alpha_\ell(H_g) \, \phi_{gk} H_k} \; $,
  $ \,\; \K_{\Phi,\ell} := e^{+ \hbar \, 2^{-1} \sum_{g,k=1}^t \alpha_\ell(H_g) \, \phi_{kg} H_k} \; \big(\! = \L_{\Phi, \ell}^{\,-1} \,\big) $  \; for  $ \ell \!\in\! I $.
 Similarly, the ``twisted'' antipode  $ \SS^{\scriptscriptstyle \F_\Phi} $  and
 the counit  $ \epsilon^{\scriptscriptstyle \F_\Phi} \! := \epsilon $   are given by
  $$  \begin{matrix}
   \qquad   \SS^{\scriptscriptstyle \F_\Phi}\big(E_\ell\big)  \, = \,
   - e^{-\hbar \, T^+_\ell} \K_{\Phi, \ell}^{-1} \, E_\ell \, \L_{\Phi, \ell}^{-1}  \quad ,
   &  \qquad  \epsilon^{\scriptscriptstyle \F_\Phi}\big(E_\ell\big) \, = \, 0   \qquad \qquad  \big(\, \forall \;\; \ell \in I \,\big) \quad , \\
   \qquad
 \SS^{\scriptscriptstyle \F_\Phi}\big(T\big)  \, = \,  -T  \quad ,  \phantom{\Big|^|}
  &  \qquad
 \epsilon^{\scriptscriptstyle \F_\Phi}\big(T\big) \, = \, 0   \qquad \qquad  \big(\, \forall \;\; T \in \lieh \,\big)\quad ,
 \\
   \qquad   \SS^{\scriptscriptstyle \F_\Phi}\big(F_\ell\big)  \, = \,
   - \K_{\Phi, \ell}^{+1} \, F_\ell \, \L_{\Phi, \ell}^{+1} \, e^{+\hbar \, T^-_\ell}  \quad ,
   &  \qquad  \epsilon^{\scriptscriptstyle \F_\Phi}\big(F_\ell\big) \, = \, 0   \qquad \qquad  \big(\, \forall \;\; \ell \in I \,\big)\quad .
\end{matrix}  $$
   \indent   Tiding up, from the explicit description of the coproduct  $ \Delta^{\scriptscriptstyle \F_\Phi} \, $,  it follows that  $ \, {\big(\, \uRpPhg \big)}^{\F_\Phi} \, $  is generated by group-likes and skew-primitive elements; hence, it is a  \textsl{pointed\/}  Hopf superalgebra.  Moreover, both Hopf superalgebras  $ \,  \uRpPhg  \, $  and  $ \, {\big(\, \uRpPhg \big)}^{\F_\Phi} \, $  have the same coradical and the same space of skew-primitive elements.  As the coproduct is changed by twist, one sees that the skew-primitive generators of  $ \, \uRpPhg  \, $,  which are  $ (1,g) $--primitive  or $ (g,1) $--primitive  with respect to  $ \Delta \, $,  for some  $ \, g \in G\big(\, \uRpPhg \big) \, $,  become  $ (h,k) $--primitive  for $ \Delta^{\scriptscriptstyle \F_\Phi} \, $.  Looking at the coradical filtration, and the associated graded Hopf superalgebra,  one finds from that set of generators some new  $ (1,\ell) $--primitive  or  $ (\ell,1) $--primitives  for  $ {\big(\, \uRpPhg \big)}^{\F_\Phi} $.  This leads to devise (new)  \textit{twisted generators\/}  and a corresponding presentation for  $ {\big(\, \uRpPhg \big)}^{\F_\Phi} $,  which yields a Hopf superalgebra isormorphism between  $ {\big(\, \uRpPhg \big)}^{\F_\Phi} $  and a new FoMpQUESA with suitable multiparameter matrix and realization.
                                                                   \par
   Motivated by the above analysis, we introduce now in  $ {\big(\, \uRpPhg \big)}^{\F_\phi} $  the ``twisted'' generators
  $ \; E^{\scriptscriptstyle \Phi}_\ell := \, \L_{\Phi, \ell}^{-1} \, E_\ell \; $,
 $ \; F^{\scriptscriptstyle \Phi}_\ell := \, F_\ell \, \K_{\Phi, \ell}^{+1} \; $
  (for all  $ \, \ell \in I \, $);  clearly, these along with all the  $ T $'s  in  $ \lieh $  still generate  $ \,  {\big(\, \uRpPhg \big)}^{\F_\Phi} = \uRpPhg \, $.  Moreover, we introduce the twisted ``distinguished toral elements'' (or ``coroots'') that were already defined in  \eqref{eq: T-phi},  i.e.\
 $ \; T^\pm_{{\scriptscriptstyle \Phi},\ell} := \, T^\pm_\ell \, \pm {\textstyle \sum\limits_{g,k=1}^t} \alpha_\ell(H_g) \, \phi_{kg} \, H_k \; $.
                                                                    \par
   Finally, again from  \S \ref{twist-deform.'s x mpmatr.'s & realiz.'s},  we recall also
  $ \, P_{\scriptscriptstyle \Phi} := {\big(\, p^{\scriptscriptstyle \Phi}_{i,j} \big)}_{i, j \in I} \, $  and
  $ \, \R_\Phi := \big(\, \lieh \, , \Pi \, , \Pi^\vee_{\scriptscriptstyle \Phi} \,\big) \, $,
  \,with the latter being a realization of the former indeed.
 \vskip3pt
   As a second step, the defining relations in the superalgebra  $ \,{\big(\, \uRpPhg \big)}^{\F_\Phi} \, $  give new relations between twisted generators.  Namely, by straightforward computations   --- for instance using that  $ \; \K_{\Phi,j} \, E_i = e^{\hbar\, 2^{-1} \, \sum_{g,k=1}^t \alpha_j(H_g) \, \phi_{kg} \alpha_i(H_k)} E_i \, \K_{\Phi,j} \; $  and that  $ \; e^{+\hbar \, T^{\pm}_{{\scriptscriptstyle \Phi},i}} = e^{+\hbar \, T_i^\pm} \big(\, \K_{\Phi,i} \, \L_{\Phi,i}^{-1} \,\big)^{\pm 1} \; $  ---   one proves that inside  $ \,{\big(\, \uRpPhg \big)}^{\F_\Phi} \, $  the following identities hold true (for all $ \, T , T' , T'' \in \lieh \, $,  $ \, i \, , j \in I \, $):
  $$  \begin{aligned}
   T \, E^{\scriptscriptstyle \Phi}_j \, - \, E^{\scriptscriptstyle \Phi}_j \, T  \; = \;  +\alpha_j(T) \, E^{\scriptscriptstyle \Phi}_j  \quad ,
   \qquad  T \, F^{\scriptscriptstyle \Phi}_j \, - \, F^{\scriptscriptstyle \Phi}_j \, T  \; = \;  -\alpha_j(T) \, F^{\scriptscriptstyle \Phi}_j   \hskip43pt  \\
   T' \, T''  \; = \;  T'' \, T'  \quad ,  \quad \qquad
   \big[ E^{\scriptscriptstyle \Phi}_i , F^{\scriptscriptstyle \Phi}_j \big]  \; = \; \delta_{i,j} \, {{\; e^{+\hbar \, T^+_{{\scriptscriptstyle \Phi},i}} \, -
   \, e^{-\hbar \, T^-_{{\scriptscriptstyle \Phi},i}} \;} \over {\; q_i^{+1} - \, q_i^{-1} \;}},   \hskip65pt  \\
   \sum\limits_{s=0}^{1-a_{ij}} {(-1)}^s {\left[ {{1-a_{ij}} \atop s}
\right]}_{\!q_i} \!
 {\big( k_{ij}^{\scriptscriptstyle \Phi} \big)}^{\!s}
 {\big( E^{\scriptscriptstyle \Phi}_i \big)}^{1-a_{ij}-s} E^{\scriptscriptstyle \Phi}_j {\big( E^{\scriptscriptstyle \Phi}_i \big)}^s
\; = \;  0   \quad   \Bigg( {{i \neq j} \atop {\,p(i) \!=\! \zero}} \Bigg)  \\
   \sum\limits_{s=0}^{1-a_{ij}} {(-1)}^s {\left[ {{1-a_{ij}} \atop s}
\right]}_{\!q_i} \!
 {\big( k_{ji}^{\scriptscriptstyle \Phi} \big)}^{\!s}
 {\big( F^{\scriptscriptstyle \Phi}_i \big)}^{1-a_{ij}-s} F^{\scriptscriptstyle \Phi}_j {\big( F^{\scriptscriptstyle \Phi}_i \big)}^s
\; = \;  0   \quad   \Bigg( {{i \neq j} \atop {\,p(i) \!=\! \zero}} \Bigg)
 \end{aligned}  $$
 --- with  $ \; q^{\scriptscriptstyle \Phi}_{i,j} := e^{\hbar \, p^{\scriptscriptstyle \Phi}_{i,j}} \; (\, i , j \in I \,) \, $,  so that  $ \, q^{\scriptscriptstyle \Phi}_{i,i} = e^{\hbar \, p^{\scriptscriptstyle \Phi}_{i,i}} = e^{\hbar \, p_{i,i}} = e^{\hbar \, 2 d_i} = q_i^{\,2} \, $,  \,as well as  $ \; k^{\scriptscriptstyle \Phi}_{i,j} := q^{\scriptscriptstyle \Phi}_{i,j} \, q^{\scriptscriptstyle \Phi}_{j,i} = e^{\hbar \, (p^{\scriptscriptstyle \Phi}_{i,j} - p^{\scriptscriptstyle \Phi}_{j,i})} \; (\, i , j \in I \,) \, $  ---   and also
  $$  \big[ E^{\scriptscriptstyle \Phi}_i , E^{\scriptscriptstyle \Phi}_j \big]_{k^{\scriptscriptstyle \Phi}_{ij}} \, = \, 0  \;\; ,  \qquad
      \big[ F^{\scriptscriptstyle \Phi}_i , F^{\scriptscriptstyle \Phi}_j \big]_{k^{\scriptscriptstyle \Phi}_{ji}} \, = \, 0
   \qquad \qquad   \text{if} \quad  a_{i,j} = 0  $$
 as well as all other relations listed in  Definition \ref{def: FoMpQUESA},  namely  \eqref{eq: rel2}  through  \eqref{eq: rel6}   --- including their ``counterparts'' with the  $ F_\ell $'s  replacing the  $ E_\ell $'s  throughout.  Indeed, apart from the first two lines (which follow from direct approach), all the other formulas follows from iterated applications of  \eqref{eq: Phi change q-bracket x E}   --- that is
  $ \; {\big[ E_i \, , E_j \big]}_\nu  = \,  {\big( k_{i,j}^\Phipicc \big)}^{-1/2} \, \L_{\Phipicc,i}^{+1} \, \L_{\Phipicc,j}^{+1} \, {\big[ E_i^\Phipicc , E_j^\Phipicc \big]}_{\nu \, k_{i{}j}^\Phipicc} \; $
 where the left-hand side now is to be read inside  $ \uRpPhg $  ---   and  \eqref{eq: Phi change q-bracket x F}  alike, just sticking to the very same argument that was followed in the proof of  Proposition \ref{prop: deform-Yamane=FoMpQUESA}.
 \vskip5pt
   Third, the Hopf operators on the ``twisted'' generators read  (for  $ \, \ell \in I \, $,  $ T \in \lieh \, $)
  $$  \begin{matrix}
   \Delta^{\scriptscriptstyle \F_\Phi}\big(E^{\scriptscriptstyle \Phi}_\ell\big)
\, = \,  E^{\scriptscriptstyle \Phi}_\ell \otimes 1 \, + \, e^{+\hbar\,T^+_{{\scriptscriptstyle \Phi},\ell}} \otimes E^{\scriptscriptstyle \Phi}_\ell  \; ,   &   \hskip1pt
   \SS^{\scriptscriptstyle \F_\Phi}\big(E^{\scriptscriptstyle \Phi}_\ell\big)  \, = \,  - e^{-\hbar\,T^+_{{\scriptscriptstyle \Phi},\ell}} E^{\scriptscriptstyle \Phi}_\ell  \; ,   &   \hskip1pt
  \epsilon^{\scriptscriptstyle \F_\Phi}\big(E^{\scriptscriptstyle \Phi}_\ell\big) \, = \, 0  \\
   \Delta^{\scriptscriptstyle \F_\Phi}\big(T\big)  \, = \,  T \otimes 1 \, + \, 1 \otimes T  \; ,   &   \hskip1pt
   \SS^{\scriptscriptstyle \F_\Phi}\big(T\big)  \, = \,  - T  \; ,   &   \hskip1pt
   \epsilon^{\scriptscriptstyle \F_\Phi}\big(T\big)  \, = \,  0  \\
   \Delta^{\scriptscriptstyle \F_\Phi}\big(F^{\scriptscriptstyle \Phi}_\ell\big)  \, = \,  F^{\scriptscriptstyle \Phi}_\ell \otimes e^{-\hbar\,T^-_{{\scriptscriptstyle \Phi},\ell}} \, + \, 1 \otimes F^{\scriptscriptstyle \Phi}_\ell  \; ,   &   \hskip1pt
   \SS^{\scriptscriptstyle \F_\Phi}\big(F^{\scriptscriptstyle \Phi}_\ell\big)  \, = \, - F^{\scriptscriptstyle \Phi}_\ell \, e^{+\hbar \, T^-_{{\scriptscriptstyle \Phi},\ell}}  \; ,   &   \hskip1pt
   \epsilon^{\scriptscriptstyle \F_\Phi}\big(F^{\scriptscriptstyle \Phi}_\ell\big)  \, = \,  0
 \end{matrix}  $$
 \vskip5pt
   At last, it is now immediate to check that mapping
  $$  E_\ell \, \mapsto \, E^{\scriptscriptstyle \Phi}_\ell := \, \L_{\Phi, \ell}^{-1} \, E_\ell \; ,  \quad  T \, \mapsto \, T \; ,  \quad  F_\ell \, \mapsto \, F^{\scriptscriptstyle \Phi}_\ell := \, F_\ell \, \K_{\Phi, \ell}^{+1}   \eqno  (\; \forall \;\; \ell \in I \, , \, T \in \lieh \,)  $$
 does indeed provide a well-defined isomorphism of (topological)
 superalgebras that is in fact also one of Hopf superalgebras, q.e.d.
\end{proof}

\vskip9pt

   As a matter of fact,  \textsl{the previous result can be somehow reversed},
   as the following shows: in particular, loosely speaking, we end up
   finding that  \textit{every straight small FoMpQUESA can be realized as a
   toral twist deformation of the ``standard'' Yamane's QUESA}
   ---  cf.\ claim  \textit{(c)\/}  in  Theorem \ref{thm: uRpPhg=twist-uhg}  here below.

\vskip13pt

\begin{theorem}  \label{thm: uRpPhg=twist-uhg}
 With assumptions as above, let  $ P $  and  $ P' $  be two matrices
 of Cartan type with the same associated Cartan matrix $ A \, $,
 \,namely  $ \, P_s = P'_s \, $.
 \vskip3pt
   (a)\,  Let  $ \R $  be a  \textsl{straight}  realization of  $ P $  and let
   $ \uRpPhg $  be the associated FoMpQUESA.  Then there is a  \textsl{straight}  realization  $ \check{\R}' $  of
   $ P' $  and a matrix
   $ \, \Phi \in \! \lieso_t\big(\kh\big) \, $  such that for the
   associated twist element  $ \F_\Phi $  as in
   \eqref{eq: Resh-twist_F-uPhgd}  we have
  $$
  U_{\!P'\!,\,\hbar}^{\check{\R}'\!,\,p}(\lieg)  \; \cong \;  {\big(\, \uRpPhg \big)}^{\F_\Phi}
  $$
 In a nutshell, if  $ \, P'_s = P_s \, $  then from any
 \textsl{straight}  FoMpQUESA over  $ P $  we can obtain by toral twist deformation a
 \textsl{straight}  FoMpQUESA (of the same rank) over  $ P' $.
                                                                  \par
   Conversely, if  $ \, \R' $  is any  \textsl{straight}
   realization of  $ P' $  and  $  U_{\!P'\!,\,\hbar}^{\R'\!,\,p}(\lieg) $
   is the associated FoMpQUESA, then there exists a  \textsl{straight}
   realization  $ \hat{\R} $  of  $ P $  and a matrix
   $ \, \Phi \in \! \lieso_t\big(\kh\big) \, $
   such that for the associated twist element  $ \F_\Phi $
   as in  \eqref{eq: Resh-twist_F-uPhgd}  we have
  $$
  U_{\!P'\!,\,\hbar}^{\R'\!,\,p}(\lieg)  \; \cong \;
  {\big(\, U_{\!P,\,\hbar}^{\hat{\R},\,p}(\lieg) \big)}^{\F_\Phi}
  $$
 \vskip3pt
   (b)\,  Let  $ \R $  and  $ \R' $  be  \textsl{straight small}
   realizations of  $ P $  and  $ P' $  respectively, with
   $ \, \rk(\R) = \rk(\R') = t\, $,  and let  $ \uRpPhg $  and
   $ U_{\!P'\!,\,\hbar}^{\R'\!,\,p}(\lieg) $  be the associated FoMpQUESAs.
   Then there exists a matrix  $ \, \Phi \in \! \lieso_t\big(\kh\big) \, $  such that for
%
%
 $ \F_\Phi $  as in  \eqref{eq: Resh-twist_F-uPhgd}  we have
  $$  U_{\!P'\!,\,\hbar}^{\R'\!,\,p}(\lieg)  \; \cong \;  {\big(\, \uRpPhg \big)}^{\F_\Phi}  $$
 In short, if  $ \, P'_s = P_s \, $,  any  \textsl{straight small}
 FoMpQUESA over  $ P' $  is isomorphic to a toral twist deformation
 of any  \textsl{straight small}  FoMpQUESA over  $ P $  of same rank.
 \vskip3pt
   (c)\,  Every  \textsl{straight small}  FoMpQUESA is
   isomorphic to some toral twist deformation of Yamane's standard
   FoMpQUESA (over  $ \, DA = P_s \, $)  of the same rank.
 \vskip3pt
   (d)\,  Similar, parallel statements hold true for the Borel FoMpQUESAs.
\end{theorem}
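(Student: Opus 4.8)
The plan is to prove Theorem \ref{thm: uRpPhg=twist-uhg} by reducing everything to the combinatorial facts about twist--equivalence of multiparameter matrices and realizations established in \S \ref{subsec: twisted-realiz}, combined with Theorem \ref{thm: twist-uRpPhg=new-uRpPhg}, exactly mimicking the strategy used for MpLSbA's in Theorem \ref{thm: MpLSbA=twist-Yamane's}. The point is that all the analytic work (computing twisted coproducts, antipodes and the change--of--generators isomorphism) has already been done in Theorem \ref{thm: twist-uRpPhg=new-uRpPhg}; what remains is purely a matter of producing the right antisymmetric matrix $\Phi$ and checking that the realizations have the required properties.

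For part \textit{(a)}, first I would observe that since $P_s = P'_s$ and $\R$ is a straight realization of $P$ of rank $t$, after possibly enlarging $\lieh$ we may assume $t \geq 3n - \rk(P_s)$; then Proposition \ref{prop: realiz=twist-standard}\textit{(a)} (with the roles of $P,P'$ as needed) yields a matrix $\Phi \in \lieso_t(\kh)$ with $P' = P_{\scriptscriptstyle \Phi}$ such that $\R_\Phi$ is again straight --- set $\check{\R}' := \R_\Phi$. Now Theorem \ref{thm: twist-uRpPhg=new-uRpPhg} gives precisely the Hopf isomorphism $U_{\!P'\!,\,\hbar}^{\check{\R}'\!,\,p}(\lieg) = U_{\!P_\Phi,\,\hbar}^{\R_{\scriptscriptstyle \Phi},\,p}(\lieg) \cong {\big( \uRpPhg \big)}^{\F_\Phi}$, which is the first assertion. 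For the converse, starting from a straight realization $\R'$ of $P'$ one applies the same argument with $P$ and $P'$ interchanged: there is $\Psi \in \lieso_t(\kh)$ with $P = P'_{\scriptscriptstyle \Psi}$ and $\R'_{\Psi}$ straight; then set $\hat{\R} := \R'_{\Psi}$ and $\Phi := -\Psi$, so that $P' = P_{\scriptscriptstyle \Phi}$ and $\R' = {\big( \hat{\R} \big)}_{\scriptscriptstyle \Phi}$, and invoke Theorem \ref{thm: twist-uRpPhg=new-uRpPhg} once more.

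Part \textit{(b)} follows from \textit{(a)} together with the uniqueness of straight small realizations of a fixed rank (Proposition \ref{prop: exist-realiz's_small}) and the functoriality Corollary \ref{cor: isom_R -> isom_uRPhg}: by Lemma \ref{lemma: twist=sym} (or directly Proposition \ref{prop: realiz=twist-standard}\textit{(b)}) there is $\Phi \in \lieso_t(\Bbbk)$ with $\R' \cong \R_\Phi$, and then $U_{\!P'\!,\,\hbar}^{\R'\!,\,p}(\lieg) \cong U_{\!P_\Phi,\,\hbar}^{\R_\Phi,\,p}(\lieg) \cong {\big( \uRpPhg \big)}^{\F_\Phi}$. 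Part \textit{(c)} is the special case of \textit{(b)} in which one takes for $\uRpPhg$ Yamane's standard FoMpQUESA over $P_{\scriptscriptstyle A} = DA = P_s$, whose defining realization (with $T_i^+ = T_i^-$) is straight and small by construction --- note $\rk(P_s)$--many coroot relations are forced, so this realization indeed has the same rank as the given one once we arrange that. Part \textit{(d)} is obtained by running the identical argument on the Borel subsuperalgebras, using the Borel version of Theorem \ref{thm: twist-uRpPhg=new-uRpPhg} (the isomorphisms $f^{\scriptscriptstyle \Phi}_{\!\scriptscriptstyle P,\pm}$ stated there) and the fact that the twist element $\F_\Phi$ lives in $U_\hbar(\lieh) \,\widehat{\otimes}\, U_\hbar(\lieh) \subseteq \uRpPhbpm \,\widehat{\otimes}\, \uRpPhbpm$, so that the deformation restricts to the Borel part; one also uses that $\lieh$, the $E_i$'s (resp.\ $F_i$'s) generate $\uRpPhbpm$ and that the change of generators $E_i \mapsto \L_{\Phi,i}^{-1} E_i$ (resp.\ $F_i \mapsto F_i \K_{\Phi,i}^{+1}$) stays inside the Borel subsuperalgebra.

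The main obstacle --- really the only genuine subtlety --- is making sure, in \textit{(b)} and \textit{(c)}, that the rank condition $t \geq 3n - \rk(P_s)$ needed to invoke Proposition \ref{prop: realiz=twist-standard} is compatible with the prescribed rank $\rk(\R) = \rk(\R')$: for small realizations the correct bound is the milder $t \geq 2n - \rk(P_s)$ of Proposition \ref{prop: exist-realiz's_small} and Proposition \ref{prop: realiz=twist-standard}\textit{(b)}, and one must cite that version rather than \textit{(a)} to avoid spuriously enlarging $\lieh$. The remaining bookkeeping --- that a twist deformation of a straight, resp.\ small, FoMpQUESA is again straight, resp.\ small (so the subclasses are stable) --- is immediate from Proposition \ref{prop: twist-realizations}\textit{(b)}, which says exactly that $\R_\Phi$ inherits straightness and smallness from $\R$.
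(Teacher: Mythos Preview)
Your proposal is correct and follows essentially the same route as the paper: reduce everything to Theorem \ref{thm: twist-uRpPhg=new-uRpPhg} plus the realization--level results (Proposition \ref{prop: realiz=twist-standard} for \textit{(a)} and \textit{(b)}, specializing to $P_{\scriptscriptstyle A}=DA$ for \textit{(c)}, and noting the twist lives in the Cartan part for \textit{(d)}). The paper's proof is in fact terser than yours---it does not spell out the converse in \textit{(a)} nor the Borel argument in \textit{(d)}---but the logic is identical. One small caution: your phrase ``after possibly enlarging $\lieh$'' in \textit{(a)} is not quite right, since enlarging $\lieh$ changes the given $\uRpPhg$; the paper (like Proposition \ref{prop: realiz=twist-standard}\textit{(a)}) simply takes the rank hypothesis $t\geq 3n-\rk(P_s)$ as implicit in ``assumptions as above'', so you should cite the proposition directly rather than modify $\R$.
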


\pf
 \textit{(a)}\,  By  Theorem \ref{thm: twist-uRpPhg=new-uRpPhg}
 it is enough to find  $ \, \Phi \in \lieso_t(\Bbbk)  \, $  such that
 $ \, P' = P_{\scriptscriptstyle \Phi} \, $,  \,that is
 $ \; P' = P - \mathfrak{A} \, \Phi \, \mathfrak{A}^{\,\scriptscriptstyle T} \, $;
 \,but this is guaranteed by Proposition \ref{prop: realiz=twist-standard}\textit{(a)}.
 \vskip5pt
   \textit{(b)}\,  This follows directly from claim  \textit{(a)\/}
   and  Proposition \ref{prop: realiz=twist-standard}\textit{(b)}.
 \vskip5pt
   \textit{(c)}\,  This follows from  \textit{(b)},  with
   $ U_{\!P'\!,\,\hbar}^{\R'\!,\,p}(\lieg) $  the given straight small FoMpQUESA and
   $ \uRpPhg $  the standard FoMpQUESA
%
%
 over  $ \, P := DA \, $
 as in Yamane's definition.
 \vskip5pt
   \textit{(d)}\,  This is clear from definitions.
\epf

\vskip9pt

\begin{obs}  \label{obs: parameter_(P,Phi)}
 Theorems \ref{thm: twist-uRpPhg=new-uRpPhg}  and  \ref{thm: uRpPhg=twist-uhg}
 have the following interpretation.  Our FoMpQUESAs  $ \, \uRpPhg \, $
 are quantum objects depending on the multiparameter  $ P \, $;
 but when we perform onto  $ \, \uRpPhg \, $  a deformation by twist as
 in  \S \ref{twist-uPhgd},  the output
 $ \, {U_{\!P,\hskip0,7pt\hbar,\Phi}^{\R,\,p}(\hskip0,5pt\lieg)}
 := {\big(\, \uRpPhg \big)}^{\F_\Phi} \, $
 seemingly depends on  \textsl{two\/}  multiparameters, namely  $ P $
 \textsl{and\/}  $ \Phi \, $.  Thus all these
 $ {U_{\!P,\hskip0,7pt\hbar,\Phi}^{\R,\,p}(\hskip0,5pt\lieg)} $'s form a seemingly
 \textsl{richer\/}  family of ``twice-multiparametric'' formal QUESAs.
 Nevertheless,  Theorem \ref{thm: twist-uRpPhg=new-uRpPhg}  above proves
 that this family actually  \textsl{coincides\/}  with the family of all FoMpQUESAs,
 although the latter seems a priori smaller.
                                                              \par
   In short,  Theorems \ref{thm: twist-uRpPhg=new-uRpPhg}  and  \ref{thm: uRpPhg=twist-uhg}
   show the following.  The dependence of the Hopf structure of
   $ {U_{\!P,\hskip0,7pt\hbar}^{\R,\Phi}(\hskip0,5pt\lieg)} $  on the
   ``double parameter''  $ (P\,,\Phi) $  is ``split'' in the algebraic structure
   (ruled by  $ P $)  and in the coalgebraic structure  (ruled by  $ \Phi $);
   \,now  Theorems \ref{thm: twist-uRpPhg=new-uRpPhg}  and  \ref{thm: uRpPhg=twist-uhg}
   enable us to ``polarize'' this dependence so to codify it either entirely
   within the algebraic structure (while the coalgebraic one is cast into a ``canonical form''),
   so that the single multiparameter  $ P_\Phi $  is enough to describe it,
   or entirely within the coalgebraic structure (with the algebraic one being
   reduced to the ``standard'' Yamane's one), hence one multiparameter alone, say  $ \Phi_P \, $,
   is enough.
\end{obs}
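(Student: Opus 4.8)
The plan is to derive the entire content of the Observation directly from the two theorems just established, since no fresh computation is required: the statement is a conceptual synthesis rather than a new result. First I would record that the assignment $ (P,\Phi) \mapsto U_{\!P,\hskip0,7pt\hbar,\Phi}^{\R,\,p}(\lieg) := {\big(\, \uRpPhg \big)}^{\F_\Phi} $ does land inside the class of FoMpQUESAs: this is exactly the content of Theorem \ref{thm: twist-uRpPhg=new-uRpPhg}, which supplies the isomorphism of topological Hopf superalgebras $ f^{\scriptscriptstyle \Phi}_{\scriptscriptstyle P} : U_{\!P_\Phi,\,\hbar}^{\R_{\scriptscriptstyle \Phi},\,p}(\lieg) \xrightarrow{\;\cong\;} {\big(\, \uRpPhg \big)}^{\F_\Phi} $. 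Taking $ \Phi = 0 $ (so that $ \F_0 = 1 $, $ P_0 = P $ and $ \R_0 = \R $) shows conversely that every FoMpQUESA already occurs as such a ``twice-multiparametric'' object; hence the two families coincide, and the apparently richer family is no larger than the original one. This settles the first assertion.

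For the splitting assertion I would invoke the very definition of twist deformation recalled in \S \ref{defs_Hopf-algs}: passing from $ H $ to $ H^{\F} $ leaves the product, unit and counit untouched and alters only the coproduct and antipode. Applied to $ H = \uRpPhg $, this says that the superalgebra structure of $ U_{\!P,\hskip0,7pt\hbar,\Phi}^{\R,\,p}(\lieg) $ is \emph{literally} that of $ \uRpPhg $, governed by $ P $ through the defining relations \eqref{eq: rel T-E / T-T / T-F} through \eqref{eq: rel6}, whereas all dependence on $ \Phi $ is confined to $ \Delta^{\scriptscriptstyle \F_\Phi} $ and $ \SS^{\scriptscriptstyle \F_\Phi} $, whose explicit shapes are displayed in the proof of Theorem \ref{thm: twist-uRpPhg=new-uRpPhg}. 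Thus the ``double parameter'' $ (P,\Phi) $ genuinely distributes itself as $ P $ on the superalgebra side and $ \Phi $ on the supercoalgebra side.

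The two polarizations are then read off from the two theorems. To cast all the data onto the algebraic structure, I would use the change of generators $ E_\ell \mapsto E^{\scriptscriptstyle \Phi}_\ell = \L_{\Phi,\ell}^{-1} E_\ell $, $ F_\ell \mapsto F^{\scriptscriptstyle \Phi}_\ell = F_\ell \, \K_{\Phi,\ell}^{+1} $ of Theorem \ref{thm: twist-uRpPhg=new-uRpPhg}: in these twisted generators the coproduct reverts to the canonical, parameter-free form \eqref{eq: coprod_x_uRpPhg_2nd-time} (now with the twisted coroots $ T^\pm_{{\scriptscriptstyle \Phi},\ell} $), while the defining relations acquire the deformed matrix $ P_\Phi $, so that the single multiparameter $ P_\Phi $ suffices. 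Dually, to cast all the data onto the coalgebraic structure I would appeal to Theorem \ref{thm: uRpPhg=twist-uhg}\textit{(c)}: every straight small FoMpQUESA is isomorphic to a toral twist $ {\big(\, U_{\!P_A,\,\hbar}^{\R,\,p}(\lieg) \big)}^{\F_{\Phi_P}} $ of Yamane's standard QUESA over $ \, DA = P_s \, $, whose algebra is the fixed Yamane one and whose coalgebra carries all the dependence through the single twist parameter $ \Phi_P $.

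Since every step is a direct citation of an already-proved statement, there is no genuine obstacle; the only point needing care is bookkeeping, namely checking that the two polarizing parameters $ P_\Phi $ and $ \Phi_P $ are tied to the original data by formula \eqref{def-P_Phi} and its inverse, whose solvability is exactly what Proposition \ref{prop: realiz=twist-standard} guarantees, so that the ``canonical form'' in each polarization is actually attained.
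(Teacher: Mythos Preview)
Your proposal is correct, and in fact it is more detailed than what the paper offers: the Observation carries no proof in the paper at all, being presented as a purely interpretive commentary on Theorems \ref{thm: twist-uRpPhg=new-uRpPhg} and \ref{thm: uRpPhg=twist-uhg}. Your reconstruction of the underlying justification --- invoking the isomorphism $f^{\scriptscriptstyle \Phi}_{\scriptscriptstyle P}$, the trivial case $\Phi = 0$, the general fact that twist deformation leaves the algebra structure intact, and the two ``polarizations'' via the change of generators and via Theorem \ref{thm: uRpPhg=twist-uhg}\textit{(c)} --- is exactly the reasoning the Observation is summarizing, so there is nothing to correct.
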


\vskip7pt

\begin{rmk}  \label{rmk: split FoMpQUESAs not-stable}
 Since the subclass of  \textsl{split\/}  realizations is  \textsl{not closed\/}  under twist
 (cf.\ the end of  \S \ref{further_stability}),
 the subclass of all ``split'' FoMpQUESAs is not closed too under twist deformation;
 this is a quantum analogue of  Observations \ref{obs: parameter_(P,Phi) x MpLbA's}\textit{(b)}.
\end{rmk}

\vskip15pt

\subsection{Deformations of FoMpQUESAs by toral  polar--$ 2 $--cocycles}
\label{subsec: q2cocyc-def_FoMpQUESAs}
 {\ }
 \vskip9pt
   We discuss now suitable deformations of FoMpQUESAs by
   polar--$ 2 $--cocycles (of ``toral type''), proving that they are again FoMpQUESAs.

\vskip9pt

\begin{free text}  \label{q2cocyc-deform_uPhgd}
 {\bf Deformations of  $ \uRpPhg $  by toral  polar--$ 2 $--cocycles.}
 In  \S \ref{defs_Hopf-algs}  we introduced the notion of ``$ 2 $--cocycle''
 and the procedure of deformation by a  $ 2 $--cocycle  for a given Hopf superalgebra.
 In the setup of QUESAs, the construction has an important variation,
 leading to the notion of  ``polar--$ 2 $--cocycle''
 for which the procedure of deformation mentioned above still applies.
 This theory is developed in  \cite{GG4}  for the case of quantized universal enveloping algebras (=QUEAs),
 i.e.\ those QUESAs whose super-structure is trivial (namely, their odd part is zero).
 The task of extending the results in  \cite{GG4}
 --- stated and proved for QUEAs: see, in particular,  \S 3.3 therein ---
 to the setup of QUESAs is harmless, though technical.
 However, we do not need that: instead, we will introduce special
 ``polar--$ 2 $--cocycles''  for our FoMpQUESAs (a special class of QUESAs, indeed)
 and then we will show by direct check that the procedure of deformation by
 $ 2 $--cocycles  still work if we use instead these  \textsl{polar}--$ 2 $--cocycles.
 Thus we will be applying a general fact, but proving its validity ``on the fly''
 instead of relying on a general, theoretical proof.
\end{free text}

\vskip9pt

\begin{free text}  \label{2-cocycle-U_h(h)}
 {\bf Polar--$ 2 $--cocycles  of  $ U_{\!P,\hbar}(\lieh) \, $.}
   Let $(A,p)$ be a Cartan super-datum. Fix again
   $ \, P := {\big(\, p_{i,j} \big)}_{i, j \in I} \in M_n\big( \kh \big) \, $  of
 Cartan type with associated Cartan matrix  $ A \, $,  a realization
 $ \; \R \, := \, \big(\, \lieh \, , \Pi \, , \Pi^\vee \,\big) \; $
 of it and the (topological) Hopf algebra  $ \uRpPhg \, $,
 as in  \S \ref{sec: Cartan-data_realiz's}  and  \S \ref{subsec: FoMpQUESA's}.
 We consider special 2--cocycles of  $ \, \uRpPhg \, $,
 called ``toral'' as they are induced from the quantum torus.
 To this end, like in  \S \ref{twist-uPhgd},  we fix in  $ \lieh $  a  $ \kh $--basis
 $ \, {\big\{ H_g \big\}}_{g \in \G} \, $,
 where  $ \G $  is an index set with  $ \, |\G| = \rk(\lieh) = t \, $.
 \vskip5pt
   Like in  \S \ref{2-cocycle-deform.'s x mpmatr.'s & realiz.'s},
   we fix an antisymmetric,  $ \kh $---bilinear  map
   $ \, \chi : \lieh \times \lieh \relbar\joinrel\longrightarrow \kh \, $,
   \,that corresponds to some
   $ \, X = {\big(\, \chi_{g{}\gamma} \big)}_{g , \gamma \in \G} \in \lieso_t\big(\kh\big) \, $
   via  $ \, \chi_{g{}\gamma} = \chi(H_g\,,H_\gamma) \, $.
   We also consider the antisymmetric matrix
   $ \, \mathring{X} := {\Big(\, \mathring{\chi}_{i{}j} =
   \chi\big(\,T_i^+\,,T_j^+\big) \Big)}_{i, j \in I} \! \in \lieso_n(\kh) \, $.
   Any such map  $ \chi $  induces uniquely an antisymmetric,  $ \kh $--bilinear  map
  $$  \tilde{\chi}_{\scriptscriptstyle U} \, : \,  U_\hbar(\lieh) \times U_\hbar(\lieh) \relbar\joinrel\relbar\joinrel\relbar\joinrel\longrightarrow \kh  $$
 as follows.  By definition,  $ \, U_\hbar(\lieh) := \uRpPhh \, $  is an  $ \hbar $--adically
 complete topologically free Hopf algebra isomorphic to
 $ \, \widehat{S}_\kh(\lieh) := \widehat{\bigoplus\limits_{n \in \NN} S_\kh^{\,n}(\lieh)} \, $
 --- the  $ \hbar $--adic  completion of the symmetric algebra
 $ \, S_\kh(\lieh) = \bigoplus\limits_{n \in \NN} S_\kh^{\,n}(\lieh) \, $  ---
 hence the following makes sense:

\vskip9pt

\begin{definition}  \label{def: tildechi_U}
 We define  $ \tilde{\chi}_{\scriptscriptstyle U} $  as the unique  $ \kh $--linear
 (hence  $ \hbar $--adically  continuous) map
 $ \; U_\hbar(\lieh) \otimes U_\hbar(\lieh) \,{\buildrel \tilde{\chi}_{\scriptscriptstyle U} \over {\relbar\joinrel\longrightarrow}}\, \kh \; $
 such that (with identifications as above)
  $$  \displaylines{
   \tilde{\chi}_{\scriptscriptstyle U}(z,1) := \epsilon(z) =: \tilde{\chi}_{\scriptscriptstyle U}(1,z)  \;\; ,
   \quad  \tilde{\chi}_{\scriptscriptstyle U}(x,y) := \chi(x,y)  \quad
   \forall \;\; z \in  \widehat{S}_\kh(\lieh) \, , \; x, y \in  S_\kh^{\,1}(\lieh)  \cr
   \hfill \qquad  \tilde{\chi}_{\scriptscriptstyle U}(x,y) := 0   \quad \qquad  \forall \;\; x \in S_\k^{\,r}(\lieh) \, ,
   \; y \in S_\k^{\,s}(\lieh) \, : \, r, s \geq 1 \, , \; r+s > 2   \hfill  \diamondsuit  }  $$
\end{definition}

\vskip9pt

   By construction,  \textsl{$ \tilde{\chi}_{\scriptscriptstyle U} $  is a normalized Hochschild 2--cocycle on  $ U_\hbar(\lieh) \, $},  \,that is
  $$  \epsilon(x) \,\tilde{\chi}_{\scriptscriptstyle U}(y,z) \, - \, \tilde{\chi}_{\scriptscriptstyle U}(xy,z) \,
  + \, \tilde{\chi}_{\scriptscriptstyle U}(x,yz) \,
  - \, \tilde{\chi}_{\scriptscriptstyle U}(x,y) \, \epsilon(z) \; = \; 0
  \qquad  \forall \;\; x, y, z \in U_{\!P,\hbar}^{\R}(\lieh)  $$

\vskip5pt

   Given two linear maps
   $ \, \eta , \vartheta : U_\hbar(\lieh) \otimes U_\hbar(\lieh) \relbar\joinrel\longrightarrow \kh \, $,
   \,one defines the  {\it convolution product\/}
   $ \; \eta \ast \vartheta : {U_\hbar(\lieh)}^{\otimes 2} \relbar\joinrel\relbar\joinrel\longrightarrow \kh \; $
 by  $ \; (\eta \ast \vartheta)(x \otimes y) \, := \, \eta(x_{(1)},y_{(1)}) \, \vartheta(x_{(2)},y_{(2)}) \; $
   for all  $ \, x, y \in U_\hbar(\lieh) \, $
   --- using the coalgebra structure of
   $ \, {U_\hbar(\lieh)}^{\otimes 2}\cong {\widehat{S}_\kh(\lieh)}^{\otimes 2}
   \cong \widehat{S}_\kh(\lieh \oplus \lieh) \, $.  Then by  $ \, \eta^{\ast\,m} \, $
   we denote the  $ m $--th  power with respect to the convolution
   product of any map  $ \eta $  as above; in particular, we set  $ \, \eta^{\ast\,0} := \epsilon \otimes \epsilon \, $.
                                                              \par
   The following result, proved by direct computation, describes the powers of  $ \tilde{\chi}_{\scriptscriptstyle U} \, $:
\end{free text}

\vskip5pt

\begin{lema}  \label{lem: tildechi}
For all  $ \, H_+ ,H_- \in \lieh \, $  and  $ \, k, \ell, m \in \NN_+ \, $,  \,we have
  $$  \quad   { \tilde{\chi}_{\scriptscriptstyle U}}^{\,\ast\,m}\big(H_+^{\,k},H_-^{\,\ell}\,\big) \; = \;
   \begin{cases}
      \begin{array}{lr}
  \delta_{k,m} \, \delta_{\ell,m} \, {\big(m!\big)}^2 \, \chi(H_+,H_-)^m  &  \quad  \text{for} \;\;\;\; m \geq 1 \; ,  \quad \hskip7pt \qquad \;   \hfill  \phantom{\square}  \\
  \delta_{k,0} \, \delta_{\ell,0} \,  &  \quad  \text{for} \;\;\;\; m = 0 \; .
  \quad \hskip7pt \qquad \;   \hfill  \square
      \end{array}
   \end{cases}  $$
\end{lema}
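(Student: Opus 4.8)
The statement is a purely computational identity about the convolution powers of $\tilde{\chi}_{\scriptscriptstyle U}$ evaluated on divided-power-free elements $H_+^{\,k}\otimes H_-^{\,\ell}$ of $U_\hbar(\lieh)\cong\widehat S_\kh(\lieh)$.  The plan is to prove it by induction on $m$, using at each step the coalgebra structure of $\widehat S_\kh(\lieh)$ (where every element of $\lieh$ is primitive) together with the vanishing condition built into  Definition \ref{def: tildechi_U}.

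First I would record the base cases.  For $m=0$ we have $\tilde\chi_{\scriptscriptstyle U}^{\,\ast\,0}=\epsilon\otimes\epsilon$, and since $\epsilon\big(H_+^{\,k}\big)=\delta_{k,0}$, $\epsilon\big(H_-^{\,\ell}\big)=\delta_{\ell,0}$ in the symmetric (super)algebra, this gives exactly $\delta_{k,0}\,\delta_{\ell,0}$.  For $m=1$ the claim is that $\tilde\chi_{\scriptscriptstyle U}\big(H_+^{\,k},H_-^{\,\ell}\big)=\delta_{k,1}\,\delta_{\ell,1}\,\chi(H_+,H_-)$; this is immediate from the defining clauses of $\tilde\chi_{\scriptscriptstyle U}$: it is $\epsilon$-like when one argument is scalar (forcing $k=0$ or $\ell=0$ to contribute $0$ unless the other is $0$ too, but then the $k=1$ case gives $0$), it equals $\chi(H_+,H_-)$ when $k=\ell=1$, and it is $0$ whenever $k,\ell\geq1$ with $k+\ell>2$, i.e.\ whenever $(k,\ell)\neq(1,1)$ in that range.

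Next I would carry out the inductive step.  Writing $\Delta\big(H_\pm^{\,k}\big)=\sum_{r=0}^{k}\binom{k}{r}H_\pm^{\,r}\otimes H_\pm^{\,k-r}$ (valid in $\widehat S_\kh(\lieh)$ because $H_\pm$ is primitive), one computes
\begin{align*}
\tilde\chi_{\scriptscriptstyle U}^{\,\ast\,(m+1)}\big(H_+^{\,k},H_-^{\,\ell}\big)
 &= \sum_{r=0}^{k}\sum_{s=0}^{\ell}\binom{k}{r}\binom{\ell}{s}\,
 \tilde\chi_{\scriptscriptstyle U}^{\,\ast\,m}\big(H_+^{\,r},H_-^{\,s}\big)\,
 \tilde\chi_{\scriptscriptstyle U}\big(H_+^{\,k-r},H_-^{\,\ell-s}\big)\,.
\end{align*}
By the base case $m=1$, the second factor vanishes unless $k-r=\ell-s=1$, so only the term $r=k-1$, $s=\ell-1$ survives, contributing $\binom{k}{1}\binom{\ell}{1}=k\,\ell$ times $\tilde\chi_{\scriptscriptstyle U}^{\,\ast\,m}\big(H_+^{\,k-1},H_-^{\,\ell-1}\big)\,\chi(H_+,H_-)$.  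Applying the inductive hypothesis, this equals $k\,\ell\cdot\delta_{k-1,m}\,\delta_{\ell-1,m}\,(m!)^2\,\chi(H_+,H_-)^m\cdot\chi(H_+,H_-)$, and since $\delta_{k-1,m}=\delta_{k,m+1}$, $\delta_{\ell-1,m}=\delta_{\ell,m+1}$, and on the support $k=\ell=m+1$ so $k\,\ell=(m+1)^2$, we get $\delta_{k,m+1}\,\delta_{\ell,m+1}\,\big((m+1)!\big)^2\,\chi(H_+,H_-)^{m+1}$, as desired.

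\textbf{Main obstacle.}  The genuine calculation is short; the only delicate point is handling the $\hbar$-adic completion correctly, i.e.\ checking that the convolution product and the coalgebra identifications $U_\hbar(\lieh)^{\otimes 2}\cong\widehat S_\kh(\lieh\oplus\lieh)$ used above are compatible with $\hbar$-adic continuity, so that the finite-sum computation on polynomial elements $H_\pm^{\,k}$ is legitimate and suffices.  This follows since $\tilde\chi_{\scriptscriptstyle U}$ is $\kh$-linear and $\hbar$-adically continuous by construction (Definition \ref{def: tildechi_U}), and $\Delta$ restricts to the usual comultiplication of the (algebraic) symmetric algebra on each graded piece; I would dispatch this with a one-line remark rather than a separate argument.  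A secondary bookkeeping point, worth a remark, is that although $\lieh$ sits inside a \emph{super}algebra, the elements $H_\pm\in\lieh$ are even ($|T|=\zero$ for all $T\in\lieh$), so no sign subtleties enter and the computation is identical to the purely even case.
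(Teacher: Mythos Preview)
Your inductive argument is the natural one (and is exactly the ``direct computation'' the paper alludes to), but it contains a genuine gap in the inductive step.  You claim that the second factor $\tilde\chi_{\scriptscriptstyle U}\big(H_+^{\,k-r},H_-^{\,\ell-s}\big)$ vanishes unless $k-r=\ell-s=1$.  That is not what Definition~\ref{def: tildechi_U} says: since $\tilde\chi_{\scriptscriptstyle U}(z,1)=\epsilon(z)$, the case $k-r=\ell-s=0$ gives $\tilde\chi_{\scriptscriptstyle U}(1,1)=1$, so the term $(r,s)=(k,\ell)$ also survives and contributes $\tilde\chi_{\scriptscriptstyle U}^{\,\ast\,m}\big(H_+^{\,k},H_-^{\,\ell}\big)$ to the sum.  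With this extra term your recursion becomes
\[
\tilde\chi_{\scriptscriptstyle U}^{\,\ast\,(m+1)}\big(H_+^{\,k},H_-^{\,\ell}\big)
=\tilde\chi_{\scriptscriptstyle U}^{\,\ast\,m}\big(H_+^{\,k},H_-^{\,\ell}\big)
+k\,\ell\;\tilde\chi_{\scriptscriptstyle U}^{\,\ast\,m}\big(H_+^{\,k-1},H_-^{\,\ell-1}\big)\,\chi(H_+,H_-)\,,
\]
which does \emph{not} reproduce the stated formula: e.g.\ for $k=\ell=m$ it gives $(m!)^2\chi^m\neq0$, whereas the lemma claims~$0$.  (Equivalently: on the group-likes $e^{tH_+},e^{sH_-}$ one has $\tilde\chi_{\scriptscriptstyle U}=1+ts\,\chi$, hence $\tilde\chi_{\scriptscriptstyle U}^{\,\ast\,m}=(1+ts\,\chi)^m$, whose coefficient extraction yields $\delta_{k,\ell}\,(k!)^2\binom{m}{k}\chi^k$ rather than $\delta_{k,m}\delta_{\ell,m}(m!)^2\chi^m$.)

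What makes both the lemma and your induction correct is the \emph{normalized} convention $\tilde\chi_{\scriptscriptstyle U}(z,1)=0=\tilde\chi_{\scriptscriptstyle U}(1,z)$ (the usual meaning of ``normalized Hochschild $2$--cochain''): under that convention the $(0,0)$--term genuinely vanishes and your inductive step goes through verbatim.  This is also the normalization needed for Lemma~\ref{lem: chi_U(K,K)} to hold --- with $\tilde\chi_{\scriptscriptstyle U}(1,1)=1$ one would get $\chi_{\scriptscriptstyle U}(K_+,K_-)=e^{1/(2\hbar)}\,e^{\hbar\,2^{-1}\chi(H_+,H_-)}$ instead.  So the fix is not to change your argument but to flag (and use) the correct normalization explicitly, rather than silently assuming it while quoting a definition that says otherwise.
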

%
%

\vskip9pt

\begin{definition}  \label{def: chi_U}
 Keep notation as above.  We define  $ \, \chi_{\scriptscriptstyle U} \, $  as the unique map from  $ \, U_\hbar(\lieh) \!\mathop{\otimes}\limits_\kh\! U_\hbar(\lieh) \, $  to  $ \khp $
 given by the exponentiation of  $ \, \hbar^{-1} \, 2^{-1} \, \tilde{\chi}_{\scriptscriptstyle U} \, $,  i.e.
 \vskip-5pt
  $$  \chi_{\scriptscriptstyle U}  \; := \;  e^{\hbar^{-1} 2^{-1} \tilde{\chi}_{\scriptscriptstyle U}}  \; = \;
  {\textstyle \sum_{m \geq 0}} \, \hbar^{-m} \, {\tilde{\chi}_{\scriptscriptstyle U}}^{\,\ast\,m} \! \Big/ 2^m\,m!   \eqno \diamondsuit  $$
\end{definition}

\vskip9pt

\begin{lema}  \label{lem: chi_U(K,K)}
 The map  $ \, \chi_{\scriptscriptstyle U} $  is a well-defined, normalized,
 $ \khp $--valued  Hopf 2--cocy\-cle for  $ \, U_\hbar(\lieh) := U_{\!P,\hbar}^{\R}(\lieh) \, $,  such that, for all
 $ H_+ \, , H_- \in \lieh \, $  and  $ \, K_\pm := e^{\,\hbar\,H_\pm} \, $,
  $$  \chi_{\scriptscriptstyle U}^{\pm 1}(H_+,H_-) \, = \, \pm \hbar^{-1} \, 2^{-1}  \chi(H_+,H_-)  \quad ,
  \qquad  \chi_{\scriptscriptstyle U}(K_+,K_-) \, = \, e^{\hbar \, 2^{-1} \chi(H_+,H_-)}  $$
\end{lema}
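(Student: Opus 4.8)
The plan is to verify the two stated formulas by reducing everything to the explicit power computation of Lemma \ref{lem: tildechi}, and then to check the Hopf $2$--cocycle identity for $\chi_{\scriptscriptstyle U}$ by an exponentiation argument built on the fact that $\tilde\chi_{\scriptscriptstyle U}$ is a normalized Hochschild $2$--cocycle. First I would record that $\chi_{\scriptscriptstyle U} := e^{\hbar^{-1}2^{-1}\tilde\chi_{\scriptscriptstyle U}}$ is well-defined as a map into $\khp$: each convolution power $\tilde\chi_{\scriptscriptstyle U}^{\,\ast m}$ raises the total symmetric degree to exactly $2m$ (by the degree bookkeeping in Definition \ref{def: tildechi_U}, each factor of $\tilde\chi_{\scriptscriptstyle U}$ only pairs degree-one with degree-one nontrivially), so on any fixed element of $U_\hbar(\lieh)\,\widehat\otimes\,U_\hbar(\lieh)$ only finitely many terms of the series contribute modulo each power of $\hbar$, and the negative powers of $\hbar$ are controlled because the degree-$2m$ part carries a factor $\hbar^{-m}$ against the intrinsic $\hbar^{2m}$-type vanishing coming from $K_\pm=e^{\hbar H_\pm}$. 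Convolution invertibility is automatic since $\hbar^{-1}2^{-1}\tilde\chi_{\scriptscriptstyle U}$ is ``locally nilpotent'' in the convolution algebra after the appropriate completion, with inverse $e^{-\hbar^{-1}2^{-1}\tilde\chi_{\scriptscriptstyle U}}$, and normalization $\chi_{\scriptscriptstyle U}(x,1)=\eps(x)=\chi_{\scriptscriptstyle U}(1,x)$ follows since $\tilde\chi_{\scriptscriptstyle U}$ is normalized and $\eps$ is the convolution unit.

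Next I would prove the cocycle identity. The standard fact here is: if $\tilde\chi$ is a normalized Hochschild $2$--cocycle on a commutative, cocommutative Hopf algebra valued in a commutative ring, then $e^{\tilde\chi}$ (convolution exponential) is a Hopf $2$--cocycle; the point is that $U_\hbar(\lieh)\cong\widehat S_\kh(\lieh)$ is both commutative \emph{and} cocommutative, which makes the convolution algebra on $U_\hbar(\lieh)^{\otimes 2}$ commutative, so exponentials behave multiplicatively and the bilinear Hochschild cocycle condition integrates to the quadratic Sweedler cocycle condition. Concretely I would set $F := \chi_{\scriptscriptstyle U}$ and check
\[
  F(b_{(1)},c_{(1)})\,F(a,b_{(2)}c_{(2)}) \;=\; F(a_{(1)},b_{(1)})\,F(a_{(2)}b_{(2)},c)
\]
by expanding both sides as $e^{(\cdots)}$ of the respective Hochschild expressions and using the Hochschild cocycle relation recalled in \S\ref{2-cocycle-U_h(h)} together with cocommutativity to identify the two exponents; since the ambient ring is commutative, equality of exponents gives equality of exponentials. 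Alternatively, and perhaps more cleanly for the write-up, I would invoke Lemma \ref{lem: tildechi} to note that on the dense subalgebra generated by the $K_\pm=e^{\hbar H_\pm}$ the cocycle identity is a finite, explicit computation, and then extend by $\hbar$-adic continuity.

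Then the two displayed identities drop out. For the first, $\chi_{\scriptscriptstyle U}^{\pm1}(H_+,H_-)$ is computed from $\chi_{\scriptscriptstyle U}^{\pm1}=e^{\pm\hbar^{-1}2^{-1}\tilde\chi_{\scriptscriptstyle U}}=\sum_{m\ge0}(\pm1)^m\hbar^{-m}\tilde\chi_{\scriptscriptstyle U}^{\,\ast m}/(2^m m!)$ evaluated on $H_+\otimes H_-\in S^1_\kh(\lieh)\otimes S^1_\kh(\lieh)$: the $m=0$ term gives $\eps(H_+)\eps(H_-)=0$, the $m=1$ term gives $\pm\hbar^{-1}2^{-1}\tilde\chi_{\scriptscriptstyle U}(H_+,H_-)=\pm\hbar^{-1}2^{-1}\chi(H_+,H_-)$, and all $m\ge2$ terms vanish since $\tilde\chi_{\scriptscriptstyle U}^{\,\ast m}(H_+,H_-)=0$ on degree-$1$ times degree-$1$ by Lemma \ref{lem: tildechi} (which requires $k=\ell=m$). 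For the second, write $K_\pm=e^{\hbar H_\pm}=\sum_{k\ge0}\hbar^k H_\pm^{\,k}/k!$ and compute, using $\Delta$-cocommutativity and Lemma \ref{lem: tildechi},
\[
  \chi_{\scriptscriptstyle U}(K_+,K_-) \;=\; \sum_{m\ge0}\frac{\hbar^{-m}}{2^m m!}\,\tilde\chi_{\scriptscriptstyle U}^{\,\ast m}\!\Big(\sum_{k\ge0}\tfrac{\hbar^k}{k!}H_+^{\,k}\,,\;\sum_{\ell\ge0}\tfrac{\hbar^\ell}{\ell!}H_-^{\,\ell}\Big)
  \;=\; \sum_{m\ge0}\frac{\hbar^{-m}}{2^m m!}\cdot\frac{\hbar^{2m}}{(m!)^2}\,(m!)^2\,\chi(H_+,H_-)^m
  \;=\; e^{\hbar\,2^{-1}\chi(H_+,H_-)},
\]
where in the middle equality only the $k=\ell=m$ diagonal survives. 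The main obstacle I anticipate is not these computations but the careful topological/analytic justification that $\chi_{\scriptscriptstyle U}$ genuinely lands in $\khp$ and is convolution invertible there — i.e.\ bounding the negative $\hbar$-powers uniformly on $\hbar$-adic neighbourhoods — since the ground ring has been enlarged from $\kh$ to $\khp$; once that is in place, everything else is a direct (if bookkeeping-heavy) consequence of Lemma \ref{lem: tildechi} and the Hochschild cocycle property of $\tilde\chi_{\scriptscriptstyle U}$.
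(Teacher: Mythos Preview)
Your proposal is correct, but you should be aware that the paper does not actually prove this lemma: its entire proof reads ``This is just Lemma 5.2.5 in \cite{GG3}.'' So there is nothing to compare against in the present paper itself.

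That said, what you have written is essentially the direct argument one expects to find in \cite{GG3}. Your two displayed computations are right: for $\chi_{\scriptscriptstyle U}^{\pm 1}(H_+,H_-)$ only the $m=1$ term survives by the degree constraint in Lemma~\ref{lem: tildechi}, and for $\chi_{\scriptscriptstyle U}(K_+,K_-)$ the diagonal extraction $k=\ell=m$ collapses the double sum to the exponential series as you wrote. Your argument for the Hopf $2$--cocycle identity --- that on a commutative, cocommutative Hopf algebra the convolution algebra on $H^{\otimes 2}$ is commutative, so the exponential of a normalized Hochschild $2$--cocycle is a Hopf $2$--cocycle --- is the standard one and is sound. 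The only place to be a bit more careful in a final write-up is the well-definedness over $\khp$: your remark that on a fixed pair of elements of bounded symmetric bidegree only finitely many $m$ contribute is the right mechanism, but the phrase ``intrinsic $\hbar^{2m}$-type vanishing coming from $K_\pm=e^{\hbar H_\pm}$'' conflates two separate issues (convergence on general elements versus the specific evaluation on $K_\pm$); it would be cleaner to argue well-definedness purely from the degree filtration on $\widehat S_\kh(\lieh)^{\otimes 2}$, independently of any particular elements.
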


\pf
 This is just Lemma 5.2.5 in  \cite{GG3}.
\epf

\vskip11pt

\begin{free text}  \label{toral-cocycles-uPhgd}
 {\bf Toral  polar--$ 2 $--cocycles of  $ \uRpPhg \, $.}
 The previous construction provides, starting from  $ \chi \, $,  a normalized Hopf\/  $ 2 $--cocycle
 $ \; \chi_{\scriptscriptstyle U} : \uRpPhh \times \uRpPhh
 \relbar\joinrel\longrightarrow \khp \; $.

\vskip5pt

   \textit{We assume now that the map  $ \, \chi $  satisfies the additional requirement\/  \eqref{eq: condition-chi},  in other words we require that  $ \, \chi \in \text{\sl Alt}_{\,\kh}^{\,S}(\lieh) \, $   --- }
 notation of  \S \ref{subsec: 2-cocycle-realiz}.  The latter map canonically induces a  $ \kh $--bilinear  map
 $ \; \overline{\chi} : \overline{\lieh} \times \overline{\lieh}
\relbar\joinrel\longrightarrow \kh \; $,  \;where  $ \, \overline{\lieh} := \lieh \big/ \lies \, $  with
$ \, \lies := \textsl{Span}_\kh\big( {\{\, S_i \,\}}_{i \in I} \big)\, $,  \,given by
$$
\overline{\chi}\,\big(\, T' \! + \lies \, , T'' \! + \lies \,\big)
\, := \, \chi\big(\,T',T''\big)   \qquad  \forall \;\; T' , T'' \in \lieh
$$
   \indent   Now, replaying the construction above but with  $ \overline{\lieh} $  and
   $ \overline{\chi} $  replacing  $ \lieh $  and  $ \chi \, $,
   we construct a normalized Hopf\/  $ 2 $--cocycle
   $ \; \overline{\chi}_{\scriptscriptstyle U} \!: U_{\!P,\hbar}^{\R,\,p}\big(\,\overline{\lieh}\,\big) \times
   U_{\!P,\hbar}^{\R,\,p}\big(\,\overline{\lieh}\,\big) \relbar\joinrel\longrightarrow \khp \; $;
for the latter, the analogue of  Lemma \ref{lem: chi_U(K,K)}  holds true again.  Moreover,
  note that  $ \, U_{\!P,\hbar}^{\R,\,p}\big(\,\overline{\lieh}\,\big) \cong \widehat{S}_\kh\big(\,\overline{\lieh}\,\big) \, $,  \,and,
thanks to  \eqref{eq: condition-chi-chit},
 there exists a Hopf superalgebra epimorphism
 $ \; \pi : \uRpPhg \relbar\joinrel\relbar\joinrel\twoheadrightarrow U_{\!P,\hbar}^{\R,\,p}\big(\,\overline{\lieh}\,\big) \; $
 given by  $ \, \pi(E_i) := 0 \, $,  $ \, \pi(F_i) := 0 \, $   --- for  $ \, i \in I \, $  ---   and
 $ \, \pi(T) := (T + \lies) \, \in \, \overline{\lieh} \subseteq U_{\!P,\hbar}^\R\big(\,\overline{\lieh}\,\big) \, $
 --- for  $ \, T \in \lieh \, $.
 Then we consider
  $$  \sigma_\chi \, := \, \overline{\chi}_{\scriptscriptstyle U} \circ (\pi \times \pi) \, : \, \uRpPhg \times \uRpPhg \relbar\joinrel\relbar\joinrel\relbar\joinrel\relbar\joinrel\twoheadrightarrow \khp  $$
 which is  \textsl{automatically\/}  a normalized,  $ \khp $--valued  Hopf\/  $ 2 $--cocycle  on  $ \uRpPhg \, $.
\end{free text}

\vskip7pt

\begin{definition}   \label{def: sigma_formal}
 We call  ``\textit{polar--2--cocycles\/}  of  \textsl{toral\/}  type'', or ``\textsl{toral}  \textit{polar--$ 2 $--cocy\-cles\/}'',  of  $ \, \uRpPhg $  all normalized Hopf 2--cocycles  $ \, \sigma_\chi $  obtained
 --- from all  $ \, \chi \in \text{\sl Alt}^{\,S}_{\,\kh}(\lieh) \, $  ---   via the above construction.  We denote by  $ \, \Z_{\,2}\big( \uRpPhg \big) \, $  the set of all of them   --- which is actually independent of the multiparameter  $ P $,  indeed.
 \vskip3pt
   \textsl{N.B.:}\,  in fact,  \textsl{the  $ \sigma_\chi $'s  above are ``polar--$ 2 $--cocycles  of  $ \uRpPhg $''  in the sense of the definition given in  \cite{GG4}},  once the latter is adapted to QUESAs.   \hfill  $ \diamondsuit $
\end{definition}

\vskip7pt

\begin{free text}  \label{formulas_deform-prod}
 {\bf Formulas for the  $ \sigma_\chi $--deformed  product.}
 Let  $ \, \sigma_\chi \in \Z_{\,2}\big( \uRpPhg \big) \, $  be a toral  $ 2 $--cocycle  as above.
 Following  \S \ref{defs_Hopf-algs},  using  $ \sigma_\chi $  we introduce in  $ \uRpPhg $  a ``deformed product'',
 hereafter denoted by   $ \, \scriptstyle \dot\sigma_\chi \, $;  \,then
 $ \, X^{(n)_{\sigma_{\chi}}} = X\raise-1pt\hbox{$ \, \scriptstyle \dot\sigma_\chi $} \cdots \raise-1pt\hbox{$ \, \scriptstyle \dot\sigma_\chi $} X \, $
 will denote the  $ n $--th  power of any  $ \, X \in \uRpPhg \, $  with respect to this deformed product.
                                                \par
   Directly from definitions, sheer computation yields the following formulas,
   relating the deformed product with the old one (for all  $ \; T', T'', T \in \lieh \; $,  $ \; i \, , \, j \in I \, $,  $ \, m, n \in \NN \, $):
  $$  \displaylines{
   T' \raise-1pt\hbox{$ \, \scriptstyle \dot\sigma_\chi $} T''  \, = \;  T' \, T''  \;\;\; ,
     \;\quad  T^{(n)_{\sigma_{\chi}}} = T^n  \;\;\; ,
     \;\quad  E_i \raise-1pt\hbox{$ \, \scriptstyle \dot\sigma_\chi $} F_j  \; = \,  E_i \, F_j  \;\;\; ,
     \;\quad  F_j \raise-1pt\hbox{$ \, \scriptstyle \dot\sigma_\chi $} E_i  \; = \,  F_j \, E_i  \cr
   T \raise-1pt\hbox{$ \, \scriptstyle \dot\sigma_\chi $}  E_j  \; = \;  T \, E_j \, + \, 2^{-1} \chi\big(T,T_j^+\big) \, E_j \;\;\; ,
     \qquad  E_j \raise-1pt\hbox{$ \, \scriptstyle \dot\sigma_\chi $} T  \; = \;  E_j \, T \, + \, 2^{-1} \chi\big(T_j^+,T\big)  \, E_j  \cr
   T \raise-1pt\hbox{$ \, \scriptstyle \dot\sigma_\chi $} F_j  \; = \;  T \, F_j \, + \, 2^{-1} \chi\big(T,T_j^-\big)  \, F_j  \;\;\; ,
     \qquad  F_j \raise-1pt\hbox{$ \, \scriptstyle \dot\sigma_\chi $} T  \; = \;  F_j \, T \, + \, 2^{-1} \chi\big(T_j^-,T\big) \, F_j
%
%
  }  $$
  $$  \displaylines{
   E_{i_1} \raise-1pt\hbox{$ \, \scriptstyle \dot\sigma_\chi $} E_{i_2} \raise-1pt\hbox{$ \, \scriptstyle \dot\sigma_\chi $} \cdots \raise-1pt\hbox{$ \, \scriptstyle \dot\sigma_\chi $} E_{i_{t-1}} \raise-1pt\hbox{$ \, \scriptstyle \dot\sigma_\chi $} E_{i_t}  \; = \;  e^{\,+\hbar \, 2^{-1} \sum_{c<e} \mathring{\chi}_{\! \raise-2pt\hbox{$ \, \scriptstyle c\,e $}}} \, E_{i_1} \, E_{i_2} \cdots E_{i_{t-1}} \, E_{i_t}  \cr
   E_i^{\,(m)_{\sigma_{\chi}}}  \, = \;  {\textstyle \prod_{\ell=1}^{m-1}} \sigma_\chi\Big( e^{\,+\hbar\,\ell\,T_i^+} \! , \, e^{\,+\hbar\,T_i^+} \Big) \, E_i^{\,m}  \; = \;  E_i^{\,m}  \cr
   E_i^{\,m} \raise-1pt\hbox{$ \, \scriptstyle \dot\sigma_\chi $} E_j^{\,n}  \; = \;  \sigma_\chi\Big(
e^{\,+\hbar\,m\,T_i^+} \! , \, e^{\,+\hbar\,n\,T_j^+} \Big) \, E_i^{\,m} \, E_j^{\,n}  \; = \;  e^{\, +\hbar \,m\,n\, 2^{-1} \mathring{\chi}_{ij} } E_i^{\,m} \, E_j^{\,n}  \cr
   E_i^{\,(m)_{\sigma_{\chi}}} \raise-1pt\hbox{$ \, \scriptstyle \dot\sigma_\chi $}\, E_j \,\raise-1pt\hbox{$ \, \scriptstyle \dot\sigma_\chi $}\, E_k^{\,(n)_{\sigma_\chi}}  \, = \;
   {\textstyle \prod_{\ell=1}^{m-1}} \sigma_\chi\Big( e^{\,+\hbar\,\ell\,T_i^+} \! ,
   \, e^{\,+\hbar\,T_i^+} \Big) \cdot {\textstyle \prod_{t=1}^{n-1}} \sigma_\chi\Big( e^{\,+\hbar\,t\,T_k^+} \! , \, e^{\,+\hbar\,T_k^+} \Big) \, \cdot   \hfill  \cr
  \hfill \cdot \; \sigma_{\chi}\Big( e^{\,+\hbar\,m\,T_i^+} \! , \, e^{\,+\hbar\,T_j^+} \Big) \,
  \sigma_\chi\Big( e^{\,+\hbar\,(m\,T_i^+ + T_j^+)} \! , \, e^{\,+\hbar\,n\,T_k^+} \Big) \, E_i^{\,m} \, E_j \, E_k^{\,n}
%
%
  }  $$
  $$  \displaylines{
   F_{i_1} \raise-1pt\hbox{$ \, \scriptstyle \dot\sigma_\chi $} F_{i_2} \raise-1pt\hbox{$ \, \scriptstyle \dot\sigma_\chi $} \cdots \raise-1pt\hbox{$ \, \scriptstyle \dot\sigma_\chi $} F_{i_{t-1}} \raise-1pt\hbox{$ \, \scriptstyle \dot\sigma_\chi $} F_{i_t}  \; = \;  e^{\,-\hbar \, 2^{-1} \sum_{c<e} \mathring{\chi}_{\! \raise-2pt\hbox{$ \, \scriptstyle c\,e $}}} \, F_{i_1} \, F_{i_2} \cdots F_{i_{t-1}} \, F_{i_t}  \cr
   F_i^{\,(m)_{\sigma_\chi}}  \, = \;  {\textstyle \prod_{\ell=1}^{m-1}} \sigma_\chi^{-1}\Big( e^{\,-\hbar\,\ell\,T_i^-} \! , \,
   e^{\,-\hbar\,T_i^-} \Big) \, F_i^{\,m}  \; = \;  F_i^{\,m}  \cr
   F_i^{\,m} \raise-1pt\hbox{$ \, \scriptstyle \dot\sigma_\chi $} F_j^{\,n}  \; = \;  \sigma_\chi^{-1}\Big( e^{\,-\hbar\,m\,T_i^-} \! , \,
   e^{\,-\hbar\,n\,T_j^-} \Big) \, F_i^{\,m} \, F_j^{\,n}  \; = \;  e^{\, -\hbar \,m\,n\, 2^{-1} \mathring{\chi}_{ij}  } F_i^{\,m} \, F_j^{\,n}  \cr
   F_i^{\,(m)_{\sigma_{\chi}}} \raise-1pt\hbox{$ \, \scriptstyle \dot\sigma_\chi $}\, F_j \,\raise-1pt\hbox{$ \, \scriptstyle \dot\sigma_\chi $}\, F_k^{\,(n)_{\sigma_{\chi}}}  \, = \;
   {\textstyle \prod_{\ell=1}^{m-1}} \sigma_{\chi}^{-1}\Big( e^{\, -\hbar\,\ell\,T_i^-} \! , \, e^{\,-\hbar\,T_i^-} \Big) \cdot {\textstyle \prod_{t=1}^{n-1}} \sigma_{\chi}^{-1}\Big( e^{\,-\hbar\,t\,T_k^-} \! , \, e^{\,-\hbar\,T_k^-} \Big) \, \cdot   \hfill  \cr
   \hfill \cdot \; \sigma_{\chi}^{-1}\Big( e^{\,-\hbar\,m\,T_i^-} \! , \, e^{\,-\hbar\,T_j^-} \Big) \, \sigma_{\chi}^{-1}\Big( e^{\,-\hbar\,(m\,T_i^- + T_j^-)} \! , \, e^{\,-\hbar\,n\,T_k^-} \Big) \, F_i^{\,m} \, F_j \, F_k^{\,n}
   }  $$
 \vskip5pt
   Note also that from the identities  $ \; T^{(n)_{\sigma_{\chi}}} = T^n \; $  (for  $ \, T \in \lieh \, $  and  $ \, n \in \NN \, $)  it follows that the exponential of any toral element with respect to the deformed product  $ \raise-1pt\hbox{$ \, \scriptstyle \dot\sigma_\chi $} $  is the same as with respect to the old one.
\end{free text}

\vskip11pt

\begin{obs}
   The whole procedure of  $ 2 $--cocycle  deformation by  $ \sigma_\chi $  should apply to the scalar extension  $ \, \khp \otimes_\kh \uRpPhg \, $,  as  \textit{a priori\/}  $ \sigma_\chi $  takes values in  $ \khp $  rather than in  $ \kh \, $.  Nevertheless, the formulas above show that
%
%
  $ \uRpPhg $  is actually  \textsl{closed\/}  for the deformed product  ``$ \, \raise-1pt\hbox{$ \, \scriptstyle \dot\sigma_\chi $} $''  provided by this procedure, hence the deformation does ``restrict'' to  $ \uRpPhg $  itself, so that we eventually end up with a well-defined  $ 2 $--cocycle  deformation  $  {\big( \uRpPhg \big)}_{\!\sigma_\chi} \, $.
\end{obs}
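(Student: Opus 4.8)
The final statement to prove is the Observation: $\uRpPhg$ is closed for the deformed product $\raise-1pt\hbox{$ \, \scriptstyle \dot\sigma_\chi $}$, so the $2$--cocycle deformation ${\big( \uRpPhg \big)}_{\!\sigma_\chi}$ is well-defined over $\kh$ rather than only over $\khp$.

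\medskip

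\textbf{Approach.} The plan is to exploit the triangular decomposition of $\uRpPhg$ (Theorem \ref{thm: triang-decomp}) together with the explicit formulas for the $\sigma_\chi$--deformed product collected in \S \ref{formulas_deform-prod}. The key point is that a $2$--cocycle deformation changes the product but not the underlying $\kh$--module: as a topological $\kh$--module, ${\big( \uRpPhg \big)}_{\!\sigma_\chi}$ has the same PBW-type basis as $\uRpPhg$, indexed by triples (ordered monomial in the $F_i$'s, monomial in a basis of $\lieh$, ordered monomial in the $E_i$'s). It suffices to show that the deformed product of any two such basis elements lies in $\kh \cdot (\text{old module})$, i.e.\ that all scalar coefficients appearing are in $\kh$ and not merely in $\khp$.

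\medskip

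\textbf{Key steps.} First I would recall that, since $\sigma_\chi$ is a toral polar--$2$--cocycle induced through the projection $\pi : \uRpPhg \twoheadrightarrow U_{\!P,\hbar}^{\R,\,p}\big(\,\overline{\lieh}\,\big)$ and the Hopf $2$--cocycle $\overline{\chi}_{\scriptscriptstyle U} = e^{\hbar^{-1} 2^{-1} \widetilde{\overline\chi}_{\scriptscriptstyle U}}$, the only way negative powers of $\hbar$ can enter is through $\chi_{\scriptscriptstyle U}$ evaluated on elements of $\lieh$ of degree $1$ (cf.\ Lemma \ref{lem: chi_U(K,K)}, where $\chi_{\scriptscriptstyle U}^{\pm 1}(H_+,H_-) = \pm \hbar^{-1} 2^{-1} \chi(H_+,H_-)$). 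The second, decisive step is the observation already recorded in \S \ref{formulas_deform-prod}: whenever $\sigma_\chi$ is applied to a pair of \emph{group-like} elements $K_\pm = e^{\hbar H_\pm}$ — which is exactly the shape in which the torus enters the coproduct of the $E_i$'s and $F_i$'s — one gets $\sigma_\chi(K_+,K_-) = e^{\hbar\,2^{-1}\chi(H_+,H_-)} \in \kh$, with the $\hbar^{-1}$ cancelled by the exponent $\hbar$ sitting inside the group-likes. Third, I would run through the list of deformed-product formulas in \S \ref{formulas_deform-prod} (the $T\,\dot\sigma_\chi\,E_j$, $E_i^m \,\dot\sigma_\chi\, E_j^n$, the mixed $E$--$E$--$E$ and $F$--$F$--$F$ strings, and the analogous $F$ ones) and verify that each scalar factor is of the form $e^{\hbar(\cdots)}$ or $2^{-1}\chi(\cdots)$, hence in $\kh$; in particular $T'\dot\sigma_\chi\,T'' = T'T''$ and $E_i\dot\sigma_\chi\,F_j = E_iF_j$, and the only ``dangerous'' evaluations $\chi_{\scriptscriptstyle U}$ on degree-one toral elements never occur in isolation but always dressed by the exponential $e^{\hbar\,\cdot}$ coming from $e^{\hbar T_i^\pm}$. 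Finally, since an arbitrary pair of PBW monomials can be reduced — using associativity of $\dot\sigma_\chi$ and the defining relations of the deformed algebra — to iterated applications of these elementary formulas, closedness over $\kh$ follows; and then ${\big( \uRpPhg \big)}_{\!\sigma_\chi}$ with product $\dot\sigma_\chi$, old unit, old coproduct, old counit and deformed antipode $\SS_{\sigma_\chi}$ is a (topological) Hopf superalgebra over $\kh$ by the general recipe of \S \ref{defs_Hopf-algs}.

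\medskip

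\textbf{Main obstacle.} The genuinely delicate point is bookkeeping: one must be sure that \emph{no} product of PBW monomials produces an uncancelled $\hbar^{-1}$, and this requires tracking how the coproduct of a product of $E_i$'s (resp.\ $F_i$'s) distributes the toral group-likes $e^{\hbar T_i^+}$ (resp.\ $e^{-\hbar T_i^-}$) across tensor factors, so that each argument fed to $\sigma_\chi$ is indeed a group-like of the form $e^{\hbar H}$ rather than a bare element of $\lieh$. The formulas in \S \ref{formulas_deform-prod} already encode the outcome of this analysis for the ``building block'' cases, so the remaining work is to argue — by induction on the length of the monomials, invoking the coassociativity of $\Delta$ and the multiplicativity of $\Delta$ on products — that the general case is a composite of these, with all coefficients staying in $\kh$. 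This is the same mechanism by which \cite{GG4} establishes the analogous closedness for polar--$2$--cocycle deformations of QUEAs, and the super-case differs only by the harmless insertion of parity signs, which do not affect the $\hbar$--adic integrality of the scalars.
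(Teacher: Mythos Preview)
Your proposal is correct and follows essentially the same line as the paper: the Observation is justified in the paper simply by the clause ``the formulas above show that\ldots'', i.e.\ by direct inspection of the explicit deformed-product formulas in \S\ref{formulas_deform-prod}, all of whose scalar coefficients visibly lie in $\kh$ (they are either $e^{\hbar(\cdots)}$ or $2^{-1}\chi(\cdots)$). Your sketch makes explicit the inductive/PBW bookkeeping that the paper leaves implicit, and your identification of the key mechanism --- that the coproduct of $E_i$'s and $F_i$'s feeds only group-likes $e^{\hbar H}$ into $\sigma_\chi$, so that Lemma~\ref{lem: chi_U(K,K)} returns $e^{\hbar\,2^{-1}\chi(\cdot,\cdot)}\in\kh$ rather than a bare $\hbar^{-1}$ --- is exactly the point.
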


   A first, direct consequence of these formulas is the following:

\vskip11pt

\begin{prop}  \label{prop: gen's-FoMpQUEA => def-FoMpQUEA}
%
%
 The deformed algebra  $ {\big( \uRpPhg \big)}_{\sigma_\chi} \! $  is still generated by the elements  $ E_i \, $,  $ F_i $  and  $ T $  (with  $ \, i \in I \, $  and  $ \, T \in \lieh \, $)  of  $ \, \uRpPhg \, $.
\end{prop}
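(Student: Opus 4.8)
The plan is to show that the same elements $E_i$, $F_i$, $T$ ($i\in I$, $T\in\lieh$) that generate $\uRpPhg$ still generate the deformed superalgebra ${\big(\uRpPhg\big)}_{\sigma_\chi}$. Since the deformation by a $2$--cocycle changes only the multiplication (and the antipode), not the underlying $\kh$--module, the set of monomials in the old generators still spans $\uRpPhg$ topologically; so it suffices to prove that the $\sigma_\chi$--subalgebra generated by $\{E_i,F_i,T\}$ contains every such monomial. The key observation is that all the formulas collected in \S\ref{formulas_deform-prod} express an old product of generators as a scalar (a unit in $\kh$, since it is an exponential of an element of $\hbar\,\kh$) times the corresponding $\sigma_\chi$--product of the same generators. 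Because those scalars are invertible, each of these identities can be read backwards.

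First I would set up an induction on the length $n$ of a monomial $X_{i_1}\cdots X_{i_n}$ in the old product (with each $X_{i_k}$ one of the generators $E_i$, $F_i$, or $T$). For $n\le 1$ there is nothing to prove. For the inductive step I would split according to the ``types'' of the factors: the formulas in \S\ref{formulas_deform-prod} cover, up to an invertible scalar and up to reordering that is harmless modulo lower-length terms, all of the products $T'T''$, $T\,E_j$, $E_j\,T$, $T\,F_j$, $F_j\,T$, $E_i\,F_j$, $F_j\,E_i$, and the iterated products $E_{i_1}\cdots E_{i_t}$ and $F_{i_1}\cdots F_{i_t}$. Each such identity rewrites the old product as a unit times a $\sigma_\chi$--product of the same generators, possibly plus a correction term of strictly shorter length (as in $T\,E_j = T\raise-1pt\hbox{$\,\scriptstyle\dot\sigma_\chi$}E_j - 2^{-1}\chi(T,T_j^+)E_j$); the correction term lies in the $\sigma_\chi$--subalgebra by the inductive hypothesis, and the leading $\sigma_\chi$--product lies there by definition. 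Inverting the invertible scalar then puts the original monomial into the $\sigma_\chi$--subalgebra.

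The one point that needs a little care — and which I expect to be the main (modest) obstacle — is the bookkeeping for a general monomial that mixes $E$'s, $F$'s, and $T$'s in arbitrary order: one must organize the induction so that every ``adjacent pair'' in the monomial is of one of the types handled by the explicit formulas, and so that the error terms produced really do have strictly smaller length. A clean way to handle this is to first reduce to the case of a monomial that is already in PBW-like order (all $F$'s, then all $T$'s, then all $E$'s), using the triangular decomposition of Theorem~\ref{thm: triang-decomp} and the relation \eqref{eq: rel E-F} together with the formulas $E_i\raise-1pt\hbox{$\,\scriptstyle\dot\sigma_\chi$}F_j = E_i\,F_j$, $F_j\raise-1pt\hbox{$\,\scriptstyle\dot\sigma_\chi$}E_i = F_j\,E_i$ (which say the $\sigma_\chi$--product and the old product agree across the $E$--$F$ boundary); then one is left with three separate, simpler inductions inside $\uRpPhnm$, $\uRpPhh$, and $\uRpPhnp$, each of which is immediate from the corresponding block of formulas in \S\ref{formulas_deform-prod}. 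Since $\uRpPhg$ is $\hbar$--adically complete and the deformed product is $\hbar$--adically continuous, passing from finite monomials to the topological closure presents no difficulty, and the proof concludes.
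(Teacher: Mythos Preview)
Your argument is correct and is precisely the elaboration the paper has in mind: the paper gives no proof at all beyond the single sentence ``a first, direct consequence of these formulas is the following,'' referring to the identities in \S\ref{formulas_deform-prod}. Your induction on monomial length, inverting the unit scalars in those identities, is exactly how one cashes that sentence out.

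One minor remark: invoking the triangular decomposition (Theorem~\ref{thm: triang-decomp}) is a clean organizational device, but it is more than is strictly required. The formulas in \S\ref{formulas_deform-prod} already handle every adjacent pair of generators (including the $T$--$E$, $T$--$F$ and $E$--$F$ mixes), so a straight induction on length --- peeling off the leftmost factor and using the appropriate two-factor identity, with the shorter correction terms absorbed by the inductive hypothesis --- goes through without first reducing to PBW order. Either route is fine; yours just front-loads a bit more structure than the paper's one-line gesture.
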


\vskip7pt

\begin{free text}  \label{tor_2-coc_deform's_uPhgd}
 {\bf Deformations of  $ \uRpPhg $  by toral  polar--$ 2 $--cocycle.}
 Our key result concerns  polar--$ 2 $--cocycle  deformations by means of
 toral  polar--$ 2 $--cocycles.  In order to state it,
 we need some more notation, which we now settle.
                                                                                    \par
   Let $(A,p)$ be a Cartan super-datum,
   $ \, P := {\big(\, p_{ij} \,\big)}_{i, j \in I} \in M_n\big( \kh \big) \, $
   a multiparameter matrix of Cartan type with associated Cartan matrix $ A $
   ---  cf.\  Definition \ref{def: realization of P}  ---
   and fix a realization  $ \R = \big(\, \lieh \, , \Pi \, , \Pi^\vee \big) \, $  of  $ P $.
   Fix an antisymmetric  $ \kh $--bilinear map
   $ \, \chi \! : \lieh \times \lieh \! \longrightarrow \! \kh \, $
   enjoying  \eqref{eq: condition-chi}   --- that is,
   $ \, \chi \in \text{\sl Alt}_{\,\kh}^{\,S}(\lieh) \, $;
   \,then we associate with it the matrix
   $ \, \mathring{X} := \! {\Big( \mathring{\chi}_{i{}j} = \chi\big(\,T_i^+,T_j^+\big) \!\Big)}_{i, j \in I} \, $
   as above.  Note that
 $ \; + \chi\big(\, \text{--} \, , T_j^+ \big)  \, = \,  - \chi\big(\, \text{--} \, , T_j^- \big) \; $
 for all  $ \, j \in I \, $,  as direct consequence of  \eqref{eq: condition-chi}.
 Basing on the above, like in  \S \ref{subsec: 2-cocycle-realiz}  we define
  $$
  P_{(\chi)}  := \,  P \, + \, \mathring{X}  \, = \,  {\Big(\, p^{(\chi)}_{i{}j} :=
  \,  p_{ij} + \mathring{\chi}_{i{}j} \Big)}_{\! i, j \in I}  \;\, ,  \!\quad
   \Pi_{(\chi)}  := \,  {\Big\{\, \alpha_i^{(\chi)}  := \,
   \alpha_i \pm \chi\big(\, \text{--} \, , T_i^\pm \big)  \Big\}}_{i \in I}
   $$
\noindent
 Then, still from  \S \ref{subsec: 2-cocycle-realiz}  we know that
 $ P_{(\chi)} $  is a matrix of Cartan type   --- the same of  $ P $
 indeed ---   and
 $ \, \R_{(\chi)} \, = \, \big(\, \lieh \, , \Pi_{(\chi)} \, , \Pi^\vee \,\big) \, $
 is a realization of it.
 \vskip3pt
   We are now ready for our result on deformations by toral
   polar--$ 2 $--cocycle.
\end{free text}

\vskip7pt

\begin{theorem}  \label{thm: 2cocdef-uPhgd=new-uPhgd}
 There exists an isomorphism of topological Hopf superalgebras
  $$  {\big( \uRpPhg \big)}_{\sigma_\chi}  \; \cong \;\,  U_{\!P_{(\chi)},\,\hbar}^{\R_{(\chi)},\,p}(\lieg)  $$
 which is the identity on generators.  In short, every toral polar--2--cocycle deformation of a
 FoMpQUESA is another FoMpQUESA, whose multiparameter  $ \, P_{(\chi)} $  and realiza\-tion  $ \R_{(\chi)} $
 depend on the original  $ P $  and  $ \R \, $,  as well as on  $ \chi \, $,  as explained in  \S \ref{subsec: 2-cocycle-realiz}.
 \vskip5pt
   Similar statements hold true for the Borel FoMpQUESAs and their deformations by  $ \sigma_\chi \, $,  namely there exist isomorphisms  $ \,\; {\big( U_{\!P,\,\hbar}^{\R,\,p}(\lieb_\pm) \big)}_{\sigma_\chi} \cong \; U_{\!P_{(\chi)},\,\hbar}^{\R_{(\chi)},\,p}(\lieb_\pm) \;\, $.
\end{theorem}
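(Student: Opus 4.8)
The plan is to mimic, in the super and ``polar'' setting, the proof of \cite[Theorem 3.3]{GG4}, using the explicit deformed-product formulas collected in \S\ref{formulas_deform-prod} to verify directly that the defining presentation of $U_{\!P_{(\chi)},\,\hbar}^{\R_{(\chi)},\,p}(\lieg)$ is recovered from the $\sigma_\chi$-deformed algebra ${\big( \uRpPhg \big)}_{\sigma_\chi}$ on the original generators. First I would note that, by \S\ref{defs_Hopf-algs}, the $\sigma_\chi$-deformation leaves the coalgebra structure of $\uRpPhg$ unchanged, hence the coproduct, counit and antipode of ${\big( \uRpPhg \big)}_{\sigma_\chi}$ are still given by \eqref{eq: coprod_x_uRpPhg_2nd-time}--\eqref{eq: antipode_x_uRpPhg}; these coincide with the Hopf structure of $U_{\!P_{(\chi)},\,\hbar}^{\R_{(\chi)},\,p}(\lieg)$ granted by Theorem \ref{thm: FoMpQUEA's}, since those formulas only involve the toral elements $T_\ell^\pm$ (which are unchanged) and the $E_\ell,F_\ell$. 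By Proposition \ref{prop: gen's-FoMpQUEA => def-FoMpQUEA}, the elements $E_i,F_i,T$ still generate ${\big( \uRpPhg \big)}_{\sigma_\chi}$, so the whole task reduces to checking that the ideal of defining relations, rewritten for the deformed product $\raise-1pt\hbox{$ \, \scriptstyle \dot\sigma_\chi $}$, is exactly the ideal of relations \eqref{eq: rel T-E / T-T / T-F}--\eqref{eq: rel6} for the matrix $P_{(\chi)}$ and realization $\R_{(\chi)}$.

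The core of the argument is the relation-by-relation verification. Using the formulas of \S\ref{formulas_deform-prod}: the relations \eqref{eq: rel T-E / T-T / T-F} become, via $T\raise-1pt\hbox{$ \, \scriptstyle \dot\sigma_\chi $}E_j = TE_j + 2^{-1}\chi(T,T_j^+)E_j$ and $E_j\raise-1pt\hbox{$ \, \scriptstyle \dot\sigma_\chi $}T = E_jT + 2^{-1}\chi(T_j^+,T)E_j$, the commutation relations with the \emph{deformed} root $\alpha_j^{(\chi)} = \alpha_j + \chi(\,\text{--}\,,T_j^+)$, i.e.\ $T\raise-1pt\hbox{$ \, \scriptstyle \dot\sigma_\chi $}E_j - E_j\raise-1pt\hbox{$ \, \scriptstyle \dot\sigma_\chi $}T = \alpha_j^{(\chi)}(T)E_j$ (and likewise $-\alpha_j^{(\chi)}(T)F_j$ for the $F$'s, thanks to \eqref{eq: condition-chi-chit}); since $E_i\raise-1pt\hbox{$ \, \scriptstyle \dot\sigma_\chi $}F_j = E_iF_j$ and $F_j\raise-1pt\hbox{$ \, \scriptstyle \dot\sigma_\chi $}E_i = F_jE_i$, relation \eqref{eq: rel E-F} is unchanged, consistent with $e^{\pm\hbar T_i^\pm}$ being unchanged by the deformation. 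For the higher relations \eqref{eq: q-Serre (E)}--\eqref{eq: rel6}, which by Remark \ref{rmk: q-Serre = iterated q-commutator} are all iterated $q$-supercommutators of products of $E_i$'s (resp.\ $F_i$'s), the key computational lemma to establish is the ``polar analogue'' of \eqref{eq: Phi change q-bracket}: for $\nu\in\kh$,
$$
{\big[ E_i \, , E_j \big]}_\nu \; = \; e^{\,-\hbar\,2^{-1}\mathring{\chi}_{ij}}\;{\big[ E_i \, , E_j \big]}^{\sigma_\chi}_{\nu\, e^{\hbar\mathring{\chi}_{ij}}}
$$
where the right-hand side supercommutator is taken with respect to $\raise-1pt\hbox{$ \, \scriptstyle \dot\sigma_\chi $}$; this is immediate from $E_i^{\,m}\raise-1pt\hbox{$ \, \scriptstyle \dot\sigma_\chi $}E_j^{\,n} = e^{\hbar\,mn\,2^{-1}\mathring{\chi}_{ij}}E_i^{\,m}E_j^{\,n}$ and its generalization \eqref{eq: prod-E => prod-EPhi}-type identity. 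Iterating this exactly as in the proof of Proposition \ref{prop: deform-Yamane=FoMpQUESA} (but tracking the additive shift $\mathring{\chi}$ in the exponents rather than multiplicative $k_{ij}^\Phipicc$ factors), one shows each relation $R = 0$ among the $E_i$'s inside $\uRpPhg$ is equivalent, inside ${\big( \uRpPhg \big)}_{\sigma_\chi}$, to the ``same'' relation but with every structural constant $q_{ij},k_{ij},\nu_\ell$ replaced by the ones attached to $P_{(\chi)}$ — precisely because $q_{ij}\,e^{\hbar\mathring{\chi}_{ij}} = e^{\hbar(p_{ij}+\mathring{\chi}_{ij})} = q^{(\chi)}_{ij}$ and $k_{ij}^{(\chi)} = e^{\hbar(p^{(\chi)}_{ij}-p^{(\chi)}_{ji})}$. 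One does the same, \emph{verbatim} with signs reversed, for the $F_i$-relations, using the $F_i^{\,m}\raise-1pt\hbox{$ \, \scriptstyle \dot\sigma_\chi $}F_j^{\,n} = e^{-\hbar\,mn\,2^{-1}\mathring{\chi}_{ij}}F_i^{\,m}F_j^{\,n}$ identities. This yields that the map sending each generator of $U_{\!P_{(\chi)},\,\hbar}^{\R_{(\chi)},\,p}(\lieg)$ to the same-name element of ${\big( \uRpPhg \big)}_{\sigma_\chi}$ is a well-defined isomorphism of topological superalgebras, and by the coalgebra observation above it is one of Hopf superalgebras.

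Finally, the Borel case follows from the same computation restricted to the subsuperalgebras generated by $\lieh$ and the $E_i$'s (resp.\ the $F_i$'s): since the toral $\sigma_\chi$ and its deformation respect the triangular decomposition (Theorem \ref{thm: triang-decomp}) and the relations involving only $E$'s, or only $F$'s, or $T$-$E$ (resp.\ $T$-$F$) are exactly those handled above, one obtains the isomorphisms ${\big( U_{\!P,\,\hbar}^{\R,\,p}(\lieb_\pm) \big)}_{\sigma_\chi} \cong U_{\!P_{(\chi)},\,\hbar}^{\R_{(\chi)},\,p}(\lieb_\pm)$ for free.

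\textbf{Main obstacle.} The conceptual content is light — the coalgebra is untouched and the algebra is a ``gauge transformation'' of generators — so the real work, and the main source of error, is the bookkeeping in the iterated-supercommutator identities for the long relations \eqref{eq: rel5}--\eqref{eq: rel6}: one must verify that the cumulative additive shifts of the exponents produced by repeatedly applying the polar analogue of \eqref{eq: Phi change q-bracket} assemble into precisely the structural constants of $P_{(\chi)}$ (including the delicate subscripts such as $k_{n-1,n-2}^{\,2}$ in \eqref{eq: rel6}), and that the parity signs ${(-1)}^{p(i)p(j)}$ carried through the braided adjoint action are consistent throughout. This is entirely parallel to — and in fact simpler than — the corresponding step in Proposition \ref{prop: deform-Yamane=FoMpQUESA}, since here the relevant matrix $\mathring{X}$ is purely antisymmetric and adds rather than multiplies, but it still requires care; we will present the $a_{ij}=0$ and $a_{ij}=-2$ cases explicitly and leave the remaining cases, which are checked by the identical mechanism, to the reader.
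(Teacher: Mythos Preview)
Your proposal is essentially the same approach as the paper's own proof: the heart of both arguments is the relation-by-relation check using the deformed-product formulas of \S\ref{formulas_deform-prod}, and your key computational lemma
\[
{[E_i,E_j]}_\nu \;=\; e^{-\hbar\,2^{-1}\mathring{\chi}_{ij}}\,{[E_i,E_j]}^{\sigma_\chi}_{\nu\,e^{\hbar\mathring{\chi}_{ij}}}
\]
is exactly the paper's identity \eqref{eq: cocyclebracketord2} (with $\kappa_{ij}:=e^{\hbar\mathring{\chi}_{ij}}$), iterated just as you describe. Your treatment of the toral and $E$--$F$ relations, and your reduction of the Borel case, also match the paper.

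The one point where the paper is more careful than your write-up is the passage from ``well-defined epimorphism'' to ``isomorphism''. You assert that checking that each old relation is \emph{equivalent} (an invertible scalar multiple) to the corresponding $P_{(\chi)}$-relation directly yields an isomorphism; the paper instead makes this step explicit by an inverse-deformation trick. Namely, once the relation check gives an epimorphism
\[
\ell : U_{\!P_{(\chi)},\,\hbar}^{\R_{(\chi)},\,p}(\lieg)\;\twoheadrightarrow\;{\big(\uRpPhg\big)}_{\sigma_\chi},
\]
the paper applies the \emph{same} construction with $P$ replaced by $P_{(\chi)}$ and $\chi$ replaced by $-\chi$ to obtain a second epimorphism $\ell'$ in the other direction (after noting $(P_{(\chi)})_{(-\chi)}=P$ and that the pulled-back cocycles agree), and then observes that both composites $\ell'_{\dot\sigma_\chi}\!\circ\ell$ and $\ell\circ\ell'_{\dot\sigma_\chi}$ are the identity on generators, hence identities. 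This is a clean and self-contained way to secure injectivity without appealing to PBW-type module comparisons. Your ``equivalence of relations'' intuition is correct, but since $(\uRpPhg)_{\sigma_\chi}$ is not \emph{a priori} given by generators and relations, you should either spell out this $-\chi$ inverse-map argument or invoke a topological-freeness/PBW argument to close the gap.
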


\pf
 We begin by noting the following key
 \vskip7pt
   {\sl  $ \underline{\text{Fact}} $:  The generators of  $ \uRpPhg \, $,
   when thought of as elements of the deformed algebra  $ {\big( \uRpPhg \big)}_{\sigma_\chi} \, $,
   obey the defining relations of the (same name) generators of
   $ \, U_{\!P_{(\chi)},\,\hbar}^{\R_{(\chi)},\,p}(\lieg) \, $   --- with respect to the deformed product
   $ \raise-1pt\hbox{$ \, \scriptstyle \dot\sigma_\chi $} \, $.}
 \vskip7pt
   Let us assume for a moment the above  $ \underline{\text{\sl Fact\/}} $  does hold true.  Then it implies that there exists a well-defined homomorphism of topological Hopf superalgebras, say
 $ \; \ell : U_{\!P_{(\chi)},\,\hbar}^{\R_{(\chi)},\,p}(\lieg) \relbar\joinrel\relbar\joinrel\relbar\joinrel\relbar\joinrel\longrightarrow
 {\big(\, \uRpPhg \big)}_{\sigma_\chi} \; $
 given on generators by  $ \; \ell(E_i) := E_i \, $,  $ \; \ell(F_i) := F_i \, $,  $ \; \ell\big(T) := T \; $  ($ \, i \in I \, $,  $ \, T \in \lieh \, $)   --- in short, it is the identity on generators.  Moreover, thanks to  Proposition \ref{prop: gen's-FoMpQUEA => def-FoMpQUEA}  this is in fact an  \textsl{epi\/}morphism.  As an application of this result, there exists also an epimorphism of topological Hopf superalgebras
 $ \; \ell' : \uRpPhg \relbar\joinrel\relbar\joinrel\relbar\joinrel\relbar\joinrel\longrightarrow
 {\big(\, U_{\!P_{(\chi)},\,\hbar}^{\R_{(\chi)},\,p}(\lieg) \big)}_{\sigma_{-\chi}} \; $
 which again is the identity on generators   --- just replace  $ \chi $  with  $ -\chi \, $  and  $ P $  with  $ P_{(\chi)} \, $.
                                                                           \par
 Mimicking what we did for  $ \uRpPhg \, $,  \,we can construct, out of  $ \chi \, $,  a normalized Hopf  $ 2 $--cocycle  $ {\dot\sigma}_\chi $  for  $ {\big(\, U_{\!P_{(\chi)},\,\hbar}^{\R_{(\chi)},\,p}(\lieg) \big)}_{\sigma_{-\chi}} \, $;  \,then we also have a similar  $ 2 $--cocycle  $ \sigma'_\chi $  on  $ \uRpPhg $  defined as the pull-back of  $ {\dot\sigma}_\chi $  via  $ \ell' \, $,  \,and a unique, induced Hopf algebra homomorphism
 $ \; \ell'_{{\dot{\sigma}}_\chi} : {\big( \uRpPhg \big)}_{\sigma'_\chi} \!
 \relbar\joinrel\relbar\joinrel\relbar\joinrel\relbar\joinrel\longrightarrow
 {\Big( {\Big(\, U_{\!P_{(\chi)},\,\hbar}^{\R_{(\chi)},\,p}(\lieg) \Big)}_{\sigma_{-\chi}} \Big)}_{{\dot{\sigma}}_\chi} \; $
 between deformed Hopf algebras, which is once more the identity on generators.  Now, tracking the whole construction we see that  $ \, \sigma'_\chi = \sigma_\chi \, $,  so that
 $ \; {\big( \uRpPhg \big)}_{\sigma'_\chi} = {\big( \uRpPhg \big)}_{\sigma_\chi} \; $,  \,and
 $ \; {\Big( {\Big(\, U_{\!P_{(\chi)},\,\hbar}^{\R_{(\chi)},\,p}(\lieg) \Big)}_{\sigma_{-\chi}} \Big)}_{{\dot{\sigma}}_\chi} \!\! = \, U_{\!P_{(\chi)},\,\hbar}^{\R_{(\chi)},\,p}(\lieg) \; $.  Then composition gives homomorphisms
  $$  \displaylines{
   \ell'_{{\dot{\sigma}}_\chi} \!\circ \ell \, : \, U_{\!P_{(\chi)},\,\hbar}^{\R_{(\chi)},\,p}(\lieg) \relbar\joinrel\relbar\joinrel\longrightarrow
   {\big( \uRpPhg \big)}_{\sigma_\chi} \relbar\joinrel\relbar\joinrel\longrightarrow U_{\!P_{(\chi)},\,\hbar}^{\R_{(\chi)},\,p}(\lieg)  \cr
   \ell \circ \ell'_{{\dot{\sigma}}_\chi} : \, {\big( \uRpPhg \big)}_{\sigma_\chi} \relbar\joinrel\relbar\joinrel\longrightarrow
   U_{\!P_{(\chi)},\,\hbar}^{\R_{(\chi)},\,p}(\lieg) \relbar\joinrel\relbar\joinrel\longrightarrow {\big( \uRpPhg \big)}_{\sigma_\chi}  }  $$
 which (both) are the identity on generators: hence in the end we get
 $ \; \ell'_{{\dot{\sigma}}_\chi} \!\!\circ \ell \, = \, \textsl{id} \; $
 and  $ \; \ell \circ \ell'_{{\dot{\sigma}}_\chi} \! = \, \textsl{id} \; $,  \;thus in particular
 $ \, \ell \, $  is an isomorphism, \,q.e.d.
 \vskip7pt
   We are now left with the task of proving the  $ \underline{\text{\sl Fact\/}} $  above, i.e.\ a bunch of relations.
 \vskip5pt
   Indeed, most relations follow at once from the formulas in  \S \ref{formulas_deform-prod}.
   Namely, the identities  $ \; T' \raise-1pt\hbox{$ \, \scriptstyle \dot\sigma_\chi $} T'' = T' \, T'' \, $  imply
   $ \; T' \raise-1pt\hbox{$ \, \scriptstyle \dot\sigma_\chi $} T'' = T'' \raise-1pt\hbox{$ \, \scriptstyle \dot\sigma_\chi $} T' \, $
   --- for all $ \, T' , T'' \in \lieh \, $.  Also, from
 $ \,\; T \raise-1pt\hbox{$ \, \scriptstyle \dot\sigma_\chi $} E_j  \, = \,  T \, E_j \, + \, 2^{-1} \chi\big(T,T_j^+\big) \, E_j \;\, $
and
 $ \,\; E_j \raise-1pt\hbox{$ \, \scriptstyle \dot\sigma_\chi $} T  \, = \,  E_j \, T \, + \, 2^{-1} \chi\big(T_j^+,T\big) \, E_j \;\, $
   --- for all  $ \, T \in \lieh \, $  and  $ \, j \in I \, $  ---   we get (since  $ \chi $  is antisymmetric)
  $$  T \raise-1pt\hbox{$ \, \scriptstyle \dot\sigma_\chi $} E_j \, -  \, E_j \raise-1pt\hbox{$ \, \scriptstyle \dot\sigma_\chi $} T  \,\;
  = \;  \Big(\, \alpha_j(T) + 2^{-1} \big(\, \chi - \chi^{\,\scriptscriptstyle T} \big)\big(\, T \, , T_j^+ \big) \!\Big) \, E_j  \,\; = \;  +\alpha^{(\chi)}_j(T) \, E_j  $$
 \vskip3pt
\noindent
 A similar, straightforward analysis also yields
 $ \; T \raise-1pt\hbox{$ \, \scriptstyle \dot\sigma_\chi $}  F_j \, -
 \, F_j \raise-1pt\hbox{$ \, \scriptstyle \dot\sigma_\chi $} T \, = \, -\alpha^{(\chi)}_j(T) \, F_j \; $.
                                                                   \par
   The identities  $ \; E_i \raise-1pt\hbox{$ \, \scriptstyle \dot\sigma_\chi $} F_j \, = \, E_i \, F_j \; $  and  $ \; F_j \raise-1pt\hbox{$ \, \scriptstyle \dot\sigma_\chi $} E_i \, = \, F_j \, E_i \; $  in turn imply
  $$  E_i\raise-1pt\hbox{$ \, \scriptstyle \dot\sigma_\chi $} F_j \, - \, {(-1)}^{p(i)\,p(j)}
  F_j \raise-1pt\hbox{$ \, \scriptstyle \dot\sigma_\chi $} E_i  \; = \;  E_i \, F_j \, - \, {(-1)}^{p(i)\,p(j)} F_j \, E_i  \; = \;  {{\; \delta_{i,j} \big( e^{+\hbar \, T_i^+} - \, e^{-\hbar \, T_i^-} \big) \;} \over {\;\; e^{+\hbar\,p_{i{}i}/2} \, - \, e^{-\hbar\,p_{i{}i}/2} \;\;}}  $$
 Eventually, since  $ \, p_{i{}i} = p_{i{}i}^{(\chi)} \, $  by definition,
 and the exponential of toral elements with respect to  $ \raise-1pt\hbox{$ \, \scriptstyle \dot\sigma_\chi $} $
 is the same as with respect to the old product, we conclude that
  $$  E_i \raise-1pt\hbox{$ \, \scriptstyle \dot\sigma_\chi $} F_j \, - \, {(-1)}^{p(i)\,p(j)}
  \, F_j \raise-1pt\hbox{$ \, \scriptstyle \dot\sigma_\chi $} E_i  \,\; = \;\,
 \delta_{i,j} \, {{\; e_{\sigma_\chi}^{+\hbar \, T_i^+} \, -
 \, e_{\sigma_\chi}^{-\hbar \, T_i^-} \;} \over {\; e^{+\hbar\,p_{i{}i}^{(\chi)}/2} \, - \, e^{-\hbar\,p_{i{}i}^{(\chi)}/2} \;}}  $$
where  $ \, e_{\sigma_\chi}^{\,X} \, $  denotes the exponential of any  $ X $  with respect to  $ \hbox{$ \, \scriptstyle \dot\sigma_\chi $} \, $   --- which we already noticed that is the same as the exponential with respect to the original product.
 \vskip7pt
   What is less trivial is proving the other relations (for the deformed product); we do this only for the relations involving the  $ E_i $'s,  those for the  $ F_i $'s  being similar.
                                                                   \par
   We begin with ``quantum Serre relations'', i.e.\ those of the form
  $$  \sum\limits_{s=0}^{1-a_{ij}} \, {(-1)}^s {\bigg[ {{1-a_{ij}} \atop s}  \bigg]}_{\!q_i} {\big( k_{ij}^{(\chi)} \big)}^{\!s} \, E_i^{{(1-a_{ij}-s)}_{\sigma_{\chi}}} \raise-1pt\hbox{$ \, \scriptstyle \dot\sigma_\chi $} E_j \raise-1pt\hbox{$ \, \scriptstyle \dot\sigma_\chi $} E_i^{{(s)}_{\sigma_{\chi}}}  \; = \;  0  $$
 for all  $ \, i \neq j \, $  in  $ I $,  with  $ \, p(i) = \zero \, $:  \,hereafter we set
 $ \; q_{ij}^{(\chi)} \! := e^{\hbar\, p_{ij}^{(\chi)}} \! = e^{\hbar\, (p_{ij} + \mathring{\chi}_{ij} )} \; $
 and
 $ \; k_{ij}^{(\chi)} := {\big( q_{ij}^{(\chi)} \big)}^{\!+1/2} {\big( q_{ji}^{(\chi)} \big)}^{\!-1/2} \; $  ($ \, i , j \in I \, $)
 noting that  $ \, q_{ii}^{(\chi)} = q_{ii} \, $  and  $ \, q_i^{(\chi)} = e^{+\hbar\,p_{i{}i}^{(\chi)}/2} = q_i \, $  ($ \, i \in I \, $),  since  $ \, \big(P_{(\chi)}\big)_{s} = P_s = DA \, $;  moreover, we set also  $ \, \kappa_{ij} := e^{\hbar \, \mathring{\chi}_{ij}} \, $,  noting then that  $ \, q_{ij}^{(\chi)} = q_{ij}\,\kappa_{ij} \, $  ($ \, i , j \in I \, $).
                                                                   \par
   To prove the above identity, we analyze all factors in the summands separately.
 \vskip9pt
\noindent
%
%
 $ \underline{\text{\sl Claim 1}} \, $:  For  $ \, i \neq j \in I \, $  we have
  $ \; k_{ij}^{(\chi)} = \,  k_{ij} \, \kappa_{ij} \, $,
 \;hence  $ \; {\big( k_{ij}^{(\chi)} \big)}^{\!s} \! =  k_{ij}^{\,s} \, \kappa_{ij}^{\,s} \; $  for  $ \, s \in \NN \, $.
 \vskip9pt
   This follows by direct computation.  Next claim instead follows from  \S \ref{formulas_deform-prod}:
 \vskip9pt
\noindent
 $ \underline{\text{\sl Claim 2}} \, $:  Fix  $ \, i \neq j \, $  in  $ I $,  $ \, p(i) = \zero \, $,  and write  $ \, m := 1-a_{ij} \, $.  Then
  $$  E_i^{(m-s)_{\sigma_{\chi}}} \raise-1pt\hbox{$ \, \scriptstyle \dot\sigma_\chi $}
  E_j \raise-1pt\hbox{$ \, \scriptstyle \dot\sigma_\chi $} E_i^{(s)_{\sigma_{\chi}}}  \;
  = \; \sigma_\chi\big( K_i^{m-s} , K_j \big) \, \sigma_\chi\big( K_i^{m-s} K_j \, , K_i^s \big) \, E_i^{m-s} E_j \, E_i^s  $$
 \vskip3pt
   Now we evaluate the value of the toral  $ 2 $--cocycle  using the exponentials.
 \vskip9pt
\noindent
 $ \underline{\text{\sl Claim 3}} \, $:  For all  $ \, i , j \in I \, $  and  $ \, m, s, \ell \in \NN \, $,  \,we have
 \vskip7pt
\begin{enumerate}
  \item[\textit{(a)}]  $ \;\; \sigma_\chi\big( K_i^\ell , K_j \big)  \; = \;  \kappa_{ij}^{\,\ell/2} $
  \item[\textit{(b)}]  $ \;\; \sigma_\chi\big( K_i^{m-s} K_j \, , K_i^s \big)  \; = \;  \kappa_{ij}^{\,s/2} $
  \item[\textit{(c)}]  $ \;\; \sigma_\chi\big( K_i^{m-s} , K_j \big) \,
  \sigma_\chi\big( K_i^{m-s} K_j \, , K_i^s \big)  \; = \;  \kappa_{ij}^{\,(m-2s)/2} $
\end{enumerate}
 \vskip7pt
\noindent
 All assertions follow by computation using  Lemma \ref{lem: chi_U(K,K)}. Indeed, for   \textit{(a)\/}  we have
  $$  \sigma_\chi\big( K_i^\ell , K_j \big)  \; = \;  \overline{\chi}_{\scriptscriptstyle U}\big( K_i^\ell , K_j \big)  \; = \;  e^{\hbar \,
 2^{-1} \chi(\ell \, T_i^+ , T_j^+) }  \; = \;  \kappa_{ij}^{\,\ell/2}  $$
 For item  \textit{(b)},  \,Lemma \ref{lem: chi_U(K,K)}  yields
  $$  \displaylines{
   \quad   \sigma_\chi\big( K_i^{m-s} K_j \, , K_i^s \big)  \; = \;
   \overline{\chi}_{\scriptscriptstyle U}\big( K_i^{m-s} K_j \, , K_i^s \big)  \; = \;
   \overline{\chi}_{\scriptscriptstyle U}\big(\, e^{\hbar \, ((m-s) \, T_i^+ + T_j^+)} , e^{\hbar\, s\, T_i^+} \,\big)  \; =   \hfill   \cr
   \hfill   = \;  e^{\hbar \, 2^{-1} \chi  ((m-s) \, T_i^+ + T_j^+ , \, s \, T_i^+)}  \; = \;  \kappa_{ij}^{\,s/2}   \quad  }  $$
 Now, putting altogether  \textit{(a)\/}  and  \textit{(b)\/}  we eventually get  \textit{(c)}.
%
%
 \vskip11pt
   Finally,  \textsl{Claims 1},  \textsl{2\/}  and  \textsl{3\/}  altogether yield, for  $ \, m := 1 - a_{ij} \, $,
  $$  \displaylines{
   \quad   \sum\limits_{s=0}^m {(-1)}^s {\bigg[ {m \atop s} \bigg]}_{\!q_i} {\big( k_{ij}^{(\chi)} \big)}^{\!s} \, E_i^{{(m-s)}_{\sigma_{\chi}}} \raise-1pt\hbox{$ \, \scriptstyle \dot\sigma_\chi $}
   E_j \raise-1pt\hbox{$ \, \scriptstyle \dot\sigma_\chi $} E_i^{{(s)}_{\sigma_{\chi}}}  \; =   \hfill  \cr
   \quad \qquad \qquad \qquad   = \;  \sum\limits_{s=0}^m {(-1)}^k {\bigg[ {m \atop s} \bigg]}_{\!q_i} k_{ij}^{\,s} \, \kappa_{ij}^{\,s} \, \kappa_{ij}^{\,(m-2s)/2} \, E_i^{m-s} E_j \, E_i^s  \; =   \hfill  \cr
   \quad \qquad \qquad \qquad \qquad \qquad \qquad   = \;  \kappa_{ij}^{\,m/2} \, \sum\limits_{s=0}^m {(-1)}^s
   {\bigg[ {m \atop s} \bigg]}_{\!q_i} k_{ij}^{\,s} \, E_i^{m-s} E_j \, E_i^s  \; = \;  0   \hfill  }  $$
 where the last equality comes from the quantum Serre relation w.r.t.\ the old product.
 \vskip9pt
   Finally, we have to check relations  \eqref{eq: rel1}  through  \eqref{eq: rel6}.  To this end, like for the proof of  Proposition \ref{prop: deform-Yamane=FoMpQUESA}   -- where one also had to check relations, in a very similar manner ---   we observe that the relations under scrutiny involve iterated  $ q $--supercommutators,  so the key step is to check how a single  $ q $--supercommutator  change under  polar--$ 2 $--cocycle  deformation.  In this respect,  \textsl{we fix notation  $ \, {[X\,,Y]}_\mu^\sigmachi \, $  to denote the  $ \mu $---supercommutator  between  $ X $  and  $ Y $  with respect to the deformed product  ``$ \, \sigmachi \, $''}.  Then direct computation, using formulas in  \S \ref{formulas_deform-prod},  gives  (for all  $\, i , j , k \in \, I $  and for all  $ \, \nu , \mu \in \kh \, $)
  $$  \begin{aligned}
   &  \quad   {[E_i\,,E_j]}_\nu  \; = \;  E_i \, E_j - \nu \, {(-1)}^{p(i) p(j)} E_j \, E_i  \; =   \hfill  \\
   &  \quad \quad   = \;  \kappa_{ji}^{+1/2} \, E_i \sigmachi E_j - \nu \, {(-1)}^{p(i) p(j)} \kappa_{ij}^{+1/2} \, E_j \sigmachi E_i  \; =   \hfill  \\
   &  \quad \quad \quad   = \;  \kappa_{ij}^{-1/2} \, E_i \sigmachi E_j - \nu \, {(-1)}^{p(i) p(j)} \kappa_{ij}^{+1/2} \, E_j \sigmachi E_i  \; =   \hfill  \\
   &  \quad \quad \quad \quad   = \;  \kappa_{ij}^{-1/2} \, \big( E_i \sigmachi E_j - \nu \, {(-1)}^{p(i) p(j)} \kappa_{ij} \, E_j \sigmachi E_i \big)  \; = \;  \kappa_{ij}^{-1/2} \, {[E_i\,,E_j]}^\sigmachi_{\nu \,\kappa_{ij}}
      \end{aligned}  $$
 thus in short
\begin{equation}  \label{eq: cocyclebracketord2}
  {[E_i\,,E_j]}_\nu  \; = \;  \kappa_{ij}^{-1/2} \, {[E_i\,,E_j]}^\sigmachi_{\nu \,\kappa_{ij}}
\end{equation}
 Similarly, iterated  $ q $--supercommutators  in  $ \uRpPhg $  can be rewritten as suitable multiples of similar, iterated  $ q $--supercommutators  in  $ {\Big( \uRpPhg \!\Big)}_{\!\!\sigmachi} $,  giving rise to ``higher order analogues'' of  \eqref{eq: cocyclebracketord2}.  Therefore, all relations from  \eqref{eq: rel1}  to  \eqref{eq: rel6}  for  $ {\Big( \uRpPhg \!\Big)}_{\!\!\sigmachi} $  are deduced from the equivalent relations for  $ \uRpPhg $  via repeated applications of  \eqref{eq: cocyclebracketord2}  or of its higher order generalizations; indeed, the same applies to the quantum Serre relations as well, that we already disposed of independently.
                                                                         \par
   For instance, the first of these higher order analogues comes from
\begin{equation}  \label{eq: change-cocycle-bracket-ord_3}
  \begin{aligned}
     &  {\big[[E_i\,,E_j]_\nu \, , E_k\big]}_\mu  \; = \;
%
%
 \big( E_i \, E_j - \nu \, {(-1)}^{p(i)p(j)} E_j \, E_i \big) \, E_k \, -  \\
     &  \;  - {(-1)}^{(p(i)+p(j)) \, p(k)} \, \mu \, E_k \, \big( E_i \, E_j - \nu \, {(-1)}^{p(i)p(j)} E_j \, E_i \big)  \; =  \\
     &  \hskip9pt   = \,  \Big( \kappa_{ji}^{+1/2} \kappa_{kj}^{+1/2} \kappa_{ki}^{+1/2} E_i \sigmachi E_j -
 \nu \, {(-1)}^{p(i)p(j)} \kappa_{ij}^{+1/2} \kappa_{kj}^{+1/2} \kappa_{ki}^{+1/2}E_j \sigmachi E_i \Big) \sigmachi E_k \, -  \\
     &  \hskip65pt   - {(-1)}^{(p(i)+p(j)) \, p(k)} \mu \, E_k \sigmachi \Big( \kappa_{ik}^{+1/2} \kappa_{jk}^{+1/2} \kappa_{ji}^{+1/2} E_i \sigmachi E_j \, -  \\
     &  \hskip145pt   - \nu \, {(-1)}^{p(i)p(j)} \kappa_{jk}^{+1/2} \kappa_{ik}^{+1/2} \kappa_{ij}^{+1/2}E_j \sigmachi E_i \Big)  \; =  \\
     &  \hskip169pt   = \; \kappa_{ji}^{+1/2} \kappa_{kj}^{+1/2} \kappa_{ki}^{+1/2} \, {\big[
{[E_i\,,E_j]}_{\nu \, \kappa_{ij}}^\sigmachi , E_k \big]}
_{\mu \, \kappa_{jk} \, \kappa_{ik}}^\sigmachi
  \end{aligned}
\end{equation}
 which eventually yields the ``third order'' identity
\begin{equation}  \label{eq: change-cocycle-bracket-ord_3 - sum up}
  {\big[[E_i\,,E_j]_\nu \, , E_k\big]}_\mu  \; = \; \kappa_{ji}^{+1/2} \kappa_{kj}^{+1/2} \kappa_{ki}^{+1/2} \, {\big[ {[E_i\,,E_j]}_{\nu \, \kappa_{ij}}^\sigmachi , E_k \big]}
_{\mu \, \kappa_{jk} \, \kappa_{ik}}^\sigmachi
\end{equation}
   \indent   To see how these apply, the easiest case is relation  \eqref{eq: rel1}  for  $ {\Big( \uRpPhg \!\Big)}_{\!\!\sigmachi} $,  which is deduced from  \eqref{eq: rel1}  for  $ \uRpPhg $  applying  \eqref{eq: cocyclebracketord2}  as follows
  $$  0  \; = \;  {[E_i\,,E_j]}_{k_{ij}}  \, = \;  \kappa_{ij}^{-1/2} \, {[E_i\,,E_j]}^\sigmachi_{k_{ij} \, \kappa_{ij}}  \,\;\quad \Longrightarrow \quad\;\,  {[E_i\,,E_j]}^\sigmachi_{k_{ij}^{(\chi)}} \, = \; 0  $$
 where we noticed that  $ \; k_{ij}^{(\chi)} := {\big( q_{ij}^{(\chi)} \big)}^{\!+1/2} {\big( q_{ji}^{(\chi)} \big)}^{\!-1/2} = k_{ij} \, \kappa_{ij} \; $  (cf.\ Definition \ref{def: params-x-Uphgd}).
                                                              \par
   Another, less trivial example is that of  \eqref{eq: rel4}  for  $ {\Big( \uRpPhg \!\Big)}_{\!\!\sigmachi} $:  we deduce it from the parallel relation  \eqref{eq: rel4}  for  $ \uRpPhg $  by direct application of  \eqref{eq: change-cocycle-bracket-ord_3 - sum up}  in which we fix  $ \, (i\,,j\,,k) := (N-2\,,N-1\,,N) \, $,  $ \, (\nu\,,\mu) := (\nu_{N-1}\,,\nu_N) \, $  in the first addendum, and  $ \, (i,j,k) := (N-2\,,N\,,N-1) \, $,  $ \, (\nu\,,\mu) := (\nu_{N-1}\,,\nu_N) \, $  in the second addendum.
 \vskip5pt
   All other cases are entirely similar, so we leave them to the reader.
\epf

\vskip9pt

\begin{rmk}  \label{rmk: about cocycle deform.'s}
 With notation of  Theorem \ref{thm: 2cocdef-uPhgd=new-uPhgd}  above, we have
 $ \; P_{(\chi)} - P \, = \, \varLambda \; $
 for some  {\sl antisymmetric\/}  matrix  $ \, \varLambda \in \lieso_n\big(\kh\big) \, $.
 Conversely, under mild assumptions on  $ P $,
 this result can be ``reversed'' as it is shown below.
\end{rmk}

\vskip7pt

\begin{theorem}  \label{thm: double FoMpQUESAs_P-P'_mutual-deform.s}
 Let  $ \, P, P' \in M_n\big(\kh\big) \, $  be two matrices of Cartan type with the same associated Cartan matrix  $ \, A \, $.
 \vskip5pt
   (a)\,  Let  $ \, \R $  be a  \textsl{split}  realization of  $ \, P $  and  $ \, \uRpPhg $  be the associated FoMpQUESA.  Then there exists a \textsl{split}  realization  $ \check{\R}' $  of  $ \, P' $, a matrix  $ \, \mathring{X} = {\big(\, \mathring{\chi}_{i{}j} \big)}_{i, j \in I} \in \lieso_n\big(\kh\big)\, $  and a toral\/  polar--$ 2 $--cocycle  $ \, \sigma_\chi $ such that
  $$  U_{\!P'\!,\,\hbar}^{\,\R',\,p}(\lieg)  \,\; \cong \;  {\big(\, U_{\!P,\,\hbar}^{\,\R,\,p}(\lieg) \big)}_{\sigma_\chi}  $$
   \indent   In short, if  $ \, P'_s = P_s \, $  then from any  \textsl{split}  FoMpQUESA over  $ P $
   we can obtain a split FoMpQUESA (of the same rank) over  $ P' $  by a toral polar--2--cocycle deformation.
 \vskip5pt
   (b)\,  Let  $ \, \R $  be a  \textsl{split minimal}  realization of  $ P $.
   Then the FoMpQUESA  $ \, \uRpPhg $  is isomorphic to a toral polar--2--cocycle deformation of Yamane's standard double QUESA, that is
 there exists some bilinear map  $ \; \chi \in \text{\sl Alt}_{\,\kh}^{\,S}(\lieh) \; $  such that
  $$  \uRpPhg  \,\; \cong \;  {\big(\, U_{P_A,\,\hbar}^{\,\R_{st},\,p}(\lieg) \big)}_{\sigma_\chi}  $$
 where  $ \, U_{P_A,\,\hbar}^{\,\R_{st},\,p}(\lieg) \, $  is the ``quantum double'' Yamane's QUESA of  Example \ref{example-Yamane's_FoQUEASA's}\textit{(b)}.

 \vskip3pt
   (c)\,  Similar, parallel statements hold true for the Borel FoMpQUESAs.
\end{theorem}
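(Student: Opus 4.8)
\textbf{Plan of proof for Theorem \ref{thm: double FoMpQUESAs_P-P'_mutual-deform.s}.}
The strategy is to reduce everything to Theorem \ref{thm: 2cocdef-uPhgd=new-uPhgd} together with the existence/uniqueness results for realizations of a matrix $P$ with prescribed symmetric part, namely Proposition \ref{prop: mutual-2-cocycle-def} and Proposition \ref{prop: exist-realiz's}. The key observation is that the $2$--cocycle deformation produced by Theorem \ref{thm: 2cocdef-uPhgd=new-uPhgd} transforms a FoMpQUESA attached to $(\R,P)$ into the FoMpQUESA attached to $(\R_{(\chi)},P_{(\chi)})$, where $P_{(\chi)} = P + \mathring{X}$ differs from $P$ by the antisymmetric matrix $\mathring{X} = {\big(\chi(T_i^+,T_j^+)\big)}_{i,j\in I}$; hence, realizing a passage from $P$ to $P'$ by a toral polar--$2$--cocycle amounts precisely to writing $P' - P = \mathring{X}$ for some $\chi \in \text{\sl Alt}_{\,\kh}^{\,S}(\lieh)$ that, in addition, deforms the given realization $\R$ of $P$ into a realization $\R_{(\chi)}$ of $P'$ with the desired properties (split, resp.\ split minimal).

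For part \textit{(a)}: since $P$ and $P'$ have the same symmetric part ($P'_s = P_s = DA$) by hypothesis, the difference $P' - P$ is antisymmetric. Starting from the given \textsl{split} realization $\R$ of $P$, I would invoke Proposition \ref{prop: mutual-2-cocycle-def}\textit{(a)}: it yields a map $\chi \in \text{\sl Alt}_{\,\kh}^{\,S}(\lieh)$ such that $P' = P_{(\chi)}$ and such that the deformed realization $\R_{(\chi)} = \big(\lieh,\Pi_{(\chi)},\Pi^\vee\big)$ is again \textsl{split}; set $\check{\R}' := \R_{(\chi)}$ and $\mathring{X} := P' - P$. Feeding this $\chi$ into Theorem \ref{thm: 2cocdef-uPhgd=new-uPhgd} gives the isomorphism of topological Hopf superalgebras ${\big(\, U_{\!P,\,\hbar}^{\,\R,\,p}(\lieg) \big)}_{\sigma_\chi} \cong U_{\!P'\!,\,\hbar}^{\,\R_{(\chi)},\,p}(\lieg) = U_{\!P'\!,\,\hbar}^{\,\check{\R}',\,p}(\lieg)$, which is exactly the claim.

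For part \textit{(b)}: here $\R$ is \textsl{split minimal}; I would apply Proposition \ref{prop: mutual-2-cocycle-def}\textit{(b)}, which says that such an $\R$ is isomorphic to a $2$--cocycle deformation of the split minimal realization $\R_{st}$ of $P_A = P_s = DA$. Equivalently, there is $\chi \in \text{\sl Alt}_{\,\kh}^{\,S}(\lieh)$ with $P = {(P_A)}_{(\chi)}$ and $\R \cong {(\R_{st})}_{(\chi)}$. Then Theorem \ref{thm: 2cocdef-uPhgd=new-uPhgd} (applied with $P_A$, $\R_{st}$ and this $\chi$), together with Corollary \ref{cor: isom_R -> isom_uRPhg} to absorb the realization isomorphism $\R \cong {(\R_{st})}_{(\chi)}$ into a Hopf superalgebra isomorphism, gives $\uRpPhg \cong U_{\!(P_A)_{(\chi)},\,\hbar}^{\,(\R_{st})_{(\chi)},\,p}(\lieg) \cong {\big(\, U_{P_A,\,\hbar}^{\,\R_{st},\,p}(\lieg) \big)}_{\sigma_\chi}$, and by Example \ref{example-Yamane's_FoQUEASA's}\textit{(b)} the algebra $U_{P_A,\,\hbar}^{\,\R_{st},\,p}(\lieg)$ is precisely the ``quantum double'' Yamane's QUESA $\mathcal{D}'$. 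Part \textit{(c)} follows by running the identical argument with the Borel subsuperalgebras $\uRpPhbpm$ in place of $\uRpPhg$ throughout, which is legitimate because Theorem \ref{thm: 2cocdef-uPhgd=new-uPhgd} already records the parallel Borel statement, and Corollary \ref{cor: isom_R -> isom_uRPhg} / Proposition \ref{prop: functor_R->uRPhg} apply to the Borel case as well.

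The main obstacle is a purely bookkeeping one: making sure that the cocycle $\chi$ produced at the level of realizations (via Proposition \ref{prop: mutual-2-cocycle-def}) is precisely the $\chi$ whose associated toral polar--$2$--cocycle $\sigma_\chi$ enters Theorem \ref{thm: 2cocdef-uPhgd=new-uPhgd}, i.e.\ that the dictionary $\chi \mapsto \mathring{X} \mapsto P_{(\chi)}$ on the classical side matches the dictionary $\chi \mapsto \sigma_\chi \mapsto {\big( \uRpPhg \big)}_{\sigma_\chi}$ on the quantum side. This is guaranteed by the way $\sigma_\chi$ was constructed in \S \ref{toral-cocycles-uPhgd} out of $\chi$, and by the fact that the deformed multiparameter matrix appearing in Theorem \ref{thm: 2cocdef-uPhgd=new-uPhgd} is literally $P_{(\chi)} = P + \mathring{X}$ with $\mathring{\chi}_{ij} = \chi(T_i^+,T_j^+)$; once this compatibility is pointed out, there is nothing further to check, and the proof is genuinely a two-line assembly of the cited results. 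No delicate estimate or new computation is needed.
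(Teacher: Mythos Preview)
Your proposal is correct and follows essentially the same route as the paper: both parts \textit{(a)} and \textit{(b)} are obtained by combining Proposition \ref{prop: mutual-2-cocycle-def} (parts \textit{(a)} and \textit{(b)} respectively) with Theorem \ref{thm: 2cocdef-uPhgd=new-uPhgd}, and part \textit{(c)} is dismissed by pointing to the Borel version of the latter. Your explicit invocation of Corollary \ref{cor: isom_R -> isom_uRPhg} in part \textit{(b)} to pass from the realization isomorphism $\R \cong (\R_{st})_{(\chi)}$ to the Hopf superalgebra isomorphism is slightly more careful than the paper's phrasing (which simply says the deformed realization ``coincides with'' $\R$), but the argument is the same.
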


\pf
   \textit{(a)}\,  By  Proposition \ref{prop: mutual-2-cocycle-def}\textit{(a)},  there exists
   $ \; \chi \in \text{\sl Alt}^S_{\,\kh}( \lieh ) \; $  such that
   $ \, P' = P_{(\chi)} \, $  and the triple   --- constructed as in  \S \ref{subsec: 2-cocycle-realiz}  ---
 $ \, \R' \, := \, \R_{(\chi)} \, = \, \big(\, \lieh \, , \Pi_{(\chi)} \, , \Pi^\vee \,\big) \, $
 is a split realization of  $ \, P' = P_{(\chi)} \, $.  Then  $ \; U_{\!P'\!,\hbar}^{\,\R',\,p}(\lieg) \, \cong \, {\big(\, \uRpPhg \big)}_{\sigma_\chi} \; $
 by  Theorem \ref{thm: 2cocdef-uPhgd=new-uPhgd}.
 \vskip5pt
   \textit{(b)}\,  Yamane's  $ U_{P_A,\,\hbar}^{\,\R_{st},\,p}(\lieg) $  is, in our language, nothing but the FoMpQUESA built upon a straight split realization  $ \; \R_{st} \,= \, \big(\, \lieh \, , \Pi_{st} \, , \Pi_{st}^\vee \,\big) \; $  of  $ DA \, $,  \,for which we write  $ \, \Pi_{st}^\vee = {\big\{\, T_i^\pm \,\big\}}_{i \in I} \, $  and  $ \, \Pi_{st} = {\big\{\, \alpha_i^{(st)} \,\big\}}_{i \in I} \, $.  From  Proposition \ref{prop: mutual-2-cocycle-def}\textit{(b)\/}  we have a suitable  $ \, \chi \in  \text{\sl Alt}_{\,\kh}^{\,S}(\lieh) \; $  such that the realization  $ \big(\R_{st}\big)_{(\chi)} $  obtained as deformation of  $ \R_{st} $  via  $ \chi $  coincides with  $ \R \, $;  \,then  Theorem \ref{thm: 2cocdef-uPhgd=new-uPhgd}  gives  $ \; \uRpPhg \, \cong \, {\big(\, U_{P_A,\,\hbar}^{\,\R_{st},\,p}(\lieg) \big)}_{\sigma_\chi} \, $.
\epf

\vskip3pt

\begin{free text}  \label{gener_quantum-GaGa-alg.'s}
 \textbf{Generalizations.}  The whole analysis and construction devised in this section can be also applied,  \textit{verbatim},  to the setup of QUESAs over  \textsl{affine\/}  Lie superalgebras, namely those introduced in \cite{Ya2}  and  \cite{Ya3}.  Much like in the semiclassical case  (cf.\ \S \ref{gener_GaGa-alg.'s})  one has to handle many more defining relations, keep track of their ```mutations'' under deformation, etc.: nevertheless, all this is a standard task, while the strategy stands exactly the same.  On the other hand, it is worth stressing that the key ideas underlying our recipe also apply (when suitably adapted) to a much larger framework, including concrete examples: we work on this in  \cite{GG6}.
\end{free text}

\bigskip
 \medskip

\section{Quantization and specialization: FoMpQUESAs vs. MpLSbAs}  \label{sec: FoMpQUESA's-vs-MpLSbA's}
 \vskip7pt
 \vskip1pt
   In this section we consider the the interplay of specialization
   --- performed onto quantum objects, namely our FoMpQUESAs ---
   and, conversely, of quantization   --- applied to semiclassical objects, namely our MpLSbA's.
                                                                \par
   To begin with, we shall see that every FoMpQUESA yields by
   specialization a suitable MpLSbA; conversely, for any MpLSbA
   there exists at least one quantization, in the form of a well-defined FoMpQUESA.
   Second, we shall study the interaction between the process of specialization (at  $ \, \hbar = 0 \, $)
   of any FoMpQUESA and the process of deformation
   --- either by (toral) twist or by (toral)  $ 2 $--cocycle  ---
   of that same FoMpQUEA or of the MpLSbA which is its semiclassical limit.
   In particular, we will find that, in a natural sense,
   \textsl{the two processes commute with each other}.

\medskip

\subsection{Deformation vs.\ specialization for FoMpQUESA's and MpLSbA's}
\label{subsec: def-vs.-spec-x-FoMpQUESA's&MpLSbA's} {\ }

\medskip

   In  \S \ref{sec: Hopf-setup & QUEA's}  we recalled that any QUESA defines,
   by specialization at  $ \, \hbar = 0 \, $,  a well-defined ``semiclassical limit'',
   given by a specific Lie superbialgebra   --- or more precisely by the universal
   enveloping superalgebra of the latter.  In addition, by
   Theorem \ref{thm: FoMpQUESAs-are-QUESAs}  we know that our FoMpQUESA's are
   indeed QUESA's, so they do have semiclassical limits in the previous sense.
   We shall now go and see what these limits look like, i.e.\ what Lie superalgebras we do find in this way.

\vskip11pt

\begin{free text}
 {\bf Formal MpQUESA's vs.\ MpLSbA's.}
 Let us fix a Cartan super-datum  $ \, (A\,,p) \, $,  a matrix
 $ \, P := {\big(\, p_{i,j} \big)}_{i, j \in I} \in M_n\big( \kh \big) \, $
 of Cartan type with associated Cartan matrix  $ A \, $,
 and a realization  $ \, \R := \big(\, \lieh \, , \Pi \, , \Pi^\vee \,\big) \, $  of  $ P $.
 Out of these data, we associate the FoMpQUEA  $ \uRpPhg \, $,
 as in  \S \ref{subsec: FoMpQUESA's},  and also the MpLSbA
 $ \lieg^{{\bar{\Rpicc}},p}_{\bar{\Ppicc}} $  introduced in
 \S \ref{subsec: MpLSbA's},  where
%
%
 we use the somewhat loose (yet obvious) notation such
 $ \, \, \bar{\R} := \R \; (\,\text{mod} \; \hbar \,) \, $  and
 $ \, \bar{P} := P \; (\,\text{mod} \; \hbar \,) \, $.

\vskip5pt

   We begin by pointing out that, in a nutshell, FoMpQUESA's and
   MpLSbA's are in bijection through the specialization/quantization process,
   as one might expect:
%
%
\end{free text}

\vskip9pt

\begin{theorem}  \label{thm: semicl-limit FoMpQUESA}
 With assumptions as above,  $ \uRpPhg $  is a  \textsl{QUESA}
 in the sense of  \S \ref{QUESA's},  whose semiclassical limit is
 $ U\big(\lieg^{{\bar{\Rpicc}},\,p}_{\bar{\Ppicc}}\,\big) \, $.
 In other words,  \textsl{$ \lieg^{{\bar{\Rpicc}},\,p}_{\bar{\Ppicc}} $
 is the specialization of  $ \, \uRpPhg \, $,  or   --- equivalently ---
 $ \uRpPhg $  is a quantization of  $ \lieg^{{\bar{\Rpicc}},\,p}_{\bar{\Ppicc}} \, $}.
\end{theorem}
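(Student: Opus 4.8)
The plan is to reduce the general statement to the case of a straight realization, where we have already established in \S\ref{FoMpQUESA from Yamane's QUESA} and Proposition \ref{prop: deform-Yamane=FoMpQUESA} a precise description of $\uRpPhg$ as a twist deformation (followed by a change of generators) of a Yamane's QUESA, and then transport the conclusion along the central Hopf epimorphism granted by Proposition \ref{prop: functor_R->uRPhg} and Proposition \ref{prop: central-Hopf-extens_FoMpQUEAs}. By Theorem \ref{thm: FoMpQUESAs-are-QUESAs} we already know that $\uRpPhg$ is a QUESA, so $\, U_\hbar \big/ \hbar\,U_\hbar \cong U(\lieg_0)\, $ for some Lie superalgebra $\lieg_0$; the task is precisely to identify $\lieg_0$ (together with its induced Lie cobracket via Theorem \ref{thm:Lie-coalgebra-limit}) with $\lieg^{\bar{\Rpicc},\,p}_{\bar{\Ppicc}}$ as a \emph{Lie superbialgebra}.

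First I would treat the straight case. Using the triangular decomposition (Theorem \ref{thm: triang-decomp}) and topological freeness, the specialization $U_0 := U_\hbar\big/\hbar\,U_\hbar$ is generated by the classes $\overline{E_i}$, $\overline{F_i}$ and $T$ ($i \in I$, $T \in \lieh$), all primitive by the coproduct formulas \eqref{eq: coprod_x_uRpPhg_2nd-time}; by Milnor--Moore, $\lieg_0 = P(U_0)$ and the Lie bracket is induced from the commutator. I would then check, relation by relation, that the defining relations of $\uRpPhg$ (Definition \ref{def: FoMpQUESA}: the relations \eqref{eq: rel T-E / T-T / T-F}, \eqref{eq: rel E-F}, the quantum Serre relations \eqref{eq: q-Serre (E)}--\eqref{eq: q-Serre (F)}, and \eqref{eq: rel1}--\eqref{eq: rel6}) reduce modulo $\hbar$ exactly to the presentation of $\lieg^{\bar\Rpicc,p}_{\bar P}$ in \S\ref{Yamane's LieSBial's}. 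The essential computations are: $e^{+\hbar T_i^+} - e^{-\hbar T_i^-} = \hbar\,(T_i^+ + T_i^-) + O(\hbar^2)$ and $q_i^{+1} - q_i^{-1} = 2\,\hbar\,d_i + O(\hbar^2)$, so \eqref{eq: rel E-F} becomes $[\overline{E_i},\overline{F_j}] = \delta_{ij}\,(T_i^+ + T_i^-)/(2d_i)$; and, since $q_i \to 1$, $k_{ij} \to 1$, $\nu_i \to 1$, $q_{ij}^{1/2} \to 1$ as $\hbar \to 0$, every $q$--supercommutator becomes an ordinary supercommutator and the $q$--binomial coefficients ${\binom{1-a_{ij}}{k}}_{q_i}$ become ordinary binomials, so the quantum Serre relations collapse to the classical $\big(\ad(\overline{E_i})\big)^{1-a_{ij}}(\overline{E_j}) = 0$ and relations \eqref{eq: rel1}--\eqref{eq: rel6} collapse to their classical counterparts (the higher-order relations displayed in \S\ref{Yamane's LieSBial's}). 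Together with a dimension/PBW count inherited from the triangular decomposition this identifies $\lieg_0$ with $\lieg^{\bar\Rpicc,p}_{\bar P}$ as Lie superalgebras. For the Lie cobracket, I would apply \eqref{def: cocorchete-semiclasico}: from $\Delta(E_i) - \Delta^{\op}(E_i) = (e^{+\hbar T_i^+}-1)\otimes E_i - E_i \otimes (e^{+\hbar T_i^+}-1) = \hbar\big(T_i^+ \otimes E_i - E_i \otimes T_i^+\big) + O(\hbar^2)$ one gets $\delta(\overline{E_i}) = 2\,T_i^+ \wedge \overline{E_i}$, and similarly $\delta(\overline{F_i}) = 2\,T_i^- \wedge \overline{F_i}$, $\delta(T) = 0$ --- exactly the formulas \eqref{eq: Lie-cobracket x gRP} of Theorem \ref{thm: Lie-sBIalg x liegRP}. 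This settles the straight case.

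For a general realization $\R$, I would invoke Lemma \ref{lemma: split/straight-lifting} to obtain a straight realization $\tilde\R$ of $P$ with an epimorphism $\underline\pi : \tilde\R \relbar\joinrel\twoheadrightarrow \R$, hence by Proposition \ref{prop: central-Hopf-extens_FoMpQUEAs} a Hopf epimorphism $U_{\underline\pi}: U^{\tilde\R,p}_{P,\hbar}(\lieg) \relbar\joinrel\twoheadrightarrow \uRpPhg$ whose kernel is generated by the central, primitive subalgebra $U_\hbar(\liek)$ with $\liek = \Ker(\phi)$. Specializing at $\hbar = 0$ this is exact (the relevant modules are topologically free, and the quotient of topologically free modules is controlled), so $U_0(\R) \cong U_0(\tilde\R)\big/\big(\,U_0(\tilde\R)\cdot\overline{\liek}\,\big)$; but on the classical side we have, by Theorem \ref{thm: Lie-sBIalg x liegRP} (whose proof already used exactly this quotient construction, with $\Ker(\pi)$ central and killed by the cobracket), $\lieg^{\bar\Rpicc,p}_{\bar P} \cong \lieg^{\tilde\Rpicc,p}_{\bar P}\big/\overline{\liek}$ as Lie superbialgebras. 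Since specialization commutes with the quotient by a central Hopf subalgebra built from a subspace of the Cartan (so on the classical side one quotients by a central ideal on which the cobracket vanishes), the two quotients agree, giving $U_0(\R) \cong U\big(\lieg^{\bar\Rpicc,p}_{\bar P}\big)$ as desired.

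The main obstacle is the bookkeeping in the straight case: verifying that the full list of higher-order defining relations \eqref{eq: rel2}--\eqref{eq: rel6} (with all their intricate $\nu$-- and $k_{ij}$--coefficients, and their $F$--counterparts) degenerates \emph{exactly} to the corresponding classical relations of \S\ref{Yamane's LieSBial's}, together with establishing that no \emph{additional} relations appear in the limit --- i.e.\ that $U_0$ is not a proper quotient of $U\big(\lieg^{\bar\Rpicc,p}_{\bar P}\big)$. The cleanest way around the latter is to use topological freeness of $\uRpPhg$ (proved in Theorem \ref{thm: FoMpQUESAs-are-QUESAs}) together with the triangular decomposition: a PBW-type basis of $\uRpPhg$ over $\kh$ specializes to a spanning set of $U_0$ of the size of a PBW basis of $U\big(\lieg^{\bar\Rpicc,p}_{\bar P}\big)$, forcing equality. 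Alternatively, one may simply note that $U_0$ is by Milnor--Moore the enveloping algebra of the Lie superalgebra generated by the $\overline{E_i},\overline{F_i},T$ subject to the degenerated relations, which is precisely the presentation of $\lieg^{\bar\Rpicc,p}_{\bar P}$; since all generating relations of $\uRpPhg$ were accounted for, no more can survive.
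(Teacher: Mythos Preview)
Your proposal is correct, but it takes a more circuitous route than the paper's proof. The paper does \emph{not} reduce to the straight case at all: the argument works uniformly for any realization $\R$. The key device you are missing is the construction of an explicit inverse map. After observing (as you do) that the defining relations of $\uRpPhg$ reduce modulo $\hbar$ to those of $\lieg^{\bar\Rpicc,p}_{\bar\Ppicc}$, giving an epimorphism $\phi : U\big(\lieg^{\bar\Rpicc,p}_{\bar\Ppicc}\big) \twoheadrightarrow U_0$, the paper simply regards $U\big(\lieg^{\bar\Rpicc,p}_{\bar\Ppicc}\big)$ as a $\kh$--algebra by scalar restriction through $\kh \twoheadrightarrow \Bbbk$, and observes that the \emph{same} relation-by-relation check yields a $\kh$--algebra epimorphism $\psi' : \uRpPhg \twoheadrightarrow U\big(\lieg^{\bar\Rpicc,p}_{\bar\Ppicc}\big)$ (identity on generators) whose kernel contains $\hbar\,\uRpPhg$, hence a $\Bbbk$--algebra map $\psi : U_0 \to U\big(\lieg^{\bar\Rpicc,p}_{\bar\Ppicc}\big)$. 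Then $\phi$ and $\psi$ are mutually inverse on generators, so they are isomorphisms; the co-Poisson compatibility is checked on generators exactly as you outline.

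This two-maps trick entirely replaces your PBW/dimension bookkeeping and your reduction to the straight case via Lemma~\ref{lemma: split/straight-lifting}. Your ``alternatively'' remark at the end is morally the same idea, but the paper makes it rigorous by exhibiting $\psi$ rather than arguing abstractly that ``no more relations can survive''. Your approach has the virtue of being more structural (it leverages the triangular decomposition and the Yamane identification), and would be the natural path if one wanted finer control on PBW bases in the limit; the paper's approach is shorter and avoids all of that machinery.
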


\pf
 By  Theorem \ref{thm: FoMpQUESAs-are-QUESAs}  we know that
 $ \uRpPhg $  is a QUESA, so only have to describe its semiclassical limit.
 In fact, we aim to prove that  $ \; \uRpPhg \big/ \hbar \, \uRpPhg \, $,
 \,as a co-Poisson Hopf superalgebra, is isomorphic to
 $ U\big( \lieg^{{\bar{\Rpicc}},\,p}_{\bar{\Ppicc}} \big) \, $.
                                                                 \par
   First of all, the presentation of  $ \uRpPhg $  implies that
   $ \; \uRpPhg \Big/ \hbar \, \uRpPhg \, $  is generated by the cosets
   (modulo  $ \hbar \, \uRpPhg \, $)  of the  $ F_i $'s,  $ T $'s  and
   $ E_i $'s  ($ \, i \in I \, $,  $ \, T \in \lieh \, $);
   moreover, these cosets  $ \; \overline{X} := X \!\! \mod \hbar\,\uRpPhg \; $
   obey all relations induced modulo  $ \hbar $
   by the defining relations among the original generators  $ X $  of  $ \uRpPhg \, $.
   On the other hand, the Lie superbialgebra  $ \lieg^{{\bar{\Rpicc}},\,p}_{\bar{\Ppicc}} $
   is endowed, by construction, with a presentation (as a Lie superalgebra) by generators
   (the  $ F_i $'s,  $ T $'s  and  $ E_i $'s)  and relations,
   and explicit formulas for the value of the Lie supercobracket  $ \delta $
   on the given generators.  From this one gets an obvious presentation of
   $ U\big( \lieg^{\bar{\Rpicc}}_{\bar{\Ppicc},\,p} \big) \, $,
   where the generators are again the  $ F_i $'s,  $ T $'s  and  $ E_i $'s
   and the Poisson cobracket  $ \delta $  on them is given by explicit formulas.
                                                            \par
   Comparing the presentation of  $ U\big( \lieg^{{\bar{\Rpicc}},p}_{\bar{\Ppicc}} \big) $
   with that of  $ \; \uRpPhg \big/ \hbar \, \uRpPhg \; $
   we find that all the given relations among generators of the
   latter superalgebra do correspond to identical relations among the
   corresponding generators in the former: indeed, mapping  $ X $  to  $ \overline{X} $
   (for all  $ \, X \in \big\{ E_i \, , F_i \,\big|\, i \in I \,\big\} \cup \lieh \, $)
   turns every given relation among the  $ X $'s
   into a same-look relation among the  $ \overline{X} $'s.
   Eventually, this provides a well-defined epimorphism of Hopf superalgebras
\begin{equation}  \label{eq: iso_Ugbar-U0g}
   \hskip-11pt   \phi \, : \, U\big( \lieg^{{\bar{\Rpicc}},\,p}_{\bar{\Ppicc}} \big) \,\relbar\joinrel\relbar\joinrel\twoheadrightarrow\, \uRpPhg \Big/ \hbar \, \uRpPhg  \;\; ,  \quad
  E_i \,\mapsto\, \overline{E_i} \, ,  \;  T \,\mapsto\, \overline{T} \, ,  \;  F_i \,\mapsto\, \overline{F_i}
\end{equation}
 ($ \, i \in I \, $,  $ \, T \in \lieh \, $);  furthermore, comparing the formulas on both sides for the co-Poisson superbracket
 we see that this is also a  \textsl{co-Poisson\/}  super Hopf epimorphism.
                                                                         \par
   On the other hand, scalar restriction   --- via
   $ \; \kh \relbar\joinrel\relbar\joinrel\twoheadrightarrow \kh \big/ \hbar \, \kh \, \cong \, \Bbbk \; $
   ---   makes  $ U\big( \lieg^{\bar{\Rpicc},\,p}_{\bar{\Ppicc}} \big) $
   into a  $ \kh $--superalgebra.  Then the same argument about
   relations yields a well-defined  $ \kh $--superalgebra  epimorphism
\begin{equation*}  \label{eq: iso_Ug-Ugbar}
   \psi\,{}' : \uRpPhg \relbar\joinrel\relbar\joinrel\twoheadrightarrow
 U\big( \lieg^{{\bar{\Rpicc}},\,p}_{\bar{\Ppicc}} \big)  \;\; ,  \quad  E_i \,\mapsto\, E_i \, ,  \;  T \,\mapsto\, T \, ,  \;  F_i \,\mapsto\, F_i   \;\quad  (\, i \in I \, , \, T \in \lieh \,)
\end{equation*}
 whose kernel contains  $ \, \hbar \, \uRpPhg \, $;
 \,therefore, a  $ \Bbbk $--superalgebra  epimorphism
\begin{equation}  \label{eq: iso_U0g-Ugbar}
   \psi \, : \, \uRpPhg \Big/ \hbar\,\uRpPhg
   \,\relbar\joinrel\relbar\joinrel\twoheadrightarrow\,
   U\big( \lieg^{{\bar{\Rpicc}},\,p}_{\bar{\Ppicc}} \big)  \;\; ,  \quad
   \overline{E_i} \,\mapsto\, E_i \, ,  \; \overline{T}
   \,\mapsto\, T \, ,  \; \overline{F_i} \,\mapsto\, F_i
\end{equation}
 ($ \, i \in I \, $,  $ \, T \in \lieh \, $)
 is induced too.  Now comparing  \eqref{eq: iso_Ugbar-U0g}
 and  \eqref{eq: iso_U0g-Ugbar}  shows that
 $ \phi $  and  $ \psi $  are inverse to each other, so
 $ \psi $  is a  super \textsl{Hopf\/}  morphism too and we are done.
\epf

\medskip

\subsection{Intertwining deformation and specialization}
  \label{subsec: intrtwining-deform-spec}  {\ }
 \vskip7pt
   We shall now compare the process of deformation at the quantum level
   and at the semiclassical level, finding a very neat outcome:
   in a nutshell,  \textit{deformation (by twist or by 2--cocycle)
   \textsl{commutes}  with specialization}.

\vskip11pt

\begin{free text}
 \textbf{Intertwining twist deformation and specialization.}
 Fix again a Cartan super-datum $(A,p)$,
 a matrix  $ \, P := {\big(\, p_{i,j} \big)}_{i, j \in I} \in M_n\big( \kh \big) \, $
 of Cartan type, a realization
 $ \, \R := \big(\, \lieh \, , \Pi \, , \Pi^\vee \,\big) \, $  of it,
 and the associated FoMpQUESA  $ \uRpPhg $  and MpLSbA
 $ \lieg^{{\bar{\Rpicc}},p}_{\bar{\Ppicc}} \, $.
 In the free  $ \kh $--module  $ \lieh $  of finite rank  $ \, t \, $,
 we fix a  $ \kh $--basis  $ \, {\big\{ H_g \big\}}_{g \in \G} \, $,
 with  $ \, |\G| = \rk(\lieh) = t \, $.
 \vskip5pt
   Choose any antisymmetric matrix
   $ \; \Psi = \big( \psi_{gk} \big)_{g, k \in \G} \in \lieso_t\big(\kh\big) \; $.
   Out of it, set
  $$
  \displaylines{
   j_{\bar{\Psipicc}}  \; := \;  {\textstyle \sum_{g,k=1}^t} \,
   \overline{\psi_{gk}} \; \overline{H_g} \otimes \overline{H_k}  \,\; \in \;\,
  \overline{\lieh} \otimes \overline{\lieh}   \qquad \qquad \qquad
  \text{as in  \eqref{eq: def_twist_Lie-bialg_2nd-time}}  \cr
   \F_\Psipicc  \; := \;  \exp\Big( \hbar \; 2^{-1} \,
   {\textstyle \sum_{g,k=1}^t} \psi_{gk} \, H_g \otimes H_k \Big)
   \,\; \in \;\,  \uRpPhg \,\widehat{\otimes}\, \uRpPhg   \qquad
   \text{as in  \eqref{eq: Resh-twist_F-uPhgd}}  }  $$
 so  $ j_{\bar{\Psipicc}} $  is a (toral)  \textsl{twist\/}
 for the  \textsl{Lie superbialgebra\/}
 $ \lieg^{{\bar{\Rpicc}},p}_{\bar{\Ppicc}} $  and  $ \F_\Psipicc $
 is a (toral)  \textsl{twist\/}  for the  \textsl{Hopf superalgebra\/}
 $ \uRpPhg \, $,  we have the deformation
 $ {\big( \lieg^{{\bar{\Rpicc}},p}_{\bar{\Ppicc}} \big)}^{j_\Psipicc} $
 of  $ \lieg^{{\bar{\Rpicc}},p}_{\bar{\Ppicc}} $  by the (Lie) twist
 $ j_{\bar{\Psipicc}} $  and the deformation
 $ {\big(\,\uRpPhg\big)}^{\F_\Psipicc} $  of  $ \uRpPhg $
 \hbox{by the (Hopf) twist  $ \F_\Psipicc \, $.}
 \vskip5pt
   Again out of  $ \Psi \, $,  we define the matrix  $ P_\Psipicc $
   and its realization  $ \, \R_\Psipicc \! := \big(\, \lieh \, , \Pi \, , \Pi^\vee_\Psipicc \big) $,
   \,as in  Proposition \ref{prop: twist-realizations};  \,then we have
   $ U_{\!P_\Psipicc,\hbar}^{\,\R_\Psipicc, p}(\lieg) $  and
   $ \lieg^{\bar{\Rpicc}_\Psipicc,\,p}_{\bar{\Ppicc}_\Psipicc} \, $,
   again mutually linked by a quantization/specialization relationship.
\end{free text}

\vskip3pt

   We can now state our result, which in particular claims (in a nutshell)
   that  \textsl{``deformation by twist commutes with specialization''}.

\vskip11pt

\begin{theorem}  \label{thm: specializ twisted FoMpQUESA}
 $ \Big(\, \uRpPhg \!\Big)^{\F_\Psipicc} $  is a QUESA,
 whose semiclassical limit is isomorphic to
 $ \, U\big( {\big( \lieg^{{\bar{\Rpicc}},p}_{\bar{\Ppicc}} \big)}^{j_{\bar{\Psipicc}}} \big) \, $.
 Indeed,  $ \; \big(\, \uRpPhg \Big)^{\F_\Psipicc} \cong
 \, U_{\!P_{\,\Psipicc},\hbar}^{\,\R_\Psipicc}(\lieg) \;\, $  and
 $ \; {\big( \lieg^{{\bar{\Rpicc}},p}_{\bar{\Ppicc}} \big)}^{j_{\bar{\Psipicc}}} \cong \, \lieg_{\bar{\Ppicc}_\Psipicc,\,p}^{\bar{\Rpicc}_\Psipicc} \;\, $.
\end{theorem}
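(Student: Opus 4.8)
The plan is to prove \,$\big(\, \uRpPhg \big)^{\F_\Psipicc} \cong U_{\!P_\Psipicc,\hbar}^{\,\R_\Psipicc,p}(\lieg)$\, and \,${\big( \lieg^{{\bar{\Rpicc}},p}_{\bar{\Ppicc}} \big)}^{j_{\bar{\Psipicc}}} \cong \lieg_{\bar{\Ppicc}_\Psipicc,\,p}^{\bar{\Rpicc}_\Psipicc}$\, first, and then deduce the statement about the semiclassical limit by combining these with  Theorem \ref{thm: semicl-limit FoMpQUESA}  and the functoriality of the specialization functor. The first isomorphism is precisely  Theorem \ref{thm: twist-uRpPhg=new-uRpPhg}  applied with $\Phi = \Psi$, so there is nothing new to do there; the second is  Theorem \ref{thm: twist-liegRP=new-liegR'P'}  (in its ``bar'' incarnation, i.e.\ over the ground field $\Bbbk$ rather than $\kh$), again with $\Theta = \bar\Psi$. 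The only genuinely new content is checking that passing to the semiclassical limit turns the quantum twist $\F_\Psipicc$ into the Lie twist $j_{\bar\Psipicc}$ in a compatible way.

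First I would invoke  Theorem \ref{thm: FoMpQUESAs-are-QUESAs}  on $\uRpPhg$ to know it is a QUESA, and then observe that twist deformation preserves the property of being a deformation Hopf superalgebra that is topologically free with $H/\hbar H \cong U(\lieg)$ for some $\lieg$: indeed the underlying topological $\kh$--module and the multiplication are unchanged under twist, so $\big(\uRpPhg\big)^{\F_\Psipicc}/\hbar\big(\uRpPhg\big)^{\F_\Psipicc}$ is the same algebra $U\big(\lieg^{{\bar\Rpicc},p}_{\bar\Ppicc}\big)$ as before; hence $\big(\uRpPhg\big)^{\F_\Psipicc}$ is again a QUESA, with the same underlying Lie superalgebra. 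Its semiclassical limit as a \emph{Lie superbialgebra} differs from that of $\uRpPhg$ only in the Lie cobracket, by  Theorem \ref{thm:Lie-coalgebra-limit}. The key point is then the standard compatibility: if $\F = \exp(\hbar\,2^{-1}\JJ_\Psi)$ with $\JJ_\Psi \in \lieh\otimes\lieh$, then $\F = 1 + \hbar\,2^{-1}\JJ_\Psi + O(\hbar^2)$, and for the twisted coproduct $\Delta^\F(x) = \F\,\Delta(x)\,\F^{-1}$ one computes
\begin{equation*}
\frac{\Delta^\F(x) - (\Delta^\F)^{\op}(x)}{\hbar} \;\equiv\; \frac{\Delta(x)-\Delta^{\op}(x)}{\hbar} \,-\, \big[\,2^{-1}(\JJ_\Psi - \JJ_\Psi^{\op})\,,\,\Delta(x)\,\big] \pmod{\hbar}\,,
\end{equation*}
and since $\Psi$ is antisymmetric, $2^{-1}(\JJ_\Psi - \JJ_\Psi^{\op}) = \JJ_\Psi$ (as an element of $\Lambda^2$), so the second term is exactly $x\,.\,j_{\bar\Psipicc}$ in the adjoint action; comparing with  \eqref{eq: def_twist-delta}  this shows $\delta^{\F_\Psipicc} = \delta^{j_{\bar\Psipicc}}$ on the semiclassical limit, i.e.\ the semiclassical limit of $\big(\uRpPhg\big)^{\F_\Psipicc}$ is $U\big({\big(\lieg^{{\bar\Rpicc},p}_{\bar\Ppicc}\big)}^{j_{\bar\Psipicc}}\big)$ as a co-Poisson Hopf superalgebra.

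To finish, I would chain the isomorphisms: by  Theorem \ref{thm: twist-uRpPhg=new-uRpPhg}  (with $\Phi = \Psi$) there is an isomorphism of topological Hopf superalgebras $f^\Psipicc_P : U_{\!P_\Psipicc,\hbar}^{\,\R_\Psipicc,p}(\lieg) \xrightarrow{\;\cong\;} \big(\uRpPhg\big)^{\F_\Psipicc}$; specializing this (i.e.\ reducing mod $\hbar$) gives, via  Theorem \ref{thm: semicl-limit FoMpQUESA}  applied to the left-hand side, that the semiclassical limit of $\big(\uRpPhg\big)^{\F_\Psipicc}$ is $U\big(\lieg^{\bar\Rpicc_\Psipicc,p}_{\bar\Ppicc_\Psipicc}\big)$. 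Matching this with the description obtained in the previous paragraph yields the co-Poisson Hopf isomorphism $U\big({\big(\lieg^{{\bar\Rpicc},p}_{\bar\Ppicc}\big)}^{j_{\bar\Psipicc}}\big) \cong U\big(\lieg^{\bar\Rpicc_\Psipicc,p}_{\bar\Ppicc_\Psipicc}\big)$, hence a Lie superbialgebra isomorphism ${\big(\lieg^{{\bar\Rpicc},p}_{\bar\Ppicc}\big)}^{j_{\bar\Psipicc}} \cong \lieg^{\bar\Rpicc_\Psipicc,p}_{\bar\Ppicc_\Psipicc}$ by passing to primitive elements; alternatively, this last isomorphism is just  Theorem \ref{thm: twist-liegRP=new-liegR'P'}  read over $\Bbbk$, which serves as an independent check.

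The main obstacle I anticipate is purely bookkeeping: making sure the ``bar'' reductions are done consistently — that $\bar\Psi = \Psi \bmod \hbar$, that $j_{\bar\Psipicc}$ is genuinely the image of $2^{-1}(\JJ_\Psi - \JJ_\Psi^{\op})$ under the specialization of $\lieh$, and that the realization $\bar\Rpicc_\Psipicc$ coincides with $(\bar\Rpicc)_{\bar\Psipicc}$, i.e.\ that ``twist-deforming then reducing mod $\hbar$'' equals ``reducing mod $\hbar$ then twist-deforming''. This commutativity at the level of matrices and realizations follows directly from the explicit formulas  \eqref{eq: T-phi}  and  \eqref{def-P_Phi}, which are polynomial (indeed linear) in the entries, hence compatible with the quotient $\kh \twoheadrightarrow \kh/\hbar\kh \cong \Bbbk$; the main care needed is simply to carry the identifications of  \S \ref{Yamane's LieSBial's}  (identifying $\lieh$ with $\overline{\lieh}$, etc.) through the argument without confusion. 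No step is conceptually hard, so the proof can be kept short, essentially assembling the cited theorems plus the one-line $O(\hbar)$ computation of the cobracket.
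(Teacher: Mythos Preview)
Your proposal is correct and, at its core, coincides with the paper's proof: the paper simply chains the three isomorphisms from Theorem \ref{thm: twist-uRpPhg=new-uRpPhg}, Theorem \ref{thm: semicl-limit FoMpQUESA}, and Theorem \ref{thm: twist-liegRP=new-liegR'P'}, exactly as in your final paragraph. Your additional direct $O(\hbar)$ computation of the cobracket (showing $\delta^{\F_\Psipicc}$ specializes to $\delta^{j_{\bar\Psipicc}}$) is not used in the paper's proof proper, but is precisely the general mechanism alluded to in the Remark immediately following it (the reference to \cite[Theorem 3.1.2]{GG4}); so you have supplied, in effect, the content of that remark as a self-contained check rather than an external citation --- redundant for the theorem as stated, but a nice confirmation that the two routes agree.
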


\pf
 The claim follows, as direct application, from the isomorphisms
  $$  \Big( \uRpPhg \Big)^{\F_\Psipicc}
%
%
 \,\cong\;
  U_{\!P_{\,\Psipicc},\hbar}^{\,\R_\Psipicc}(\lieg)  \;\, ,  \!\quad
   U_{\!P_{\,\Psipicc},\hbar}^{\,\R_\Psipicc}(\lieg) \Big/
   \hbar\,U_{\!P_{\,\Psipicc},\hbar}^{\,\R_\Psipicc}(\lieg)
%
%
 \;\cong\;
U_\hbar\big(\lieg_{\bar{\Ppicc}_\Psipicc}^{\bar{\Rpicc}_\Psipicc}\big)  \;\, ,  \!\quad
   \lieg_{\bar{\Ppicc}_\Psipicc}^{\bar{\Rpicc}_\Psipicc}
 \;\cong\,
%
%
   {\big( \lieg^{\bar{\Rpicc}}_{\bar{\Ppicc}} \big)}^{j_{\bar{\Psipicc}}}  $$
 which come from  Theorem \ref{thm: twist-uRpPhg=new-uRpPhg},
 Theorem \ref{thm: semicl-limit FoMpQUESA}  and
 Theorem \ref{thm: twist-liegRP=new-liegR'P'}.
\epf

\vskip9pt

\begin{rmk}
 The above result is proved by direct inspection;
 it can also be deduced as an application of a general result about
 the ``semiclassical limit'' of a twist of a QUESA, namely
 \cite[Theorem 3.1.2]{GG4}  (the result there is stated for the non-super case,
 but it extends straightforwardly to the super setup).
\end{rmk}

\vskip11pt

\begin{free text}
 \textbf{Intertwining 2--cocycle deformation and specialization.}
 Now we analyse what happens when one combines  \textsl{deformations by 2--cocycle}
 --- for both FoMpQUESAs and MpLSbA's ---   with the specialization process.
                                                                   \par
   We begin with a Cartan super-datum $(A,p)$, a matrix
   $ \, P \! := \! {\big(\, p_{i,j} \big)}_{i, j \in I} \in M_n\big( \kh \big) $  \,
   of Cartan type, a realization
   $ \, \R := \big(\, \lieh \, , \Pi \, , \Pi^\vee \,\big) \, $
   of  $ P $,  and a fixed  $ \kh $--basis  $ \, {\big\{ H_g \big\}}_{g \in \G} \, $
of  $ \lieh \, $,  with  $ \, |\G| = \rk(\lieh) = t \, $.
Then  $ \uRpPhg $  and  $ \lieg^{{\bar{\Rpicc}},p}_{\bar{\Ppicc}} $
are defined, interlocked through quantization/specialization.
 \vskip5pt
   Like in  \S \ref{2-cocycle-U_h(h)},  we fix a  $ \kh $--bilinear
   map  $ \; \chi : \lieh \times \lieh \relbar\joinrel\longrightarrow \kh \; $
   which is antisymmetric and obeys  \eqref{eq: condition-eta}.
   Taking everything modulo  $ \hbar \, $,  this  $ \chi $
   defines a similar antisymmetric,  $ \Bbbk $--bilinear  map
   $ \; \gamma := \big(\, \chi \!\mod \hbar \,\big) : \lieh_0 \times
   \lieh_0 \relbar\joinrel\longrightarrow \Bbbk \; $
   --- with  $ \, \lieh_0 := \lieh \Big/ \hbar \, \lieh \, = \,
   \overline{\lieh} \, $  ---   which obeys  \eqref{eq: condition-eta}
   again, up to replacing  ``$ \, \chi \, $''  with  ``$ \, \gamma \, $''.
   Following  \S \ref{toral-cocycles-uPhgd},  we construct out of
   $ \chi $  a  $ \khp $--valued  toral  polar--$ 2 $--cocycle
   $ \sigma_\chi : \uRpPhg \otimes \uRpPhg \relbar\joinrel\longrightarrow \khp \, $,
   \,and then the deformed Hopf superalgebra  $ \, {\big( \uRpPhg \big)}_{\sigma_\chi} \, $.
   Similarly, we construct as in  \S \ref{subsec: tor-2cocyc def's_mp-Lie_supbialg's}
   (but with  ``$ \, \gamma \, $''  replacing  ``$ \, \chi \, $'')  a toral  $ 2 $--cocycle
   $ \gamma_\lieg $  for the Lie bialgebra  $ \lieg^{{\bar{\Rpicc}},p}_{\bar{\Ppicc}} \, $,
   \,and out of it the  $ 2 $--cocycle  deformed Lie superbialgebra
   $ \, {\big( \lieg^{{\bar{\Rpicc}},p}_{\bar{\Ppicc}} \big)}_{\gamma\raisebox{-1pt}{$ {}_\lieg $}} \, $.
 \vskip3pt
   Still out of  $ \chi \, $,  we define the matrix  $ P_{(\chi)} $
   and its realization  $ \, \R_{(\chi)} := \big(\, \lieh \, , \, \Pi_{(\chi)} \, ,
   \, \Pi^\vee \,\big) \, $,  \,as in  Proposition \ref{prop: 2cocdef-realiz};
   similarly, out of  $ \gamma $  we get the matrix  $ P_{(\gamma)} $
   and its realization  $ \, \R_{(\gamma)} \, $:  then
   $ \, P_{(\gamma)} = \bar{P}_{(\chi)} \, $  and
   $ \, \R_{(\gamma)} = \bar{\R}_{(\chi)} \, $.
   Finally, attached to the latter we have
   $ \, U_{\!P_{(\chi)},\,\hbar}^{\,\R_{(\chi)},p}(\lieg) \, $  and
   $ \, \lieg^{\Rpicc_{(\gamma)},\,p}_{\Ppicc_{(\gamma)}} =
   \lieg^{\bar{\Rpicc}_{(\chi)},\,p}_{\bar{\Ppicc}_{(\chi)}} \, $,
   \,again connected via quantization/specialization.
\end{free text}

\vskip5pt

 Next result claims that  \textsl{``deformation by 2--cocycle commutes with specialization''}.

\vskip13pt

\begin{theorem}  \label{thm: specializ 2-cocycle FoMpQUESA}
 $ {\big(\, \uRpPhg \big)}_{\sigma_\chi} $  is a QUESA, with semiclassical limit  $ \, U\Big(\! {(\liegRpP)}_{\gamma\raisebox{-1pt}{$ {}_\lieg $}} \Big) \, $.
%
%
\end{theorem}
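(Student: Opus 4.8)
The plan is to mirror, in the $2$-cocycle world, the strategy already used for Theorem \ref{thm: specializ twisted FoMpQUESA}: namely, to obtain the statement as a concatenation of three isomorphisms that are each available from earlier results. By Theorem \ref{thm: 2cocdef-uPhgd=new-uPhgd} we have an isomorphism of topological Hopf superalgebras $ {\big(\, \uRpPhg \big)}_{\sigma_\chi} \cong U_{\!P_{(\chi)},\,\hbar}^{\,\R_{(\chi)},\,p}(\lieg) $. By Theorem \ref{thm: FoMpQUESAs-are-QUESAs} the right-hand side is a QUESA, hence so is $ {\big(\, \uRpPhg \big)}_{\sigma_\chi} $, and by Theorem \ref{thm: semicl-limit FoMpQUESA} its semiclassical limit is $ U\big( \lieg^{\bar{\Rpicc}_{(\chi)},\,p}_{\bar{\Ppicc}_{(\chi)}} \big) $. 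Finally, Theorem \ref{thm: 2-cocycle-def-MpLSbA}(a) gives a Lie superbialgebra isomorphism $ \lieg_{P_{(\gamma)}}^{\R_{(\gamma)},p} \cong {\big( \liegRpP \big)}_{\gamma\raisebox{-1pt}{$ {}_\lieg $}} $; combining this with the identifications $ P_{(\gamma)} = \bar{P}_{(\chi)} $ and $ \R_{(\gamma)} = \bar{\R}_{(\chi)} $ established in the paragraph preceding the theorem, we get $ U\big( \lieg^{\bar{\Rpicc}_{(\chi)},\,p}_{\bar{\Ppicc}_{(\chi)}} \big) \cong U\big( {(\liegRpP)}_{\gamma\raisebox{-1pt}{$ {}_\lieg $}} \big) $. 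Chaining the three displayed isomorphisms yields the claim.

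Concretely, the steps in order are: (1) invoke Theorem \ref{thm: 2cocdef-uPhgd=new-uPhgd} to replace $ {\big(\, \uRpPhg \big)}_{\sigma_\chi} $ by the genuine FoMpQUESA $ U_{\!P_{(\chi)},\,\hbar}^{\,\R_{(\chi)},\,p}(\lieg) $; (2) apply Theorem \ref{thm: FoMpQUESAs-are-QUESAs} to conclude it is a QUESA; (3) apply Theorem \ref{thm: semicl-limit FoMpQUESA} to identify its semiclassical limit with $ U\big( \lieg^{\bar{\Rpicc}_{(\chi)},\,p}_{\bar{\Ppicc}_{(\chi)}} \big) $; (4) use the compatibility $ P_{(\gamma)} = \bar{P}_{(\chi)} $, $ \R_{(\gamma)} = \bar{\R}_{(\chi)} $ together with Theorem \ref{thm: 2-cocycle-def-MpLSbA}(a) to rewrite this as $ U\big( {(\liegRpP)}_{\gamma\raisebox{-1pt}{$ {}_\lieg $}} \big) $. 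One should also check that the Hopf (and co-Poisson) structures match at each stage, but this is automatic: Theorem \ref{thm: 2cocdef-uPhgd=new-uPhgd} gives a Hopf isomorphism, Theorem \ref{thm: semicl-limit FoMpQUESA} gives a co-Poisson Hopf isomorphism, and Theorem \ref{thm: 2-cocycle-def-MpLSbA}(a) is an isomorphism of Lie superbialgebras, so the induced map on universal enveloping superalgebras is a co-Poisson Hopf isomorphism.

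The main subtlety — and the only place where genuine verification is needed rather than quotation — is the compatibility of the $\hbar$-level $2$-cocycle construction with reduction modulo $\hbar$: one must be sure that the matrix $\mathring{X}$ attached to $\chi$ reduces modulo $\hbar$ to the matrix attached to $\gamma := (\chi \bmod \hbar)$, so that $ \bar{P}_{(\chi)} = P_{(\gamma)} $ and $ \bar{\R}_{(\chi)} = \R_{(\gamma)} $ exactly on the nose. This is precisely the content of the paragraph preceding the theorem (where these identities are asserted), and it follows because $ \mathring{\chi}_{ij} = \chi(T_i^+,T_j^+) $ depends $\kh$-linearly on $\chi$ and the passage $ \chi \mapsto \gamma $ is reduction modulo $\hbar$, while the deformation of $\alpha_i$ into $\alpha_i^{(\chi)} = \alpha_i \pm \chi(-,T_i^\pm)$ reduces in the same way. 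Hence the only ``work'' beyond assembling citations is a short consistency check, and I expect no real obstacle — the result is, by design, a formal consequence of Theorems \ref{thm: 2cocdef-uPhgd=new-uPhgd}, \ref{thm: FoMpQUESAs-are-QUESAs}, \ref{thm: semicl-limit FoMpQUESA} and \ref{thm: 2-cocycle-def-MpLSbA}, exactly parallel to how Theorem \ref{thm: specializ twisted FoMpQUESA} follows from its twist-analogues. Alternatively, as in the remark following Theorem \ref{thm: specializ twisted FoMpQUESA}, one could deduce it from a general statement on semiclassical limits of $2$-cocycle (polar-cocycle) deformations of QUESAs, extending \cite[\S 3.3]{GG4} to the super setting, but the direct three-isomorphism argument is cleaner here.
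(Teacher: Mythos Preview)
Your proposal is correct and follows essentially the same approach as the paper's own proof: both chain the isomorphism from Theorem \ref{thm: 2cocdef-uPhgd=new-uPhgd}, the specialization result Theorem \ref{thm: semicl-limit FoMpQUESA}, and the Lie-level isomorphism from Theorem \ref{thm: 2-cocycle-def-MpLSbA}(a), using the identifications $ P_{(\gamma)} = \bar{P}_{(\chi)} $ and $ \R_{(\gamma)} = \bar{\R}_{(\chi)} $ set up just before the statement. Your extra remarks on the consistency check and on the alternative route via \cite[\S 3.3]{GG4} also match the paper's subsequent remark.
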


\pf
 The claim follows, as direct application, from the two isomorphisms
  $$  {\big(\, \uRpPhg \big)}_{\sigma_\chi}
%
%
 \!\cong
 U_{\!P_{(\chi)},\,\hbar}^{\,\R_{(\chi)}}(\lieg)  \;\;\; ,  \qquad
       U_{\!P_{(\chi)},\,\hbar}^{\,\R_{(\chi)}}(\lieg) \Big/ \hbar \,
   U_{\!P_{(\chi)},\,\hbar}^{\,\R_{(\chi)}}(\lieg)
%
%
 \;\cong\;
 U_\hbar\Big( \lieg_{\Ppicc_{(\gamma)}}^{\Rpicc_{(\gamma)}} \Big)   $$
 of co-Poisson Hopf superalgebras  (cf.\ Theorem \ref{thm: 2cocdef-uPhgd=new-uPhgd}  and  Theorem \ref{thm: semicl-limit FoMpQUESA})  and the isomorphism of Lie superbialgebras
   $ \; \lieg^{\Rpicc_{(\gamma)},\,p}_{\Ppicc_{(\gamma)}}
%
%
 \;\cong\,
 {\big(\liegRpP\big)}_{\gamma\raisebox{-1pt}{$ {}_\lieg $}} \; $
   --- cf.\ Theorem \ref{thm: 2-cocycle-def-MpLSbA}.
\epf

\vskip5pt

\begin{rmk}
 The previous result is proved by direct inspection; nevertheless, we can deduce it also as an application of a general result on the ``semiclassical limit'' of a polar-2--cocyle of a QUESA, namely  \cite[Theorem 3.3.72]{GG4}  (although the result there is stated for the non-super case, it does extends easily to the super setup).

\end{rmk}

\medskip

\subsection{A global synopsis}  \label{subsec: global synopsis} \

\medskip

 In this paper we studied multiparametric versions of formal QUESA's,
 as well as their semiclassical limits.  All these are presented by generators and relations,
 so their very definition highlights the relation between
 the multiparameters and the action of a fixed commutative subsuperalgebra:
 this is encoded in the notion of  \textsl{realization\/}
 of a multiparameter matrix  $ P $,  much like for Kac-Moody algebras.
 This allows us to relate the quantum objects with their semiclassical limit,
 and also multiparameter objects with standard ones:
 the latter step goes through deformation(s).
 \vskip5pt
   In conclusion we can say, a bit loosely, that:
 \vskip2pt
\begin{enumerate}
  \item[$(a)$]  multiparameters are encoded in realizations;
 \vskip2pt
  \item[$(b)$]  FoMpQUESA's are quantizations of MpLSbA's;
 \vskip2pt
  \item[$(c)$]  multiparameter objects are given by deformation of
  either the (super)algebra or the (super)coalgebra structure of (Yamane's)  \textsl{uniparameter\/}  objects.
\end{enumerate}

\vskip5pt

   Finally, we display a ``pictorial sketch'' of the global picture, as follows:
 \vskip-11pt
%
  $$  \xymatrix{
   U^{\,\R_\Psi,\,p}_{\!P_\Psi,\,\hbar}(\lieg) \, \cong {\Big( \uRpPhg \Big)}^{\F_\Psi}
     \ar@{<~>}[dd]|-(0.5){\text{Theorem \ref{thm: uRpPhg=twist-uhg}}} \!\  &  &
     \ar@{<.>}[ll]_{\hskip0pt \text{Theorem \ref{thm: semicl-limit FoMpQUESA}}}
   \,\ U\Big( \lieg^{\bar{\R}_\Psi,\,p}_{\bar{P}_\Psi} \Big) \, \cong
   \, U\Big(\! {\Big( \lieg^{\bar{\R},\,p}_{\bar{P}} \Big)}^{j_{\bar{\Psi}}} \Big)
     \ar@{<~>}[dd]|-(0.5){\text{Theorem \ref{thm: MpLSbA=twist-Yamane's}}}  \\
      &    &  \\
     \uRpPhg \  \ar@{<~>}[dd]|-(0.5){\text{Theorem \ref{thm: double FoMpQUESAs_P-P'_mutual-deform.s}}}  &  &
     \ar@{<.>}[ll]_{\hskip0pt \text{Theorem \ref{thm: semicl-limit FoMpQUESA}}}
     \  U\Big( \lieg^{\bar{\R},\,p}_{\bar{P}} \Big)
     \ar@{<~>}[dd]|-(0.5){\text{Theorem \ref{thm: 2-cocycle-def-MpLSbA}}}  \\
      &    &  \\
   {\Big( \uRpPhg \Big)}_{\!\sigma_\chi} \! \cong \,
   U^{\,\R_{(\chi)},\,p}_{\!P_{(\chi)},\,\hbar}(\lieg) \,\  &  &
   \ar@{<.>}[ll]_{\hskip-7pt \text{Theorem \ref{thm: semicl-limit FoMpQUESA}}}
    \,\  U\Big(\! {\Big( \lieg^{{\bar{\R}},\,p}_{\bar{P}} \Big)}_{\!\bar{\chi}} \,\Big) \, \cong \, U\Big( \lieg^{{\bar{\R}_{(\chi)}},\,p}_{\bar{P}_{(\chi)}} \,\Big)  }  $$
%
 In the above drawing, each horizontal arrow (with dotted shaft) denotes a ``quantization/specialization (upwards/downwards) relationship''   --- which involves the ``continuous parameter'' $ \hbar $  ---   whereas each vertical arrow (having a waving shaft) stands for a relationship ``via deformation''   --- that involves ``discrete parameters''.

\bigskip
 \medskip

\section{Polynomial multiparameter QUESA's and their deformations}
\label{sec: polynomial MpQUESA's & deform.'s}
{\ }

\vskip1pt

   Our FoMpQUESA's are a close analogue, for multiparameter depending supergroups,
   of the well-known (uniparameter) QUEA's  $ \uhg $  by Drinfeld,
   attached with finite-dimensional semisimple Lie algebras  $ \lieg \, $.
   Now, a  \textsl{polynomial\/}  version of Drinfeld's  $ \uhg $
   is also available, originally due to Jimbo and Lusztig
   --- and a multiparameter version of that is known as well, cf.\ \cite{GG2}.
   Similarly, we will now introduce a ``polynomial version'' of MpQUESA's,
   and we will shortly present its main features.

\vskip15pt

\subsection{Multiparameters of polynomial type}
\label{subsec: multiparam-polynom}
 {\ }
 \vskip9pt
 Given  $ \, n \in \NN_+ \, $  and  $ \, I := \{1,\dots,n\} \, $,
 let  $ \, \ZZ\big[\,\bx^{\pm 1}\big] := \ZZ\big[ {\big\{ x_{ij}^{\pm 1} \big\}}_{i, j \in I} \,\big] \, $
 be the ring of Laurent polynomials with coefficients in  $ \ZZ $
 in the indeterminates  $ x_{ij} $  ($ \, i, j \in I \, $),
 and let  $ \, A := {\big( a_{ij} \big)}_{i, j \in I} \, $  be an indecomposable,
 symmetrizable Cartan matrix as in  \S \ref{subsec: FoMpQUESA's}.
 Consider the quotient ring
%
%
 $ \; \Zabqpm \, := \, \ZZ\big[\,\bx^{\pm 1}\big] \bigg/
 \Big( {\big\{ x_{ij} \, x_{ji} - x_{ii}^{\,a_{ij}} \big\}}_{i, j \in I} \Big) \; $
 and denote by  $ q_{ij} $  the coset of every  $ x_{ij} $
 (for  $ \, i, j \in I \, $)  in  $ \, \Zabqpm \, $.
 This is the ring of global sections of an affine scheme over
 $ \ZZ \, $,  call it  $ \mathfrak{C}_A \, $.
                                                         \par
   From all the identities  $ \; q_{ij} \, q_{ji} = q_{ii}^{\,a_{ij}} \; $
   in  $ \Zabqpm \, $,  from the indecomposability of  $ A $  and
   from the symmetry of  $ \, D\,A \, $   --- i.e.,
   $ \, d_i\,a_{ij} = d_j\,a_{ji} \, $  for all  $ \, i, j \in I \, $  ---
   one finds that there exists  $ \, j_{\raise-2pt\hbox{$ \scriptstyle 0 $}} \in I \, $
   such that  $ \, q_{ii} = q_{j_{\raise-2pt\hbox{$ \scriptscriptstyle 0 $}}
   \, j_{\raise-2pt\hbox{$ \scriptscriptstyle 0 $}}}^{\,d_i} \, $.
   From this and the relations between the  $ q_{ij} $'s, it is easy to
   argue that  $ \mathfrak{C}_A $  is a torus, of dimension
   $ \, {{\,n\,} \choose {\,2\,}} + 1 \, $:  in particular,
   it is irreducible, hence  $ \Zabqpm $  is a domain.
%
%
                                                                   \par
   Finally, we extend the ring  $ \, \Zabqpm \, $
   using formal square roots, as follows.  First, we consider the ring extension
   $ \dotRbq $  of  $ \, \Zabqpm \, $  generated by a square root of
   $ \, q_{j_{\raise-2pt\hbox{$ \scriptscriptstyle 0 $}} \,
   j_{\raise-2pt\hbox{$ \scriptscriptstyle 0 $}}} \, $,  \,namely
  $ \; \dotRbq \, := \, \big(\, \Zabqpm \,\big)[x] \bigg/\! \Big( x^2 - \,
  q_{j_{\raise-2pt\hbox{$ \scriptscriptstyle 0 $}} \,
  j_{\raise-2pt\hbox{$ \scriptscriptstyle 0 $}}} \Big) \; $  so that
  $ \, q := q_{j_{\raise-2pt\hbox{$ \scriptscriptstyle 0 $}} \,
  j_{\raise-2pt\hbox{$ \scriptscriptstyle 0 $}}}^{\,1/2} := \,
  \overline{x} \, \in \, \dotRbq \, $.
 We still denote by  $ q_{ij} $  the images in  $ \dotRbq $
%
%
 of the ``old'' elements with same name in  $ \Zabqpm \, $.
%
%
 Note that
  $ \; \dotRbq \, \cong \, \big(\, \ZZ\big[\, {\big\{ x_{ij}^{\pm 1} \big\}}_{i, j \in I} \,\big] \,\big)[x] \bigg/ \Big(\, {\big\{ x_{ij} \, x_{ji} - x_{ii}^{\,a_{ij}} \big\}}_{i, j \in I} \, \cup \big\{ x^2 - x_{j_{\raise-2pt\hbox{$ \scriptscriptstyle 0 $}} \, j_{\raise-2pt\hbox{$ \scriptscriptstyle 0 $}}} \big\} \Big) \; $.
%
%
 In turn, we define also
  $$
  \dotRbqsq  \; := \;  \ZZ\big[ {\big\{ \xi_{ij}^{\pm 1/2} \big\}}_{i, j \in I}
  \,\big]\big[ \xi^{\pm 1/2} \big] \bigg/
   \Big( {\Big\{ \xi_{ij}^{1/2} \, \xi_{ji}^{1/2} \! -
   {\big( \xi_{i{}i}^{1/2\,} \big)}^{a_{ij}} \Big\}}_{i, j \in I} \! \cup
   \Big\{\! {\big(\xi^{1/2\,}\big)}^2 \! -
   \xi_{j_{\raise-2pt\hbox{$ \scriptscriptstyle 0 $}} \,
   j_{\raise-2pt\hbox{$ \scriptscriptstyle 0 $}}}^{\,1/2} \Big\} \Big)
   $$
 \vskip0pt
\noindent
 which is again a domain,
%
%
 and we denote by  $ q_{ij}^{\pm 1/2} $  and by  $ q^{\pm 1/2} $  the coset of
 $ \xi_{ij}^{\pm 1/2} $  and  $ \xi^{\pm 1/2} $,  respectively, in  $ \dotRbqsq \, $.
%
%
 Then set also  $ \, q_i^{\pm 1} := q^{\pm d_i} \, $  for all  $ \, i \in I \, $;
 in particular,  $ \; q_{j_{\raise-2pt\hbox{$ \scriptscriptstyle 0 $}}
 \, j_{\raise-2pt\hbox{$ \scriptscriptstyle 0 $}}}^{\,1/2} := q =
 q_{j_{\raise-2pt\hbox{$ \scriptscriptstyle 0 $}}} \; $.  Finally,
 inside the field of fractions of  $ \dotRbqsq $  we consider the subring
 $ \Rbqsq $  generated by  $ \dotRbqsq $  and the elements
 $ \, {\big(\, q_i^{+1} - q_i^{-1} \,\big)}^{-1} \, $,  for all  $ \, i \in I \, $.
                                                       \par
   At last, for later use we introduce notation
   $ \, k_{ij} := q_{ij}^{1/2} \, q_{ji}^{-1/2} \; $  for all  $ \, i \, , j \in I \, $.

\vskip21pt

\subsection{Polynomial multiparameter QUESA's}  \label{subsec: polyn-MpQUESA's}
 {\ }
 \vskip9pt
   We resume all assumptions and notation as in  \S \ref{subsec: FoMpQUESA's}
   and  \S \ref{subsec: multiparam-polynom}  above.  In particular,
   we fix the multiparameter matrix  $ \, \bq := {\big(\, q_{ij} \big)}_{i,j \in I} \, $
   and the ring  $ \Rbqsq $  associated with some fixed symmetrizable,
   indecomposable Cartan matrix  $ \, A = {\big( a_{ij} \big)}_{i,j \in I} \, $
   as in  \S \ref{subsec: FoMpQUESA's}:  we shall then say that the multiparameter
   $ \, \bq := {\big(\, q_{ij} \big)}_{i,j \in I} \, $  is of  \textsl{Cartan type  $ A \, $}.

\vskip9pt

\begin{definition}  \label{def: PolMpQUESA}
 Let  $ \, \big(\, A := {\big(\, a_{i,j} \big)}_{i, j \in I} \, , \, p \,\big) \, $
 be a fixed Cartan super-datum, and  $ \, \bq := {\big(\, q_{ij} \big)}_{i,j \in I} \, $
 a multiparameter matrix of Cartan type  $ A \, $,
 \vskip3pt
   {\it (a)}\,  We define the  {\sl polynomial multiparameter quantum
   universal super enveloping algebra}   --- in short
   {\sl polynomial MpQUESA, or simply PolMpQUESA}  ---
   with multiparameter  $ \bq $  as follows.  It is the unital, associative
   $ \Rbqsq $--superalgebra  $ \Ubqg $  generated by the elements
 $ K_i^{\pm 1} $,  $ L_i^{\pm 1} $,  $ \, E_i \, $,  $ F_i \, $  (for all  $ \, i \in I \, $),
 \,with  $ \ZZ_2 $--grading  given by setting
%
  $ \; \big| K_i^{\pm 1} \big| := \zero =: \big| L_i^{\pm 1} \big| \; $,
  $ \; |E_i| := p(i) =: |F_i| \; \big(\, \forall \; i \in I \,\big) \, $,
 \,and  \textsl{with relations
  $$  \displaylines{
   K_i^{\pm 1} L^{\pm 1}_j  \, = \,  L^{\pm 1}_j K^{\pm 1}_i \;\; ,
\qquad K_i^{\pm 1} K^{\mp 1}_i  \, = \,  1  \, = \,  L^{\pm 1}_i L^{\mp 1}_i  \cr
   K_i^{\pm 1} K^{\pm 1}_j  \, = \,  K^{\pm 1}_j K^{\pm 1}_i  \;\; ,
\qquad  L_i^{\pm 1} L_j^{\pm 1}  \, = \,  L_j^{\pm 1} L_i^{\pm 1}  \cr
   K_i \, E_j \, K_i^{-1}  \, = \,  q_{ij} \, E_j  \;\; ,  \qquad
L_i \, E_j \, L_i^{-1}  \, = \,  q_{ji}^{-1} \, E_j  \cr
   K_i \, F_j \, K_i^{-1}  \, = \,  q_{ij}^{-1} \, F_j  \;\; ,  \qquad
L_i \, F_j \, L_i^{-1}  \, = \,  q_{ji} \, F_j  \cr
   [E_i \, , F_j]  \, = \,  \delta_{i,j} \, \frac{\, K_i - L_i \,}{\, q_i^{+1} - q_i^{-1} \,}  }  $$
 as well as relations  \eqref{eq: q-Serre (E)},  \eqref{eq: q-Serre (F)},
 \eqref{eq: rel1}  and all of relations  \eqref{eq: rel2}  through
 \eqref{eq: rel6}  both for the  $ E_i $'s  and for the  $ F_i $'s  alike}.
 \vskip3pt
   {\it (b)}\,  We define the  {\sl Cartan subalgebra\/}  $ \Ubqh $  of a
   PolMpQUESA  $ \Ubqg $  as being the unital  $ \Rbqsq $--subsuperalgebra  of  $ \Ubqg $
   generated by the subset  $ \, \big\{ K_i^{\pm 1} , L_i^{\pm 1} \,\big|\, i \in I \,\big\} \, $.
 \vskip3pt
   {\it (c)}\,  We define the  {\sl positive, resp.\ negative, nilpotent subsuperalgebra\/}
   $ \Ubqnp $,  resp.\  $ \Ubqnm $,  of a PolMpQUESA  $ \Ubqg $
   to be the unital  $ \Rbqsq $--subsuperalgebra  of  $ \Ubqg $
   generated by the subset  $ \, \big\{ E_i \,\big|\, i \in I \,\big\} \, $,
   resp.\ the subset  $ \, \big\{ F_i \,\big|\, i \in I \,\big\} \, $.
 \vskip5pt
   {\it (d)}\,  We define the  {\sl positive, resp.\ negative, Borel subsuperalgebra\/}
   $ \Ubqbp $,  resp.\  $ \Ubqbm $,  of a PolMpQUESA  $ \Ubqg $
   to be the unital  $ \Rbqsq $--subsuperalgebra  of  $ \Ubqg $
   generated by the subset  $ \, \big\{ E_i \, , K_i^{\pm 1} \,\big|\, i \in I \,\big\} \, $,
   resp.\ the subset  $ \, \big\{ F_i \, , L_i^{\pm 1} \,\big|\, i \in I \,\big\} \, $.
 \vskip5pt
   {\it (e)}\,  The above are definitions for PolMpQUESA's (or their subalgebras)
   over the ``generic multiparameter''  $ \bq $  with entries in  $ \Rbqsq \, $.
   If  $ \, \overline{\bq} = {\big(\, \overline{q}_{ij} \big)}_{i , j \in I} \, $
   is any square matrix of size  $ n $  \textsl{with entries in
   $ \k^\times $  \textit{which are not roots of 1}},  we can define the
   ``specialized'' PolMpQUESA  $ U_{\overline{\bq}\,}(\lieg) $  as a  $ \k $--algebra
   which has exactly the same presentation of  $ \Ubqg $  but for the
   $ \overline{q}_{ij} $'s  replacing the  $ q_{ij} $'s.
   Similarly, we define ``specialized'' Cartan, nilpotent and
   Borel subsuperalgebras  $ U_{\overline{\bq}\,}(\lieh) \, $,
   $ U_{\overline{\bq}\,}(\lien_\pm) $  and  $ U_{\overline{\bq}\,}(\lieb_\pm) \, $.
 \vskip5pt
   \textit{$ \underline{\text{Remark}} $:}\,  The definitions in
   \textit{(a)\/}  through  \textit{(d)\/}  are given over the ground ring  $ \Rbqsq $
   that is the  \textsl{least\/}  one for which the definitions make sense.
   Another natural choice would be to pick as ground ring the field of fractions of  $ \Rbqsq \, $.
   The further results hereafter then still hold true, up to minimal adjustments.
   \hfill  $ \diamondsuit $
\end{definition}

\vskip7pt

   The key point about PolMpQUESA's is the following:

\vskip13pt

\begin{theorem}  \label{thm: Hopf-struct-of-PolMpQUESA's}
 Every PolMpQUESA  $ \Ubqg $  is a
%
%
 Hopf superalgebra over  $ \Rbqsq \, $,  \,with coproduct, counit and antipode given by (for all  $ \, i \in I \, $)
  $$  \displaylines{
   \com\big(K_i^{\pm 1}\big) \, = \, K_i^{\pm 1} \otimes K_i^{\pm 1}  \;\; ,
   \qquad  \epsilon\big(K_i^{\pm 1}\big) \, = \, 1  \;\; ,  \qquad
   \SS\big(K_i^{\pm 1}\big) \, = \, K_i^{\mp 1}  \cr
   \hskip3pt   \com\big(L_i^{\pm 1}\big) \, = \, L_i^{\pm 1} \otimes L_i^{\pm 1}  \;\; ,
   \;\qquad  \epsilon\big(L_i^{\pm 1}\big) \, = \,  1  \;\; ,
   \hskip12pt \hskip6pt \quad  \SS\big(L_i^{\pm 1}\big) \, = \,  L_i^{\mp 1}  \cr
   \com(E_i) \, = \, E_i \otimes 1 + K_i \otimes E_i  \;\; ,  \hskip9pt \quad
 \epsilon(E_i) \, = \, 0  \;\; ,  \hskip13pt \quad  \SS(E_i) \, = \, -K_i^{-1} E_i  \cr
   \hskip7pt   \com(F_i) \, = \, F_i \otimes L_i + 1 \otimes F_i  \;\; ,
   \hskip13pt \quad  \epsilon(F_i) \, = \, 0  \;\; ,
   \hskip17pt \quad  \SS(F_i) \, = \, - F_i{L_i}^{-1}  }
   $$
 and Borel subsuperalgebras  $ \Ubqbp $  and  $ \Ubqbm $  are
 \textsl{Hopf}  sub-superalgebras of  $ \Ubqg \, $.
\end{theorem}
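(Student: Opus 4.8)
The plan is to establish the Hopf superalgebra structure on $\Ubqg$ by the same ``descent from the formal case'' strategy used for FoMpQUESA's in Section \ref{sec: FoMpQUESA's}, adapted to the polynomial setting. First I would verify that the stated formulas for $\com$, $\epsilon$, $\SS$ genuinely define morphisms: one checks that $\com$, extended multiplicatively from the generators, respects all the defining relations of $\Ubqg$ listed in Definition \ref{def: PolMpQUESA}(a), and likewise for $\epsilon$ (an algebra map to $\Rbqsq$) and $\SS$ (an algebra anti-map). The verification for the ``toral'' relations among the $K_i^{\pm1}$, $L_i^{\pm1}$ and their commutation with $E_j$, $F_j$ is immediate since these are group-like/skew-primitive by design; the key relation $[E_i,F_j] = \delta_{i,j}(K_i - L_i)/(q_i - q_i^{-1})$ is a short direct check once one knows $\com(K_i-L_i) = K_i\otimes K_i - L_i\otimes L_i$ and uses that $K_i$, $L_i$ act by the scalars $q_{ij}$, $q_{ji}^{-1}$ on $E_j$, $F_j$.

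\textbf{Reduction of the Serre-type relations.} The real content is that $\com$ kills the quantum Serre relations \eqref{eq: q-Serre (E)}, \eqref{eq: q-Serre (F)} and the higher-order relations \eqref{eq: rel1}--\eqref{eq: rel6}. Here I would invoke exactly the mechanism already exploited in Remark \ref{rmk: q-Serre = iterated q-commutator} and in the proof of Proposition \ref{prop: deform-Yamane=FoMpQUESA}: all these relations are iterated $q$--supercommutators, and a braided-adjoint-action argument shows that the left-hand side of each is, up to an invertible toral factor, a skew-primitive element whose $\com$ is forced by that of a single $q$--supercommutator; applying $(\eps\otimes\id)$ and $(\id\otimes\eps)$ then shows the coproduct of a defining relation lies in the (two-sided) ideal generated by the same relations, hence vanishes in $\Ubqg\otimes\Ubqg$. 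Rather than redo this computation, the cleanest route is to \emph{transport} the structure from the formal side. Over the appropriate ring, $\Ubqg$ is (up to extension of scalars and the substitution $K_i = e^{\hbar T_i^+}$, $L_i = e^{\hbar T_i^-}$) the ``polynomial part'' of a FoMpQUESA $\uRpPhg$ for a straight split realization $\R$; by Theorem \ref{thm: FoMpQUEA's} the latter is a (topological) Hopf superalgebra with coproduct \eqref{eq: coprod_x_uRpPhg_2nd-time}, and the formulas in Theorem \ref{thm: Hopf-struct-of-PolMpQUESA's} are precisely the images of those under this identification. So one shows that the $\Rbqsq$--subsuperalgebra generated by the $K_i^{\pm1},L_i^{\pm1},E_i,F_i$ is a Hopf subsuperalgebra: it is closed under $\com$ (inspect \eqref{eq: coprod_x_uRpPhg_2nd-time}), under $\eps$, and under $\SS$ (using \eqref{eq: antipode_x_uRpPhg}, noting $\SS(E_i) = -K_i^{-1}E_i$, $\SS(F_i) = -F_i L_i^{-1}$ land in the subalgebra). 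Functoriality and the triangular decomposition (the polynomial analogue of Theorem \ref{thm: triang-decomp}, proved by the standard Poincar\'e--Birkhoff--Witt argument as in \cite[\S4.4.9]{GG3}) guarantee that the defining presentation of $\Ubqg$ is faithfully realized inside $\uRpPhg$, so no extra relations can sneak in and the induced maps on $\Ubqg$ are well defined.

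\textbf{Borel subsuperalgebras and conclusion.} Once $\com$, $\eps$, $\SS$ are known to be well-defined (anti)homomorphisms, the counit and antipode axioms $(\eps\otimes\id)\com = \id = (\id\otimes\eps)\com$ and $m\circ(\SS\otimes\id)\circ\com = \eta\eps = m\circ(\id\otimes\SS)\circ\com$ need only be checked on the finite generating set, where they are the displayed short identities, and then extend by multiplicativity. For the Borel subsuperalgebras: $\Ubqbp$ is generated by $\{E_i,K_i^{\pm1}\}$, and from the coproduct formula $\com(E_i) = E_i\otimes 1 + K_i\otimes E_i$ together with $\com(K_i^{\pm1}) = K_i^{\pm1}\otimes K_i^{\pm1}$ we see $\com(\Ubqbp)\subseteq \Ubqbp\otimes\Ubqbp$, while $\SS(E_i) = -K_i^{-1}E_i\in\Ubqbp$, $\SS(K_i^{\pm1}) = K_i^{\mp1}\in\Ubqbp$ and $\eps$ restricts; symmetrically for $\Ubqbm$ using $\com(F_i) = F_i\otimes L_i + 1\otimes F_i$. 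I expect the main obstacle to be bookkeeping: making the identification of $\Ubqg$ with (a suitable localization/restriction of) the FoMpQUESA precise enough that one may legitimately pull back the Hopf structure --- in particular establishing the polynomial triangular decomposition and the non-existence of hidden relations --- rather than any genuinely new computation. If one prefers to avoid the detour through $\uhg$ entirely, the alternative is the direct verification that $\com$ annihilates \eqref{eq: q-Serre (E)}--\eqref{eq: rel6}, which is exactly the calculation sketched in Remark \ref{rmk: q-Serre = iterated q-commutator} carried out on the coproduct; that is lengthy but routine and can be cited from \cite{Ya1} mutatis mutandis.
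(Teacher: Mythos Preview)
Your proposal is correct and follows essentially the same strategy as the paper: embed $\Ubqg$ as the $\Rbqsq$--subsuperalgebra generated by $E_i$, $F_i$, $K_i^{\pm1}:=e^{\pm\hbar T_i^+}$, $L_i^{\pm1}:=e^{\mp\hbar T_i^-}$ inside a straight split FoMpQUESA, check that this subalgebra is closed under the Hopf operations of Theorem~\ref{thm: FoMpQUEA's}, and pull the structure back. The paper makes the ``extension of scalars'' step you flag as the main obstacle completely explicit --- it passes to the field $\mathbb{K}_n = \k\big(\{x_{ij}\}_{i<j}\big)$, builds $P_{\underline{x}}\in M_n\big(\mathbb{K}_n[[\hbar]]\big)$ with $p_{ij}=d_ia_{ij}\pm x_{ij}$, and uses the injective ring map $\varphi:\Rbqsq\hookrightarrow\mathbb{K}_n(\!(\hbar)\!)$, $q_{ij}\mapsto e^{\hbar p_{ij}}$, to view the formal object as an $\Rbqsq$--algebra --- whereas you leave this as a sketch; but the architecture of the argument is the same.
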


\begin{proof}
 Let  $ \, D A \, $  the symmetrized Cartan matrix associated
 with the multiparameter  $ \bq $  (see above).
 Let  $ \mathbb{K}_n := \k\big(\{\, x_{ij} \,|\, i, j \!\in\! I , i<j \,\}\big) \, $
 be the field of rational functions in  $ {n \choose 2} $  indeterminates
 $ \, x_{ij} \, $   --- with  $ \, i < j \, $  in  $ \, I := \{1,\dots,n\} \, $  ---
 let  $ \, \mathbb{K}_n[[\hbar]] \, $  the associated ring of formal power series in
 $ \hbar \, $,  and let  $ \, \mathbb{K}_n(\!(\hbar)\!) \, $
 be the field of formal Laurent series in  $ \hbar \, $.  Now pick the matrix  $ \, P_{\underline{x}} := {\big(\, p_{ij} \big)}_{i, j \in I} \in M_n\big(\mathbb{K}_n[[\hbar]]\big) \, $  given by
  $$
  p_{ij} := d_i \, a_{ij} + x_{ij} \;\;\; \forall \; i < j \; ,
  \!\!\qquad  p_{ii} := d_i \, a_{ii} \;\;\; ,
  \!\!\qquad  p_{ij} := d_i \, a_{ij} - x_{ji} \;\;\; \forall \; i > j
  $$
 so that  $ \, P_{\underline{x}} + P_{\underline{x}}^{\,T} = 2\,D\,A \, $.
 Then, there exists a unique  $ \k $--algebra  morphism, say
 $ \, \varphi : \Rbqsq \longrightarrow \mathbb{K}_n(\!(\hbar)\!) \, $,
 given by  $ \, q_{ij} \mapsto \varphi(q_{ij}) = \exp\big(\hbar\,p_{ij}\big) \, $,
 \,which is clearly injective since the  $ q_{hk} $'s  only obey the relations
 $ \, q_{ij} \, q_{ji} = q_{ii}^{\,a_{ij}} \, $  and the  $ p_{hk} $'s
 the relations  $ \, p_{ij} + p_{ji} = 2\,d_i\,a_{ij} \, $.
                                                        \par
 Choose any realization  $ \, \R = \big(\, \lieh \, , \Pi \, , \Pi^\vee \,\big) \, $
 of  $ P_{\underline{x}} $  \textsl{over the ground ring  $ \mathbb{K}_n[[\hbar]] \, $},
 which is  \textsl{straight\/}  and  \textsl{split\/}
 --- its existence is granted by  Proposition \ref{prop: exist-realiz's}\textit{(a)}.
 Out of these data, we define the FoMpQUESA  $ \uRpPhg \, $
 \,still over the ground ring  $ \mathbb{K}_n[[\hbar]] \, $,
 \,and also its scalar extension
 $ \, \mathbb{K}_n(\!(\hbar)\!) \otimes_{\mathbb{K}_n[[\hbar]]} \uRpPhg \, $.
 Using the monomorphism  $ \, \varphi : \Rbqsq \!\relbar\joinrel\longrightarrow \mathbb{K}_n(\!(\hbar)\!) \, $
 mentioned above, by scalar restriction through it we consider
 $ \, \mathbb{K}_n(\!(\hbar)\!) \otimes_{\mathbb{K}_n[[\hbar]]} \uRpPhg \, $
 as an  $ \Rbqsq $--superalgebra  too.
 \vskip5pt
   Inside  $ \, \mathbb{K}_n(\!(\hbar)\!) \otimes_{\mathbb{K}_n[[\hbar]]} \uRpPhg \, $
   we look at the  $ \Rbqsq $--subsuperalgebra  $ \dot{U}_\bq(\lieg) $
   generated by  $ \; \Big\{\, \dot{E}_i \, , \, \dot{K}_i^{\pm 1} :=
   \exp\big( \pm \hbar \, T_i^+ \big) \, , \, \dot{L}_i^{\pm 1} :=
   \exp\big( \mp \hbar \, T_i^- \big) \, , \, \dot{F}_i \;\Big|\; i \in I \,\Big\} \, $.
   An easy calculation, along with the fact that the set
   $ \, \Pi^\vee = \big\{\, T_i^+ , T_i^- \,\big|\, i \in I \,\big\} \, $
   is linearly independent, proves that  $ \Ubqg $  is actually contained in  $ \uRpPhg \, $;
   \,moreover, the defining presentation for  $ \uRpPhg $  induces for
   $ \dot{U}_\bq(\lieg) $  the same presentation as for  $ \Ubqg \, $.
   In short,  $ \dot{U}_\bq(\lieg) $  is an isomorphic copy of  $ \Ubqg $
   embedded inside  $ \uRpPhg \, $,  \,via  $ \;\; \varPhi :
   U_\bq(\lieg) \,{\buildrel \cong \over {\relbar\joinrel\longrightarrow}}\,
   \dot{U}_\bq(\lieg) \;\; \Big(\, E_i \mapsto \dot{E}_i \, , \; K_i^{\pm 1}
   \mapsto \dot{K}_i^{\pm 1} , \, L_i^{\pm 1} \mapsto \dot{L}_i^{\pm 1} ,
   \, F_i \mapsto \dot{F}_i \,\Big) \; $.
                                                      \par
   Yet another computation yields  $ \, \Delta\big(\dot{K}_i^{\pm 1}\big) =
   \dot{K}_i^{\pm 1} \otimes \dot{K}_i^{\pm 1} \, $  and
   $ \, \Delta\big(\dot{L}_i^{\pm 1}\big) = \dot{L}_i^{\pm 1} \otimes \dot{L}_i^{\pm 1} \, $
   inside  $ \uRpPhg \, $:  together with similar formulas for the antipode
   (and for the  $ E_i $'s  and the  $ F_i $'s  as well), this implies that
   the topological Hopf structure of  $ \uRpPhg $  does restrict to a (non-topological)
   Hopf structure on  $ \dot{U}_\bq(\lieg) \, $.  The outcome is that
   $ \dot{U}_\bq(\lieg) $  is indeed a (non-topological!) Hopf superalgebra over
   $ \Rbqsq $,  hence the same holds true for  $ \Ubqg $
   via pull-back of the structure through the superalgebra isomorphism
   $ \varPhi $  mentioned above.  Finally, tracking the whole construction
   we see that this Hopf structure onto  $ \Ubqg $  is indeed described by
   the formulas in the main claim.
                                                                 \par
   The last claim about Borel subsuperalgebras then is clear.
\end{proof}

\vskip5pt

   The proof of next claim is contained in that of
   Theorem \ref{thm: Hopf-struct-of-PolMpQUESA's}  right above:

\vskip11pt

\begin{cor}  \label{cor: PolMpQUESA's-inside-FoMpQUEA's}
 Every  \textsl{polynomial}  MpQUESA embeds as a Hopf sub-superalgebra
 inside (the suitable scalar extension of) any split straight  \textsl{formal}
 MpQUESA defined by the same Cartan super-datum.   \hfill  \qed
\end{cor}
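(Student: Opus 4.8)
The statement to prove is Corollary~\ref{cor: PolMpQUESA's-inside-FoMpQUEA's}: every polynomial MpQUESA embeds as a Hopf subsuperalgebra inside (a suitable scalar extension of) any split straight formal MpQUESA sharing the same Cartan super-datum. The plan is to extract the embedding directly from the proof of Theorem~\ref{thm: Hopf-struct-of-PolMpQUESA's}, where in fact such an embedding was constructed as the main technical device; what remains is to reorganize that construction so that it applies to an \emph{arbitrary} split straight formal MpQUESA rather than only to the specific one built from the matrix $P_{\underline{x}}$ used there.

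First I would fix a Cartan super-datum $(A,p)$ and let $\Ubqg$ be the associated generic polynomial MpQUESA over $\Rbqsq$. Then I would take an arbitrary split straight realization $\R = \big(\,\lieh\,,\Pi\,,\Pi^\vee\,\big)$ of some multiparameter matrix $P$ of Cartan type $A$ over a ground ring $\kh$ (for a field $\k$ containing the relevant rational functions), giving the formal MpQUESA $\uRpPhg$. The key observation is that the entries of $P$ encode the same data as the $q_{ij}$'s: setting $q_{ij} := e^{\hbar\,p_{ij}}$ inside $\kh$, and recalling from Definition~\ref{def: params-x-Uphgd} that $q_{ij}\,q_{ji} = q_{ii}^{\,a_{ij}}$, one obtains a well-defined ring homomorphism $\varphi : \Rbqsq \longrightarrow \kh$ (or into a suitable scalar extension $\khp$ if the square roots force it) sending $q_{ij}\mapsto e^{\hbar p_{ij}}$, $q^{1/2}\mapsto e^{\hbar p_{j_0 j_0}/2}$, and $(q_i^{+1}-q_i^{-1})^{-1}$ to its image, which lies in the scalar extension once we invert those elements. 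Injectivity of $\varphi$ follows, as in the proof of Theorem~\ref{thm: Hopf-struct-of-PolMpQUESA's}, from the fact that the $q_{hk}$'s satisfy only the relations $q_{ij}q_{ji}=q_{ii}^{a_{ij}}$ while the $p_{hk}$'s satisfy only $p_{ij}+p_{ji}=2d_ia_{ij}$ — here one uses straightness of $\R$ to guarantee that the $\alpha_i$'s, hence the relevant exponentials, are genuinely independent.

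Next I would, inside the scalar-extended FoMpQUESA $\khp \otimes_\kh \uRpPhg$ (regarded as an $\Rbqsq$-superalgebra by restriction of scalars along $\varphi$), consider the $\Rbqsq$-subsuperalgebra generated by
$$\Big\{\, E_i \,,\; \dot K_i^{\pm 1} := e^{\pm\hbar\,T_i^+} \,,\; \dot L_i^{\pm 1} := e^{\mp\hbar\,T_i^-} \,,\; F_i \;\Big|\; i \in I \,\Big\}.$$
Using that $\Pi^\vee = \{T_i^+,T_i^-\}_{i\in I}$ is linearly independent — which holds precisely because $\R$ is \emph{split} — one checks that the defining relations of $\Ubqg$ (the $K$--$L$ commutations, the $K_iE_jK_i^{-1}=q_{ij}E_j$ etc., the $[E_i,F_j]$ relation, and the quantum Serre and higher-order relations \eqref{eq: q-Serre (E)}--\eqref{eq: rel6}) are exactly the relations induced among these elements by the defining presentation of $\uRpPhg$; this gives an $\Rbqsq$-algebra monomorphism $\varPhi : \Ubqg \hookrightarrow \khp\otimes_\kh\uRpPhg$. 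Then I would invoke Theorem~\ref{thm: FoMpQUEA's} (the Hopf structure of $\uRpPhg$, with coproduct \eqref{eq: coprod_x_uRpPhg_2nd-time}) to compute $\Delta(\dot K_i^{\pm 1}) = \dot K_i^{\pm 1}\otimes\dot K_i^{\pm 1}$, $\Delta(\dot L_i^{\pm 1}) = \dot L_i^{\pm 1}\otimes\dot L_i^{\pm 1}$, $\Delta(E_i) = E_i\otimes 1 + \dot K_i\otimes E_i$, $\Delta(F_i) = F_i\otimes\dot L_i + 1\otimes F_i$, and the analogous counit/antipode formulas, so that the image of $\varPhi$ is closed under the Hopf operations and $\varPhi$ is a Hopf superalgebra embedding matching the structure of Theorem~\ref{thm: Hopf-struct-of-PolMpQUESA's}. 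Finally, for a specialized polynomial MpQUESA $U_{\overline{\bq}}(\lieg)$ with $\overline{q}_{ij}\in\k^\times$ not roots of unity, one chooses $P$ so that $e^{\hbar p_{ij}}$ specializes appropriately, and repeats the argument.

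The main obstacle I anticipate is \emph{not} the bookkeeping of relations — that is essentially routine, transported almost verbatim from Theorem~\ref{thm: Hopf-struct-of-PolMpQUESA's} — but rather making precise the phrase ``suitable scalar extension'' so that the embedding genuinely lands in a \emph{split straight} FoMpQUESA and not merely a scalar extension of one. Concretely, the difficulty is that an arbitrary split straight $\R$ need not have its $T_i^\pm$ expressible through the specific combinations that $\varphi$ forces; one must argue that the free choice in Proposition~\ref{prop: exist-realiz's}\textit{(a)} of a split straight realization of large enough rank can always be made compatible with the chosen $\varphi$, and that enlarging the ground ring to accommodate the square roots $q^{1/2}$ and the inverses $(q_i^{+1}-q_i^{-1})^{-1}$ does not disturb straightness or splitness of $\R$ — these being conditions on $\lieh$ modulo $\hbar$, hence insensitive to such scalar extensions. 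Once that compatibility is spelled out, the corollary follows; indeed the honest statement is simply that the construction inside the proof of Theorem~\ref{thm: Hopf-struct-of-PolMpQUESA's} is functorial enough to apply to any split straight formal MpQUESA with the matching Cartan super-datum, so the cleanest writeup is to state that proof slightly more generally and then read off the corollary.
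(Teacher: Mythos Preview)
Your overall shape is right—the corollary really is just an extraction from the proof of Theorem~\ref{thm: Hopf-struct-of-PolMpQUESA's}, and the paper says exactly that in one line. But your attempt to upgrade the argument to an \emph{arbitrary} multiparameter $P$ has a genuine gap in the injectivity step.

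You claim that $\varphi:\Rbqsq\to\khp$, $q_{ij}\mapsto e^{\hbar p_{ij}}$, is injective ``as in the proof of Theorem~\ref{thm: Hopf-struct-of-PolMpQUESA's}, from the fact that \ldots\ the $p_{hk}$'s satisfy only $p_{ij}+p_{ji}=2d_ia_{ij}$'', and you attribute this to straightness of $\R$. That reasoning does not hold. Straightness says the $\alpha_i$'s are linearly independent; it says nothing about algebraic independence of the scalars $p_{ij}=\alpha_j(T_i^+)\in\kh$. For a generic $P$ the entries $p_{ij}$ can satisfy many additional relations—indeed take $P=DA$ itself, where every $p_{ij}$ is an integer and $\varphi$ collapses the $\binom{n}{2}$ free variables of $\Rbqsq$ onto a one-parameter image. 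The paper avoids this precisely by \emph{choosing} $P=P_{\underline{x}}$ with the $x_{ij}$ genuinely algebraically independent over $\k$; that choice is what makes the ``$p_{hk}$'s satisfy only $p_{ij}+p_{ji}=2d_ia_{ij}$'' literally true, hence $\varphi$ injective.

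So the obstacle is not the scalar-extension bookkeeping you flag at the end; it is that the embedding is into the FoMpQUESA over the \emph{specific} generic matrix $P_{\underline{x}}$ (with any split straight realization of it), not into one over an arbitrary $P$ of Cartan type $A$. The word ``any'' in the corollary refers to the free choice of split straight realization, which is exactly the freedom exercised in the proof of Theorem~\ref{thm: Hopf-struct-of-PolMpQUESA's}; read that way, your final sentence—that one simply reads off the corollary from that proof—\emph{is} the correct argument, and no further generalization is needed (or available).
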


\vskip7pt

   Specializing the ``generic parameters''  $ q_{ij} $
   to specific values in  $ \k $  we get at once

\vskip11pt

\begin{cor}  \label{thm: Hopf-struct-of-special-PolMpQUESA's}
 Every  \textsl{specialized}  PolMpQUESA  $ U_{\overline{\bq}\,}(\lieg) $
 is a (non-topological) Hopf superalgebra over  $ \, \k \, $,  \,with
%
%
 Hopf structure given as in
 Theorem \ref{thm: Hopf-struct-of-PolMpQUESA's}.   \hfill  \qed
\end{cor}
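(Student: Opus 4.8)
The final statement is \texttt{Corollary \ref{thm: Hopf-struct-of-special-PolMpQUESA's}}: every \emph{specialized} PolMpQUESA $U_{\overline{\bq}}(\lieg)$ — obtained by replacing the generic entries $q_{ij} \in \Rbqsq$ with specific values $\overline{q}_{ij} \in \k^\times$ that are not roots of unity — is a (non-topological) Hopf superalgebra over $\k$, with Hopf structure given by the same formulas as in \texttt{Theorem \ref{thm: Hopf-struct-of-PolMpQUESA's}}.

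The plan is to realize the specialized algebra as a base change of the generic one along a suitable ring homomorphism, and then transport the Hopf structure. First I would observe that the matrix $\overline{\bq} = {\big(\, \overline{q}_{ij} \big)}_{i,j \in I}$ satisfies, by assumption, the same multiplicative relations $\overline{q}_{ij}\,\overline{q}_{ji} = \overline{q}_{ii}^{\,a_{ij}}$ that define $\Zabqpm$ (this being built into the hypothesis that $\overline{\bq}$ is ``of Cartan type $A$''). Hence there is a well-defined ring homomorphism $\Zabqpm \to \k$ sending each $q_{ij}$ to $\overline{q}_{ij}$. Since the $\overline{q}_{ij}$ are nonzero, this extends to the localizations and to the square-root extensions: choosing square roots $\overline{q}_{ij}^{\,1/2}$, $\overline{q}^{\,1/2}$ in (an extension of) $\k$ compatibly with the defining relations of $\dotRbqsq$, and using that $\overline{q}_i^{+1} - \overline{q}_i^{-1} \neq 0$ — which holds precisely because $\overline{q}_{j_0 j_0}$, hence each $\overline{q}_i = \overline{q}^{\,d_i}$, is not a root of unity — one obtains a ring homomorphism $\varphi_{\overline{\bq}} : \Rbqsq \longrightarrow \k$ (or to a finite extension of $\k$, which is harmless). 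Then by the very presentations in \texttt{Definition \ref{def: PolMpQUESA}}, one has a canonical isomorphism of $\k$-superalgebras $U_{\overline{\bq}}(\lieg) \cong \k \otimes_{\Rbqsq} \Ubqg$, the point being that both sides are presented by the same generators and the same relations, with the structure constants of the latter sent to those of the former under $\varphi_{\overline{\bq}}$.

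Next I would invoke \texttt{Theorem \ref{thm: Hopf-struct-of-PolMpQUESA's}}, which endows $\Ubqg$ with a Hopf superalgebra structure over $\Rbqsq$. Base change along a ring homomorphism is functorial and monoidal: if $H$ is a Hopf algebra over a commutative ring $R$ and $R \to S$ is a ring map, then $S \otimes_R H$ is a Hopf algebra over $S$, since $\Delta$, $\epsilon$, $\SS$ extend $S$-linearly and the axioms (coassociativity, counit, antipode) are preserved because the relevant diagrams commute after tensoring. Applying this with $R = \Rbqsq$, $S = \k$ (or its finite extension) and $H = \Ubqg$, we conclude that $\k \otimes_{\Rbqsq} \Ubqg \cong U_{\overline{\bq}}(\lieg)$ is a Hopf superalgebra over $\k$. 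Tracking the images of the coproduct, counit and antipode of $\Ubqg$ on the generators $K_i^{\pm 1}, L_i^{\pm 1}, E_i, F_i$ under this base change, one recovers verbatim the formulas displayed in \texttt{Theorem \ref{thm: Hopf-struct-of-PolMpQUESA's}}, now read with $\overline{q}_i$ in place of $q_i$; the assertion about Borel subsuperalgebras $U_{\overline{\bq}}(\lieb_\pm)$ being Hopf subsuperalgebras follows by base-changing the corresponding statement for $\Ubqbpm$.

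The one point requiring genuine care — the ``main obstacle'', though a mild one — is establishing the clean identification $U_{\overline{\bq}}(\lieg) \cong \k \otimes_{\Rbqsq} \Ubqg$. A priori one must check that no collapse occurs: that imposing $q_{ij} = \overline{q}_{ij}$ does not force extra relations beyond those in the definition of $U_{\overline{\bq}}(\lieg)$, and conversely that the base-changed algebra is not smaller than expected. This is handled by the universal property of presentations: $\Ubqg$ is the quotient of a free $\Rbqsq$-superalgebra by the ideal generated by the listed relations, $\k \otimes_{\Rbqsq}(-)$ is right exact, and the image of that ideal generates exactly the defining ideal of $U_{\overline{\bq}}(\lieg)$ — here one uses that all the structure coefficients appearing in relations \eqref{eq: q-Serre (E)}--\eqref{eq: rel6} (the $q_i$-binomials, the $k_{ij}$, the $\nu_i$, and the denominators $q_i^{+1} - q_i^{-1}$) are elements of $\Rbqsq$ whose $\varphi_{\overline{\bq}}$-images are precisely the corresponding coefficients over $\k$, which is legitimate since $\overline{q}_i$ is not a root of unity so those denominators stay invertible. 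Once this bookkeeping is in place, the rest is the formal base-change argument above, and the proof is complete.
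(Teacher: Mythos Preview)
Your proof is correct and is exactly the argument the paper has in mind: the corollary is stated with a bare \qed, preceded only by the sentence ``Specializing the `generic parameters' $q_{ij}$ to specific values in $\k$ we get at once'', so the paper treats the base-change argument you spell out as immediate. Your careful handling of the ring map $\Rbqsq \to \k$ (square roots, invertibility of $q_i - q_i^{-1}$ from the non-root-of-unity hypothesis) and the right-exactness of $\k \otimes_{\Rbqsq}(-)$ on the presentation simply makes explicit what the authors leave to the reader.
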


\vskip7pt

   A special case in particular stands out:

\vskip11pt

\begin{definition}  \label{def: stand-PolMpQUESA}
 We define the  \textit{standard\/}  PolMpQUESA  $ U_{\check{\bq}}(\lieg) $
 as the specialized PolMpQUESA for the ``standard multiparameter''
 $ \, \check{\bq} := {\big(\, \check{q}_{ij} = q^{d_i a_{ij}} \big)}_{i, j \in I} \, $.
                                                          \par
   \textsl{N.B.:} \,  this choice corresponds to specializing the parameters
   $ \, x_{ij} = 0 \, $  in the proof above, whereas
   the parameter $ q $  stands untouched.
\end{definition}

\vskip7pt

   Finally, from  Theorem \ref{thm: triang-decomp}  we easily see that the
   triangular decompositions for formal MpQUESA's imply parallel decompositions for their polynomial
   counterparts (just because the latter embed into the former, as shown in the proof of
   Theorem \ref{thm: Hopf-struct-of-PolMpQUESA's}  above).
   The precise statement, whose proof is immediate, reads as follows:

\vskip11pt

\begin{theorem}  \label{thm: polyn-triang-decomp}
 Every PolMpQUESA  $ \Ubqg $  admits  \textsl{triangular decompositions}
  $$  \displaylines{
   \Ubqnp \otimes \Ubqh \otimes \Ubqnm  \;\; \cong \;\;  \Ubqg
   \; \cong \;  \Ubqnm \otimes \Ubqh \otimes \Ubqnp  \cr
   \Ubqbp \otimes \Ubqbm  \; \cong \;  \Ubqg  \;\; \cong \;\;
   \Ubqbm \otimes \Ubqbp  }  $$
 and similar decompositions also occur for Borel subsuperalgebras, namely
  $$
  U_\bq(\lien_\pm) \otimes \Ubqh  \;\; \cong \;\;
  U_\bq(\lieb_\pm)  \; \cong \;  \Ubqh \otimes U_\bq(\lien_\pm)
  $$
   \indent   A similar statement holds true for all
   \textsl{specialized}  PolMpQUESA's as well.   \hfill   \qed
\end{theorem}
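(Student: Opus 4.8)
The plan is to deduce all these decompositions from their \emph{formal} counterparts (Theorem \ref{thm: triang-decomp}) by means of the embedding of $\Ubqg$ into a suitable formal MpQUESA that was built inside the proof of Theorem \ref{thm: Hopf-struct-of-PolMpQUESA's}. Recall from there that, after scalar extension along the monomorphism $\, \varphi : \Rbqsq \hookrightarrow \mathbb{K}_n(\!(\hbar)\!) \,$, one obtains an embedding of $\Rbqsq$--superalgebras $\, \varPhi : \Ubqg \hookrightarrow \widetilde{U} := \mathbb{K}_n(\!(\hbar)\!) \otimes_{\mathbb{K}_n[[\hbar]]} \uRpPhg \,$ for a straight split realization $\R$ of $P_{\underline{x}}$ over $\mathbb{K}_n[[\hbar]]$, carrying $\, E_i \mapsto \dot{E}_i \,$, $\, K_i^{\pm 1} \mapsto \exp(\pm\hbar\,T_i^+) \,$, $\, L_i^{\pm 1} \mapsto \exp(\mp\hbar\,T_i^-) \,$, $\, F_i \mapsto \dot{F}_i \,$. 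In particular $\varPhi$ carries $\Ubqnp$, $\Ubqh$, $\Ubqnm$ into $\, \widetilde{U}_+ := \mathbb{K}_n(\!(\hbar)\!) \otimes \uRpPhnp \,$, $\, \widetilde{U}_0 := \mathbb{K}_n(\!(\hbar)\!) \otimes \uRpPhh \,$, $\, \widetilde{U}_- := \mathbb{K}_n(\!(\hbar)\!) \otimes \uRpPhnm \,$ respectively, and similarly for the Borel pieces.

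First I would dispose of surjectivity of the multiplication maps: using the commutation relations of Definition \ref{def: PolMpQUESA}\textit{(a)} (those between $K_i^{\pm1},L_i^{\pm1}$ and $E_j,F_j$, together with $\, [E_i,F_j] = \delta_{i,j}(K_i - L_i)/(q_i^{+1}-q_i^{-1}) \,$), every monomial in the generators of $\Ubqg$ can be rewritten as an $\Rbqsq$--combination of monomials with all $E$'s on the left, the toral part in the middle and all $F$'s on the right — and, in the other order, with $E$'s on the right — and likewise for the Borel subsuperalgebras; hence all the multiplication maps in the statement are onto. The same reordering works verbatim after specialization.

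For injectivity the key point is a PBW--type statement for $\Ubqnp$ and $\Ubqnm$, inherited from the formal side. The root vectors of $\uRpPhnp$ — the relevant iterated $q$--supercommutators of the $E_i$'s — already lie in the subalgebra generated by the $E_i$'s, hence in $\Ubqnp$; the ordered monomials in them form a topological $\mathbb{K}_n[[\hbar]]$--basis of $\uRpPhnp$ (this is part of the content of Theorem \ref{thm: triang-decomp}, cf.\ \cite{GG3} and \cite{Ya1}). Since these monomials span $\Ubqnp$ over $\Rbqsq$ (same defining relations) and are $\Rbqsq$--linearly independent inside $\widetilde{U}_+$ (being part of a $\mathbb{K}_n(\!(\hbar)\!)$--basis), they constitute an $\Rbqsq$--basis of $\Ubqnp$; likewise for $\Ubqnm$, while $\Ubqh$ is the group algebra $\, \Rbqsq[\mathbb{Z}^{2n}] \,$ on the monomials in $K_i^{\pm1},L_i^{\pm1}$. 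Thus $\, \Ubqnp \otimes_{\Rbqsq} \Ubqh \otimes_{\Rbqsq} \Ubqnm \,$ is $\Rbqsq$--free on the triple products of these basis vectors; composing with $\varPhi$ and the multiplication of $\widetilde{U}$ sends this basis to a subset of a $\mathbb{K}_n(\!(\hbar)\!)$--basis of $\widetilde{U}$ — here one uses that $\exp(\pm\hbar\,T_i^+)$ and $\exp(\mp\hbar\,T_i^-)$ span a free abelian group of group-likes that are $\mathbb{K}_n(\!(\hbar)\!)$--linearly independent in $\widetilde{U}_0$, because $\{T_i^+,T_i^-\}_{i\in I}$ is $\mathbb{K}_n[[\hbar]]$--linearly independent — and then invokes the triangular decomposition $\, \widetilde{U}_+ \widehat{\otimes}\, \widetilde{U}_0 \widehat{\otimes}\, \widetilde{U}_- \cong \widetilde{U} \,$ of Theorem \ref{thm: triang-decomp}. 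Hence the multiplication map is injective, so an isomorphism; the reversed decomposition and the Borel ones follow in exactly the same manner.

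Lastly, the specialized statement follows by base change: writing $\, U_{\overline{\bq}}(\lieg) = \Ubqg \otimes_{\Rbqsq} \Bbbk \,$ for the ring morphism $\Rbqsq \to \Bbbk$ sending the $q_{ij}$ to the $\overline{q}_{ij}$ (well defined, since the $\overline{q}_{ij}$ are not roots of unity, whence $\overline{q}_i^{\,2} \neq 1$), one applies $(-)\otimes_{\Rbqsq}\Bbbk$ to the generic decompositions and notes that the canonical inclusions $\, \Ubqnp \hookrightarrow \Ubqnp \otimes_{\Rbqsq} \Ubqh \otimes_{\Rbqsq} \Ubqnm \,$ and their analogues are split (by $\mathrm{id}\otimes\epsilon\otimes\epsilon$), so that $U_{\overline{\bq}}(\lien_\pm)$, $U_{\overline{\bq}}(\lieh)$ remain identified with $\, \Ubqnpm \otimes_{\Rbqsq}\Bbbk \,$, $\, \Ubqh \otimes_{\Rbqsq}\Bbbk \,$ inside $U_{\overline{\bq}}(\lieg)$. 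I expect the PBW statement for the polynomial nilpotent subsuperalgebras to be the only step requiring real attention — and even that is light, being a straight transcription of the formal PBW bases across $\varPhi$ — while everything else is purely formal.
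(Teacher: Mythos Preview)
Your proposal is correct and follows precisely the approach the paper indicates: deduce the polynomial decompositions from the formal ones (Theorem~\ref{thm: triang-decomp}) via the embedding $\varPhi$ constructed in the proof of Theorem~\ref{thm: Hopf-struct-of-PolMpQUESA's}. The paper treats the result as ``immediate'' from that embedding; you have simply spelled out the two steps (surjectivity by reordering, injectivity by pulling back linear independence from the formal PBW bases through $\varPhi$) that make this claim honest.
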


\vskip15pt

\subsection{Deformations of polynomial MpQUESA's}  \label{subsec: deform-polyn-MpQUESA's}
 {\ }
 \vskip9pt
   We discuss now how the construction of deformations by (toral)
   twist or by (toral) polar--2--cocycles somehow ``restrict'' from formal MpQUESA's
   to  \textsl{polynomial\/}  ones   --- bearing in mind that the latter embed as (Hopf)
   sub-superalgebras of the former.
                                                                   \par
   In short, stepping from the ``formal'' setup to the ``polynomial'' one,
   things get more tricky for deformations by (toral) twist, whereas they get
   instead simpler for deformations by (toral) polar-2-cocycles.
   We present these results a bit quickly, yet the interested reader will
   certainly be able to fill in further details as needed.

\vskip13pt

\begin{free text}  \label{twist-deform x PolMpQUESA's}
 \textbf{Deformations by toral twist for PolMpQUESA's.}
 Let  $ \Ubqg $  be some PolMpQUESA, and  $ \uRpPhg $  some split,
 straight FoMpQUESA such that the former embeds inside the latter,
 as in  Corollary \ref{cor: PolMpQUESA's-inside-FoMpQUEA's}.
 Let us consider a toral twist for  $ \uRpPhg $  as in  \S \ref{twist-uPhgd},  say
 $ \; \F_\Phi \, := \, \exp\Big(\hskip1pt \hbar \,  \, 2^{-1} \,
 {\textstyle \sum_{g,k=1}^t} \phi_{gk} \, H_g \otimes H_k \Big) \; $.
 This belongs to  $ \, {\uRpPhg}^{\otimes 2} $,  but  \textsl{not\/}  to
 $ \, {\Ubqg}^{\otimes 2} $,  unless in the trivial case
 $ \, \Phi = \big( \phi_{gk} \big)_{g, k \in \G} = 0 \, $;
 \,hence in no way any such a toral twist for  $ \uRpPhg $
 can be seen as a toral twist for  $ \Ubqg \, $.  Nevertheless,
 \textsl{for suitable choices of
 $ \, \Phi = \big( \phi_{gk} \big)_{g, k \in \G} \in \lieso_t\big(\kh\big) \, $,
 \,hence of  $ \F_\Phi \, $,  the deformed coproduct  $ \Delta^\Phipicc $  of
 $ \, {\big( \uRpPhg \big)}^{\F_\Phi} $  maps  $ \Ubqg $  inside
 $ {\Ubqg}^{\otimes 2} $,  hence it is indeed a new, ``twist-deformed''
 coproduct for the Hopf algebra  $ \Ubqg \, $}.
                                                          \par
   In order to prove the above statement clearly, we note that, directly
   from the proof of  Theorem \ref{thm: twist-uRpPhg=new-uRpPhg},
   we see that the property that
   $ \, \Delta^\Phipicc\big(\Ubqg\big) \subseteq {\Ubqg}^{\otimes 2} \, $
   does hold true if and only if
  $ \; \L_{\Phi, \ell} := e^{+ \hbar \, 2^{-1}
  \sum_{g,k=1}^t \alpha_\ell(H_g) \, \phi_{gk} H_k} \in \Ubqh \; $
and similarly
  $ \; \K_{\Phi,\ell} := e^{+ \hbar \, 2^{-1} \sum_{g,k=1}^t \alpha_\ell(H_g) \,
  \phi_{kg} H_k} \in \Ubqh \; $
 --- for all  $ \, \ell \in I \, $  ---   hence if and only if
%
%
\begin{equation}   \label{eq: integr-cond-twist}
   \hskip-7pt
 2^{-1} {\textstyle\sum\limits_{g,k=1}^t} \alpha_\ell(H_g) \,
 \phi_{gk} \, H_k  \; \in \; \textsl{Span}_\ZZ\big( \Pi^\vee \big)
 \; \ni \;   2^{-1} {\textstyle\sum\limits_{g,k=1}^t} \alpha_\ell(H_g)
 \, \phi_{kg} \, H_k   \hskip1pt\quad  \big(\, \forall \; \ell \in I \,\big)
\end{equation}
 but since the matrix  $ \, \Phi = \big( \phi_{ij} \big)_{i, j \in I} \, $
 is antisymmetric, the double set of conditions above is equivalent to half
 of them, say the left-hand side conditions in  \eqref{eq: integr-cond-twist}  above.
                                                            \par
   Now,  $ \Pi^\vee $  can be completed to a  $ \kh $--basis  of  $ \lieh $
   --- by the splitness assumption ---   so let us assume from scratch that the basis
   $ \, {\{ H_k \}}_{k=1,\dots,t;} \, $  actually include  $ \Pi^\vee \, $;
   \,even more, we assume that it coincide with it (i.e., the realization
   $ \R $  is also  \textsl{minimal\/});  then the matrix
   $ \, \Phi = \big( \phi_{ij} \big)_{i, j \in I} \, $  can be written in blocks, say
 $ \, \Phi = \begin{pmatrix}
     \; \Phi_+^+  &  \Phi_+^- \;  \\
     \; \Phi_-^+  &  \Phi_-^- \;
             \end{pmatrix} \, $
 where  $ \, \Phi_\varepsilon^\eta =
 \big(\, \phi_{ij}^{\varepsilon,\eta} \,\big)_{i, j \in I} \, $  with
 $ \; \F_\Phi \, := \, \exp\Big(\hskip1pt \hbar \,  \, 2^{-1} \,
 {\textstyle \sum_{\varepsilon , \eta \in \{+,-\} }} \,
 {\textstyle \sum_{i,j=1}^n} \, \phi_{ij}^{\varepsilon,\eta} \,
 T_i^\varepsilon \otimes T_j^\eta \Big) \; $.
 Then the necessary and sufficient conditions for having
 $ \, \Delta^\Phipicc\big(\Ubqg\big) \subseteq {\Ubqg}^{\otimes 2} \, $  are
\begin{equation}   \label{eq: new-integr-cond-twist}
  {\textstyle\sum_{i=1}^n} \, 2^{-1}  \, \phi_{ij}^{\varepsilon,\eta}
  \, \alpha_\ell\big(T_i^\varepsilon\big)  \; \in \; \ZZ   \hskip45pt
  \forall \;\; \ell \, , j \in I \, , \; \varepsilon, \eta \in \{+,-\}
\end{equation}
 Finally, recalling that  $ \, \alpha_\ell\big(T_i^+\big) = p_{i\ell} \, $
 and  $ \, \alpha_\ell\big(T_i^-\big) = p_{\ell{}i} \, $,  conditions
 \eqref{eq: new-integr-cond-twist}  eventually read, in matrix form,
 as ``integrality conditions'' of the following form:
  $$
  P^T \!\cdot \Phi_+^\eta \, \in \, 2\,M_n(\ZZ) \;\; ,
  \quad  P \cdot \Phi_-^\eta \, \in \, 2\,M_n(\ZZ)   \eqno  \forall \;\; \eta \in \{+,-\}   \qquad
  $$
   \indent   When these conditions are fulfilled (and only then),
   the deformation of  $ \uRpPhg $  by the ``compatible'' toral twist
   $ \F_\Phi $  does ``restrict'' to a deformation of the polynomial MpQUESA
   $ \Ubqg $  as well, that we denote by  $ {\Ubqg}^{(\F_\Phi)} \, $.
   The new coproduct, counit and antipode in  $ {\Ubqg}^{(\F_\Phi)} $
   are described by formulas which are\,  \textit{(a)\/}  unchanged for the
   $ K_i^{\pm 1} $'s  and  $ L_i^{\pm 1} $'s,  and\,  \textit{(b)\/}
   the very same formulas as in the first part of the proof of
   Theorem \ref{thm: twist-uRpPhg=new-uRpPhg}  for the  $ E_i $'s  and
   $ F_i $'s   --- as those formulas now make sense as formulas inside
   $ {\Ubqg}^{\otimes 2} $  or  $ \Ubqg \, $.  Then the
   ``polynomial counterpart'' of  Theorem \ref{thm: twist-uRpPhg=new-uRpPhg}
   holds true, stating in particular that every ``twist deformed'' object
   $ {\Ubqg}^{(\F_\Phi)} $  issued by this procedure   --- out of a given
   polynomial MpQUESA  $ \Ubqg \, $,  through a toral twist  $ \F_\Phi $
   which is ``compatible'' in the sense explained above ---
   is yet another  \textsl{polynomial\/}  MpQUESA.  And similar
   statements hold true for Borel subalgebras too.
\end{free text}

\vskip13pt

\begin{free text}  \label{2-cocycle-deform x PolMpQUESA's}
 \textbf{Deformations by toral 2--cocycle for PolMpQUESA's.}
 Let again  $ \Ubqg $  and  $ \uRpPhg $  be a PolMpQUE\-SA and a split,
 straight and minimal FoMpQUESA, respectively, the former embedding into
 the latter as in  Corollary \ref{cor: PolMpQUESA's-inside-FoMpQUEA's}.
 \vskip5pt
   Let  $ \; \sigma_\chi : \uRpPhg \times \uRpPhg
   \relbar\joinrel\relbar\joinrel\twoheadrightarrow \khp \; $
   be a toral polar--2--cocycle of  $ \uRpPhg $   --- cf.\ \S \ref{toral-cocycles-uPhgd}  ---
   which by construction is defined starting from a  $ \kh $--bilinear  map
   $ \; \chi : \lieh \times \lieh \relbar\joinrel\relbar\joinrel\longrightarrow \kh \; $
   that vanishes (both on the left and the right) on the  $ S_i $'s  ($ \, i \in I \, $).
   Let us restrict this  $ \, \sigma_\chi \, $  to
   $ \, \Ubqg \times \Ubqg \, $:  \,by construction,
   $ \, \sigma_\chi{\Big|}_{\Ubqg \times \Ubqg} \, $  is uniquely determined by
   $ \, \chi{\Big|}_{\Ubqh \times \Ubqh} \, $.  Now,  $ \Ubqh $
   is nothing but the group algebra over  $ R_\bq $  of the free Abelian group
   $ \, \varGamma \cong \ZZ^{2n} \, $  (generated by the  $ K_i $'s  and the $ L_i $'s);
   therefore  $ \, \chi{\Big|}_{\Ubqh \times \Ubqh} \, $  is uniquely
   determined by its restriction  $ \, \eta(\chi) := \chi{\Big|}_{\varGamma \times \varGamma} \, $.
   A straightforward check shows that  $ \, \eta(\chi) \, $
   is just a standard group 2--cocycle with values in  $ R_\bq \, $,  and likewise
   $ \, \chi{\Big|}_{\Ubqh \times \Ubqh} \, $  is a  \textsl{standard\/}
   Hopf 2--cocycle for the Hopf algebra  $ \Ubqh \, $,
   with values in ground ring  $ R_\bq \, $,  and the same holds for
   $ \, \sigma_{\hat{\bq}_\chi} := \sigma_\chi{\Big|}_{\Ubqg \times \Ubqg} \, $
   --- in particular, no more ``polar'' is involved, nor there is any need to
   resort to any larger ring than the fixed ground one.  The outcome is that
   restricting to the PolMpQUESA   $ \Ubqg $  the deformation process onto the larger FoMpQUESA
   $ \uRpPhg $  via the (toral) polar--2--cocycle  $ \sigma_\chi $  yields
   the same outcome as deforming  $ \Ubqg $  by its Hopf 2--cocycle
   $ \sigma_{\hat{\bq}_\chi} $  via the standard procedure in Hopf algebra theory.
                                                                  \par
   By the way, note that (by construction) every such  $ \, \sigma_{\hat{\bq}_\chi} \, $
   vanishes on the subgroup  $ \varGamma_0 $  of  $ \varGamma $
   generated by the products  $ K_i^{+1} L_i^{-1} $  ($ \, i \in I \, $).
 \vskip5pt
   Conversely, starting from any standard group 2--cocycle
   $ \, \hat{\sigma}_{\hat\bq} \, $  on  $ \varGamma $  with values in
   $ R_\bq $  \textsl{which vanishes on the subgroup  $ \varGamma_0 \, $},
   we can uniquely extend it to a Hopf 2--cocycle of  $ \Ubqh $  and then
   even to a Hopf 2--cocycle  $ \sigma_{\hat\bq} $  of  $ \Ubqg $
   --- adapting the recipe in  \S \ref{toral-cocycles-uPhgd}.
   These Hopf 2--cocycles (in a standard sense: no ``polar'' is involved!)
   of  $ \Ubqg $  will in fact coincide with all those coming from
   restriction of toral polar--2--cocycles of  $ \uRpPhg \, $,
   hence we will call these  $ \sigma_{\hat\bq} $  ``toral'' as well.
 \vskip5pt
   To finish with, whenever we perform a deformation of  $ \Ubqg $
   by any toral 2--cocycle  $ \sigma_{\hat\bq} $  we get a genuine
   deformation of the polynomial MpQUESA  $ \Ubqg \, $,  that we denote by
   $ {\big( \Ubqg \big)}_{\sigma_{\hat\bq}} \, $:  its coalgebra structure
   is unchanged, whilst the new product is described by a straightforward (and easy), partial adaptation of formulas given in  \S \ref{formulas_deform-prod}.  Then the ``polynomial counterpart'' of  Theorem \ref{thm: 2cocdef-uPhgd=new-uPhgd}  holds true, namely there exists an isomorphism
%
%
 $ \; {\big( \Ubqg \big)}_{\sigma_{\hat\bq}} \cong U_{\bq*\hat{\bq}}(\lieg) \, $
 which is the identity on generators.  In short, every toral 2--cocycle
 deformation of a PolMpQUESA is yet another PolMpQUESA, whose new multiparameter
 $ \, \bq*\hat{\bq} \, $  is a suitable combination of the original one
 $ \bq $  and of the further parameters encoded within  $ \sigma_{\hat\bq} $
 --- details can be easily deduced again from  \S \ref{formulas_deform-prod}.  In particular, one can prove that any PolMpQUESA  $ \Ubqg $  can be obtained from the standard PolMpQUESA  $ U_{\check{\bq}}(\lieg) $  by a  $ 2 $--cocycle  deformation, by choosing the parameter  $ \hat{\bq} $  such that  $ \, \check{\bq}*\hat{\bq} = \bq \, $.
 Similar statements also hold true for the Borel PolMpQUESAs and their
 deformations by  $ \sigma_{\hat\bq} \, $.
\end{free text}

\bigskip
 \bigskip

\vskip21pt


\begin{thebibliography}{999}

\vskip11pt

\bibitem[An]{An}
  {\sc N.\ Andruskiewitsch},
  \textit{Lie superbialgebras and Poisson-Lie supergroups},
Abh.\ Math.\ Sem.\ Univ.\ Hamburg  \textbf{63}  (1993), 147--163.

\vskip7pt

\bibitem[CG]{CG}  {\sc N.\ Ciccoli},  {\sc L.\ Guerra},
  {\it The Variety of Lie Bialgebras},
  J.\ Lie Theory  {\bf 13}  (2003), no.\ 2, 577--588.

\vskip7pt

\bibitem[CP]{CP}  {\sc V.\ Chari},  {\sc A.\ Pressley},
  {\it A guide to quantum group},
  Cambridge University Press, Cambridge, 1995.

%
%
%

\vskip7pt

\bibitem[EG]{EG}  {\sc B.\ Enriquez},  {\sc N.\ Geer},
  \textit{Compatibility of quantization functors of Lie bialgebras with duality and doubling operations},
  Selecta Math.\ (N.S.)  \textbf{15}  (2009), no.\ 1, 1--38.

\vskip7pt

\bibitem[GG1]{GG1}  {\sc G.\ A.\ Garc{\'\i}a},  {\sc F.\ Gavarini},
  \textit{Twisted deformations vs.\ cocycle deformations for quantum groups},  Communications in Contemporary Mathematics  \textbf{23}  (2021), no.\ 8  ---  2050084 (56 pages).

\vskip7pt

\bibitem[GG2]{GG2}  \bysame , \bysame ,
  \textit{Multiparameter quantum groups at roots of unity},
  J.\ Noncommut.\ Geom.\  \textbf{16}  (2022), no.\ 3, 839--926.

\vskip7pt

\bibitem[GG3]{GG3}  \bysame , \bysame ,
  \textit{Formal multiparameter quantum groups,  deformations and specializations},  Annales de l'Institut Fourier  \textbf{76}  (2026), no.\ 1, 51--167.

\vskip7pt

\bibitem[GG4]{GG4}  \bysame , \bysame ,
  \textit{Quantum Group Deformations and Quantum  $ R $--(co)matrices vs.\ Quantum Duality Principle},  72 pages   ---   preprint  \texttt{arXiv:2403.15096 [math.QA]}  (2024).

\vskip7pt

\bibitem[GG5]{GG5}  \bysame , \bysame ,
  \textit{GaGa algebras},  in progress.

\vskip7pt

\bibitem[GG6]{GG6}  \bysame , \bysame ,
  \textit{Quantum GaGa algebras},  in progress.

\vskip7pt

\bibitem[Ge1]{Ge1}  {\sc N.\ Geer},
  \textit{Etingof-Kazhdan quantization of Lie superbialgebras},
  Adv.\ Math.\  \textbf{207}  (2006), no.\ 1, 1--38.

\vskip7pt

\bibitem[Ge2]{Ge2}  \bysame ,
  {\it Some remarks on quantized Lie superalgebras of classical type},
J.\ Algebra  \textbf{314}  (2007), no.\ 2, 565--580.

\vskip7pt

\bibitem[GP]{GP}  {\sc F.\ Gavarini},  {\sc M.\ Paolini},
  \textit{Multiparameter quantum general linear supergroup},  J.\ Pure Appl.\ Algebra (to appear), 37 pages   ---   preprint  \texttt{arXiv:2512.05777 [math.QA]}  (2025).

\vskip7pt

\bibitem[Ka1]{Ka1}  {\sc V.\ G.\ Kac},
   \textit{Infinite dimensional Lie algebras},  Third edition, Cambridge University Press, Cambridge, 1990.

\vskip7pt

\bibitem[Ka2]{Ka2}  \bysame ,
   \textit{Lie superalgebras},  Advances in Math.\  {\bf 26}  (1977), no.\ 1, 8--96.

\vskip7pt

\bibitem[Ko]{Ko}  {\sc B.\ Kostant},
   \textit{Graded manifolds, graded Lie theory, and prequantization},  in:  \textsl{Differential geometrical methods in mathematical physics (Proc.\ Sympos., Univ.\ Bonn, Bonn,\ 1975)},  pp.\ 177--306, Lecture Notes in Math.,  Vol.\ \textbf{570},  Springer, Berlin--New York, 1977.

\vskip7pt

\bibitem[KS]{KS}  {\sc A. Klimyk},  {\sc K. Schm\"udgen},
  \textit{Quantum groups and their representations},
  Texts and Monographs in Physics, Springer-Verlag, Berlin, 1997, {\rm xx}+552 .

\vskip7pt

\bibitem[Le]{Le}  {\sc D.\ A.\ Leites},
{\it Cohomologies of Lie superalgebras},  Functional Analysis and Its Applications 9(4) (1975), 340--341.

\vskip7pt

\bibitem[Ma]{Ma}  {\sc Y.\ Manin},
  \textit{Multiparametric Quantum Deformation of the General Linear Supergroup},
  Communications in Mathematical Physics  {\bf 123}  (1989), 163--175.

\vskip7pt

\bibitem[Mj]{Mj}  {\sc S.\ Majid},
  \textit{Foundations of quantum groups},  Cambridge University Press, Cambridge, 1995.

\vskip7pt

\bibitem[MW]{MW}  {\sc S.\ Merkulov},  {\sc T.\ Willwacher},
   {\it Deformation theory of Lie bialgebra properads},
   Geometry and physics, Vol.\ I, 219--247, Oxford Univ.\ Press, Oxford, 2018.

\vskip7pt

\bibitem[Ya1]{Ya1}  {\sc H.\ Yamane},  \textit{Quantized enveloping algebras associated to simple Lie superalgebras and their universal  $ R $--matrices},  Publ.\ Res.\ Inst.\ Math.\ Sci.\  {\bf 30}  (1994), no.\ 1, 15--87.

\vskip7pt

\bibitem[Ya2]{Ya2}  \bysame ,  \textit{On defining relations of affine Lie superalgebras and affine quantized universal enveloping superalgebras},  Publ.\ Res.\ Inst.\ Math.\ Sci.\  {\bf 35}  (1999), no.\ 3, 321--390.

\vskip7pt

\bibitem[Ya3]{Ya3}  \bysame ,  \textit{Errata to: ``On defining relations of affine Lie superalgebras and affine quantized universal enveloping superalgebras''},  Publ.\ Res.\ Inst.\ Math.\ Sci.\  {\bf 37}  (2001), no.\ 4, 615--619.

%
\end{thebibliography}
\end{document}